\def\dB{{\mathcal{B}}}
\def\dF{{\mathcal{F}}}
\def\dN{{\mathcal{N}}}
\def\dS{{\mathcal{S}}}
\def\bR{{\mathbb{R}}}
\def\bZ{{\mathbb{Z}}}
\def\cC{{\mathscr{C}}}
\def\cD{{\mathscr{D}}}
\def\cF{{\mathscr{F}}}
\def\cG{{\mathscr{G}}}
\def\ve{\epsilon} 
\def\vp{\varphi}
\renewcommand{\d}{{\partial}}
\def\lec{\lesssim}
\DeclareMathOperator{\diam}{diam}
\def\dim{\mathop\mathrm{dim}} 					
\def\dist{\mathop\mathrm{dist}} 						
\def\Int{\mathop\mathrm{Int}} 						
\newcommand{\ps}[1]{\left( #1 \right)}
\newcommand{\ck}[1]{\left\{#1 \right\}}
\newcommand{\floor}[1]{\left\lfloor #1 \right\rfloor}
\def\XXint#1#2#3{{\setbox0=\hbox{$#1{#2#3}{\int}$ }
\vcenter{\hbox{$#2#3$ }}\kern-.58\wd0}}
\newtheorem{theorem}{Theorem}[section]
\newtheorem{lemma}[theorem]{Lemma}
\theoremstyle{definition}
\newtheorem{definition}[theorem]{Definition}
\theoremstyle{remark}
\newtheorem{remark}[theorem]{Remark}
\numberwithin{equation}{section}
\newcommand{\R}{\mathbb{R}}
\newcommand{\N}{\mathbb{N}}
\newcommand{\Z}{\mathbb{Z}}
\newcommand{\hd}{\mathcal{H}^d}
\newcommand{\hdc}{\mathcal{H}^d_\infty}
\newcommand{\spt}{\mathrm{spt}}
\newcommand{\wt}{\widetilde}
\newcommand{\eqt}[1]{\ensuremath{\stackrel{#1}{=}}}
\newcommand{\leqt}[1]{\ensuremath{\stackrel{#1}{\leq}}}
\newcommand\blfootnote[1]{%
  \begingroup
  \renewcommand\thefootnote{}\footnote{#1}%
  \addtocounter{footnote}{-1}%
  \endgroup
}
\newcommand{\dH}{\mathcal{H}}
\numberwithin{equation}{section}
\theoremstyle{plain}
\newtheorem{proposition}[theorem]{Proposition}
\newtheorem{corollary}[theorem]{Corollary}
\newtheorem{notation}[theorem]{Notation}
\newtheorem{claim}[theorem]{Claim}
\newtheorem*{lemman}{Lemma}
\newcommand{\st}{\mbox{ s.t. }}
\newcommand{\Adj}{\mathrm{Adj}}
\newcommand{\Child}{\mathrm{Child}}
\newcommand{\dD}{\mathcal{D}}
\newcommand{\cc}{\mathsf{c}}
\newcommand{\aA}{\mathsf{A}}
\newcommand{\Cc}{\mathsf{C}}
\newcommand{\geqt}[1]{\ensuremath{\stackrel{#1}{\geq}}}
\newcommand{\Forest}{\text{Forest}}
\newcommand{\Tree}{\mathsf{Tree}}
\newcommand{\Stop}{\mathsf{Stop}}
\newcommand{\Top}{\mathsf{Top}}
\newcommand{\Bad}{\mathsf{Bad}}
\newcommand{\Nextt}{\mathsf{Next}}
\def\Next{{\mathsf Next}}
\newcommand{\Ss}{\mathsf{S}}
\newcommand{\BWGL}{\mathsf{BWGL}}
\newcommand{\1}{\mathbf{1}}
\newcommand{\Vv}{\mathsf{V}}
\newcommand{\Gg}{\mathsf{G}}
\newcommand{\jones}{\textrm{BETA}}
\newcommand{\doi}[1]{\textsc{doi}: \href{http://dx.doi.org/#1}{\nolinkurl{#1}}}
\def\@makefnmark{%
  \leavevmode
  \raise.9ex\hbox{\fontsize\sf@size\z@\normalfont\tiny\@thefnmark}}
\newcommand{\RomanNumeralCaps}[1]
    {\MakeUppercase{\romannumeral #1}}
\title[Higher dimensional Jordan curves]{Higher dimensional Jordan curves}
\author{Michele Villa}
\address{School of Mathematics, University of Edinburgh, JCMB, Kings Buildings,
Mayfield Road, Edinburgh,
EH9 3JZ, Scotland.}
\email{m.villa-2 ``at" sms.ed.ac.uk}
\date{}
\dedicatory{}
\begin{document}

\maketitle
\begin{center}
\begin{minipage}[c][][r]{400pt}
\begin{small}
\textsc{Abstract.} 
We address the question of what is the correct higher dimensional analogue of Jordan curves from the point of view of quantitative rectifiability. More precisely, we show that `topologically stable' sets can be used as covering objects in Analyst's Travelling Salesman Theorem-type theorems: if $E$ is lower $d$-regular (in a certain suitable sense), then we show that there exists a topologically stable surface $\Gamma$ so that $E \subset \Gamma$ and
\begin{align*}
    \diam(E)^d + \beta^d(E) \approx \hd(\Gamma),
\end{align*}
where $\beta^d$ is a term quantifying the curvature of $E$. A corollary of the main result of this paper and a construction by Hyde \cite{hyde2020TST}, is a higher dimensional analogue of Peter Jones TST, valid for \textit{any} subset of Euclidean space. 
\end{small}
\end{minipage}
\end{center}   

\blfootnote{\textup{2010} \textit{Mathematics Subject Classification}: \textup{28A75}, \textup{28A12} \textup{28A78}.

\textit{Key words and phrases.} Rectifiability, uniform rectifiability, quasiminimizers,  beta numbers, Hausdorff content, Travelling salesman theorem.

M. V. was supported by The Maxwell Institute Graduate School in Analysis and its
Applications, a Centre for Doctoral Training funded by the UK Engineering and Physical
Sciences Research Council (grant EP/L016508/01), the Scottish Funding Council, Heriot-Watt
University and the University of Edinburgh. M.V. was also supported by the Academy of Finland via the project Incidences on Fractals, grant No. 321896.
}
\vspace{-.5cm}
\tableofcontents

\section{Introduction}
A set $E \subset \R^n$ is $d$-rectifiable if it can be covered, up to a set of $\hd$-measure zero, by a countable number of images of $\R^d$ under Lipschitz maps $f: \R^d \to \R^n$. Here $\hd$ denotes the $d$-dimensional Hausdorff measure which can be seen as a surface measure of $E$; recall also that $f:\R^d \to \R^n$ is $L$-Lipschitz if $|f(x)-f(y)| \leq L |x-y|$ for some $L\geq 1$. Rectifiable sets can be thought of as the measure theoretic analogues of smooth manifolds and they are central in many branches of analysis.

In the past thirty years several theorems appeared which quantify the \textbf{multiscale behaviour} of rectifiable sets. These results were motivated by, and essential to, the solution of long-standing and fundamental questions in analysis, notably the proof of Vitushkin's conjecture by David \cite{david1998unrectictifiable} or the solution to Painlev\'{e}'s problem by Tolsa \cite{tolsa03}. This loose family of theorems and techniques came to be known as \textbf{quantitative rectifiability}. The most well-known theorem pertaining to quantitative rectifiability is the so-called \textbf{Analyst's Travelling Salesman Theorem} (TST), originally proved by Jones in the \textit{Inventiones} paper \cite{jones90}: let $E \subset \R^n$ be compact. If $A \subset \R^n$ ($A$ will be a cube or a ball) and $\diam(A)$ denotes the diameter of $A$, we define $\beta_E(A)$ by setting $\beta_E(A) \diam(A)$ to be equal to the radius of the smallest cylinder containing $E \cap A$. This is a scale invariant measure of how flat, that is, how close to a line, $E$ is in $A$. Jones' TST says that $E$ may be covered by a rectifiable curve $\Gamma$ if and only if
$
\beta(E) := \sum \beta_E(3Q)^2 \ell(Q) <+ \infty,
$
where the sum is over all dyadic cubes and $\ell(Q)$ is the side length of $Q$. Moreover, if $\Gamma$ is the shortest curve containing $E$, we have
\begin{align}\label{e:TST}
 \diam(E) + \beta(E) \approx \mathcal{H}^1(\Gamma).
\end{align}
Jones' result quantifies in a precise manner how quickly a rectifiable set should become flat as we zoom in through the scales. With time, it found many applications (harmonic measure \cite{bishop1994harmonic}, Kleinian groups \cite{bishop1997hausdorff}, analytic capacity \cite{tolsa-book}, Brownian motion \cite{bishop1997dimension}, and graph networks \cite{azzam2012take}).

Jones' results holds for curves (1-dimensional objects) in space. The extension of quantitative results to $d$-rectifiable sets (with $d>1$) has mostly been developed under the \textbf{Ahlfors regularity} assumption, and with the aim of characterising uniformly rectifiable sets. We give some definitions: a set $E$ is \textbf{Ahlfors} $d$-\textbf{regular} (\textbf{$d$-AR}) if there exists a constant $C$ so that 
\begin{align*}
r^d/C \leq \hd(E \cap B(x,r)) \leq C r^d \quad \mbox{ for }  \quad x \in E \quad \mbox{ and } \quad 0< r< \diam(E). 
\end{align*}
(Throughout the article $B(x,r)$ is the open ball centered at $x$ and with radius $r>0$). A set $E$ is called \textbf{uniformly $d$-rectifiable} (or \textbf{UR}) if we can find constants $\theta>0$ and $L\geq 1$ so that for each $x \in E$ and $0<r< \diam(E)$, there exists an $L$-Lipschitz map $f:\R^d \to \R^n$ so that 
\begin{align*}
\hd(E \cap B(x,r)\cap f(B)) \geq \theta r^d,
\end{align*}
for some ball $B \subset \R^d$ also with radius $r$.
This is a quantitative notion of rectifiability - it is also stronger: if a set is UR it is also rectifiable.
Uniformly rectifiable sets were introduced by David and Semmes in \cite{david-semmes91} and \cite{david-semmes93}. While their motivation was in connection with singular integral operators (SIOs), 
uniform rectifiability is now an important notion in geometric measure theory and characterising UR sets in terms of their multiscale behaviour has become an active field of research. Many recent important  breakthroughs in the theory of singular integrals (for example \cite{ntv}) and in harmonic measure (see \cite{azzam2020harmonic}) fundamentally rely on these characterisations.

Currently, there is a \textbf{move toward results about UR sets beyond the Ahlfors regular} \textbf{setting}. Some examples are \cite{david2012reifenberg}, where David and Toro construct Lipschitz parameterisations for Reifenberg flat sets (with holes), or \cite{girela2018riesz}, which was vital in the solution of Bishop's Conjecture on harmonic measure \cite{azzam2016rectifiability}. With J. Azzam, we embarked on a program aimed at developing a quantitative theory of rectifiability for \textbf{lower content $d$-regular sets} (or \textbf{$d$-LCR}), building on a previous work by Azzam and Schul \cite{azzam2018analyst}. A set $E \subset \R^n$ is lower content $d$-regular\footnote{A more precise definition is in \eqref{e:LCR}.} if for each ball $B$ centered on $E$ with radius $r_B>0$ we have
\begin{align*}
\mathcal{H}_\infty^d (E \cap B) \gtrsim r_B^d.
\end{align*}
Roughly speaking this means that it cannot concentrate around objects with dimension smaller than $d$. 
 In the joint work with Azzam \cite{azzam2019quantitative} we show that many of the characterisations for UR sets still have a meaning in this more general setting. Some of the results proved here were later used by Azzam (\cite{azzam2019harmonic}) to give several applications to harmonic measure. 
 
In this paper, building on techniques developed in \cite{azzam2019quantitative}, we give a solution to the problem of `\textbf{higher dimensional Jordan curves}', which we presently describe. Jones' TST has been extended\footnote{See Section \ref{s:lit} for more relevant literature.} to curves lying in many different spaces (e.g. $\R^n$ in \cite{oki92}, Hilbert space in \cite{schul2007}, Heisenberg group in \cite{li2016traveling}). However, an Analyst's TST for \textit{higher dimensional objects} was not available until recently: Azzam and Schul in \cite{azzam2018analyst} proved a TST for lower content $d$-regular sets which however looked like
\begin{align}\label{e:AStst}
  \diam(E)^d + \beta^d(E) \approx \mathcal{H}^d (E) + \mathrm{Error}.
\end{align}
Here $\beta^d(E):=\sum \beta_E(3Q)^2 \ell(Q)^d$ and the $\beta$ coefficients are the correct variation of Jones' original ones (and they measure distance to a $d$-dimensional plane).
In contrast to \eqref{e:TST}, observe the \textbf{appearance of the error term}; note also that there is \textit{no mention of a covering surface}. To prove a TST \textit{\`{a} la} Jones \eqref{e:TST}, one needs to face a fundamental issue: \textbf{what is a `higher dimensional curve'}? (Topological balls will not work, for example). Here, we give an answer to this question by considering sets whose $d$-measure is stable under Lipschitz deformations: for each $x$ in the surface $\Sigma$ and each $0<r< \diam(\Sigma)$, we have\footnote{See Definition \ref{d:TC} for the precise definition.}
\begin{align}\label{e:stable-surfaces}
	\hd(B(x,r)\cap \vp_t(\Sigma)) \gtrsim  r^d,
\end{align}
whenever $\{\vp_t\}_{0\leq t \leq 1}$ is a family of (Lipschitz) deformations homotopic to the identity and so that $\vp_t(y)=y$ if $y \notin B(x,r)$. In English, $\Sigma$ cannot be continuously distorted into a lower dimensional set (just like Jordan curves). Similar topological conditions appeared previously in \cite{david2000uniform} and \cite{david2004hausdorff}, which also inspired parts of our proof.
For this class of sets, which we will call \textbf{topologically stable} $d$-surfaces, we are able to retrieve a TST \textit{a l\'{a}} Jones, \eqref{e:TST}, for lower content regular sets (see Theorems \ref{t:topo-main} and \ref{c:TST-lcr-stable}, Section \ref{s:actual-theorems}, for precise statements).

\begin{theorem}\label{t:main-sloppy}
Let $F \subset \R^n$ be lower content $d$-regular. There exists a topologically stable $d$-surface such that $F \subset \Sigma$ and
\begin{align*}
\diam(F)^d + \beta^d(F) \approx \hd(\Sigma). 
\end{align*}
If $\beta^d(F)< + \infty$ then $F$ and $\Sigma$ are $d$-rectifiable.
\end{theorem}
\noindent
Recently Hyde (\cite{hyde2020TST}) proved that for \textit{any} set $E \subset \R^n$, if there is a $d$ so that $\beta^d(E)< +\infty$ (here, again, the coefficients are the appropriate variant for the problem), then one can construct a lower content $d$-regular set $F$ so that $E \subset F$. Moreover it holds that $\beta(E) \approx \beta(F)$. Hence, a corollary of Theorem \ref{t:main-sloppy} together with Hyde's construction\footnote{This will be more thoroughly explained in Section \ref{s:actual-theorems}.} is a \textbf{full higher dimensional Travelling Salesman Theorem} for general sets. 
\begin{corollary}\label{hyde-villa}
Let $E \subset \R^n$. Then if there exists a $d$ so that $\beta^d(E) < \infty$, there exists $d$-LCR set $F$ and a \textup{(}rectifiable\textup{)} topologically stable $d$-surface $\Sigma$ so that $E \subset F \subset \Sigma$ and
\begin{align*}
 \diam(E)^d + \beta^d(E) \approx \hd(\Sigma).   
\end{align*}
\end{corollary}
\noindent
We conclude that topologically stable surfaces can be conceived as higher dimensional analogues to `Jordan curves' (from a TST perspective).

\subsection{Relevance and more applications}
The reader may understandably wonder whether Theorem \ref{t:main-sloppy} is at all interesting or useful. 
The author thinks it is interesting in the sense that, behind it, there is a natural question: what should a `higher dimensional curve' be? Although as stated such a question is imprecise and open to very many answer, if we add `\textit{from the point of view of the Analyst's TST}', it takes a decidedly more precise meaning. 
\noindent
As for the usefulness of the result, we provide a few more applications (with the hope that more will come). We state the results rather imprecisely, to convey some ideas. We postpone details to Section \ref{s:applications}. 

\begin{itemize}[leftmargin=0.5cm]
    \item We call a set uniformly non $d$-flat if the $d$-dimensional $\beta$-coefficients (as defined in \eqref{e:beta-p-cont}) are large at all scales\footnote{We postopone the precise definition to Section \ref{s:actual-theorems}.}. As first application of our main result, we show that \textit{if $E$ is $d$-stable and uniformly non $d$-flat, then it must have Hausdorff dimension larger than $d$.} This result was proven by David in \cite{david2004hausdorff} under stronger topological assumptions. We provide a quantitative strengthening of David's result (under a weaker topological condition) which gives a precise dependence between  $\beta$-coefficients and dimension. See Theorem \ref{t:non-flat}, Section \ref{s:actual-theorems}. 
    \item In \cite{semmes1995finding}, Semmes stated the following guiding principle to understand the relation between the topology of some set, and its `mass' distribution: \textit{`Suitable topological conditions on a space in combination with upper bounds on the mass often implies serious restrictions on the geometric complexity of the space.'} 
As a second application, we give a result that makes this principle precise: a topologically stable $d$-surface which is also upper regular is uniformly rectifiable. This result is similar to the main result in \cite{david2000uniform}. See Corollary \ref{c:UR-TC}, Section \ref{s:actual-theorems}.
\end{itemize}

\subsection{Sketch of the proof}
We give a brief idea of the proof. As a first step, we apply a construction from \cite{azzam2019quantitative} which gives a \textit{coronisation} of a $d$-stable surface $E$ by Ahlfors $d$-regular sets. One should imagine such a construction as a dyadic approximation of $E$ at certain scales. Next, following ideas coming from the proofs in \cite{david2004hausdorff}, we show that, first, the topological condition transfers from $E$ to the approximating sets. The fundamental observation is this: because the approximating sets are Ahlforse regular and satisfy the topological condition, they turn out to have large intersection with \textit{quasiminimal sets}. By the main result in \cite{david2000uniform}, sets of this type are (locally) \textit{uniformly rectifiable}. This will give certain estimates on the $\beta$-coefficients, which will then be transferred back to $E$, to obtain one direction of Theorem \ref{t:main-sloppy}. The other direction follows from the main result in \cite{azzam2018analyst}.

\subsection{Structure of the paper}
In Section \ref{s:prel} we gather some notation which will be used throughout the paper. In Section \ref{s:actual-theorems} we state precisely the main result (Theorem \ref{t:topo-main}) and the definition of the topological condition (Definition \ref{d:TC}). We give the precise statements of the two applications mentioned  above (Theorem \ref{t:non-flat} and Corollary \ref{c:UR-TC}) together with the relevant definitions in Section \ref{s:applications}.  In
Section \ref{s:rem} we give some remarks on the choice of the topological condition and show that some perhaps better known types of surfaces falls withing this category. In Section \ref{s:first-red} we start the proof in earnest with some preliminary reductions and by applying the corona construction of \cite{azzam2019quantitative}. In Section \ref{s:STC} we define a variant of the topological condition and show that it is `inherited' from $E$ by the approximating sets coming from \cite{azzam2019quantitative}. Section \ref{s:FFproj} introduces a fundamental tool, Federer-Fleming projections. In Section \ref{s:TC-STC} we show that the above mentioned `inheritance' effectively happens. Section \ref{s:E-rho-STC} presents a further family of approximating sets. 
In Sections \ref{s:functional} and \ref{s:quasiminimality} we show that the approximating sets lie close to a uniformly rectifiable set. Sections \ref{s:beta} and \ref{s:end} concludes the proof via some $\beta$-coefficients estimates. In Section \ref{s:sigma-construction} we construct a stable surface, given any lower content regular set. In Section \ref{s:non-flat} we give a proof of the application to uniformly non-flat sets mentioned above. 

\subsection{Acknowledgements}
The bulk of this work was done while I was a PhD student at Edinburgh under Jonas Azzam. I thank him for suggesting the problem, for his help, support and patience. I would also like to thank PCMI/IAS and the organisers of the graduate school of 2018 on Harmonic Analysis: it was here that I learnt many tools used in this paper, as explained by Guy David (whom I also thank for the clear explanations). Some revision was done while a postdoc under Tuomas Orponen, whom I thank for his support. I finally thank Matt Hyde for explaining to me his results. 
\section{Preliminaries}\label{s:prel}

We gather here some notation and some results which will be used later on.
We write $a \lesssim b$ if there exists a constant $C$ such that $a \leq Cb$. By $a \approx b$ we mean that there exists a constant $C >0$ so that $a \lesssim_C b \lesssim_C a$. If the constant $C$ depends on some parameter, say $\ve$, than we will write $a \lesssim_\ve b$ or $a \approx_\ve b$. If we have no interest in keeping track of parameter and or constant, we will simply write $a \lesssim b$ or $a \approx b$. If $A$ is a subset of some set $B$, then $A^c = B \setminus A$.
For sets $A,B \subset \R^n$, we let
\begin{align*}
	\dist(A,B) := \inf_{a\in A, b \in B} |a-b|.
\end{align*}
The diameter of a set $A$ is defined as $\diam(A):= \sup_{x,y \in A} |x-y|$.
For a point $x \in \R^n$ and a subset $A \subset \R^n$, 
$
	\dist(x, A):= \dist(\{x\}, A)= \inf_{a\in A} |x-a|.
$
We write $
	B(x, r) := \{y \in \R^n \, |\,|x-y|<r\}.$
Sometimes we will denote a ball simply by $B$; in this case its radius will be $r(B)$ or $r_B$ and its center $x_B$ or $z_B$. 

\subsection{Measure, content, Choquet integrals}
For a set $A\subset \R^n$, $d \geq 0$ and $\delta>0$, write
\begin{align*}
    \mathcal{H}_\delta^d(A) = \inf \left\{ \sum_{i} \diam(A_i)^d \quad |\quad  A \subset \bigcup_i A_i \quad {\rm and} \quad \diam(A_i) < \delta \right\}.
\end{align*}
The the $d$-dimensional Hausdorff \textit{measure} of $A$ is given by $\hd(A) = \lim_{\delta \to 0} \hd_\delta(A)$. The $d$-dimensional Hausdorff \textit{content}, on the other hand, is given by 
\begin{align} 
    \hd_\infty(A) =  \inf \left\{ \sum_{i} \diam(A_i)^d \quad |\quad  A \subset \bigcup_i A_i \right\}.\label{e:hdc}
\end{align}
For a function $f:\R^n \to [0, \infty)$, a set $A\subset \R^n$ and $0<p<\infty$,  the $p$-Choquet integral with respect to Hausdorff content $\hdc$ is defined as
\begin{align}\label{e:choquet}
    \int_A f(x)^p \, d \hdc(x) := \int_0^\infty \hdc \left( \left\{ x \in A \, |\,  f(x)> t \right\}\right) \, t^{p-1}\, dt.
\end{align}
Choquet integrals satisfy all the standard linearity properties of integral with respect to measures. We will highlight throughout the proofs when certain properties of Choquet integrals are needed. We (will) refer the reader to Section 2.1 of \cite{azzam2018analyst} (`Hausdorff measure and content') for the relevant lemmas. Following \eqref{e:choquet}, we will write
\begin{align*}
    \beta_E^{d,p}(B) = \inf_{L} \left(\int_B \left( \frac{\dist(y, L)}{r(B)}\right)^p \, d \hdc(y) \right)^{\frac{1}{p}}, 
\end{align*}
where $B$ is a ball. We will often write 
\begin{align}\label{e:beta-x-r}
    \beta_{E}^{d,p} (x,r) := \beta_{E}^{d,p} (B(x,r)). 
\end{align}

\subsection{Cubes, stopping times} 
\subsubsection{Dyadic cubes and skeleta}
For $j \in \Z$, we will denote by $\Delta_j$ the family of dyadic cubes with side length $2^{-j}$. We also set 
\begin{align*}
    \Delta := \bigcup_{j \in \Z} \Delta_j. 
\end{align*}
For a set $A \subset \R^n$, we put $\Delta(A):= \left\{ I \in \Delta \, |\, I \cap A \neq \varnothing\right\}.$
For a cube $I \in \Delta$ and an integer $1\leq k< d$, we write
\begin{align} \label{e:skeleta}
    \partial_k I
\end{align}
to denote the $k$-dimensional skeleton\footnote{This is precisely what the reader thinks.} of $I$. We also set
\begin{align} \label{e:Sjd}
    \dS_{j,k} := \bigcup_{I \in \Delta_j} \partial_k I.
\end{align}
Let us remark that for a set $V$, we write $\partial I$ to mean the standard boundary of $V$; so in particular $\partial I = \partial_{n-1} I$.

\subsubsection{Christ-David cubes}
In the (sloppy) statement of Theorem \ref{t:main-sloppy}, the summation was over a family of cubes $\dD$. These \textit{are not} the standard dyadic cubes; however the reader may think of them as dyadic `intrinsic' dyadic cubes of a set $E$. We may refer to these cubes as \textit{Christ-David cubes}, to distinguish them from the dyadic cubes. As it would be cumbersome to use write each time `Christ-David cube', in some sections we will simply call them cubes. However, there will always be a \textit{notational difference}: 
\begin{itemize}[leftmargin=0.5cm]
    \item Dyadic cubes will always be denoted by $I, J$.
    \item On the other hand, we will denote Christ-David cubes with $Q, P, R$. 
\end{itemize}

The following theorem gives the existence Christ-David cubes. A first version of the theorem is by David in \cite{David1988}; subsequent versions appeared in \cite{christ1990b} and \cite{hytonen2012non}. Before stating the theorem, recall that a metric space is  \textit{doubling} if there exists a number $N$ so that any ball can be covered by $N$ balls of half the radius. We will apply Theorem \ref{t:Christ} below with $X=E$, where $E$ is a subset lying in the Euclidean space $\R^n$; in particular $E$ (as metric space) will always be doubling. 

\begin{theorem} \label{t:Christ}
Let $X$ be a doubling metric space. Let $X_{k}$ be a nested sequence of maximal $\lambda^{k}$-nets\footnote{Recall that for a constant $c>0$ a $c$-net $A$ is a set so that $|x-y|>c$ for all $x,y \in A$.} for $X$ where\footnote{The exact value of these constants in unimportant.} $\lambda<1/1000$ and let $c_{5}=1/500$. For each $n\in\bZ$ there is a collection $\dD_{k}$ of `cubes', which are Borel subsets of $X$ such that the following hold.
\begin{enumerate}
\item For every integer $k$, $X=\bigcup_{Q\in \dD_{k}}Q$.
\item If $Q,Q'\in \dD=\bigcup \dD_{k}$ and $Q\cap Q'\neq\emptyset$, then $Q\subseteq Q'$ or $Q'\subseteq Q$.
\item For $Q\in \dD$, let $k(Q)$ be the unique integer so that $Q\in \dD_{k}$ and set $\ell(Q)=5\lambda^{k(Q)}$. Then there is $z_{Q}\in X_{k}$ so that
\begin{equation}\label{e:containment}
B(z_{Q},c_{5}\ell(Q) )\subseteq Q\subseteq B(z_{Q},\ell(Q))
\end{equation}
and $ X_{k}=\{z_{Q}\; |\; Q\in \dD_{k}\}$.
\end{enumerate}
\end{theorem}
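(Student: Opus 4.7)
The plan is a quantitative Bishop--Jones-style argument that combines Theorem~\ref{t:main} with the topological condition to build a Cantor--Frostman-type measure on $E \cap R$; the mass distribution principle then closes the argument. First, from the $\beta$-hypothesis I extract macroscopic ``transverse mass.'' Fix $Q \in \dD(R)$ and let $L_Q$ (almost) realise $\beta_E^{p,d}(C_0 Q)$. Unpacking the Choquet integral in \eqref{e:beta-p-cont} and splitting the $t$-integral at the threshold $t_0 \sim \beta_0^{1/2}$, the hypothesis $\beta_E^{p,d}(C_0 Q)^2 > \beta_0$ together with the universal upper bound $\hdc(E \cap C_0 B_Q) \lesssim \ell(Q)^d$ yields a subset $S_Q \subset E \cap C_0 B_Q$ with $\hdc(S_Q) \gtrsim \beta_0^{p/2}\,\ell(Q)^d$ and $\dist(S_Q, L_Q) \gtrsim \beta_0^{1/2}\,\ell(Q)$.

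Next, I convert this transverse mass into an enhanced descendant count at a carefully chosen scale. Pick a generation offset $J \sim \log(1/\beta_0)/\log(1/\lambda)$ so that $\lambda^J\ell(Q) \le c\,\beta_0^{1/2}\ell(Q)$. At generation $k(Q)+J$, any Christ--David descendant of $Q$ meeting $S_Q$ is disjoint from the slab of width $\beta_0^{1/2}\ell(Q)$ about $L_Q$. Lower $d$-content regularity (from \eqref{e:TC} applied with $\vp_t=\id$) forces $\gtrsim \lambda^{-Jd}$ descendants to cover the near-$L_Q$ slab, while $S_Q$ requires $\gtrsim \beta_0^{p/2}\lambda^{-Jd}$ further descendants disjoint from the first family in order to cover its content. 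Theorem~\ref{t:main}, applied inductively to the descendants of $Q$, is what prevents the transverse mass from collapsing into a low-dimensional spike at intermediate scales, so that the content of $S_Q$ really is spread across these extra cubes. Writing $N_J(Q)$ for the number of generation-$(k(Q)+J)$ descendants of $Q$ that meet $E$, one obtains $N_J(Q) \ge (1+c\beta_0^{p/2})\lambda^{-Jd}$.

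Finally, build $\mu$ inductively: set $\mu(R)=1$ and at each step distribute the mass of an active cube $Q$ uniformly among its $\ge N_J(Q)$ generation-$J$ descendants meeting $E$. After $N$ iterations, a generation-$NJ$ descendant $Q^\ast$ has $\mu(Q^\ast) \le (1+c\beta_0^{p/2})^{-N}\lambda^{NJd}$ while $\ell(Q^\ast) = \lambda^{NJ}\ell(R)$, so $\mu(B(x,r)) \lesssim r^s$ with
\[ s \;\ge\; d + \frac{\log(1+c\beta_0^{p/2})}{J\log(1/\lambda)}. \]
Optimising $J$ and taking $p$ close to $p(d)$ (as permitted by Corollary~\ref{cor:main}) yields $s \ge d + c\beta_0^2$, and the mass distribution principle then delivers $\dim R > d + c\beta_0^2$.

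The main obstacle is the second step: one has to guarantee \emph{honest} disjointness between the flat (near-$L_Q$) and transverse descendants, which is precisely what forces the shift $J \sim \log(1/\beta_0)$ rather than a single generation and makes the bookkeeping delicate. The TC is essential here, supplying lower $d$-content regularity at every intermediate scale and thereby preventing the transverse piece from collapsing into a low-dimensional spike that would require no new cubes. Matching the precise exponent $\beta_0^2$ in the conclusion may further require a scale-averaged Chebyshev refinement of the above counting, to absorb the logarithmic factor coming from $J$.
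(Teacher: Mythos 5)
Your proposal does not address the statement at hand. The statement is Theorem \ref{t:Christ}, the construction of Christ--David ``dyadic cubes'' on a doubling metric space: given a nested sequence of maximal $\lambda^{k}$-nets $X_{k}$, one must produce, for each generation $k$, a collection $\dD_{k}$ of Borel sets that partition $X$, are nested across generations, and satisfy the two-sided ball containment \eqref{e:containment} around the net points $\zeta_{Q}$. What you have written instead is a sketch of a Bishop--Jones-type dimension lower bound for uniformly non-flat topologically stable surfaces (essentially Theorem \ref{t:high-dim}): you extract transverse mass from the hypothesis $\beta_E^{p,d}(C_0Q)^2>\beta_0$, count descendants, and build a Frostman measure. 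None of that has any bearing on Theorem \ref{t:Christ}, which is a purely metric-space construction making no reference to $\beta$ numbers, the topological condition \eqref{e:TC}, lower content regularity, or Hausdorff dimension; indeed Theorem \ref{t:Christ} is an ingredient \emph{used} in the results you are invoking (Theorem \ref{t:main}, Corollary \ref{cor:main}), so your argument is also circular as a matter of logical order.

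A proof of Theorem \ref{t:Christ} runs along entirely different lines (it is the construction of David, Christ, and Hyt\"onen--Martikainen, which this paper cites rather than reproves): one fixes the nested maximal nets $X_{k}$, assigns to each net point of generation $k+1$ a unique ``parent'' net point of generation $k$ within distance comparable to $\lambda^{k}$, defines preliminary cubes as the unions of all descendants of a given net point, and then verifies, using maximality of the nets, the separation of net points, and the geometric decay $\lambda<1/1000$, that the resulting Borel sets can be made disjoint within each generation, cover $X$, are nested, and contain and are contained in balls of radii $c_{5}\ell(Q)$ and $\ell(Q)$ about $\zeta_{Q}$. Your proposal contains no parent assignment, no treatment of the partition and nesting properties, and no verification of \eqref{e:containment}, so there is a complete gap: the theorem you set out to prove is simply not the one proved (or even touched) by your argument.
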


We now set some notation. For $Q \in \dD$, we denote by $B_Q$ the ball $B(z_Q, \ell(Q))$. Also, some jargon: for a cube $Q \in \dD$, we denote $\dD(Q)$ the family of cubes in $\dD$ which are also subsets of $Q$. If $Q \in \dD_k$, then the cubes $Q' \in \dD_{k+1}(Q)$ are called the \textit{children} of $Q$, while $Q$ is the (unique) \textit{parent} cube of any $Q' \in \dD_{k+1}(Q)$, or the \textit{ancestor} of any $Q' \in \dD(Q)$. For a Christ-David cube $Q$, the collection of its children is denoted by $\Child(Q)$. Two cubes having the same parent are surprisingly called \textit{siblings}.

\subsubsection{Stopping times}
Later on we will need to partition $\dD$ into subfamilies of cubes. These subfamilies will be often called \textit{stopping-time} regions and are defined as follows. 
\begin{definition}\label{d:stopping}
A collection of Christ-David cubes $\Ss \subseteq \dD$ is a {\it stopping-time region} or {\it tree} if the following hold:
\begin{enumerate}
\item There is a cube $Q(\Ss)\in \Ss$ that contains every cube in $\Ss$.
\item If $Q\in \Ss$, $R\in \Ss$, and $Q\subseteq R\subseteq Q(\Ss)$, then $R\in \Ss$.
 \item If $Q\in \Ss$ and there is $Q'\in \Child(Q)\backslash \Ss$, then $\Child(Q)\subset \Ss^{c}$.
\end{enumerate}
\end{definition}

\begin{definition}\label{d:maxfam}
Given a family $\cF$ of cubes (either dyadic or Christ-David), the subcollection of \textit{maximal cubes} from $\cF$ is the subset of $\cF$ composed of cubes which have the largest sidelength and such that any other cube in $\cF$ is contained in one of the cubes in this subcollection. 
\end{definition}

\subsection{Constants}
We collect here all the various different constants that will be used throghout the paper. 
\begin{enumerate}[leftmargin=0.5cm]
    \item \label{i:dim} $n, d$: the dimension of the ambient space and of the set under consideration, respectively.
    \item $\cc_1$: the lower content regularity constant. It first appears in \ref{e:LCR}. \label{i:lcr}
    \item $\aA$: it determines how much we are inflating the ball where we are measuring the $\beta$ number. It first appears in Theorem \ref{t:AS-real}. \label{i:Aa} 
    \item $\Cc_0$: it determines the expansion of the ball where we are measuring the BWGL. It first appears in Theorem \ref{t:AS-real}. \label{i:Cc0}
    \item $\epsilon$: tolerance parameter in BWGL (see \eqref{e:BWGL-1}), and in the definition of Reifenberg-flatness. 
    \item $\cc_2$ and $\cc_3$: constants in the definition of the modified content, first appearing in \eqref{e:mod-cont1}.
    \item $c_1$: constant appearing in the definition of Semmes surfaces, see Definition \ref{def:semmes-surface}.
    \item $C_1$: expansion factor of top cubes in Lemma \ref{l:corona}.
    \item $\tau$: smoothing parameter in Lemma \ref{l:corona}.
    \item $k_0$: generation parameter in Lemma \ref{l:corona}.
    \item $M$: constant for the stopping time in the construction of Lemma \ref{l:corona}.
    \item $\lambda$: nets parameter in Theorem \ref{t:Christ}.
    \item $c_5$: containment parameter in Theorem \ref{t:Christ}.
    \item $r_0, \alpha_0, \eta_0, \delta_0, \gamma_0$: parameters of the topological condition \eqref{e:TC}.
    \item $r_1, \alpha_1, \eta_1, \delta_1$: parameters for the skeletal topological condition (see \eqref{e:TND-constants}).
    \item $C_2$: constant of the skeletal topological condition. See Definition \ref{d:TND}.
    \item $\Cc_3$: Ahlfors regularity constant of the approximating set $E_R$ (and of $E_\rho)$. See the paragraph above Lemma \ref{l:TC-STC-Erho}.
    \item $\rho$: scale parameter of the approximating set $E_\rho$. See \eqref{e:rho}.
    \item $\sigma$: scale parameter for the construction of the domain of the fucntional $J$. See \eqref{e:sigma}.
    \item $M$: large constant in the functional $J$ (not the same $M$ as above!). See \ref{e:M}. 
    \item $c_2$: small constant in the definition of $M$ (see \eqref{e:M}).
    \item $k$: quasiminimality: Hausdorff measure constant. See \eqref{e:QMconst} and \eqref{e:QMmain}.
    \item $\delta$: quasiminimality: locality constant. See \eqref{e:QMconst} and the properties of the Lipschitz deformation $\phi$ \eqref{e:QM-W}-\eqref{e:QM-homo}.
    \item $c_3$: small constant in the choice of $\delta$. See \eqref{e:delta-choice}.
    \item $C_4$: inflation constant for the $\beta$ numbers on $Z_Q$.
\end{enumerate}

\section{Actual theorems}\label{s:actual-theorems}
In this section we state our main results and those of Azzam and Schul and of Hyde in a precise manner. Note that all the statements below are \textit{local}, that is, they concern an arbitrary Christ-David cube of the relevant set.

\subsection{Azzam and Schul's TST}
\subsubsection{Issues in higher dimensions}
It has been mentioned in the introduction that it took quite a while to arrive at a higher dimensional version of the TST.  There are two reasons for this.

\begin{enumerate}[leftmargin=.5cm]
    \item The first is that which motivates the current paper: what to use as a covering surface in higher dimensions?  One could legitimately think about, for example, topological spheres; see Figure \ref{f:octopus} for why this would not be a good candidate.
    \item The second issue is technical in nature: in any situation where one is dealing with sets of dimension larger than one, Jones' $\beta_{E, \infty}$ coefficients (as defined in the Introduction) become rather useless: in his PhD thesis, X. Fang constructed a codimension one Lipschitz graph $G$ in $\R^3$ with $\beta_{\infty}^2(G)=\infty$, see \cite{fang1990} or Example 1.16 in \cite{azzam2018analyst}.
    A further version of Jones' coefficients was introduced by David and Semmes in \cite{david-semmes91}. We will define them later on; for the moment it will suffice to say this: David and Semmes' coefficients make sense for `higher dimensional' sets, as long as such sets are \textit{a priori} \textit{Ahlfors regular}. Ahlfors regularity is a size condition\footnote{It is not a regularity condition: the four corner Cantor set is Ahlfors $1$-regular and purely unrectifiable.}  which impose a certain relationship between the diameter of a portion of the set and its mass.
 However, Jones's theorem does not assume this; indeed, $E$ need not to have \textit{a priori} finite $d$-dimensional measure, let alone be Ahlfors regular. Thus, a new type of coefficient measuring flatness is needed.
 \end{enumerate}
 
 \begin{figure}[hbt!]\label{f:octopus}
 \centering
 \includegraphics[scale=0.5]{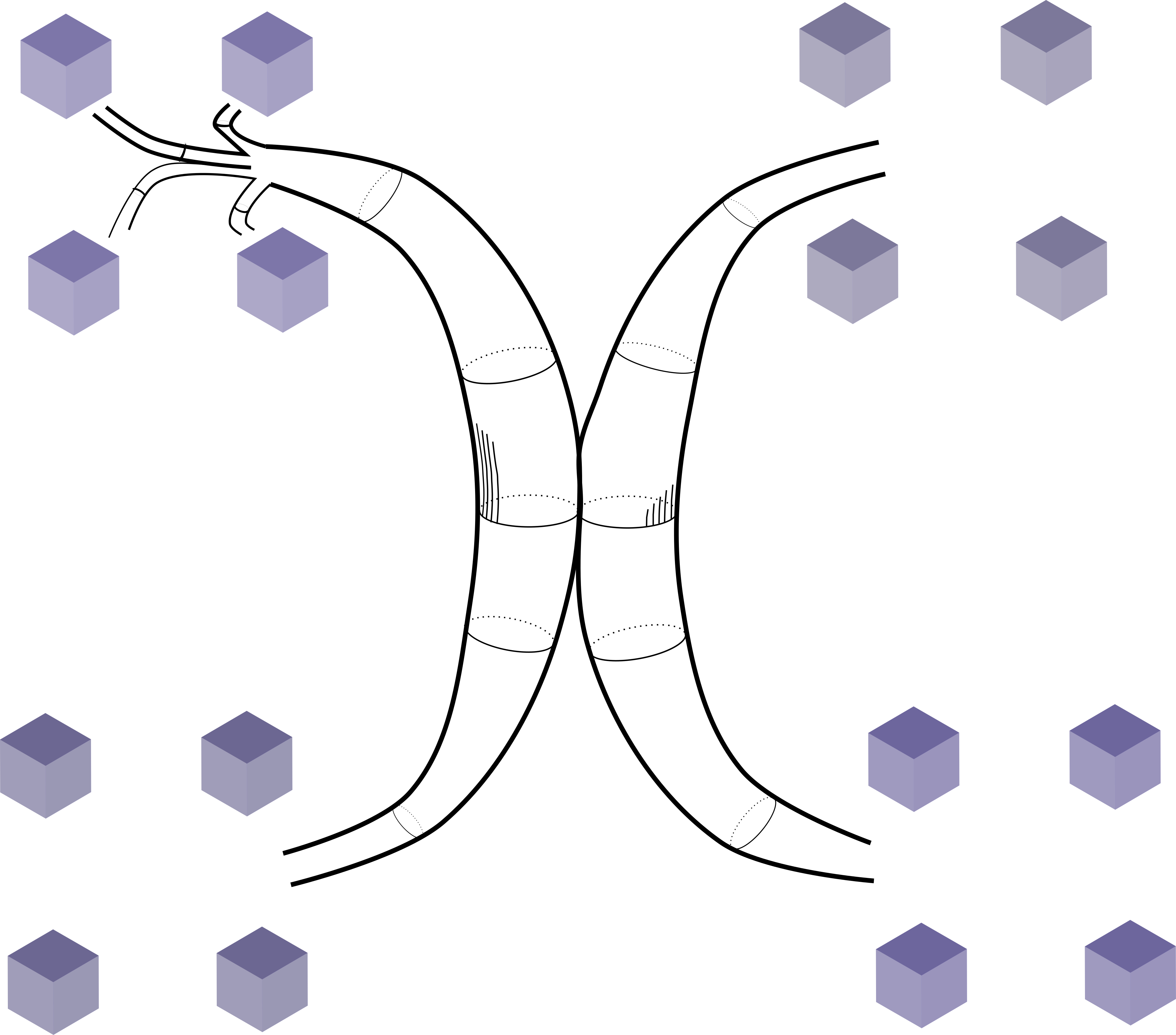}
 \caption{Given the $2$-dimensional  8-corner Cantor set in $\R^3$, one can construct a $2$-dimensional surface with finite measure, so that the closure of this surface will contain the Cantor set and will be homeomorphic to the $2$-sphere.}
\end{figure}

\noindent
 Notwithstanding these difficulties Azzam and Schul proved in \cite{azzam2018analyst} a version of Jones' theorem for sets of dimension larger than one in Euclidean space. 
\begin{enumerate}[leftmargin=0.5cm]
    \item To deal with the first difficulty,
Azzam and Schul decided to focus on obtaining a quantitative result of type \eqref{e:TST} directly for a set $E$ lying in $\R^n$, without trying to find a `covering object'. To be able to make some progress, they imposed on $E$ a size condition called \textit{lower content regularity}. 
\begin{definition}
We say that a set $E \subset \R^n$ is \textit{lower content} $d$-\textit{regular} with constant $\cc_1 \leq1$, or lower content $(d, \cc_1)$-regular, if 
\begin{align} \label{e:LCR}
    \hd_\infty(B(x,r) \cap E) \geq \cc_1 r^d
\end{align}
for all $(x, r) \in E \times (0, \diam(E))$.
\end{definition}
\noindent
The symbol $\hdc$ represents Hausdorff content, which has been defined in \eqref{e:hdc}.
Roughly speaking, lower content regularity prevents a set $E$ to concentrate around objects of dimension smaller than $d$. It is a natural assumption to impose on $E$, and it appears, in a form or another, in works on harmonic measure for example (see \cite{akman2019absolute}, \cite{akman2019rectifiability}, \cite{azzam2019harmonic})\footnote{Lack of lower bounds on the mass of a set (whether measured with a measure or a content) is highly problematic, as it becomes hard to say anything about its geometry.}. 

\item To deal with the second difficulty, Azzam and Schul introduced the following variant of the $L^p$-type $\beta$ coefficients\footnote{The $L^p$ $\beta$-coefficients are the ones that David and Semmes came up with.}. For a set\footnote{This set will usually be a ball or a cube. In this case $\diam(B)$ will be substituted by the radius of the ball or the side length of the cube.} $B \subset \R^n$
\begin{align}
    \beta_E^{d,p}(B) = \inf_{L} \left(\int_B \left( \frac{\dist(y, L)}{r(B)}\right)^p \, d \hdc(y) \right)^{\frac{1}{p}}, 
\end{align}

\begin{align} \label{e:beta-p-cont}
\beta_E^{d,p} (B) := \inf_L \left( \frac{1}{r^d} \int_0^1 \hdc (\{y \in B\cap E \,|\, \dist(y,L)> t \, \diam(B)\}) t^{p-1} \, dt \right)^{\frac{1}{p}},
\end{align}
where the infimum is taken over all affine $d$-planes $L$ in $\R^n$. The integral on the right hand side of \eqref{e:beta-p-cont} is a Choquet integral. Azzam and Schul chose $\hdc$ to define their $\beta$ coefficients because $\hdc$ \textit{does not blowup}. Indeed, for any set $E$, we always have that $\hdc(E)\lesssim \diam(E)^d$, even when the Hausdorff dimension of $E$ is larger than $d$. We refer the reader to Examples 1.14, 1.15 and 1.16 in \cite{azzam2018analyst} for some examples which motivate their choice of $\beta$-coefficients.
\end{enumerate}

\subsubsection{What Azzam and Schul proved}
We need to introduce a little more notation. Given  two closed sets $E$ and $F$, and $B$ a set we denote
\begin{align}\label{e:loc-haus}
d_{B}(E,F)=\frac{2}{\diam B}\max\left\{\sup_{y\in E\cap B}\dist(y,F), \sup_{y\in F\cap  B}\dist(y,E)\right\}
\end{align}
Recall that, by Theorem \ref{t:Christ}, for each Christ-David cube $Q\in \dD$, there is a ball $B_{Q}$ centered on, containing, and of comparable size to, $Q$. For $\Cc_0>0$, and $\ve>0$, let
\begin{align}\label{e:BWGL-1}
{\mathsf{ BWGL}}(\Cc_0,\ve)=\{Q\in \dD| \;\; d_{\Cc_0\, B_{Q}}(E,P)\geq \ve \;\; \mbox{ for all $d$-planes } P\}. 
\end{align}
We can now state the result from \cite{azzam2018analyst} (Theorems I and II there). This is the estimate which appeared in the Introduction as \eqref{e:AStst}. To be sure, our phrasing is slightly different to the original one, but the interested reader can find a justification of this reformulation in the Appendix of \cite{azzam2019quantitative}.
\begin{theorem}[\cite{azzam2018analyst}, Theorems I and II]\label{t:AS-real}
Let $1\leq d<n$ and $E\subseteq \bR^{n}$ be a closed set. Suppose that $E$ is lower content $d$-regular with constant $\cc_1$ and let $\dD$ denote the Christ-David cubes for $E$.  Let $\Cc_0>1$. Then there is $\ve>0$ small enough so that the following holds. 
Let\footnote{The range of $p$ is always the same for all subsequent theorems. This is the choice of $p$  which we alluded to in the introduction.} $1\leq p<p(d)$ where
\begin{equation}
\label{e:pd}
p(d):= \left\{ \begin{array}{cl} \frac{2d}{d-2} & \mbox{if } d>2 \\
 \infty & \mbox{if } d\leq 2\end{array}\right. . 
 \end{equation}
For $R\in \dD$, let
    \begin{align}\label{e:BWGL}
  {\rm BWGL}(R):=  {\rm BWGL}(R,\ve,\Cc_0)=\sum_{ Q\in {\mathsf{BWGL}}(\ve,\Cc_0) \atop Q\subseteq R } \ell(Q)^{d},
      \end{align}
    and
    \begin{align} \label{e:jones-function-2}
    \textup{\jones}_{E,d, p, \aA}(R) := \sum_{Q \in \dD(R)} \beta_{E}^{d,p}(\aA \, B_Q)^{2}\ell(Q)^{d}.
    \end{align}
    Then for $R\in \dD$,
\begin{equation}
\label{e:tst}
\hd(R)+  {\rm BWGL}(R,\ve,\Cc_0)
 \approx_{\aA,n,\cc_1,p, \Cc_0,\ve} \textup{\jones}_{E,\aA,p}(R) + \ell(R)^d.
 \end{equation}
\label{t:AS}
\end{theorem}
\noindent
Let us recall that the coefficients $\beta_E^{p,d}$ are those defined in \eqref{e:beta-p-cont}. These coefficients enjoy some standard\footnote{Standard in the sense that also the coefficients introduced by Jones and David and Semmes enjoy these properties. It is rather important for the content $\beta$'s to satisfy these properties - this allowed Azzam and Schul to use familiar techniques from the theory of uniformly rectifiable sets.} properties such as a sort of monotonicity with respect to balls' radii and an  `ability' to jump between two close-by sets without changing their value too dramatically. We will highlight when we use these properties and cite the relevant lemmas from \cite{azzam2018analyst}. 

\begin{remark}\label{rem:BWGL}
The presence of $\rm BWGL(E)$ in \eqref{e:tst} is natural: in Jones's theorem the right-hand-side only contains the length of the covering curve; now, a curve has no holes\footnote{Sherlock, personal communication.}. However, $E$ may very well be quite broken (even while being lower content regular). Thus, if we imagine our set $E$ being covered by `a higher dimensional curve' $\Gamma$, we would have $\hd(\Gamma) \approx \hd(E) + \text{BWGL}(E)$, where $\text{BWGL}(E) \approx \hd( \Gamma \setminus E)$ (see Figure \ref{f:bwgl}).

\begin{figure}[h]
\begin{minipage}[l]{0.45\linewidth}
        \centering
        \includegraphics[scale=0.4]{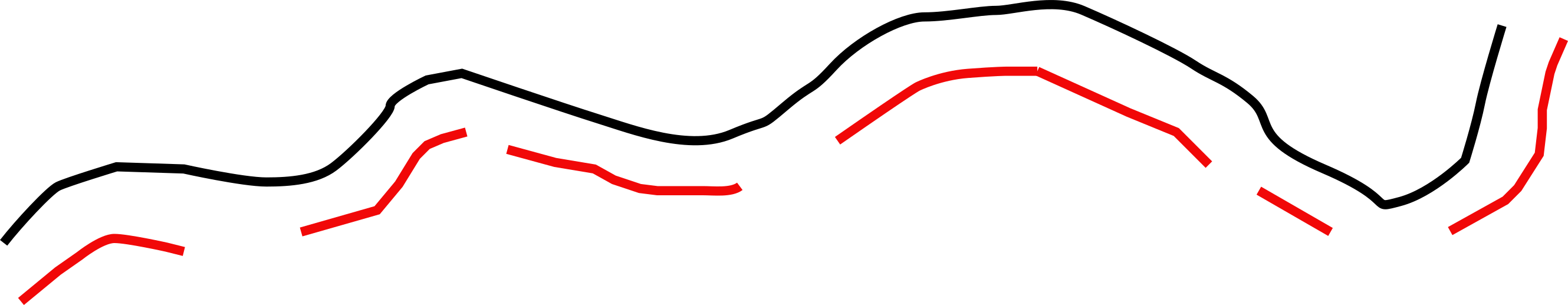}
\end{minipage}
\begin{minipage}[l]{0.45\linewidth}
\centering 
\includegraphics[scale=0.4]{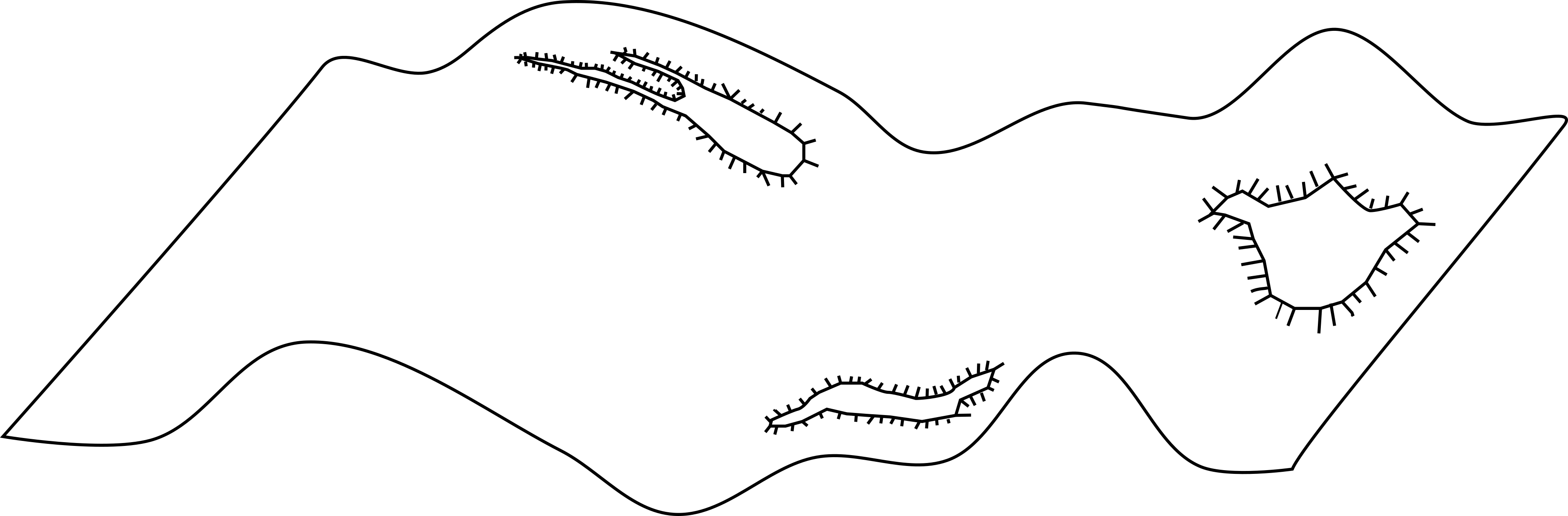}
\end{minipage}
\caption{A covering curve picks up the holes in the set (left). In the absence of a `higher dimensional curve', the term BWGL accounts for the holes (right).} \label{f:bwgl}
\end{figure}
\end{remark}

\subsection{Stable $d$-surfaces} \label{ss:TC}\

\noindent
We now define precisely the topological condition mentioned in \eqref{e:stable-surfaces}. Our definition is a weaker version of that of David in \cite{david2004hausdorff}.
Let $E$ be a \textit{closed} subset of $\R^n$. 
\begin{definition}[Allowed Lipschitz deformations with parameter $\alpha_0$] \label{d:ALD} Fix a constant $0< \alpha_0 <1$.
Consider a one parameter family of Lipschitz maps $\{\varphi_t\}$, $0 \leq t \leq 1$, defined on $\R^n$. For $x \in E$ and $r>0$, we say that $\{\vp_t\}_{0\leq t \leq 1}$ is an \textit{allowed Lipschitz deformation} with parameter $\alpha_0$, or an $\alpha_0$-ALD, if it satisfies the following four conditions: 
\begin{align}
    & \vp_t(\overline{B}(x,r) )\subset \overline{B}(x,r) \mbox{ for each } t \in [0,1]; \label{e:vp1}\\
    & \mbox{ for each } y \in \R^n,\, t \mapsto \vp_t(y) \mbox{ is a continuous function on } [0,1]; \label{e:vp2}\\
    & \vp_0 (y) = y \mbox{ for all } y \in \R^n \mbox{ and } \vp_{t}(y) = y \mbox{ for } t \in [0,1] \mbox{ whenever }  y \in \R^n \setminus B(x,r); \label{e:vp3}\\
    & \dist(\vp_t(y), E) \leq \alpha_0 r \mbox{ for } t \in [0,1] \mbox{ and } y \in E \cap B(x,r). \label{e:vp4}
\end{align}
\end{definition}
\noindent
Thus if we perturb a set $E$ in a ball $B(x,r)$ with an $\alpha_0$-ALD, what we do is effectively to move in a thin tube around $E \cap B(x,r)$. 
The topological condition that we impose on $E$ is the following. 
\begin{definition}[Stable $d$-surface] \label{d:TC}
Fix five parameters:
\begin{align}
    & r_0, \mbox{ the scale parameter;} \label{e:r_0}\\
    & \alpha_0, \mbox{ the distance parameter;} \label{e:alpha_0}\\
    & \delta_0, \mbox{ the lower regularity parameter; } \label{e:delta_0} \\
    & \eta_0, \mbox{ the boundary parameter;} \label{e:eta_0}\\
    & \gamma_0, \mbox{ the `other ball' parameter.} \label{e:gamma_0}
\end{align}
We say that a \textit{closed} set $E$ is a \textit{\textup{(}topologically\textup{)} stable $d$-surface with parameters} $r_0, \alpha_0, \delta_0$, $\eta_0$ \textit{and}  $\gamma_0$, if for each $x_0 \in E$ and $0<r<r_0$ we can find a ball $B$ such that\footnote{Recall that for a ball $B$ we denote its center by $x_B$ and its radius by $r_B$.}
\begin{center}
\begin{gather}
     B \mbox{ is centered on } E;\label{e:Bcent}\\
     B \subset B(x_0,r); \label{e:Bin}\\
     \gamma_0 r \leq r(B) \leq r;\label{e:r_B}\\
    \hd\ps{B(x_B, (1-\eta_0)r(B)) \cap \vp_1(E) } \geq \delta_0 r^d, \label{e:TC} \tag{TC}
\end{gather}
  \end{center}
whenever $\vp_t$ is an $\alpha_0$-ALD relative to $(x_B,r(B))$. 
We will refer to the above condition on $E$ as the `\textit{topological condition'}, or TC for short. 
\end{definition}

\noindent
This definition looks rather technical. However there are plenty of stable surface.
An obvious example is that of a curve. Also a square in the plane is a stable $1$-surface, simply because the left hand side of \eqref{e:TC} is always infinite. We will present more interesting examples in Section \ref{s:rem}, namely sets satisfying the so called \textit{Condition B} and \textit{Reifenberg flat} sets. On the other hand, the surface depicted in Figure \ref{f:octopus} (the many-tentacled octopus) is not a stable $2$-surface: at any scale one can shrink the diameter of the tubes as much as one wants (note that there is no uniform bound on the Lipschitz constant of an ALD). By doing so, one can make the Hausdorff $2$-measure of the surface of the tubes arbitrarily small, hence breaking \eqref{e:TC}\footnote{
This example suggests that a stable $d$-surface should really look $d$-dimensional: the many-tentacled octopus in Figure \ref{f:octopus} is effectively made of tubes, which are, in a way, $2$-dimensional approximations of ($1$-dimensional) pieces of curves. The topological condition excludes such instances by requiring the set to be spread out (in a ball) like a $d$-plane. }


\subsection{Statement of the main result}
We now state Theorem \ref{t:main-sloppy} as the following (more precise) statement.
\begin{theorem}\label{t:topo-main}
Assume that $1 \leq d < n$. Let $E \subset \R^n$ be a stable $d$-surface with parameters $r_0, \alpha_0, \delta_0$ and $\eta_0$ and let $\dD$ be its Christ-David cubes. Let $Q_0 \in \dD$ be an arbitrary cube satisfying $\ell(Q_0) \leq r_0$ and let $\aA$ be a sufficiently large positive constant. Take $1 \leq p < p(d)$ with 
\begin{align*}
p(d):= \left\{ \begin{array}{cl} \frac{2d}{d-2} & \mbox{if } d>2 \\
 \infty & \mbox{if } d\leq 2\end{array}\right. 
 \end{align*}
 Then it holds that
\begin{align}\label{e:main}
    {\rm diameter}(Q_0)^d + {\rm BETA }(Q_0) \approx \hd(Q_0),
\end{align}
where 
\begin{align}\label{e:BETA}
    {\rm BETA}(Q_0) = {\rm BETA}_{E, \aA, p, d} (Q_0): = \sum_{Q \in \dD(Q_0)} \beta_E^{d,p} (\aA B_Q)^2 \ell(Q)^d,
\end{align}
and where the constant behind the symbol $\approx$ depends on the parameters of $\eqref{e:TC}$ \textup{(}$\alpha_0, \delta_0, r_0, \eta_0$, $\gamma_0$\textup{)}, on $\aA$, on $p$, and on the dimensional parameters \textup{(}$d$, $n$\textup{)}.
\end{theorem}

\noindent
The assumption that $\ell(Q_0) \leq r_0$ is a natural one and cannot be avoided. Assuming the topological condition from a certain scale, i.e. from $r_0$, means that at larger scale there could be holes. This would make the term $\text{BWGL}(Q_0)$ come back. 
The assumption that $E$ is closed is relevant, but not restrictive; in fact we have that $\hd(E) = \hd(\overline{E})$. 
A corollary of Theorem \ref{t:topo-main} is the following. 
\begin{corollary}\label{c:TST-lcr-stable}
Fix $\ve>0$ sufficienlty small and $\Cc_0>1$. Let $E \subset \R^n$ be a lower content $(d, \cc_1)$-regular set (see \eqref{e:LCR}). Let $Q_0 \in \dD$. There exists a set $\Sigma= \Sigma(Q_0, \ve, \Cc_0)$ such that
\begin{enumerate}[leftmargin=1cm, label=\textup{(}\textup{\arabic*}\textup{)}] 
    \item $Q_0 \subset \Sigma$. 
    \item $\Sigma$ is a stable  $d$-surface with parameters  $r_0= \ell(Q_0)$,  $\eta_0, \gamma_0, \alpha_0$   sufficiently small with respect to $\ve>0$  and $\delta_0$ sufficiently small depending on $\eta_0, \alpha_0, \gamma_0$ and $d$.
    \item We have the estimate 
    \begin{align}
        {\rm diameter}(Q_0)^d + {\rm BETA}_{E, d, p, \aA}(Q_0)  \approx \hd(\Sigma),
    \end{align}
    where ${\rm BETA}_{E, d, p, \aA}(Q_0)$ is as in \eqref{e:BETA} and the constant behind the symbol $\approx$ depend on $n,d, \cc_1, \ve, \Cc_0, \aA$ and $(\alpha_0, \eta_0, \gamma_0, \delta_0, \gamma_0)$. 
\end{enumerate}
\end{corollary}
\noindent The corollary above gives a TST for lower content regular sets, and uses as `curves' stable $d$-surfaces. Glaringly, one would like to drop the lower content regularity assumption. The following construction, due to Hyde, does precisely this. 

\subsection{Hyde's construction}
In the recent paper \cite{hyde2020TST}, Hyde shows that for any set $F \subset \R^n$, one can construct a lower content $d$-regular set $E$ with some parameter $\cc_1$ such that $F \subset E$ and, importantly, one has comparability between the ${\rm BETA}$ term of $F$ and that $E$ (as was mentioned in the introduction). The removal of the lower content regularity assumption presents an important issue here - we spell it out for clarity: how to pick up the geometry of a set which has potentially very little mass? The ($d$-dimensional) $\beta$ coefficients of Azzam and Schul\footnote{This is the tool through which we `picked up' the geometry in the previous theorems.} (see \eqref{e:beta-p-cont}) will always return a small number if computed on a set with small $d$-dimensional content, even if the set is extremely non-flat\footnote{In this case, being flat means lying close to a $d$-dimensional affine plane.}. 
It is clear then that one needs to devise a different `geometry detector' to say something useful about sets which have no lower bounds on their mass. Hyde's idea is simple yet ingenious: he defines a different content\footnote{As opposed to the Hausdorff one.}, denoted by ${\wt \hdc}$, with respect to which \textit{any set  $E$} \textit{becomes lower content $d$-regular}. Then he proceeds to define a $\beta$ coefficient via Choquet integration against $\wt \hdc$ - these $\beta$ coefficient will be able to detect non-flatness even when a set has very little (Hausdorff-type) mass. Let us be more precise (see Definition 2.9 in \cite{hyde2020TST}).
Fix two constants $\cc_2$, which should be thought of as small, and $\cc_3$, which should be thought of as large. For a set $E \subset \R^n$ and a ball $B$, we say that a collection of balls $\dB$ is \textit{good} if for all $B' \in \dB$, $x \in E \cap B$ and $0< r< r(B)$, we have
\begin{align}\label{e:mod-cont1}
    & \sum_{\substack{B' \in \dB\\ B' \cap B(x,r) \cap B \cap E \neq \varnothing}} r(B')^d \geq \cc_2 r^d \quad  \mbox{ and } \quad
    \sum_{\substack{B' \in \dB\\ B' \cap B(x,r) \cap B \cap E \neq \varnothing \\ r(B') \leq r}} r(B')^d \leq \cc_3 r^d.
\end{align}
Then the above mentioned content of a set $A \subset E \cap B$ is given by
\begin{align*}
    {\wt \hdc} (A) = {\wt \hdc}^E_B (A) := \inf \left\{ \sum_{\substack{ B' \in \dB \\ B' \cap A \neq \varnothing}} r(B')^d \quad | \quad \dB \mbox{ is good for } E \cap B \right\}. 
\end{align*}
\noindent We refer the reader to \cite{hyde2020TST}, Section 2.4 for details on and properties of this modified content.
The $\beta$ coefficients are then defined as in \eqref{e:beta-p-cont} (see \cite{hyde2020TST}, Definition 2.20\footnote{In \cite{hyde2020TST}, the notation is somewhat different: Azzam and Schul's coefficients are denoted by $\check{\beta}$, while the coefficients in \eqref{e:H-beta} are simply denoted by $\beta$.}): for $1\leq p < \infty$, $E \subset \R^n$, $B$ a ball centered on $E$ and $L$ a $d$-dimensional affine plane, set
\begin{align}
    {\wt \beta}_E^{p,d}(B)&  :=\inf_{L} \left( \frac{1}{r(B)^d} \int_{E \cap B} \left( \frac{\dist(y, L)}{r(B)}\right)^p \, d {\wt \hdc} (x) \right)^{\frac{1}{p}} \nonumber\\
    & =\inf_{L}\left( \frac{1}{r(B)^d} \int_0^1 {\wt \hdc} (\{y \in B\cap E \,|\, \dist(y,L)> t r(B)\}) t^{p-1} \, dt \right)^{\frac{1}{p}}. \label{e:H-beta}
\end{align}
We now state the main results in \cite{hyde2020TST}.
\begin{theorem}[{\cite{hyde2020TST}, Theorem 1.13}]\label{t:TST-H1}
Let $1 \leq d < n$, $\aA > 1$ and $1< p < p(d)$ \textup{(}$p(d)$ as in \eqref{e:pd}\textup{)}. There exists a constant $\cc_4$ such that the following holds. Suppose $E\subset F \subset \R^n$, where $F$ is lower content $d$-regular with some constant $c \geq \cc_4$. Let $\dD(E)$ and $\dD(F)$ be the Christ-David cubes for $E$ and $F$, respectively. Let $Q_0^E \in \dD(E)$ and let $Q_0^F$ be the cube in $\dD(F)$ with the same center and side length as $Q_0^E$. Then 
\begin{align} \label{e:TST-H1}
    \diam(Q_0^E)^d + \sum_{Q \in \dD(Q_0^E)} {\wt \beta}_{E}^{d,p}(\aA B_Q)^2 \ell(Q)^d \lesssim_{\aA, c, n, p} \diam(Q_0^F)^d + \sum_{Q \in \dD(Q_0^F)} {\wt \beta}_F^{d,p}(\aA B_Q)^2 \ell(Q)^d. 
\end{align}
\end{theorem}
\begin{theorem}[{\cite{hyde2020TST}, Theorem 1.14}]\label{t:TST-H2}
Let $n,d, \aA$ and $p$ be as above. Let $E \subset \R^n$, and let $Q_0^E \in \dD(E)$ be so that $\diam(Q_0^E) \geq \theta \ell(Q_0^E)$ for some $0< \theta \leq 1$. Then there exists a lower content $d$-regular set $F$ with constant\footnote{The constant $\cc_4$ is as in Theorem \ref{t:TST-H1}.} $\cc_4$ such that the following holds. Let $\dD(F)$ be the Christ-David cubes for $F$ and let $Q_0^F$ be the cube in $\dD(F)$ with the same center and side length as $Q_0^E$. Then 
\begin{align}
    \diam(Q_0^F)^d + \sum_{Q \in \dD(Q_0^F)} \beta_F^{d,p}(\aA B_Q)^2 \ell(Q)^d  \lesssim_{\aA, \cc_4, n, p, \theta} \diam(Q_0^E)^d + \sum_{Q \in \dD(Q_0^E)} \beta_{E}^{d,p} (\aA B_Q)^2 \ell(Q)^d.
\end{align}

\end{theorem}

\subsection{A TST for general sets}
We now state Corollary \ref{hyde-villa} precisely. See also Corollary 1.16 in \cite{hyde2020TST}. As mentioned above, this corollary represents a full higher dimensional counterpart to the theorem of Jones from 1990 \eqref{e:TST}. 
\begin{corollary}
Let $E \subset \R^n$ and take $1 \leq d <n$, $\aA >1$ and $1 \leq p < p(d)$, with $p(d)$ as in \eqref{e:pd}. Then there exists a stable $d$-surface $\Sigma$ with parameters $(r_0, \alpha_0, \delta_0, \eta_0)$ such that $E \subset \Sigma$ and, if $Q_0^E \in \dD(E)$,  
\begin{align}
    {\rm diameter}(Q_0^E)^d + {\wt {\rm BETA}}(Q_0^E) \approx \hd(\Sigma),
\end{align}
where
\begin{align*}
    {\wt {\rm BETA}}(Q_0^E) = {\wt {\rm BETA}}_{d,p,\aA} (Q_0^E) = \sum_{Q \in \dD(Q_0^E)} {\wt \beta}_E^{d,p} (\aA B_Q)^2 \ell(Q)^d
\end{align*}
\end{corollary}

\section{Applications}\label{s:applications}
In this section we properly introduce the two applications of the main result Theorem \ref{t:topo-main} which were briefly mentioned in the introduction. 

\subsection{Uniformly non-flat sets}
In \cite{david2004hausdorff}, David proved that if $E$ is a stable $d$-surface (as in Definition \ref{d:TC}) and it is uniformly non-flat, then it must have dimension strictly larger that $d$. David's result was in the spirit of a previous result by 
 Bishop and Jones about uniformly wiggly, or uniformly non-flat, sets in the plane. 
 A set $E \subset \R^2$ is called \textit{uniformly wiggly} or \textit{uniformly non-flat} with parameter $\beta_0$ if for all dyadic cubes $I \in \Delta(E)$, we have that
 \begin{align*}
     \beta_{E,\infty}(I) > \beta_0 >0.
 \end{align*}
 \noindent
 Let us now recall the result of Bishop and Jones. It was later generalised to metric spaces by Azzam in \cite{azzam2015hausdorff}.
 \begin{theorem}[\cite{bishop1997wiggly}, Theorem 1.1] \label{t:nonflat}
 Let $E \subset \R^2$ be a compact, connected subset which is uniformly wiggly with parameter $\beta_0$. Then $\dim(E) > 1+ C \beta_0^2$, where $C$ is an absolute constant\footnote{We remark once more that by dimension we mean Hausdorff dimension. See \cite{mattila}, Definition 4.8 for a definition.}.
 \end{theorem}
 \noindent Let us go back to David's result. His is, in a sense, a generalisation of Bishop and Jones's theorem. However, it is of qualitative nature, and the dependence of the lower bound on the parameter $\beta_0$ is not explicit. See Theorem 1.8 in \cite{david2004hausdorff}. We give a quantitative strengthening of David's result where this dependence is made explicit. This result is a fairly immediate application of Theorem \ref{t:topo-main} and of the scheme of proof from \cite{bishop1997wiggly}. 
 
\begin{theorem} \label{t:non-flat}
Let $E \subset \R^n$ be a stable $d$-surface, where $1\leq d <n$ and let $\aA> 1$ and $1 \leq p < p(d)$, with $p(d)$ as in \eqref{e:pd}. Suppose that $Q_0 \in \dD$ is such that, for any $Q \in \dD(Q_0)$, we have that
\begin{align}
    \beta_E^{p,d}(\aA B_Q)^2 > \beta_0 >0.
\end{align}
Then
\begin{align}
    \dim(Q_0) > d + c \beta_0^2. 
\end{align}
\end{theorem}
\noindent
A quick corollary is the following.
\begin{corollary} \label{c:non-flat}
Let $E \subset \R^n$ be a stable $d$-surface, where $1 \leq d < n$. Then if for all dyadic cubes $I \in \bigtriangleup (I)$ with $\ell(I) \leq \diam(E)$, $\beta_{E, \infty}(I) > \beta_0$, then $\dim(E) > d + c \beta_0^2$.
\end{corollary}
\noindent
Both statements will be proven in Section \ref{s:non-flat}.

\subsection{Connection to uniform rectifiability}
In this subsection, we state a corollary which makes Semmes' principle, as mentioned in the introduction, precise. 
One of the most famous results from UR theory is the following characterisation of uniform rectifiability (see \cite{david-semmes91}, (C3)).
\begin{theorem}\label{t:DS-beta}
An Ahlfors regular set $E$ is uniformly $d$-rectifiable if and only if there exists a constant $C$ such that for all $x \in E$ and $0< r< \diam(E)$, we have that
\begin{align} \label{e:DS-beta}
\int_{E \cap B} \int_0^{r(B)} \beta_{E, p}^d(x,r)^2 \, \frac{dr}{r} \, d\hd(x) \leq C\, r(B)^d. 
\end{align}
\end{theorem}
\noindent
An immediate corollary of this and Theorem \ref{t:topo-main} is the following.
\begin{corollary}\label{c:UR-TC}
    Let $E \subset \R^n$ be a stable $d$-surface (as in Definition \ref{d:TC}) which also satisfies $\hd(E \cap B) \lesssim r(B)^d$. Then $E$ is a uniformly $d$-rectifiable set. 
\end{corollary}
\noindent
We remark that, most likely, the upper regularity hypothesis is not needed. A possible conjecture is the following: if $E \subset \R^n$ satisfies a condition as in Definition \ref{d:TC}, except that instead of \eqref{e:TC} we assume the existence of a constant $k \geq 1$ so that
\begin{align*}
    \hd(E \cap B(x,r)) \leq k \hd(\vp_t(E) \cap B(x,r)),
\end{align*}
then $E$ is Ahlfors $d$-regular and $d$-UR. This is hinted by the work of David and Semmes on quasiminimal sets \cite{david2000uniform}. Note that a condition as above would be a priori weaker than that assumed in \cite{david2000uniform}.
\section{Some remarks on the topological condition} \label{s:rem}
We would like to motivate a little bit our choices: why would one use the topological condition as in Definition \ref{d:TC}? The quantitative bound \eqref{e:main} was already known for surfaces satisfying the so called Condition B and for Reifenberg flat sets; as mentioned above, both of them imply the topological condition \eqref{e:TC}. 
\noindent As Condition B applies only to subsets of codimension one, let us consider instead a more general property which make sense in any codimension. Subsets satisfying this property are called Semmes surfaces. They were first introduced by G. David in \cite{David1988}. 
\begin{definition}\label{def:semmes-surface}
Let $n, d$ be two integers with $0 \leq d\leq n-1$. A (local\footnote{The definition is local, since it holds up to a maximum radius $r_0$.}) \textit{Semmes $d$-surface} is a closed subset $E \subset \R^n$ satisfying the following: we can find a constant  $c_1>0$ and a radius $r_0>0$ so that for all points $x_0 \in E$ and radii $0<r<r_0$, there exists  an affine subspace $W$ of dimension $n-d$ and a sphere $S$ of dimension $n-d-1$ so that
\begin{align}
    & S \subset B(x_0, r/2) \cap W, \\
    & \dist( S, E) \geq 2 c_1 r, \mbox{ and } \\
    & S \mbox{ links } E \mbox{ in } B(x_0, r). 
\end{align}
Let us explain what we mean by $S$ \textit{links} $E$ in a ball $B(x_0, r)$; we say that $S$ and $E$ are linked if it is \textit{not possible to find} an homotopy $f_t(y)$ defined and continuous for all $(y,t) \in \R^n \times [0,1]$  such that
\begin{align}
    & f_t(\overline{B}(x_0,r)) \subset \overline{B}(x_0,r) \mbox{ for each } t \in [0,1];\\
    & \mbox{for each } y \in \R^n, f_t(y) \mbox{ is a  continuous function of } t \in [0,1];\\
    & f_t(y) = y \mbox{ for } (y,t) \in \R^n \times \{0\}, \mbox{ and } f_t(y)=y \mbox{ for }  (y,t)  \in \R^n \setminus B(x_0,r) \times [0,1]; \\
    & \dist(f_t(y), S) \geq c_1 r \mbox{ for } t \in [0,1] \mbox{ and } y \in E \cap B(x_0,r);
\end{align}
and 
\begin{align}
    f_1(E \cap B(x_0,r)) \subset \partial B(x_0,r).
\end{align}
\end{definition}
\noindent In other words it is not possible to deform $E$ into the boundary of the ball $B(x_0,r)$ while at the same time keeping it away from the sphere $S$. 
Note that a set satisfying Condition B is just a $d$-dimensional Semmes surface with $d=n-1$. 
David shows the following.
\begin{lemma}[{\cite{david2004hausdorff}, Lemma 2.16}]\label{l:ss-top}
A Semmes $d$-surface is a stable $d$-surface with parameters $\alpha_0, \delta_0, \eta_0$, $\gamma_0$, depending on $c_1$ and with the same $r_0$.
\end{lemma}
\noindent
In fact, David shows that a Semmes $d$-surface satisfies the condition he introduces in \cite{david2004hausdorff}, which implies our condition in Definition \ref{d:TC}.

\noindent We now turn to Reifenberg flat sets. 
\begin{definition}\label{d:reif}
Let $n,d$ as above, and fix two positive constants $\epsilon$ and $r_0$. A closed subset $E \subset \R^n$ is called a (local) $d$-dimensional \textit{Reifenberg $\epsilon$-flat} set if for all $(x,r) \in E \times (0, r_0)$, there exists a $d$-dimensional affine plane $L$ so that
\begin{align}
    d_{x,r} (E, L) < \epsilon, 
\end{align}
where $d_{x,r}= d_{B(x,r)}$ is as in \eqref{e:loc-haus}.
\end{definition}
\noindent
Perhaps the most typical example of a Reifenberg $\ve$-flat set is the so called Koch snowflake. We recall Reifenberg's well-known topological disk theorem - we state it rather informally. For a more precise statement, we refer the reader to \cite{david2012reifenberg}, Theorem 1.1.
\begin{theorem}[Reifenberg Topological Disk Theorem, \cite{reifenberg1960solution}]\label{t:reif}
For all choice of integers $0<d<n$, and for $\ve>0$ sufficiently small, if a closed set $E\subset \R^n$ is locally a $d$-dimensional Reifenberg $\ve$, then there exists a bi-H\"{o}lder map (with H\"{o}lder parameter depending on $\ve$) $g: \R^n \to \R^n$ and a $d$-dimensional affine plane $P$ so that $g(P) = E$ locally.
\end{theorem}
\noindent It is then immediate that, for $\ve>0$ sufficiently small, if $E$ is a $d$-dimensional Reifenberg $\ve$-flat set (with local radius $r_0$), then it is a stable $d$-surface with parameters depending on $\ve$ and $r_0$.

\section{Remarks on literature}\label{s:lit}
To keep the introduction at a manageable length, we refrained from mentioning much of the relevant literature. After feeling rather guilty about this, we added this short section. However, the literature being rather plentiful, we have certainly omitted something. We apologise in advance for this. 

\subsection{Works in connection to the TST and the $\beta$ numbers. } 

The generalisation to $\R^n$  by Okikiolu \cite{oki92} and to Hilbert spaces by Schul \cite{schul2007} were mentioned in the introduction. For other TST-type results in Euclidean space connected to the $\beta$-coefficients, see \cite{pajot1996sous} and \cite{lerman2003quantifying}.
For Banach space results, see \cite{david2019sharp} and \cite{badger2020subsets}. 
TST-type results appeared also in the context of Heisenberg and Carnot geometry, see \cite{ferrari2007geometric}, \cite{li2014upper}, \cite{li2014upper}, \cite{chousionis2019traveling}.
Some results of TST-type in metric spaces are available in \cite{hahlomaa2005menger, hahlomaa2007curvature}, albeit also concerning Menger curvature. See also the H\"{o}lder parameterisation result \cite{badger2019holder}.

\subsection{Pointwise characterisations.}
The $\beta$ coefficients has been used to characterise rectifiable sets and measures.
The first result of this type is \cite{bishop1994harmonic}. This has been extended to the higher dimensional setting under a variety of hypothesis: see the series by Azzam and Tolsa \cite{tolsa2015characterization, azzam2015characterization}; the works of Badger and Schul \cite{badger2015multiscale} and \cite{badger2017multiscale}; the works of Edelen, Naber and Valtorta \cite{edelen2016quantitative}, \cite{edelen2019effective}.
See also
\cite{villa2020tangent}.
Characterisations of rectifiable sets inspired by works on the $\beta$-coefficients are given for example with respect to Menger curvature, see the works of Leger  \cite{leger1999menger} and Goering \cite{goering2018characterizations}, or Tolsa's $\alpha$ numbers (\cite{tolsa2009uniform}, \cite{dkabrowski2019necessary}, \cite{dkabrowski2019sufficient}), or center of mass \cite{mayboroda2009boundedness} and \cite{villa2019square},
or in terms of behaviour of SIOs, see \cite{tolsa2008principal}, \cite{villa2019omega}.

There has been a lot of work done to find sufficient conditions in terms of $\beta$ coefficients (or similar) to parameterise sets. An example, \cite{david2012reifenberg} will be heavily used in Section \ref{s:sigma-construction} (see also \cite{david2008generalization}  and \cite{toro1995geometric}). For higher order results in this direction see \cite{ghinassi2017sufficient} (and the related \cite{del2019geometric}).
\section{First reductions and the construction of approximating skeleta}\label{s:first-red}
Sections \ref{s:first-red} to \ref{s:end} will be devoted to the proof of Theorem \ref{t:topo-main}. We will first prove the following inequality (the converse being an immediate consequence of Azzam and Schul's result Theorem \ref{t:AS-real}, as we will see in Section \ref{s:end}).   
\begin{proposition} \label{p:main-one-dir}
Assume that $1 \leq d < n$. Let $E \subset \R^n$ be a stable $d$-surface with parameters $r_0, \alpha_0, \delta_0$ and $\eta_0$ and let $\dD$ be its Christ-David cubes. Let $Q_0 \in \dD$ be an arbitrary cube satisfying $\ell(Q_0)< r_0$ and let $\aA>1$. Take $1 \leq p < p(d)$ with 
\begin{align*}
p(d):= \left\{ \begin{array}{cl} \frac{2d}{d-2} & \mbox{if } d>2 \\
 \infty & \mbox{if } d\leq 2\end{array}\right. 
 \end{align*}
 Then it holds that
\begin{align}\label{e:main6}
    {\rm diameter}(Q_0)^d + {\rm BETA }(Q_0) \lesssim \hd(Q_0),
\end{align}
where 
\begin{align}\label{e:BETA6}
    {\rm BETA}(Q_0) = {\rm BETA}_{E, \aA, p, d} (Q_0): = \sum_{Q \in \dD(Q_0)} \beta_E^{d,p} (\aA B_Q)^2 \ell(Q)^d,
\end{align}
and where the constant behind the symbol $\lesssim$ depends on the parameters of $\eqref{e:TC}$ \textup{(}$\alpha_0, \delta_0, r_0, \eta_0, \gamma_0$\textup{)}, on $\aA$, on $p$, and on the dimensional parameters \textup{(}$d$, $n$\textup{)}.
\end{proposition}

\subsection{First reductions and lower content regularity of stable surfaces}
 \noindent Fix a Christ-David cube $Q_0 \in \dD$, as in the statement of Proposition \ref{p:main-one-dir}. First, we see that if $\hd(Q_0) = \infty$, then Proposition \ref{p:main-one-dir} is trivially true and hence there is nothing to prove.  Thus, we may (and will) assume that
\begin{align} \label{e:hdE}
    \hd(Q_0)< + \infty
\end{align}
We can also take $E$ to be compact, since Theorem \ref{t:topo-main} is local.
To obtain the estimates on $\beta$ coefficients that we want, we would like to apply a coronisation of lower content regular sets by Ahlfors regular sets proved in \cite{azzam2019quantitative} (we will state in later on). To do so, we first need to show that any topologically stable $d$-surface is lower content $d$-regular. 
\begin{lemma} \label{l:TP-LCR}
Let $E \subset \R^n$ be compact stable $d$-surface with parameters $r_0, \alpha_0, \delta_0$, $\eta_0$ and $\gamma_0$. Then $E$ satisfies
\begin{align*}
    \hdc(E \cap B(x,r)) \geq \cc_1 r^d
\end{align*}
for all $x \in E$ and $r< r_0$; the lower regularity constant $\cc_1$ will depend on $\delta_0$, $\eta_0$ and $\gamma_0$. 
\end{lemma}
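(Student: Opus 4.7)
The plan is to argue by contradiction via a Federer--Fleming type projection. The identity map is a trivial $\alpha_0$-ALD, so \eqref{e:TC} immediately yields $\hd(E \cap B(x,(1-\eta_0)r)) \geq \delta_0 r^d$; what the topological condition really buys us is a rigidity statement saying that any Lipschitz collapse of $E$ still fills a $d$-dimensional amount of space. To obtain a bound on the content, I would suppose for contradiction that $\hdc(E \cap B(x,(1-\eta_0)r)) < \varepsilon r^d$ for some small $\varepsilon$ to be chosen. Using the definition of Hausdorff content together with a standard subdivision of the covering balls (each ball being covered by dyadic cubes of side length $\leq \alpha_0 r/C(n)$, which enlarges the sum of $\ell^d$ by at most a factor depending on $n,d,\alpha_0$), one can cover $E \cap B(x,(1-\eta_0)r)$ by disjoint dyadic cubes $\{Q_i\}$ contained in $B(x,r)$ with $\ell(Q_i) \leq \alpha_0 r/C(n)$ and $\sum_i \ell(Q_i)^d \lesssim_{n,d,\alpha_0} \varepsilon r^d$.

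Next, I would construct $\{\vp_t\}_{t \in [0,1]}$ by Federer--Fleming projection: outside $\bigcup_i Q_i$, set $\vp_t \equiv \id$; inside each $Q_i$, iteratively radially project $E$ onto the lower-dimensional faces of $Q_i$ until reaching the $d$-skeleton $\partial_d Q_i$. Each radial step is performed from a center chosen by the standard Fubini averaging argument so that it stays uniformly away from the part of $E$ being projected, giving bounded Lipschitz norm. I would then interpolate linearly in $t$ between the identity and each successive projection, concatenating short time-intervals (one per descent in dimension) so that the whole family is continuous in $t$. The resulting $\{\vp_t\}$ is an $\alpha_0$-ALD: conditions \eqref{e:vp1}, \eqref{e:vp2} and \eqref{e:vp3} are immediate from the construction, and \eqref{e:vp4} holds because $y \in E \cap Q_i$ forces $\vp_t(y) \in Q_i$, so $\dist(\vp_t(y), E) \leq \diam(Q_i) \leq \sqrt{n}\,\ell(Q_i) < \alpha_0 r$.

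On the other hand, $\vp_1(E) \cap B(x,(1-\eta_0)r) \subseteq \bigcup_i \partial_d Q_i$, hence $\hd(\vp_1(E) \cap B(x,(1-\eta_0)r)) \lesssim_{n,d} \sum_i \ell(Q_i)^d \lesssim \varepsilon r^d$. For $\varepsilon$ sufficiently small depending only on $\delta_0$ and dimensional constants, this contradicts \eqref{e:TC}. Monotonicity of Hausdorff content then gives $\hdc(E \cap B(x,r)) \geq \hdc(E \cap B(x,(1-\eta_0)r)) \gtrsim c_0 r^d$, with $c_0$ depending on $\delta_0$, $\eta_0$, $\alpha_0$ and the dimensions. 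The main obstacle I expect is the clean execution of the Federer--Fleming projection as a genuine ALD: the radial projections are Lipschitz only when their centers remain uniformly far from the part of $E$ being projected (which forces the Fubini averaging argument to select good centers at each step), and the continuity of $t \mapsto \vp_t$ must be maintained across the iterated descent from dimension $n$ down to $d$.
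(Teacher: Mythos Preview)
Your overall strategy---assume small content, build an $\alpha_0$-ALD via Federer--Fleming, and contradict \eqref{e:TC}---is exactly the paper's. The gap is in how you justify the iterated radial projections inside each covering cube $Q_i$. The ``standard Fubini averaging argument'' you invoke controls $\hd(\pi_c(F))$ in terms of $\hd(F)$; it therefore needs $\hd(E\cap Q_i)<\infty$, which the lemma does not assume. Without that, nothing in your setup prevents $E$ from containing all of $\Int(Q_i)$ (this is consistent with $\sum_i\ell(Q_i)^d<\varepsilon r^d$), in which case no center exists even for the first projection; and even if the first center exists, the projected image may fill an $(n-1)$-face and block the next step. Stopping at the $d$-skeleton rather than the $(d-1)$-skeleton does not avoid the issue: you still have to descend $n-d$ times.

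The paper fixes this by working not with your variable-size dyadic cover but with a \emph{uniform} grid at a single scale $\rho=(\nu\delta_0)^{1/d}r$. The point of this choice is that the contradiction hypothesis $\hdc(E\cap B(x,r))<\mu\nu\delta_0 r^d=\mu\rho^d$ then gives $\hdc(E\cap I)<\mu\rho^d$ for \emph{every} grid cube $I$ of side $\rho$, i.e.\ small content \emph{relative to the cube}. That is precisely the hypothesis of Proposition~12.61 in \cite{david-semmes00}, which produces a Lipschitz map sending $E\cap A^1$ into the $(d-1)$-skeleton $\dS_{j(\rho),d-1}$ while keeping each cube invariant; no finite-$\hd$ assumption is needed because the small-content-per-cell hypothesis is what furnishes good centers at every stage. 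Interpolating linearly then yields the ALD, and since the image sits in a $(d-1)$-skeleton one gets $\hd(\vp_1(E)\cap B(x,(1-\eta_0)r))=0$, contradicting \eqref{e:TC} directly. Your argument becomes correct if you replace the variable cover by this uniform grid and appeal to (or reprove) the content-based version of Federer--Fleming rather than the measure-based one.
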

\noindent This fact is essentially present in Chapter 12 of \cite{david2000uniform}, although in a somewhat different form. We give a proof for this reason. 
We will first prove the following further Lemma, which will imply Lemma \ref{l:TP-LCR}.
\begin{lemma} \label{sl:hdc-sl}
Let $E$ be a compact subset of $\R^n$; fix two positive and sufficiently small constants $\delta_0$ and $\eta_0$, and let $B$ be a ball centered on $E$ satisfying
\begin{align} \label{e:small-content}
    \hdc(B \cap E) < \mu \nu \, \delta_0 r(B)^d,
\end{align}
for a parameter $\nu$ \textup{(}sufficiently small depending on $\eta_0$\textup{)} and a number $\mu>0$ which depend  only on $\eta_0$ and $\delta_0$. Then there exists a one parameter family of Lipschitz mappings $\{\vp_t\}$ which satisfies \eqref{e:vp1}-\eqref{e:vp4} and so that $\vp_1$ maps $B(x_B,(1-\eta_0)r(B)) \cap E$ into the $(d-1)$-dimensional skeleton of cubes from $\Delta_{j}$, where $j=j(\rho) \in \N$ is such that $2^{-j} \approx \rho$, and  $\rho = (\nu \delta_0)^{1/d} r(B)$. 
\end{lemma}
\noindent
The proof of this lemma will follow quickly if we use the following proposition from \cite{david2000uniform}.
\begin{proposition}[{\cite{david2000uniform}, Proposition 12.61}]\label{p:content} 
Let $A$ be a union of dyadic cubes from $\Delta_j$, where $j$ is some integer. There is a possibly small constant $c>0$ so that if $\theta \approx c\, 2^{-j}$, the following is true. Let $F$ be a compact subset of $A$ such that
\begin{align} \label{e:theta-small}
    \hdc(F \cap I) < \theta \mbox{ for all } I \in \Delta_{j}.
\end{align}
Then there is a Lipschitz mapping $\phi: F \to A$ so that $\phi(F) \subset \dS_{j, d-1}$ and $\phi(F \cap I) \subset I$ for all $I \in \Delta_j$. Also, $\phi$ is homotopic to the identity through mappings from $F$ to $A$.  
\end{proposition}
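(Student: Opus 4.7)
The plan is to construct $\phi$ by iteratively lowering the codimension: we first push $F$ off the interiors of the $n$-cubes of $A$ onto the full $(n-1)$-skeleton, then off the interiors of $(n-1)$-faces onto the $(n-2)$-skeleton, and so on, stopping once we land in $\dS_{j,d-1}$. Each step is a radial projection from a carefully chosen center point of a face, and the assumption \eqref{e:theta-small} is used precisely to guarantee such a center exists.

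The key geometric lemma I would establish first is the following. Let $Q$ be a $k$-dimensional dyadic face with $k \geq d$, and let $F' \subset Q$ be compact with $\hdc(F') < \theta$. If $\theta$ is chosen so that $\theta \cdot \ell(Q)^{k-d}$ is substantially smaller than the $k$-volume of a concentric sub-face $(1-c_0)Q$, then there is a point $x_Q \in (1-c_0)Q$ with $\dist(x_Q, F') \gtrsim \ell(Q)$. The reason is the elementary content-to-volume estimate: cover $F'$ by balls $\{B(y_i,r_i)\}$ in $\R^k$ with $\sum r_i^d < \theta$ and $r_i \leq \ell(Q)$; then the $k$-Lebesgue measure of a $\rho$-neighborhood of $F'$ inside $Q$ is at most $C \sum (r_i+\rho)^k \lesssim \theta \rho^{k-d}$ once $\rho \gtrsim r_i$. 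Taking $\rho \sim \ell(Q)$ and comparing with $\mathcal{L}^k((1-c_0)Q) \sim \ell(Q)^k$ shows that the condition $\theta \lesssim \ell(Q)^d$ (i.e.\ $\theta \sim c \cdot 2^{-jd}$, which I take to be the intended scaling in the statement) leaves a definite proportion of $(1-c_0)Q$ outside the $\rho$-neighborhood of $F'$, so a center $x_Q$ can be picked.

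Next I would carry out the projection. Define $\phi^{(0)}: F \to \dS_{j,n-1}$ cube by cube: on $F \cap Q$, radially project from $x_Q$ onto $\partial Q$. Since $F \cap Q$ lies at distance $\gtrsim \ell(Q)$ from $x_Q$ while $\diam(Q) \sim \ell(Q)$, the map is Lipschitz with a uniform constant, and because it fixes $\partial Q$ it glues across adjacent cubes into a globally Lipschitz map. It preserves the containment $\phi^{(0)}(F \cap Q) \subset Q$ (cubes are convex, radial projections stay inside), and the straight-line homotopy $(1-s)y + s \,\phi^{(0)}(y)$ along the projection ray stays in $Q$, hence in $A$, giving a homotopy of $\phi^{(0)}$ to $\id$ through maps $F \to A$. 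I would then iterate: on each $(n-1)$-face $\sigma$ of $\dS_{j,n-1}$, apply the same center-finding lemma to $\phi^{(0)}(F) \cap \sigma \subset \sigma$; crucially, Lipschitz images do not increase $d$-Hausdorff content, so the smallness hypothesis is inherited (up to a controlled constant which I would absorb by choosing $c$ in the statement small enough to survive $n-d$ iterations). This defines $\phi^{(1)}: \dS_{j,n-1} \to \dS_{j,n-2}$ Lipschitz on $\phi^{(0)}(F)$, fixing lower skeleta, and homotopic to $\id$ inside each face. Repeating $n-d$ times yields a composition $\phi := \phi^{(n-d-1)} \circ \cdots \circ \phi^{(0)}$ landing in $\dS_{j,d-1}$, with all intermediate images respecting both $Q$-containment and ambient containment in $A$.

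Concatenating the individual straight-line homotopies, rescaled to disjoint time subintervals of $[0,1]$, produces a single homotopy from $\id$ to $\phi$ through maps $F \to A$, since each stage moves points only inside a face of $A$. The main obstacle is the center-finding step: one has to verify not only the existence of a single good $x_Q$, but also that the choice of $c$ in $\theta \sim c \cdot 2^{-jd}$ is robust under the $n-d$ iterations (each of which may degrade the effective content bound by a bounded multiplicative factor coming from the Lipschitz constants of the previous projections). I would handle this by fixing at the outset the product of all these loss factors and choosing $c$ small enough that the content bound persists at every stage; the total Lipschitz constant of $\phi$ is then controlled by the $(n-d)$-fold product of the per-stage Lipschitz constants, which is a constant depending only on $n$ and $d$.
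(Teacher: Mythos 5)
Your overall architecture (cube-by-cube radial projections, straight-line homotopies inside each face, descending one skeletal dimension at a time until $\dS_{j,d-1}$, and choosing $c$ small enough to survive the $n-d$ stages) is exactly the Federer--Fleming scheme behind the David--Semmes proposition, and your reading of the threshold as $\theta\sim c\,2^{-jd}$ is indeed the dimensionally correct one. But the step you call the ``key geometric lemma'' is false, and it is the load-bearing step of your argument. It is not true that a compact $F'\subset Q$ with $\hdc(F')<c\,\ell(Q)^d$ admits a center $x_Q$ with $\dist(x_Q,F')\gtrsim \ell(Q)$: take $F'$ to be a finite $\epsilon$-net of $Q$ with $\epsilon$ as small as you like; then $\hdc(F')=0$, yet every point of $Q$ lies within $\epsilon$ of $F'$. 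The covering estimate you invoke is also backwards: when $r_i\ll\rho$ one only gets $(r_i+\rho)^k\sim\rho^k$ per ball, so $\sum_i(r_i+\rho)^k$ is not bounded by $\theta\,\rho^{k-d}$ (small Hausdorff content gives no control whatsoever on the Lebesgue measure of a $\rho$-neighborhood). Consequently you cannot guarantee a uniformly Lipschitz projection at each stage, and your mechanism for propagating the content bound through the $n-d$ iterations --- ``Lipschitz images increase content by at most $L^d$ with $L$ bounded'' --- collapses with it.

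The standard repair, which is what the cited proof uses, is to choose the projection center by averaging rather than by a distance bound: for a $k$-face $Q$ and compact $S\subset Q$, the average over centers $x$ in the middle portion of $Q$ of $\hdc\ps{\pi_x(S)}$ is $\lesssim_n \hdc(S)$ (the Federer--Fleming estimate, valid for content as well as measure), so Chebyshev produces a center for which the projected image has content at most $C(n)\,\hdc(S)$, even though no lower bound on $\dist(x,S)$ is available. One only needs $x\notin S$ to make $\pi_x$ Lipschitz on the compact set $S$ (with a constant that may depend on $S$, which the statement permits), and such $x$ exists because $\hdc(S)<c\,\ell(Q)^d$ forces $\mathcal{L}^k(S)\lesssim c\,\ell(Q)^k$, a small fraction of the face. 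With this substitution, the rest of your outline --- the gluing across faces, the containments $\phi(F\cap Q)\subset Q$, the concatenated straight-line homotopies inside $A$, and the choice of $c$ small enough that the content stays below the critical level down to the $d$-dimensional faces (where non-surjectivity is what allows the final projection onto $\dS_{j,d-1}$) --- goes through as you describe.
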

\begin{proof}[Proof of Lemma \ref{sl:hdc-sl}]
To ease notation, set $x_B = x$ and $r(B)=r$. Let $\rho>0$ and $j(\rho) \in \N$ be as in the statement of the lemma, and let $\mu>0$, $\nu>0$ two possibly small parameters to be fixed soon. Set
\begin{align} 
    & A^1:= \bigcup \ck{ I \in \Delta_{j(\rho)} \, |\, I \cap B\ps{x,(1-\mu)r} \neq \emptyset}, \label{e:A1}\\
    & A^2 := \bigcup \ck{ I \in \Delta_{j(\rho)} \, |\, I \cap A^1 \neq \emptyset}. \label{e:A2}
\end{align}
We want $\mu$ and $\nu$ to be so that
\begin{align}\label{e:mu-nu}
    \eta_0 > 10 \, \mu > 30(\nu \,  \delta_0)^{1/d}.
\end{align}
This choice then implies that
\begin{align}\label{e:A1subA2}
    E \cap B(x,(1-\eta_0) r) \subset E\cap B(x,(1-\mu)r) \subset E \cap A^1 \subset E \cap A^2 \subset B(x,r) \cap E,
\end{align}
Now, by the hypothesis \eqref{e:small-content}, we see that for any $I \in \Delta_{j(\rho)}$ which is also contained in $A^2$ we have 
\begin{align*}
    \hdc\ps{ I \cap (E \cap A^1) } < \mu\, \nu\, \delta_0 \,r^d = \mu\, \rho^d.
\end{align*}
Adjusting the choice of $\mu$ and $\nu$ if needed, we see that this implies \eqref{e:theta-small} to hold for all $I \in \Delta_{j(\rho)}$ which also lie in $A^2$ with $F= E \cap A^1$. Moreover, with this $F$, \eqref{e:theta-small} holds trivially for any other $I \in \Delta_{j(\rho)}$. Hence we apply Proposition \ref{p:content} with $j= j(\rho)$ (i.e. so that $2^{-j} \approx \rho$), $A=A^2$ as defined in \eqref{e:A2} and $F = A^1 \cap E$, as defined in \eqref{e:A1}. We obtain a Lipschitz mapping $\phi$ which sends $E \cap A^1$ into $\dS_{j(\rho), d-1}$ and all the properties listed in the proposition. Note in particular that with the choice \eqref{e:mu-nu} of $\mu$ and $\nu$ and the fact that $\phi(E\cap I) \subset I$ for any $I \in \Delta_{j(\rho)}$, we have that
\begin{align} \label{e:containment1}
    B\ps{x, (1-\eta_0)r} \cap \phi(E) = B\ps{ x, (1-\eta_0)r} \cap \phi(E \cap A^1) \subset \phi(E \cap A^1).
\end{align}
\noindent Now we can extend $\phi$ to be the identity outside of $A^2$. Setting 
\begin{align*}
    \vp_t(y)= t\phi(y) + (1-t) y \mbox{ for } t \in [0,1],
\end{align*}
it is easy to check that $\vp_t$ satisfies \eqref{e:vp1}-\eqref{e:vp4}.
\end{proof}

\begin{proof}[Proof of Lemma \ref{l:TP-LCR}]
By definition, there exists a ball $B$ centered on $E$ satisfying $B \subset B(x,r)$, $\gamma_0 r < r_B$ and \eqref{e:TC}. It suffices to show that $\hd(B \cap E) \geq \wt{\cc_1} r(B)^d \geq \cc_1 r^d$, for some $\wt{\cc_1}$ (depending only on $\eta_0, \delta_0, d,n$) and with $\cc_1 = \wt{\cc_1} \gamma_0^d$.   
For the sake of contradiction, suppose that for the inequality \eqref{e:small-content} holds for $B$. Then, using the definition of topological condition \eqref{e:TC} (which can be applied since $r<r_0$), we obtain
\begin{align*}
    \delta_0 r^d &
    < \hd\ps{B(x_B,(1-\eta_0)r_B) \cap \vp_1(E)}\\
    & = \hd\ps{B(x_B,(1-\eta_0)r_B) \cap \phi(E)} \\
    & \leqt{\eqref{e:containment1}} \hd\ps{\phi(E \cap A^1)} = 0.
\end{align*}
Thus we must have that for any such a ball $B$, \eqref{e:small-content} cannot hold. This implies the lower content $d$-regularity of $E$ (for scales smaller than $r_0$), with constant $\cc_1$ depending only on $\delta_0,\eta_0$ and $\gamma_0$ (since recall that $\mu$ and $\nu$ in \eqref{e:small-content} only depend on $\delta_0$ and $\eta_0$). 
\end{proof}
\begin{remark}
Because all our statements are local, we will be ignoring the fact that our set is lower regular only for (possibly) small scales. In fact, we could assume without loss of generality that $r_0=1$. 
\end{remark}
\noindent
Let $E$ be stable $d$ surface as in the statement of Theorem \ref{t:topo-main} (or Proposition \ref{p:main-one-dir}). Since by Lemma \ref{l:TP-LCR} we know that $E$ is lower content $d$-regular (with constant $\cc_1$), we can apply Theorem \ref{t:AS}. In particular we have that for any cube $Q_0 \in \dD$ with $\ell(Q_0) < 1$, 
\begin{align*}
    {\rm BETA}_{E, 3, 2}(Q_0) \approx_{\aA, n, p, \cc_1, \Cc_0} {\rm BETA}_{E, \aA, p }(Q_0). 
\end{align*}
This was also shown in the Appendix of \cite{azzam2019quantitative}. Hence we have:
\begin{lemma}\label{l:redux-Ap}
It suffices to prove Theorem \ref{t:topo-main} \textup{(}and thus Proposition \ref{p:main-one-dir}\textup{)} with $p=2$ and $\aA=3$. 
\end{lemma}

\subsection{Construction of the approximating skeleta $E_R$} \label{s:ER}
In this subsection we recall the corona construction from \cite{azzam2019quantitative}. One should think of such a coronisation as a collection of `top cubes' $\{R\} \subset \dD$; so that each of these top cubes one can construct an Ahlfors regular set $E_R$ which approximates $R \subset E$.  
\begin{lemma}[\cite{azzam2019quantitative}, Main Lemma] \label{l:corona}
Let $k_0>0$, $\tau>0$, $d>0$ and $E \subset$ be a closed subset that is lower content $(d, \cc_1)$-regular. Let $\dD_k$ denote the Christ-David cubes on $E$ of scale $k$ and $\dD=\bigcup_{k\in\bZ} \dD_{k}$. Let $Q_{0}\in \dD_{0}$ and $\dD(Q_0, k_0)=\dD(k_0)=\bigcup_{k=0}^{k_0}\{Q\in \dD_{k}|Q\subseteq Q_0\}$. Then, for\footnote{$\Top(k_0)$ is a sub-collection of Christ-David cubes; what sort of Christ-David cubes belong to $\Top(k_0)$ will become clear in the sketch of the proof we give below. } $R \in \Top(k_{0})\subseteq \dD(k_{0})$, we may partition $\dD(k_{0})$ into stopping-time regions which we call $\Tree(R)$; this partition has the following properties:
\begin{enumerate}[leftmargin=0.8cm]
\item We have 
\begin{equation}
\label{e:ADR-packing}
\sum_{R \in \Top(k_{0})} \ell(R)^{d} \lec_{\cc_1,d} \dH^{d}(Q_0).
\end{equation}

\item Given $R\in \Top(k_{0})$ and a stopping-time region\footnote{Recall Definition \ref{d:stopping}.} $\Ss \subseteq \Tree(R)$ with maximal cube $T=T(\Ss)$, let  $\dF=\dF(S)$ denote the minimal cubes of $\Ss$ and set
\begin{align}\label{e:d_F}
    d_{\Ss, T, \dF}(x) = d_{T}(x) := \inf_{Q \in \dF} \ps{ \ell(Q) + \dist(x,Q)}.
\end{align}
For $C_{0}>4$ and $\tau>0$, there is a collection  $\cC(T(\Ss))=\cC_T \subset \Delta$ of disjoint dyadic cubes covering $C_{0}B_{T}\cap E$ so that 
if 
\[
E(T(\Ss))=E_T:=\bigcup_{I\in \cC_T} \d_{d} I,\]
where $\d_{d}I$ denotes the $d$-dimensional skeleton of $I$, then the following hold:
\begin{enumerate}[label=\textup{(}\alph*\textup{)}, leftmargin=0.8cm]
\item $E_T$ is Ahlfors $d$-regular with constants depending on $C_{0},\tau,d,$ and $\cc_1$.
\item We have the containment
\begin{equation}
\label{e:contains}
C_{0}B_{T}\cap E \subseteq \bigcup_{I\in \cC} I\subseteq 2C_{0}B_{T}.
\end{equation}

\item $E$ is close to $E_T$ in $C_{0}B_{T}$ in the sense that
\begin{equation}
\label{e:adr-corona}
\dist(x,E_T)\lec  \tau d_{T}(x) \;\; \mbox{ for all }x\in E\cap C_{0}B_{T}.
\end{equation}
\item The cubes in $\cC$ satisfy
\begin{equation}
\label{e:whitney-like}
\ell(I)\approx \tau \inf_{x\in I} d_{T}(x) \mbox{ for all }I\in \cC_T.
\end{equation}
\end{enumerate}
\end{enumerate}
\end{lemma}
\noindent
For readability purposes (and for setting some notation which will be used later on), we give a short sketch of the proof of Lemma \ref{l:corona}. The reader can find the details in Section 3 of \cite{azzam2019quantitative}. 
The idea is akin to Frostmann's Lemma and its proof (both can be found in \cite{mattila}, Chapter 8, from page 112). 
Assume without loss of generality that $Q_0 \subset [0,1]^n$. Let us fix some notation:
\begin{align*}
    & \Delta_{j}(Q_0) := \ck{I \in \Delta_{j} \, |\, Q_0 \cap I \neq \emptyset };\\
    & \Delta(Q_0) := \bigcup_{j \geq 0} \Delta_{j}(Q_0).
\end{align*}
We also set
\begin{align*}
    V_{j}(Q_0):= \bigcup_{I \in \Delta_{j}(Q_0) } I.
\end{align*}
We are going to iteratively define a measure on the approximating set $V_{j}$; we then put all those dyadic cubes where this measure growa too large in a family called $\Bad$, to then perform a stopping time algorithm on the David-Christ cubes of $Q_0\subset E$, stopping whenever a David-Christ cubes hits a dyadic cubes in $\Bad$ with comparable side length, and restarting after skipping one generation of cubes. Through this stopping time procedure we obtain the decomposition of $\dD(k_0)$ mentioned in Lemma \ref{l:corona}.
\\
\\
\noindent
Let us define the measure mentioned above. For $m \in \N$  
\begin{align*}
    \mu_{m}^m := \mathcal{H}^n|_{V_{m}} 2^{(n-d)m}.
\end{align*}
 Note that then, if $I \in \Delta_{m}(Q_0)$, 
\begin{align*}
    \mu_{m}^m (I) = \ell(I)^d.
\end{align*}
We now define a family of cubes $\Bad(m)$ as follows. First, we immediately impose that
\begin{align*}
    \Delta_{m}(Q_0) \subset \Bad(m).
\end{align*}
Next, we look at the cubes one level up, that is, at the cubes in $\Delta_{m-1}(Q_0)$. If for one such cube $J$, we have
\begin{align*}
    \mu_{m}^m(J) > 2 \ell(J)^d,
\end{align*}
the we put $J$ in $\Bad(m)$ and define
\begin{align*}
    \mu_{m}^{m-1}|_J := \ell(J)^d \frac{\mu_{m}^m|_J}{\mu_{m}^m(J)} < \frac{1}{2} \mu_{m}^m|_J.
\end{align*}
Otherwise, we set
\begin{align*}
    \mu_{m}^{m-1}|_J := \mu_{m}^m|_{J}.
\end{align*}
Note that in this way, it is always true that, for a cube $I \in \Delta_{m-1}(Q_0)$, $\mu_{m}^{m-1}(I) \leq 2 \ell(I)^d$. Continuing inductively in this fashion, we define $\mu_{m}^{m-2}$, $\mu_{m}^{m-3}$ and so on; suppose we defined $\mu_{m}^{k}$, for $1\leq k\leq m$. We consider the cubes $I \in \Delta_{k-1}$: if 
\begin{align*}
    \mu_{m}^{k}(I) > 2 \ell(I)^d, 
\end{align*}
then we put $I \in \Bad{}(m)$ and set 
\begin{align*}
    \mu_{m}^{k-1}|_I := \ell(I)^d \frac{\mu_{m}^{k}|_I}{\mu_{m}^{k}(I)} < \frac{1}{2}\mu_{m}^k.
\end{align*}
Otherwise, we set
\begin{align*}
    \mu_{m}^{k-1}|_I:= \mu_{m}^{k}|_I.
\end{align*}
We stop when we reach $k=1$ (and so $\mu_{m}^0$ is defined). 
One can then show the packing condition
\begin{align} \label{e:Bad-est}
    \sum_{I \in \Bad(m) } \ell(I)^d \lesssim_{n,d} \hd(Q_0),    
\end{align}
which is independent of $m \in \N$.
For a proof of this, see \cite{azzam2019quantitative}, in particular equation (3.5). 

Let now $k_0>0$ be an arbitrary integer number, $M>1$ a constant to be fixed later and $\aA>1$ the inflation constant for the $\beta_E^{d,p}$ coefficients (see Constant \eqref{i:Aa}). Here's the stopping time algorithm on the Christ-David cubes which will output the above mentioned partition of $\dD(Q_0)$.
For a Christ-David cube $R\in \dD(k_0)$ contained in $Q_0$, we let $\Stop(R)$ denote the set of maximal Christ-David cubes in $R$ from $\dD(k_0)$ that are either in $\dD_{k_{0}}$ or have a child $Q$ for which there is a dyadic cube $I\in \Bad(m)$ such that 
\begin{align} \label{e:ST-cond}
    & MB_Q \cap I \neq \emptyset \enskip \enskip \enskip \mbox{ and} \nonumber \\
    & \lambda \ell(I) \leq \ell(Q) \leq \ell(I),
\end{align}
where $\lambda$ is as in Theorem \ref{t:Christ}. Observe that if $R\in \dD_{k_{0}}$, then $\Stop(R)=\{R\}$. We then let $\Tree(R)$ be those Christ-David cubes contained in $R$ that are not properly contained in any cube from $\Stop(R)$, so in particular, $\Stop(R)\subseteq \Tree(R)$. Let $\Nextt(R)$ be the children of cubes in $\Stop(R)$ that are also in $\dD(k_{0})$ (so this could be empty).
Note that the process will eventually terminate because we stopped at all cubes, or because we reached the bottom of $\dD(k_0)$; in particular, the cardinality of $\Tree(R)$ is finite (perhaps depending on $k_0$). Furthermore, we consider all cubes $Q$ of the same generation of $Q_0$, so that
\begin{align*}
    2\aA\, Q_0 \cap Q \neq \emptyset,
\end{align*}
where $\aA$ is the constant appearing in \eqref{i:Aa}.
We denote this family by $\dN(Q_0)$. 
On each of these cubes, we perform the same stopping time, so to construct the relative $\Tree(Q)$.
\begin{figure}
    \centering
    \includegraphics[scale=0.7]{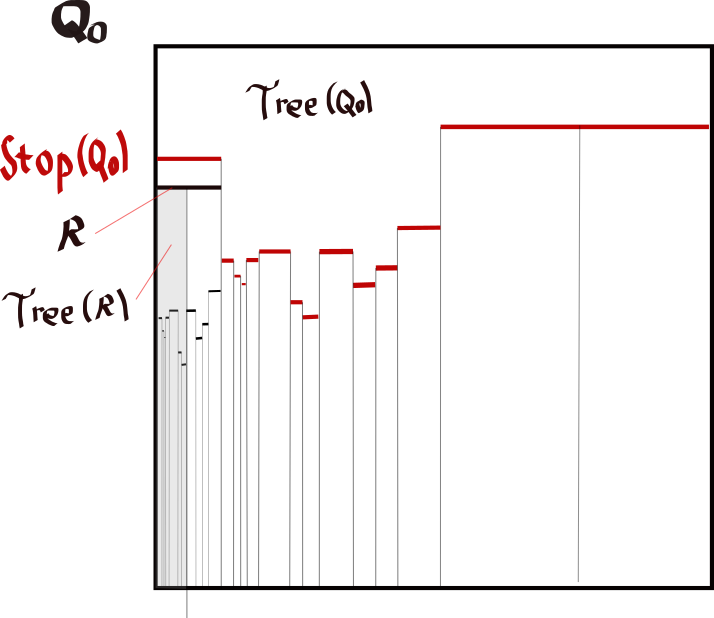}
    \caption{How one could imagine the partition structure. }
    \label{fig:stopping-time}
\end{figure}
Finally we put 
\begin{align} \label{e:forest}
    \Forest(Q_0) := \bigcup_{Q \in \dN (Q_0)} \Tree(Q),
\end{align}
and also
\begin{align*}
    \Stop(Q_0) := \ck{ Q \in \dD(k_0) \, |\, Q \mbox{ is minimal in } \Forest(Q_0)}.
\end{align*}
Next, we put
\begin{align*}
    \Nextt(Q_0) := \bigcup_{Q \in \Stop(Q_0)} \Child(Q).
\end{align*}
We now repeat the stopping time on each $R \in \Nextt(Q_0)$. Thus, if we set $\Top_0(k_0) :=\{Q_0\}$, then $\Top_1(k_0) := \Nextt(Q_0)$; proceeding inductively, supposed that $\Top_m(k_0)$ has been defined, for $m \in \N$: we put
\begin{align*}
    \Top_{m+1}(k_0) := \bigcup_{R \in \Top_m(k_0)} \Nextt(R).
\end{align*}
Finally, we set
\begin{align*}
    \Top(k_0) = \bigcup_{k\geq 0} \Top_k(k_0).
\end{align*}
Hence, to each element $R \in \Top(k_0)$, there correspond a forest $\Forest(R)$ and a family of minimal cubes $\Stop(R)$. Now, for each $R \in \Top(k_0)$ and for $x \in \R^N$, define
\begin{align*}
    & d_{R}(x) := \inf_{Q \in \Stop(R)} \ps{ \ell(Q) + \dist(x, Q)}, \mbox{ and } \\
    & d_R(I) := \inf_{x \in I} d_{R}(x), \mbox{ whenever } I \in \Delta.
\end{align*}
\begin{remark}\label{d_R(x)bounded}
Recall that $\Tree(R)$ has finite cardinality. This implies, in particular, that $d_R(x)$ and $d_R(I)$ are always bounded from below by the side length of the smallest Christ-David cube in $\Tree(R)$. 
\end{remark}
\noindent
These auxiliary functions are used to `smoothen out' the collection of dyadic cubes, so that if the lie close by, they will also have comparable side length. This is a trick that goes back to David and Semmes' \cite{david-semmes91}. 
For a parameter $\tau>0$, we put
\begin{align} \label{e:CR}
    \cC_R := \ck{ \mbox{ maximal } I \in \Delta \, |\, I\cap 2\aA R \neq \emptyset \mbox{ and } \ell(I) < \tau d_R(I)}. 
\end{align}
\begin{remark}\label{rem:CR-finite}
Because of Remark \ref{d_R(x)bounded}, the family $\cC_R$ will always have finite cardinality, too.
\end{remark}
\noindent
Finally we set
\begin{align}\label{e:wtER}
    \wt E_R := \bigcup_{I \in \cC_R} \partial_d I.
\end{align}
Thus $\wt E_R$ is the union of $d$-dimensional skeleta (see \eqref{e:skeleta}) of cubes belonging to $\cC_R$. 
\begin{lemma}[\cite{azzam2019quantitative}, Lemma 3.6]
The set $\wt E_R$ is Ahlfors $d$-regular with constant $\cc_0$.
\end{lemma}
\noindent
The constant $M>1$ is fixed here: it has to be sufficiently large (depending on $\tau$). See the proof of Lemma 3.6 in \cite{azzam2019quantitative}.
The following lemmas summarise some of the properties of the cubes in $\cC_R$. Their proof is standard, but we included them in the appendix. 
\begin{lemma}\label{l:whitey-cC-R}
The cubes $I \in \cC_R$ have disjoint interior and satisfy the following properties. 
\begin{enumerate}
    \item If $x \in 15\,I$, for some $I \in \cC_R$, then $\ell(I) \approx \tau d(x)$.
    \item There is a constant depending on $\tau$, such that if $15 I \cap 15J \neq \emptyset$ for $I, J \in \cC_R$, then $\ell(I) \approx_\tau \ell(J)$. 
\end{enumerate}
\end{lemma}
\begin{lemma} \label{l:meta1} Let $S$ be a cube in $\Stop(Q)$ for some $Q \in \Nextt(R)$, $R \in \Top(k_0)$. Then there exists a dyadic cube $I_S := I \in \cC_{Q}$ so that $I_S \subset \frac{1}{2}B_S$ and $\ell(I_S) \sim \tau \ell(S)$.
\end{lemma}

\begin{lemma}\label{l:I-QI}
Let $I \in \cC_Q$ for $Q \in \Nextt(R)$, $R \in \Top(k_0)$. Then there exists a cube $Q_I \in \Tree(Q)$ so that 
\begin{align*}
    & \ell(I) \leq \ell(Q_I) \leq c \tau^{-1} \ell(I); \\
    & \dist(I, Q_I) \leq c \tau^{-1} \ell(I). 
\end{align*}
\end{lemma}

\subsection{Modification of $\wt E_R$}
In this subsection we modify slightly the construction of $\wt E_R$; we need to do so to construct a coherent Federer-Fleming projection in the next section. 
\\
\\
\noindent
Fix $R \in \Top$; recall the definition of $\cC_R$ in \eqref{e:CR}.
 Take a cube $I \in \cC_R$. Consider one of its $(n-1)$-dimensional faces, and denote it by $T_I$. Set (see Figure \ref{fig:Adj})
\begin{align*}
    & \Adj^{n-1} (T_I) \\
    & := \ck{ J \in \cC_R \, |\, \ell(J) \leq \ell(I), \, J \cap T_I \mbox{ is an } (n-1) \mbox{-face of } J \mbox{ and } J \cap T_I \subset \Int(T_I) }.
\end{align*}
\noindent
\begin{figure}
    \centering
    \includegraphics[scale=0.4]{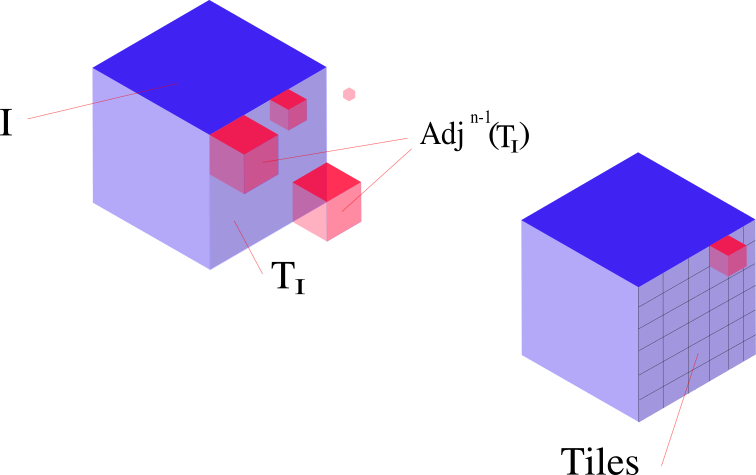}
    \caption{The red dyadic cubes constitute the family $\Adj^{n-1}.$}
    \label{fig:Adj}
\end{figure}
\noindent
We order the cubes in $\cC_R$ from the largest to the smallest one, and we label them as $I_0,....,I_N$, for some $N \in \N$. This is true because the cardinality of $\cC_R$ is finite (depending on $k_0$ - see Remarks \ref{d_R(x)bounded} and \ref{rem:CR-finite}). 
Let us start our construction with $I_0 \in \cC_R$ (thus $I_0$ is the largest cube in $\cC_R$). We look at one of its $(n-1)$-dimensional faces, let us denote it by $T_{I_0}$. 
Now, let $I$ be a cube of minimal side length contained in $\Adj^{n-1}(T_{I_0})$; let $n(I) \in \N$ be such that $\ell(I) = 2^{-n(I)}$. We consider the family of cubes in $\Delta_{n(I)}$ such that they have an $(n-1)$-dimensional face contained in $T_{I_0}$. We call this family  $\Delta_{n(I)}(T_{I_0})$. Let us denote by
\begin{align} \label{e:tiles}
    \cD^{n-1}(T_{I_0})
\end{align}
the family of $(n-1)$-dimensional faces of the same side length of $I$, such that they are both an $(n-1)$-dimensional face of a cube $J \in \Delta_{n(I)}(T_{I_0})$ and also they are contained in $T_{I_0}$. We may refer to this family as the \textit{tiles} of $T_{I_0}$. 
We repeat the same procedure for $I_1,...,I_N$; we don't do anything if $\Adj^{n-1}(T_{I_j})= \emptyset$ for some face $T_{I_j}$ of $I_j$, $1 \leq j \leq N$. Note that the definition of $\Adj^{n-1}(T_I)$ imposes the following: if two cubes $I$ and $I'$ are so that, say, $\ell(I)> \ell(I')$ and $I' \in \Adj^{n-1}(T_I)$, then the tiles constructed on $T_I$ will be the same one that we have on the face $T_{I'} \subset T_I$. The construction of tiles on the other $(n-1)$-faces of $I'$ will not change the ones already present in $T_{I'}$. 
This procedure terminates since $\cC_R$ is finite. 
\\
\\
\noindent
Once we constructed $(n-1)$-dimensional tiles on all the $(n-1)$-dimensional faces of all cubes in $\cC_R$, we rest. After, we proceed as follows. 
Denote by 
\begin{align} \label{e:cF}
    \cF^{n-1}
\end{align}
the family of $(n-1)$-dimensional faces belonging to some cube in $\cC_R$. If $T \in \cF^{n-1}$ and $\cD^{n-1}(T) \neq \emptyset$, the put the elements of $\cD^{n-1}(T)$ in $\cF^{n-1}$ and take $T$ away. If $\cD^{n-1}(T) = \emptyset$, then leave $T$ in $\cF^{n-1}$.
Next, we repeat the previous construction: order the elements of $\cF^{n-1}$ in decreasing order with respect to side length and consider $T_0$ (the largest face in $\cF^{n-1}$). For each $(n-2)$-dimensional face $F_{T_0}$ of $T_0$ we set
\begin{align*}
    & \Adj^{n-2}(F_{T_0})\\
    & := \ck{ T \in \cF^{n-1} \, |\, \ell(T) < \ell(T_0), \, T \cap F_{T_0} \mbox{ is an } (n-2)-\mbox{face of } T \mbox{ and } T \cap F_{T_0} \subset \Int (F_{T_0}) }. 
\end{align*}
We now look for the minimal element of $\Adj^{n-2}(F_{T_0})$, and call it $T$. Let $n(T) \in \Z$ so that $\ell(T) = 2^{n(T)}$; we now tessellate $F_{T_0}$ with tiles of side length $2^{n(T)}$; by tessellate here we mean the obvious thing, i.e. we substitute $F_{T_0}$ with its children of size $2^{n(T)}$. 
Let us denote the tiles so constructed by
\begin{align*}
    \cD^{n-2}(F_{T_0}). 
\end{align*}
We repeat the same procedure for $T_1,...,T_{N'} \in \cF^{n-1}$. Again, the construction of $(n-2)$-dimensional tiles for smaller $(n-2)$-dimensional faces does not affect the previously constructed tiles for larger faces. 
This procedure terminates since $\cF^{n-1}$ is finite, which follows trivially from $\cC_R$ being finite. 
Next, we set 
\begin{align*}
    \cF^{n-2} 
\end{align*}
to be the family of $(n-2)$-dimensional faces coming from elements of $\cF^{n-1}$, and we immediately modify it as above: if $\cD^{n-2}(F_T) \neq \emptyset $, for $T \in \cF^{n-1}$, we substitute $F_T$ with the corresponding family of tiles. 
\\
\\
\noindent
We continue this construction: we obtain $\cF^{n-3}$ from $\cF^{n-2}$, $\cF^{n-4}$ from $\cF^{n-3}$, and so on, until we construct $\cF^d$. We stop at this point and we set (see Figure \ref{fig:ER}). 
\begin{align} \label{e:newER}
   E_R := \wt E_R \cup \ps{\bigcup_{F \in \cF^d} F}.
\end{align}
\begin{lemma}
The set $E_R$ is Ahlfors $d$-regular. 
\end{lemma}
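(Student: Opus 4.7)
The strategy is to upgrade the known Ahlfors $d$-regularity of $\wt E_R$ (cited above) to Ahlfors regularity of $E_R = \wt E_R \cup \bigcup_{F\in\cF^d}F$. Since $\wt E_R\subset E_R$, the lower bound on $\hd(E_R\cap B(x,r))$ is automatic for $x\in \wt E_R$ and $r\in (0,\diam(E_R))$. The main work is therefore to (i) show that the additional $d$-tiles contribute only a controlled amount of $\hd$ at each scale (for the upper bound), and (ii) handle points $x\in E_R\setminus\wt E_R$ (for the lower bound).

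The first and central step is to quantify the local complexity introduced by the recursive tessellation. I would prove: for every $I\in\cC_R$, the total $\hd$-measure of the tiles of $\cF^d$ contained in $I$ is $\lesssim \ell(I)^d$, with constant depending only on $n$, $d$, $\tau$. The key input is the Whitney-like property \eqref{e:whitney-like} from Lemma~\ref{l:corona}, which forces neighbouring cubes of $\cC_R$ to have comparable sidelengths. Consequently, whenever we tessellate an $(n-1)$-face $T$ of $I$ by the minimal adjacent cube $J$, the tile size $s=\ell(J)$ satisfies $s\sim\ell(I)$, so the number of $(n-1)$-tiles inside $T$ is $O(1)$. I would then argue inductively on $k=n-1,n-2,\dots,d$ that the elements of $\cF^{n-k}$ sitting inside $I$ are $O(1)$ in number and have sidelength $\sim\ell(I)$: at each passage $\cF^{n-k}\to\cF^{n-k-1}$ the tessellation of a face by an adjacent smaller element again only changes sizes by bounded factors, because all elements of $\cF^{n-1}$ inside a fixed $I$ already have comparable size, and the Adj procedure only ever matches faces with elements of comparable generation. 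In particular $\#\{F\in\cF^d : F\subset I\}\lesssim 1$ and each such $F$ has $\hd(F)\sim\ell(I)^d$.

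With this bounded-complexity bound in hand, the upper bound is standard. Given $x\in E_R$ and $0<r<\diam(E_R)$, let $\cC_R(x,r)=\{I\in\cC_R : I\cap B(x,r)\neq\emptyset\}$. By \eqref{e:whitney-like} and the fact that $E_R\subset\bigcup_{I\in\cC_R}I$, every $I\in\cC_R(x,r)$ has $\ell(I)\lesssim r$ (otherwise $I$ alone would contain $B(x,r)$ and $\#\cC_R(x,r)=1$, which is fine too), and $\sum_{I\in\cC_R(x,r)}\ell(I)^n\lesssim r^n$ by disjointness and volume. A Whitney-type argument (comparing $\ell(I)^n$ to $\ell(I)^d\cdot\ell(I)^{n-d}$ with $\ell(I)\sim r$ for the relevant $I$) then yields $\sum_{I\in\cC_R(x,r)}\ell(I)^d\lesssim r^d$, and combining with Step 1 gives $\hd(E_R\cap B(x,r))\lesssim r^d$.

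For the lower bound at a point $x\in E_R\setminus \wt E_R$: such $x$ lies in some $F\in\cF^d$ which in turn sits in the interior of a higher-dimensional face of some $I\in\cC_R$, with $\ell(F)\sim\ell(I)$ by Step 1. For $r\lesssim\ell(F)$ the $d$-tile $F$ alone gives $\hd(F\cap B(x,r))\gtrsim r^d$. For $r\gtrsim\ell(F)\sim\ell(I)$, use Lemma~\ref{l:meta1} / Lemma~\ref{l:I-QI} (or directly the Whitney property) to find a Christ--David cube $Q_I$ with $\dist(I,Q_I)\lesssim \ell(I)\lesssim r$ and $\ell(Q_I)\sim\ell(I)$; since $Q_I$ contains a point of $E$ and $\wt E_R$ passes near $E$ in $C_1 B_{Q(R)}$ (by \eqref{e:adr-corona}), we find a point $x'\in\wt E_R$ within distance $\lesssim r$ of $x$, and the Ahlfors lower bound for $\wt E_R$ applied at $x'$ with radius $\sim r$ transfers to $\hd(E_R\cap B(x,Cr))\gtrsim r^d$; rescaling gives the desired bound.

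The main obstacle is Step 1: verifying rigorously that the sidelengths of tiles at every level $\cF^{n-1},\dots,\cF^d$ remain comparable to the sidelength of the ambient cube $I$. This is where one has to invoke the Whitney-like property of $\cC_R$ repeatedly, and check that the Adj procedure at each codimension only ever pairs an element with one of comparable size (since both descend from mutually adjacent cubes of $\cC_R$). Once this is verified, the $\hd$-measure estimates are routine.
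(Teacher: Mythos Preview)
Your plan follows the same line as the paper's short proof: both hinge on the Whitney property \eqref{e:whitney-like}, which forces any cube of $\cC_R$ neighbouring a given $I$ to have side length $\gtrsim\tau\ell(I)$; consequently every $d$-tile added on $\partial I$ has side $\sim_{\tau}\ell(I)$, and the extra mass attached to each $I$ is $\lesssim_{\tau}\ell(I)^d$. The paper is considerably terser (it just says ``envelope $I$ in cubes of side $\tau\ell(I)$'' and reads off the skeleton mass), while your handling of lower regularity at points of $E_R\setminus\wt E_R$ is more careful than the paper's one-line dismissal.

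One step in your upper-bound sketch is wrong, though. From the volume bound $\sum_{I\in\cC_R(x,r)}\ell(I)^n\lesssim r^n$ together with $\ell(I)\lesssim r$ you \emph{cannot} deduce $\sum_I\ell(I)^d\lesssim r^d$: taking $N=(r/s)^n$ disjoint cubes of side $s\ll r$ gives $\sum\ell(I)^n=r^n$ but $\sum\ell(I)^d=r^n/s^{n-d}\gg r^d$. The fix is to use the upper regularity of $\wt E_R$ directly instead of a volume argument: for the cubes $I$ with $\ell(I)\le r$ one has $I\subset B(x,Cr)$, hence
\[
\sum_{\ell(I)\le r}\ell(I)^d\;\sim\;\sum_{\ell(I)\le r}\hd(\partial_d I)\;\lesssim\;\hd\bigl(\wt E_R\cap B(x,Cr)\bigr)\;\lesssim\;r^d.
\]
Cubes with $\ell(I)>r$ meeting $B(x,r)$ must be handled separately: by the $1$-Lipschitz property of $d_R$ and \eqref{e:whitney-like} all such cubes have mutually comparable side length, so there are only $O_{n,\tau}(1)$ of them; on each, $E_R\cap I$ consists of boundedly many flat $d$-pieces of side $\gtrsim r$, whence $\hd(E_R\cap I\cap B(x,r))\lesssim r^d$. (The paper glosses over this split as well, writing only ``Thus upper regularity is also preserved.'')
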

\begin{proof}
Lower regularity follows immediately from the definition and the lower regularity of $\wt E_R$. On the other hand, note that for any cube $I \in \cC_R$, any smaller neighbouring cube $I' \in \cC_R$ will satisfy $\ell(I')> \tau \ell(I)$ (using Lemma \ref{l:whitey-cC-R}, (2)). If we envelope $I$ in cubes of side length $\ell(I) \tau$ and we consider the $d$-dimensional skeleton of this family of cubes, we see that the overall additional mass will not exceed a constant times $\ell(I)^d$, where such a constant depends on $n$, $d$ and $\tau$. Thus upper regularity is also preserved.
\end{proof}
\begin{figure}[h]
    \centering
    \includegraphics{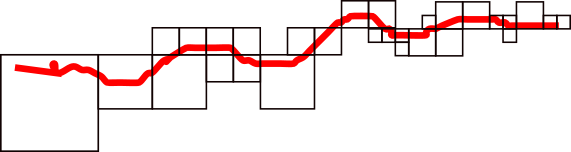}
    \caption{The approximating set $E_R$.}
    \label{fig:ER}
\end{figure}

\begin{notation} \label{n:C3}
From now on, we fix the notation for the regularity constant of $E_R$: it will be denoted by $\Cc_3$ and depends on $n,d,\tau$ anf the regularity constant of $\wt E_R$.
\end{notation}

\section{A topological condition on approximating skeleta}\label{s:STC}
We now introduce a condition on $E_R$ which will imply the existence of a uniformly rectifiable sets lying close to it. This is akin to the condition David calls TND (topological nondegeneracy condition) in \cite{david2004hausdorff}. We modified it somewhat to adhere to our trees structure. Let $R \in \Top(k_0)$ and $E_R$ be the set constructed in Section \ref{s:ER}, i.e. the set given in \eqref{e:newER}.

\begin{definition}[STC]\label{d:TND}
Let $C_2$ be an arbitrary big constant and let $k_0 \in \N$ be as in the statement of Lemma \ref{l:corona}. Let $E$ be a lower content ($\cc_1, d)$-regular set. Then we say that the family of subsets $\{E_R\}_{R \in \Top(k_0)}$ satisfies the \textit{skeletal topological condition} with parameter $C_2$, or $C_2$-(STC), if we can find positive constants 
\begin{align} \label{e:TND-constants}
   0< \alpha_1, \, \eta_1, \, \delta_1< 1 \mbox{ and } r_1>0
\end{align}
such that 
\begin{align} 
    & \mbox{for all } x_1 \in E,\label{e:TND-x1} \\
    & \mbox{for all } R \in \Top(k_0) \st x_1 \in R \mbox{ and } \frac{\ell(R)}{4} \leq r_1, \label{e:TND-R}\\
    & \mbox{for all } Q \in \Tree(R) \mbox{ s.t. } x_1 \in Q, \label{e:TND-Q}
\end{align}
for which 
\begin{align} \label{e:TND-upper}
    \hd\ps{E_R \cap B(x_1, \ell(Q))} \leq C_2 \ell(Q)^d
\end{align}
holds, 
there is a ball $B(x_2, r_2)$ centered on $E$ and contained in $B(x_1, \ell(Q))$ such that, for each one-parameter family $\{\vp_t\}_{0\leq t \leq 1}$ of Lipschitz functions on $\R^n$ that satisfy \eqref{e:vp1}, \eqref{e:vp2}, \eqref{e:vp3} and
\begin{align} \label{e:TND3}
    \dist(\vp_t(y), E) \leq \alpha_1 \ell(Q) \mbox{ for } t\in[0,1] \mbox{ and } y \in E_R \cap B(x_2,r_2),
\end{align}
we have that 
\begin{align}\label{e:TND4}
    \hd\ps{\vp_1(E_R \cap B(x_2,r_2))} \geq \delta_1 \ell(Q)^d + \hd(E_R \cap A_{\eta_1 \ell(Q)}(x_2,r_2)), 
\end{align}
where 
\begin{align} \label{e:TND5}
    A_{\eta_1 \ell(Q)}(x_2,r_2):= B(x_2,r_2) \setminus B(x_2, r_2-\eta_1 \ell(Q)).
\end{align}
\end{definition}
\begin{remark}
Note that
\begin{align}\label{e:r-R}
    r_2> \eta_1 \ell(Q);
\end{align}
if $r_2\leq \eta_1 \ell(Q)$, then $A_{\eta_1 \ell(Q)}= B(x_2, r_2)$.
Thus if we apply \eqref{e:TND4} with $\vp_t(y)=y$, then we would obtain
that $\hd(E_R \cap B(x_2, r_2)) > \hd(E_R\cap B(x_2, r_2))$, a contradiction.
\end{remark}

\section{Federer-Fleming projections}\label{s:FFproj}
In this section we will construct a Federer-Fleming projection of $E$ onto a subset of $E_R$; we will use these projections in the next section to prove that the topological condition \eqref{e:TC} on $E$ implies the condition STC on the approximating skeleta.  Our construction will mimic the one in \cite{david2004hausdorff}, which in turn comes from \cite{david2000uniform}. The difference here is that we are dealing with a skeleton of faces coming from cubes of different sizes.

\subsection{Preliminaries and notation}
\noindent
\subsubsection{The story so far.}
Let $E$ be the stable $d$-surface that we started with. From Lemma \ref{l:TP-LCR} we know that it is lower content ($d, \cc_1$)-regular. Hence we apply the coronisation as described in Subsection \ref{s:ER} (Lemma \ref{l:corona}), which is a collection of `top cubes' $\{R\}_{R \in \Top(k_0)}$. For each such an $R$ we construct an approximating set which we called $\wt E_R$ (see its definition in \eqref{e:wtER}, and see recall the definition of $\cC_R$ in \eqref{e:CR}). We subsequently modify it as in \eqref{e:newER}. 
\\
\\ 
\noindent
\subsubsection{Some notation}
Now fix any (Christ-David) `top cube' $R \subset E$ and let $B(x,r)$ be a ball centered on it (the construction below will be applied to the ball $B(x_2,r_2)$ as in the definition of STC, Definition \ref{d:TND}). Set
\begin{align}
    & \cC_R(x,r) := \left\{ I \in \cC_R \, |\, I \cap B(x,r) \neq \emptyset\right\}; \label{e:C_R(x,t)}\\
    & \cF^{m}(x,r) := \ck{ T \in \cF^{m} \, |\, T \cap B(x,r) \neq \emptyset } \mbox{ for } d \leq m \leq n-1; \label{e:F(x,r)} \\
    & D_R(x,r) := \bigcup_{I \in \cC_R(x,r)} I. \label{e:D-R}
\end{align}
We define a further family, which we call $\cC_R^2(x,r)$, as follows: 
\begin{itemize}[leftmargin=0.5cm]
    \item First we add to $\cC_R^2(x,r)$ all the dyadic cubes belonging to $\cC_R(x,r)$ (so in particular $\cC_R(x,r)$ will be a subfamily of $\cC^2_R(x,r)$. 
    \item Next, consider the collection (which we call) $\cF(x,r)$ of dyadic cubes $J \in \Delta \setminus \cC_R$  so that
\begin{align}
    & \Int(J) \subset \ps{ \bigcup_{I \in \cC_R(x,r)} I}^c; \label{e:C2a}\\
    & \mbox{ there exists a dyadic cube } I \in \cC_R(x,r) \st I \cap J \neq \emptyset; \label{e:C2b}\\
    & \min\ck{\ell(I) \, |\, I \in  \dN (J)} \leq \ell(J) \leq \max\ck{\ell(I) \, |\, I \in  \dN(J)}, \label{e:C2c}
\end{align}
where $\dN(J)$ is the family of cubes in $\cC_R(x,r)$ which intersects $J$. Then we add to $\cC_R^2(x,r)$ the subcollection of maximal dyadic cubes (see Definition \ref{d:maxfam}) of $\cF(x,r)$.
\end{itemize} 
\noindent
The family $\cC^2_R(x,r) \setminus \cC_R(x,r)$ forms a sheath for $\cC_R(x,r)$ (imagine the plastic covering of some Minecraft electrical wires). Finally we define
\begin{align} \label{e:D-R-2}
    D_R^2(x,r):= \bigcup_{I \in \cC_R^2(x,r)} I.
\end{align} 
\subsection{The construction}
Once again, recall the definition of $E_R$ as in \eqref{e:newER}. The following lemma is similar to Proposition 3.1 in \cite{david2000uniform}, and so is the proof. The only difference is that we are working with a non-uniform grid of cubes. 
\begin{lemma}\label{l:FF}
Let $E \subset \R^n$ be a lower content \textup{(}$d,\cc_1$\textup{)}-regular subset with $\hd(E)<+ \infty$.   Given $(x,r) \in E\times \R_+$, there exists a Lipschitz map $\pi : \R^n \to \R^n$ such that 
\begin{align}
    & \pi(y)=y \mbox{ whenever } x \in \R^n \setminus D_R^2(x,r); \label{e:pi1}\\
    & \pi(I) \subset I \mbox{ if } I \in \cC_R^2(x,r); \label{e:pi2}\\
    & \pi(E) \cap I \subset E_R \cap I \mbox{ for any } I \in \cC_R(x,r). \label{e:pi3}
\end{align}
\end{lemma}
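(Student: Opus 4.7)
The construction mirrors the classical Federer--Fleming projection from \cite{david-semmes00}, but has to be adapted to the fact that the cubes in $\cC_R^2(x,r)$ have different sizes. The entire point of the tile construction of Section \ref{s:ER} (producing the families $\cF^{n-1},\dots,\cF^{d}$) is precisely to make this adaptation possible: on a large cube $I$, any face $T$ that touches a smaller neighbour $I'\in\cC_R$ has been pre-subdivided so that $\cD^{n-1}(T)$ is compatible with the $(n-1)$-faces of $I'$, and similarly in lower dimensions. As a result, even though the cubes do not form a uniform grid, their skeleta still glue together coherently, and one can carry out a standard descending dimension induction.

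My plan is to build $\pi$ as a composition $\pi=\pi_d\circ\pi_{d+1}\circ\cdots\circ\pi_{n-1}\circ\pi_n$, where $\pi_k$ is a retraction that pushes the $k$-skeleton of the tile structure (restricted to $D_R^2(x,r)$) onto the $(k-1)$-skeleton, for $k=n-1,n-2,\dots,d$ (here $\pi_n$ refers to the initial projection from the interior of each cube to its tiled boundary). On each $k$-face $F$ that appears at stage $k$, the map $\pi_k$ will be a radial projection from a centre $c_F\in\mathrm{Int}(F)$ onto $\partial F$ (the latter being the union of the $(k-1)$-tiles inherited from $\cF^{k-1}$ or constructed in the tile procedure). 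To pick $c_F$, I use the Federer--Fleming averaging trick: the Lipschitz constant and the measure of the image of $E$ under radial projection from $c$, integrated against a smooth bump function on a small concentric subregion of $F$, is bounded in terms of $\hd(E\cap F)/\ell(F)^{d}$, so by the mean-value argument there exists a good centre $c_F$ for which the radial projection is Lipschitz with a dimensional constant. Crucially, on $\partial F$ each radial projection is the identity, so the maps on adjacent $k$-faces agree on their common $(k-1)$-faces; this is where the tile compatibility of Section \ref{s:ER} is used.

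For the sheath cubes $J\in\cC_R^2(x,r)\setminus\cC_R(x,r)$ I do not need to push anything down to $E_R$; I only need $\pi(J)\subset J$ together with $\pi\equiv\mathrm{id}$ on the outer boundary $\partial D_R^2(x,r)$. I handle those by defining $\pi$ on $J$ as the canonical Lipschitz interpolation between the map already defined on the interior faces of $J$ (shared with $\cC_R(x,r)$) and the identity on the outer faces of $J$; this is a piecewise cone/linear extension from a point in $\mathrm{Int}(J)$ and is Lipschitz with constant controlled by \eqref{e:C2c}, which guarantees that $\ell(J)$ is comparable to the sizes of its $\cC_R(x,r)$-neighbours.

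Finally, I verify the three bulleted properties. Property \eqref{e:pi1} is immediate from the sheath construction, which is designed to be the identity on $\partial D_R^2(x,r)$ and extends by $\mathrm{id}$ to the complement. Property \eqref{e:pi2} holds because each $\pi_k$ fixes the boundary of every face it acts on and moves points only within the face, so $I\in\cC_R^2(x,r)$ is invariant. Property \eqref{e:pi3} is what the whole descending induction delivers: after applying $\pi_n,\dots,\pi_d$ on $I\in\cC_R(x,r)$, the image of $E\cap I$ lies in the $d$-skeleton of $I$ with respect to the tile structure, which by definition \eqref{e:newER} is a subset of $E_R\cap I$. The main obstacle in carrying this out rigorously is bookkeeping: ensuring at each dimensional step that the faces being acted on match across adjacent cubes of different sizes, and that the choice of radial centres can indeed be made simultaneously and compatibly across all faces of the same dimension. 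Both points are handled, respectively, by the tile construction of Section \ref{s:ER} and by the standard averaging argument applied face-by-face.
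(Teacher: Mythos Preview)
Your proposal is essentially the paper's proof: descending radial projections through the tile-compatible skeleta $\cF^{n-1},\dots,\cF^{d}$, identity on the outer boundary of the sheath $D_R^2(x,r)$, and Lipschitz extension inside each face/cube. Two minor discrepancies are worth flagging. First, your composition has an off-by-one: if $\pi_k$ pushes the $k$-skeleton to the $(k-1)$-skeleton, then the last map in the chain should be $\pi_{d+1}$, not $\pi_d$; as written you would land in the $(d-1)$-skeleton rather than in $E_R$. Second, the paper does \emph{not} invoke the averaging trick to choose the centres $c_F$. It simply picks any $c_F\in\Int(F)$ lying outside the (closed) current image of $E$, which exists because $\hd(E)<\infty$ forces that image to have empty interior in each face of dimension $>d$; the resulting global Lipschitz constant of $\pi$ may be enormous, and the paper explicitly says it does not mind this. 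Your claim that averaging produces a radial projection that is ``Lipschitz with a dimensional constant'' is not what the standard Federer--Fleming averaging gives (it controls $\hd$ of the image, not the Lipschitz norm), but since the lemma only requires \emph{some} Lipschitz map, this overclaim is harmless here.
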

\noindent
We will obtain our Federer-Fleming projection as the composition of a finite number of maps which we will define inductively. 
We start by defining a map, let us call it $\pi_1$, that will send points in $D_R^2(x,r) \cap E$ into $(n-1)$-dimensional faces. We define $\pi_1$ on each individual cube $I \in \cC_R^2(x,r)$ as follows. Pick a point $c_I \in I$ such that $c_I \notin E$. This is possible since $\hd(E)<\infty$ (recall \eqref{e:hdE}) and thus, in particular, $\dim_H(E) < d+1$; a standard argument then shows that $E$ is porous, and thus such a point $c_I$ must exist. Then for $y \in E \cap \Int(I)$, we set 
\begin{align}\label{e:pi1-a}
    \pi_1(y) \mbox{ to be the point where the line passing through } y  \mbox{ and } c_I \mbox{ meets } \partial I;
\end{align} 
note that, then, $\pi_1(y)$ belong to some $(n-1)$ dimensional face of $I$. On the other hand, if $y \in E\cap \partial I$, we set 
\begin{align} \label{e:pi1-b}
    \pi_1(y)=y.
\end{align}
We then 
\begin{align} \label{e:pi1-c}
    \mbox{extend } \pi_1 \mbox{ on the whole of } I  \mbox{ such that } \pi_1(I) \subset I \mbox{ and } \pi_1 \mbox{ is Lipschitz on } I.
\end{align} 
(This can be done via standard extension results, see for example \cite[Theorem 2.5]{heinonen2005lectures}).
Note that this definition is coherent, in the sense that one can glue together the definition of $\pi_1$ on each $I \in \cC_R^2(x,r)$  into a unique map $\pi_1$ defined on the whole of $D^2_R(x,r)$. Indeed, if $I, I' \in \cC_R^2(x,r)$ are so that $I \cap I' \neq \emptyset$, then the definition of $\pi_1$ on $I'\cap I$ must agree, since $I \cap I'$ is contained in $\partial I$ and $\partial I'$. Furthermore, we extend the definition of $\pi_1$ to $\R^n \setminus D_R^2(x,r)$ by setting \begin{align}\label{e:pi1-d}
    \pi_1(y)=y
\end{align}
there. Thus \eqref{e:pi1-a}-\eqref{e:pi1-d} give a coherent definition of $\pi_1$ on the whole of $\R^n$. 
\begin{figure}
        \centering
        \includegraphics[scale=.6]{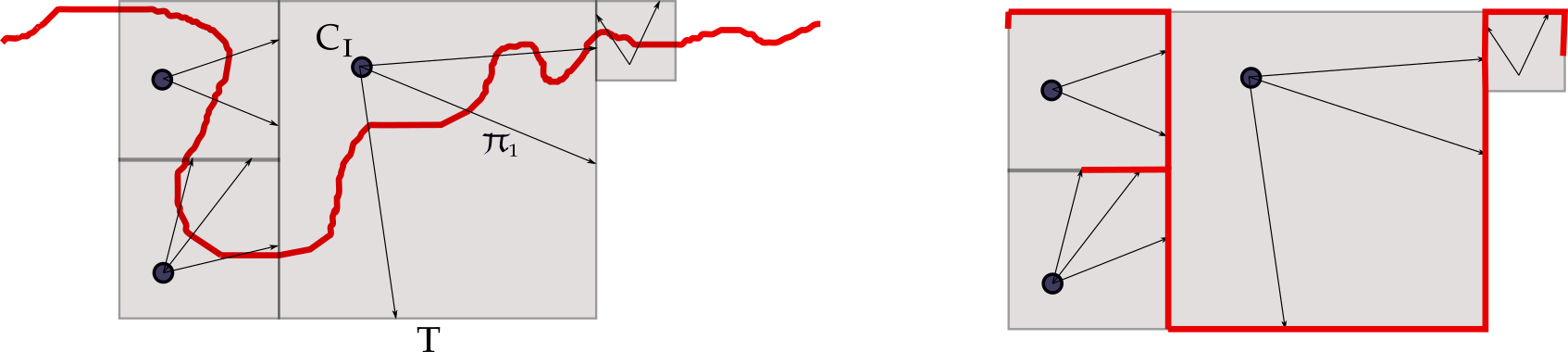}
        \caption{An image to give an heuristic idea of Federer-Fleming projections onto skeleta.}
    \end{figure}
\\
\\
\noindent
Now, if $d=n-1$, we stop here and we set $\vp := \pi_1$. Otherwise, we continue as follows. We want to send points on the $(n-1)$-dimensional faces of cubes in $\cC_R(x,r)$ to the boundaries of these faces, which are, in turn, $(n-2)$-dimensional faces. To do this, we proceed, as above, by defining the map we need on each individual face.
Recall the definition of $\cF^{n-1}$ in \eqref{e:cF}. Let us start by defining $\pi_2$ on each $\partial T \cup (\pi_1(E) \cap T)$, where $T \in \cF^{n-1}$: we repeat the construction above. Namely, 
\begin{align} \label{e:pi2a}
    \mbox{we find a point } c_T \in \Int(T) \setminus \pi_1(E) \mbox{ and then project radially } \pi_1(E)\cap T \mbox{ onto } \partial T;
\end{align} 
once again, this definition leave unchanged those points which already belong to $\partial T$. Moreover, we can extend $\pi_2$ as a Lipschitz map from $T$ to $T$ (for each $T \in \cF^{n-1}$), again by standard extension results. In this way, we obtain a coherently defined map on $\partial I$, for each $I \in \cC_R(x,r)$. Next, 
\begin{align} \label{e:pi2b}
    & \mbox{we extend } \pi_2 \mbox{ to the whole of } D_R(x,r)  \mbox{ by requiring that } \nonumber \\
    & \pi_2(I) \subset I \mbox{ for any } I \in \cC_R(x,r).
\end{align}
 To do so, we want to extend $\pi_2$ from $\partial I$, to the whole of $I$, with the requirement that $\pi_2(I) \subset I$. Let $c_I$ be the center of $I$. We set $\pi_2(c_I):= x^*$, where $x^*$ is any point in $\pi_2(\partial I)$. Then for any point $y \in \partial I$, and a point $x = t c_I + (1-t) y$, $t \in [0,1]$, (so that $x$ belongs to the line segment from $c_I$ to $y$), we set
\begin{align*}
    \pi_2 (x) = t \pi_2(c_I) + (1-t) \pi_2(y).
\end{align*}
Note that, because both $\pi_2(c_I)$ and $\pi_2(y)$ belong to $\partial I$, and $I$ is convex, then $\pi_2(x) \in I$. 
Let us check that $\pi_2$ so defined is Lipschitz on $I$. Take any two points $x_1, x_2 \in I$ and write them as
\begin{align}
    & x_1= t c_I + (1-t) y_1, \, \, t \in [0,1] \mbox{ and } y_1 \in \partial I;\label{e:x1}\\
    & x_2= s c_I + (1-s) y_2,\, \, s \in [0,1] \mbox{ and } y_2 \in \partial I. \label{e:x2}
\end{align}
Assume first that $t=s$. We can assume that $t=s<1$, for otherwise $x_1=x_2$. In this case, we have that
\begin{align*}
    |\pi_2(x_1) - \pi_2(x_2)|  = |(1-t)(\pi_2(y_1)-\pi_2(y_2))| & \leq |1-t|C |y_1-y_2| \\
    & = C \, |(1-t)(y_1-y_2)| \\
    & = |x_1-x_2|.
\end{align*}
Here the constant $C$ is the Lipschitz constant of $\pi_2$ as function defined on $\partial I$.
Next, let us suppose that for $x_1$ and $x_2$ as in \eqref{e:x1} and \eqref{e:x2}, we have that $y_1=y_2$, hence they lie on the same line segment from $c_I$ to $\partial I$. We first note that (assuming without loss of generality that $t>s$), 
\begin{align*}
|x_1 - x_2| = |(t-s) (c_I - y_1)| \geq (t-s) \ell(I).  
\end{align*}
On the other hand, simply beacuse both $\pi_2(c_I)$ and $\pi_2(y_1)$ belong to $\partial I$, we have that
\begin{align*}
    |\pi_2(x_1) - \pi_2(x_2)|= |(t-s)(\pi_2(c_I) - \pi_2(y_1))| \leq \sqrt{n}\, (t-s) \ell(I).
\end{align*}
Thus $|\pi_2(x_1) - \pi_2(x_2)| \leq \sqrt{n}|x_1-x_2|$. Finally, for  any two points $x_1, x_2 \in I$ as in \eqref{e:x1} and  \eqref{e:x2}, put
\begin{align*}
    x_2':= t\, c_I + (1-t)y_2.
\end{align*}
Note that there exists a constant, depending only on $n$, so that
\begin{align}
|x_2 - x_2'| \leq C \, |x_1 - x_2|,    
\end{align}
But then, by the triangle inequality, we also have that
\begin{align*}
    |x_1-x_2'| \leq (C+1) |x_1-x_2|. 
\end{align*}
This give us the following:
\begin{align*}
    |\pi_2(x_1) - \pi_2(x_2)| &  \leq |\pi_2(x_1) - \pi_2(x_2')| + |\pi_2(x_2') - \pi_2(x_2)| \\
    & \leq C \ps{ |x_1- x_2'| + |x_2'-x_2|} \\
    & \leq C' |x_1-x_2|. 
\end{align*}
This proves that the extension of $\pi_2$ to the whole of $I$ is indeed Lipschitz, with a Lipschitz constant comparable to that of $\pi_2$ as defined on $\partial I$. Now we let $\pi_2$ on $D(x,r)$ to be piecewise defined on each $I$ of $\cC_R(x,r)$. 
\\
\\
\noindent
Let us see why this definition is coherent. If $T, T' \in \cF^{n-1}$, $T \cap T' \neq \emptyset$ and let us assume without loss of generality that $\ell(T') < \ell(T)$, then either 
\begin{align}\label{e:pi2-case1}
    T' \subset T,
\end{align}
or
\begin{align} \label{e:pi2-case2}
    T \cap T' \subset \ps{\partial T} \cup \ps{\partial T'}.
\end{align}
If \eqref{e:pi2-case2} holds, than we immediately see that the definition of $\pi_2$ is coherent, since we defined to be the identity on both $\partial T$ and $\partial T'$. 
We divert a moment from the main construction to show that the former case does not happen.
\begin{lemma}
The case \eqref{e:pi2-case1} does not occur.
\end{lemma}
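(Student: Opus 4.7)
The plan is to argue by contradiction. Assume there exist $T, T' \in \cF^{n-1}$ with $T\cap T' \neq \emptyset$, $\ell(T') < \ell(T)$, and $T' \subset T$. By the construction of $\cF^{n-1}$, each of $T, T'$ is either itself an original $(n-1)$-face of some cube in $\cC_R$ or else a tile in $\cD^{n-1}(T_0)$ for some such original face $T_0$; in either case there exist $I, I' \in \cC_R$ with original faces $T_I, T_{I'}$ satisfying $T \subset T_I$ and $T' \subset T_{I'}$. Since $T' \subset T_I \cap T_{I'}$ has positive $(n-1)$-dimensional measure, $T_I$ and $T_{I'}$ lie in a common affine hyperplane $H$ and overlap there with nonempty relative interior.

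The next step is to show that $I$ and $I'$ must be face-adjacent cubes of equal side length. Because $\cC_R$ is a packing, $I$ and $I'$ cannot sit on the same side of $H$ without their interiors meeting, so they lie on opposite sides of $H$ and share a common $(n-1)$-face contained in $T_I \cap T_{I'}$. The key input is the Whitney-like estimate $\ell(J) \sim \tau d_R(J)$ for $J \in \cC_R$ (which follows from the maximality in the definition \eqref{e:CR} of $\cC_R$ in exactly the way \eqref{e:whitney-like} holds in Lemma~\ref{l:corona}), together with the $1$-Lipschitz continuity of $d_R$ inherited from its definition. Writing $\ell(I) = 2^k \ell(I')$ and supposing $k \geq 1$, the Lipschitz bound yields $|d_R(I) - d_R(I')| \lesssim_n \ell(I) = 2^k \ell(I')$, whereas the two-sided Whitney estimate forces $d_R(I) - d_R(I') \gtrsim 2^k \ell(I')/\tau$. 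Matching these two bounds forces $1/\tau \lesssim_n 1$, so once $\tau$ is taken sufficiently small depending only on $n$ (which is permissible since $\tau$ is a free parameter in Lemma~\ref{l:corona}) no $k \geq 1$ is possible, and hence $\ell(I) = \ell(I')$.

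With $\ell(I) = \ell(I')$, the two dyadic $(n-1)$-faces $T_I$ and $T_{I'}$ have equal side length, lie in the same hyperplane, and overlap with positive $(n-1)$-measure, so dyadic alignment forces $T_I = T_{I'}$. Hence both $T$ and $T'$ are supported on a single original face $T_I$. If $\cD^{n-1}(T_I) = \emptyset$, then neither was replaced by tiles and $T = T_I = T'$, contradicting $\ell(T') < \ell(T)$. If $\cD^{n-1}(T_I) \neq \emptyset$, the modification step of $\cF^{n-1}$ removed $T_I$ and inserted its tiles, so $T, T' \in \cD^{n-1}(T_I)$; but all tiles in $\cD^{n-1}(T_I)$ share a single common side length, again contradicting $\ell(T') < \ell(T)$.

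The main obstacle is the quantitative Whitney comparison in the second paragraph: one has to track the implicit constants in $\ell \sim \tau d_R$ and in the Lipschitz bound carefully enough to see that the threshold on $\tau$ depends only on $n$, so that adjacent cubes in $\cC_R$ really must have equal side lengths. All remaining steps are bookkeeping about the two possible origins (original face versus tile) of elements of $\cF^{n-1}$ and how they relate to the original $(n-1)$-faces of $\cC_R$.
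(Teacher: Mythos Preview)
Your argument has a genuine gap: the claim that adjacent cubes in $\cC_R$ must have \emph{equal} side length is false, and the very existence of the tiling construction that produces $\cF^{n-1}$ is predicated on its falsity. The paper itself records (in the proof that $E_R$ is Ahlfors regular) that for $I\in\cC_R$ a smaller neighbouring cube $I'\in\cC_R$ only satisfies $\ell(I')>\tau\ell(I)$, not $\ell(I')=\ell(I)$.

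Concretely, your inequality $d_R(I)-d_R(I')\gtrsim 2^{k}\ell(I')/\tau$ breaks down at $k=1$. From $\ell(I)<\tau d_R(I)$ one gets $d_R(I)>2\ell(I')/\tau$, while maximality of $I'$ gives $2\ell(I')=\ell((I')^{*})\geq \tau d_R((I')^{*})$; combining with the $1$-Lipschitz bound yields only $d_R(I')\leq 2\ell(I')/\tau + 2\sqrt{n}\,\ell(I')$. Subtracting, the $2\ell(I')/\tau$ terms cancel and you are left with $d_R(I)-d_R(I')>-2\sqrt{n}\,\ell(I')$, which says nothing. The Whitney-type condition $\ell\sim\tau d_R$ forces neighbouring cubes to have \emph{comparable} side lengths (this is the standard Whitney behaviour), but it cannot rule out a factor-of-two jump no matter how small $\tau$ is. So the chain ``$\ell(I)=\ell(I')\Rightarrow T_I=T_{I'}\Rightarrow$ contradiction'' never gets started.

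The paper's proof avoids any metric comparison of cube sizes and argues purely from the combinatorics of the construction of $\cF^{n-1}$. If $T$ is an original $(n-1)$-face that survives in $\cF^{n-1}$, then by the modification step $\cD^{n-1}(T)=\emptyset$, hence $\Adj^{n-1}(T)=\emptyset$; but the presence of $T'\subset T$ (whether $T'$ is a face of some $I'$ or a tile) forces a smaller cube in $\Adj^{n-1}(T)$, a contradiction. If instead $T$ is itself a tile, then all tiles on the same original face have a single common side length, so $T'\subset T$ with $\ell(T')<\ell(T)$ is again impossible. You should rework your argument along these lines, using the replacement rule (original face $\leadsto$ tiles of the minimal adjacent size) rather than trying to force adjacent cubes in $\cC_R$ to have equal side length.
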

\begin{proof}
Let $T \in \cF^{n-1}(x,r)$, and assume first that $T$ is an $(n-1)$-dimensional face (as opposed to a tile) of a cube $I \in \cC_R(x,r)$. Suppose that there exists an element $T'$ of $\cF^{n-1}(x,r)$ such that $T' \subset T$. If $T'$ is an $(n-1)$-dimensional face of a cube $I' \in \cC_R(x,r)$, then, by construction of $\cF^{n-1}(x,r)$, we must have that
$I' \in \Adj^{n-1}(T)$. But then $F$ cannot possibly belong to $\cF^{n-1}$. On the other hand, if $T'$ is a tile, then also in this case $T$ cannot be in $\cF^{n-1}$, since it should have been tessellated into tiles of the same size of $T'$. 

Suppose now that $T$ is a tile itself. But by construction, we cannot have two tiles of different sizes lying on the same $(n-1)$-dimensional face. Thus $T' \subset T$ has to really be $T' = T$, which contradicts the fact that $\ell(T')< \ell(T)$.
\end{proof}
\noindent
Thus the definition of $\pi_2$ is coherent.
Let us now define $\pi_2$ on those $(n-1)$-dimensional faces $T'$  of cubes in $\cC_R^2(x,r)$ such that $\Int(T') \nsubseteq \Int(D_R(x,r))$ (recall the definition of $D_R(x,r)$, \eqref{e:D-R}). 
These are the faces which form the external boundary of the sheath $\cC_R^2(x,r) \setminus \cC_R(x,r)$. For these faces we leave everything unchanged, i.e. we let
\begin{align}\label{e:pi2c}
    \pi_2(y) = y  &\mbox{ for any } y \in T, \\
    & \mbox{ where } T \mbox{ is a } (n-1)\mbox{-dimensional face } T \mbox{ with } T \nsubseteq D_R(x,r).
\end{align}\label{e:pi2d}
Finally, we extend $\pi_2$ to the whole of $D^2_R(x,r) \setminus D_R(x,r)$ by requiring that 
\begin{align}
    & \pi_2(I) \subset I \mbox{ for } I \in \cC^2_R(x,r)\setminus \cC_R(x,r) \nonumber \\
    & \pi_2(y) = y \mbox{ whenever } y \in \partial D_R^2(x,r).
\end{align}
(This can be done in the same fashion as for \eqref{e:pi2b}).
We finally set 
\begin{align}\label{e:pi2e}
    \pi_2(y)=y \mbox{ whenever } y \in \R^n \setminus D_R^2(x,r).
\end{align}
Hence \eqref{e:pi2a}-\eqref{e:pi2e} give us a Lipschitz map $\pi_2$ defined on the whole of $\R^n$.
Now, if $d=n-2$, then we can set $\vp = \pi_2 \circ \pi_1$, otherwise we continue projecting. To do so, we define a third map $\pi_3$. We follow the procedure above: first, if $F$ is an $(n-2)$-dimensional element of $\cF^{n-2}(x,r)$, then we set $\pi_3$ to be the radial projection from some point $c_F \in \Int(F) \setminus \pi_2 \circ \pi_1(E)$ defined on $\partial F \cup \ps{\pi_1 \circ \pi_2 (E) \cap F}$. In particular, $\pi_3(y)=y$ if $y \in \partial F$. Next, we extend $\pi_3$ to the whole of $T$, by requiring that $\pi_3(T) \subset T$; if there is an element $F$ of $\cF^{n-2}$ such that $(\pi_2 \circ \pi_1) (E) \cap F = \emptyset$, we set $\pi_3(y)=y$ on such an element. Note that this definition is coherent by construction of $\cF^{n-2}(x,r)$, as in the definition of $\pi_2$. Next, we extend the definition of $\pi_3$ to the faces $T$ of dimension $(n-1)$, requiring that for any such a face, we have $\pi_3(T) \subset T$ and $\pi_3(\Int(T)) \subset \Int(T)$; we also require that $\pi_3(y) = y$ on those faces $T$ such that $T \cap D_R(x,r) = \emptyset$. Finally, we extend $\pi_3$ to the whole cubes $I$, requiring again that $\pi_3(I) \subset I$. At this point, note that for $y \in E$, we have 
\begin{itemize}
    \item  
    either $\pi_3 \circ \pi_2 \circ \pi_1 (y)\in \R^n \setminus D^2_R(x,r)$ if $ y \in E \setminus D^2_R(x,r)$;
    \item 
    or $\pi_3 \circ \pi_2 \circ \pi_1 (y) \in T$, where $T$ is a $(n-1)$-dimensional face of a cube in  $\cC_R^2(x,r) \st T \nsubseteq D_R(x,r)$; 
    \item
    or  $\pi_3 \circ \pi_2 \circ \pi_1 (y) \in F$,  where $F \in \cF^{n-3}$.
\end{itemize}
\begin{remark}
The second possibility only occurs for those $y \in E$ so that 
\begin{align*}
    y \in \bigcup_{I \in \cC^2(x,r) \setminus \cC(x,r)} I \subset \R^n \setminus B(x,r).
\end{align*}
\end{remark}
We continue constructing projections in this fashion until reaching the $d$-dimensional skeleton.
At each step, we construct $\pi_m$, for $n-d \leq m \leq n$, first on the elements of $\cF^m(x,r)$ as a radial projection, and second we extend this definition to faces (or tiles) of increasing dimension, asking (if $F'$ represents on such face or tile) that $\pi_m (F') \subset F'$. 
We stop once $\pi_{n-d}$ has been defined. If $y \in E $, then, setting
\begin{align} \label{e:def-pi}
    \pi:= \pi_{n-d} \circ \cdots \circ \pi_1,
\end{align}
we see that
\begin{itemize}
    \item either $\pi(y)\in \R^n \setminus D_R^2(x,r)$, if $y \in E \setminus D^2_R(x,r);$
    \item or $\pi(y) \in T$, where $T$ is an $(n-1)$-dimensional face of a cube in $\cC_R^2(x,r)$ such that  $T \nsubseteq D_R(x,r)$;
    \item or $\pi(y) \in F$, where $F \in \cF^{d}(x,r)$.
\end{itemize}
Note that the definition of $\pi$ is coherent for the same reasons that $\pi_3$ and $\pi_2$ were coherent. In particular, $\pi$ is Lipschitz (with possible a very large Lipschitz constant, but we do not mind this). Moreover, it follows from the construction that the properties \eqref{e:pi1}, \eqref{e:pi2} and \eqref{e:pi3} are satisfied; this concludes the proof of Lemma \ref{l:FF}
\section{Topological stability of $E$ implies topological stability of the approximating set} \label{s:TC-STC}
In this section, we will prove that if $E$ is a stable $d$-surface (i.e. it satisfies condition \eqref{e:TC}) and if we approximate it with sets $E_R$ (as in \eqref{e:newER}), then  these sets satisfy the skeletal topological condition STC (see Definition \ref{d:TND}).
\subsubsection{The story so far.}
Let $E$ be the stable $d$-surface that we started with. From Lemma \ref{l:TP-LCR} we know that it is lower content ($d, \cc_1$)-regular. Hence we apply the coronisation as described in Subsection \ref{s:ER} (Lemma \ref{l:corona}), which is a collection of `top cubes' $\{R\}_{R \in \Top(k_0)}$. For each such an $R$ we construct an approximating set which we called $\wt E_R$ (see its definition in \eqref{e:wtER}, and see recall the definition of $\cC_R$ in \eqref{e:CR}). We subsequently modify it as in \eqref{e:newER}. Next we defined coherent Federer-Fleming projections (Lemma \ref{l:FF}), through which one can map $E$ into $E_R$ in a Lipschtitz continuous way (without control on the Lipschitz constant). 
\\
\\
\noindent
The idea behind the proof of the Lemma \ref{l:TC-STC} below is that by composing the Federer-Fleming projection (which maps $E$ into $E_R$) with a deformation $\vp_t$ as in the definition of the skeletal topological condition (Def. \ref{d:TND}), we obtain an allowed Lipschitz deformation \textit{for} $E$ (see Def. \ref{d:ALD}). Consequently we obtain a lower bound for the $d$-measure of the deformation of $E$ (as in \eqref{e:TC}), and, because $E_R$ lies close to $E$, we obtain a lower bound for the measure of the deformation of $E_R$ (as in \eqref{e:TND4}). See Figure \ref{fig:tc-stc}.
\begin{figure}[h]
    \centering
    \includegraphics[scale=0.5]{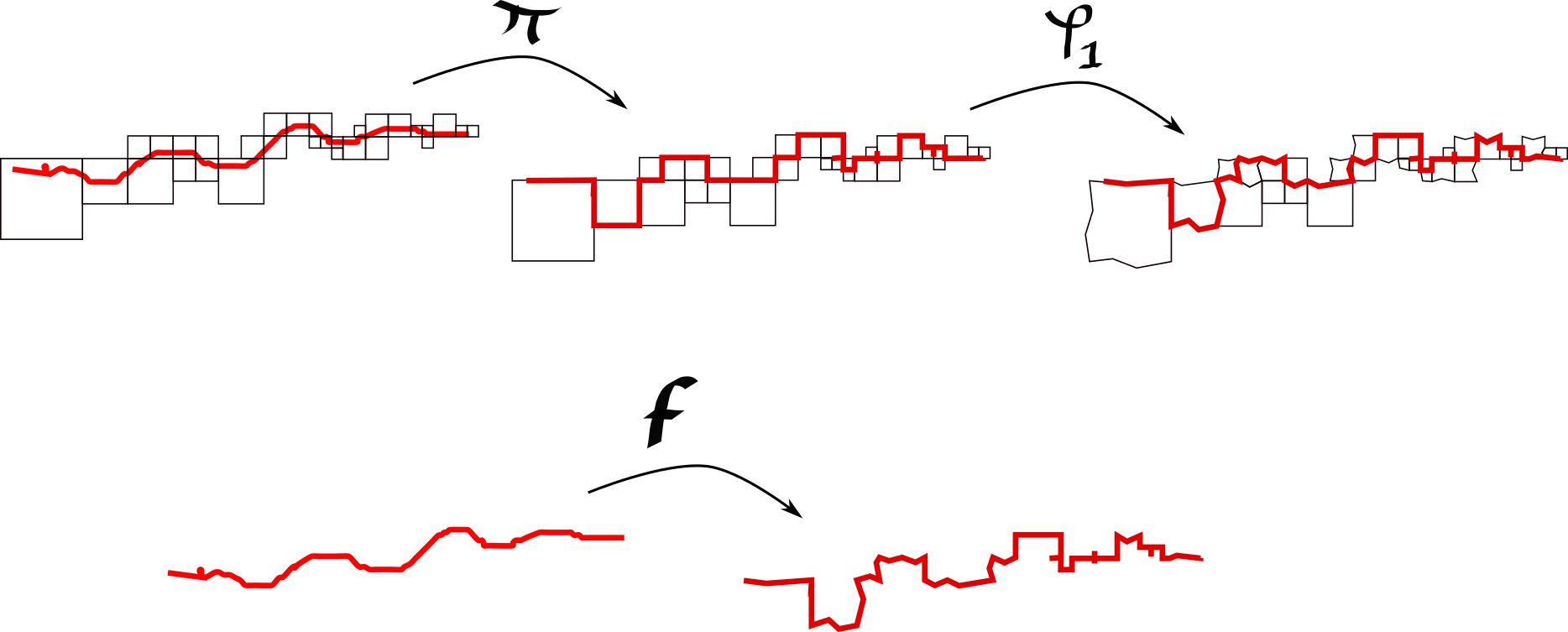}
    \caption{The topological condition on $E$ implies the skeletal topological condition on $E_R$: composing $\pi$ with $\vp_1$ gives an allowed deformation for $E$.}
    \label{fig:tc-stc}
\end{figure}
\noindent

\begin{lemma}\label{l:TC-STC}
Let $E \subset \R^n$ be such that $0<\hd(E)<\infty$. Suppose also that $E$ is a stable $d$-surface with constants $(r_0, \alpha_0, \delta_0, \eta_0, \gamma_0)$; let $Q_0 \in \dD(E)$ be such that $\ell(Q_0)< r_0$. For some $k_0 \in \N$, apply Lemma \ref{l:corona} to $Q_0$ to obtain a corona decomposition $\Top(k_0)=\Top(Q_0, k_0)$ and a family of sets $\{E_R\}_{R \in \Top(k_0)}$ with parameter $\tau$. Then we can find parameters $r_1, \alpha_1, \delta_1$ and $\eta_1$, so that the family $\{E_R\}_{R \in \Top(k_0)}$ satisfies the $C_2$-\textup{(STC)} for $C_2$ sufficiently large. 
\end{lemma}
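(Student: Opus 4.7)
The strategy is to transport the topological condition \eqref{e:TC} from $E$ to $E_R$ by precomposing deformations of $E_R$ with the Federer--Fleming projection $\pi$ from Lemma \ref{l:FF}, applying \eqref{e:TC} to the resulting deformation of $E$, and pushing the content bound back; this follows the approach of \cite{david04}. Given $(x_1, R, Q)$ satisfying \eqref{e:TND-x1}--\eqref{e:TND-upper} and a one-parameter family $\{\vp_t\}$ satisfying \eqref{e:vp1}--\eqref{e:vp3} and \eqref{e:TND3} on a to-be-chosen ball $B(x_2,r_2)$, I will manufacture an $\alpha_0$-ALD $\{\tilde\vp_t\}$ of $E$ on a slightly larger ball, run \eqref{e:TC}, and read off \eqref{e:TND4}.

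\emph{Choice of the ball.} Using the $\tau$-closeness of $E$ to $E_R$ inside $C_1 B_Q$ (\eqref{e:adr-corona}) together with the local $d$-lower content regularity of $E$ from Lemma \ref{l:TP-LCR}, I pick $x_2\in E$ near $x_1$ and set $r_2 = c_2\,\ell(Q)$ with $c_2$ a small dimensional constant chosen so that $\bar B(x_2, 2r_2)\subset B(x_1, \ell(Q))$ and $r_2>\eta_1\,\ell(Q)$ (cf.\ the remark after \eqref{e:TND5}). Fixing $r_1 \le r_0/4$ and combining it with \eqref{e:TND-R} guarantees that $2r_2<r_0$, so \eqref{e:TC} is available on $B(x_2,2r_2)$. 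The upper bound \eqref{e:TND-upper} with $C_2$ sufficiently large, together with the Ahlfors regularity of $E_R$ (constant $C_3$), rules out the degenerate possibility that $E\cap B(x_1,\ell(Q))$ is too sparse for such a choice of $x_2$.

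\emph{Construction and verification of $\tilde\vp_t$.} Apply Lemma \ref{l:FF} at $(x_2, r_2')$ with $r_2' = r_2+\alpha_1\ell(Q)$ to obtain a Lipschitz map $\pi:\R^n\to\R^n$ with $\pi(E)\cap I\subset E_R\cap I$ for every $I\in\cC_R(x_2,r_2')$, and then set
\[
\tilde\vp_t(y) = \begin{cases} (1-2t)\,y + 2t\,\pi(y), & 0\le t\le 1/2,\\ \vp_{2t-1}(\pi(y)), & 1/2\le t\le 1. \end{cases}
\]
The cubewise containment \eqref{e:pi2} ensures that the linear path on $[0,1/2]$ stays inside each cube $I$, so \eqref{e:vp1} and \eqref{e:vp3} transfer automatically to $\tilde\vp_t$ on $B(x_2,2r_2)$; continuity \eqref{e:vp2} is immediate. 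For \eqref{e:vp4}, on $[0,1/2]$ both endpoints of the linear path lie within $O(\tau\,\ell(Q))$ of $E$ by \eqref{e:adr-corona}, and on $[1/2,1]$ the point $\pi(y)\in E_R\cap B(x_2,r_2)$ so that \eqref{e:TND3} gives $\dist(\vp_{2t-1}(\pi(y)),E)\le\alpha_1\ell(Q)$. Choosing $\alpha_1$ and $\tau$ small relative to $\alpha_0$ makes $\tilde\vp_t$ an $\alpha_0$-ALD of $E$ on $B(x_2,2r_2)$, and applying \eqref{e:TC} yields
\[
\hd\!\ps{B\!\ps{x_2,(1-\eta_0)\cdot 2r_2}\cap\vp_1(\pi(E))}\ge \delta_0(2r_2)^d.
\]
Since $\pi(E)\subset E_R$ on $B(x_2,r_2')$ and $\vp_1=\mathrm{id}$ outside $B(x_2,r_2)$, the mass above is captured by $\vp_1(E_R\cap B(x_2,r_2))$, producing the bulk contribution $\delta_1\ell(Q)^d$ in \eqref{e:TND4} with $\delta_1\sim \delta_0 c_2^d$.

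\emph{Annular contribution and main obstacle.} To pick up the term $\hd(E_R\cap A_{\eta_1\ell(Q)}(x_2,r_2))$ in \eqref{e:TND4}, I plan to postcompose $\vp_1$ with a second Federer--Fleming projection onto $E_R$ and invoke a linking/degree argument in the spirit of \cite{david04}: because $\vp_1$ is the identity outside $B(x_2,r_2)$ and only perturbs $E_R\cap B(x_2,r_2)$ within $\alpha_1\ell(Q)$ of $E$ (hence within $O(\alpha_1\ell(Q))$ of $E_R$ via \eqref{e:adr-corona}), the postcomposed map is homotopic to the identity on $E_R\cap A_{\eta_1\ell(Q)}(x_2,r_2)$, forcing its image to cover this annular set in $\hd$-measure. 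Picking $\eta_1$ small compared with $\alpha_1$ makes the annulus survive, and picking $\eta_1$ small compared with $\eta_0 c_2$ makes the annular image essentially disjoint from the bulk image inside $B(x_2,(1-\eta_0)\cdot 2r_2)$, so the two contributions add to give \eqref{e:TND4}. The main obstacle will be precisely this annular step: arranging that the parameters $\alpha_1$, $\eta_1$, $\tau$ and $c_2$ are coordinated so that the linking/degree argument genuinely runs on the multi-scale skeleton $E_R$ (as opposed to a uniform grid), and so that the annular image is not cannibalized by the bulk image produced via $\pi$.
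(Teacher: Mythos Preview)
Your construction of $\tilde\vp_t$ and the ``bulk'' application of \eqref{e:TC} are essentially what the paper does (its Lemma~\ref{l:TND}); that part is fine. The gap is in how you handle the annular term $\hd(E_R\cap A_{\eta_1\ell(Q)}(x_2,r_2))$ on the right of \eqref{e:TND4}. You propose to earn it by a degree/linking argument showing that $\vp_1(E_R)$ must cover the annulus, and you correctly flag this as the main obstacle. The paper avoids this obstacle entirely by a different, much simpler idea: it does \emph{not} fix $r_2=c_2\,\ell(Q)$, but instead lets $r_2$ range over $[\ell(Q)/3,\,2\ell(Q)/3]$ and uses a pigeonhole argument. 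With $\eta_1 = C\delta_0/C_2$ one can fit $N\gtrsim 1/\eta_1\sim C_2/\delta_0$ pairwise disjoint annuli $A_{\eta_1\ell(Q)}(x_2,s_k)$ in that range; since their total $E_R$-mass is bounded by $\hd(E_R\cap B(x_1,\ell(Q)))\le C_2\,\ell(Q)^d$ from \eqref{e:TND-upper}, some $s_k$ satisfies $\hd(E_R\cap A_{\eta_1\ell(Q)}(x_2,s_k))\lesssim \delta_0\,\ell(Q)^d$. Choosing $r_2=s_k$ makes the annular term already $\lesssim \delta_1\,\ell(Q)^d$, so \eqref{e:TND4} reduces (after adjusting the constant in $\delta_1$) to the bulk bound $\hd(\vp_1(E_R\cap B(x_2,r_2)))\ge 2\delta_1\,\ell(Q)^d$, which your $\tilde\vp_t$ argument gives.

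Two consequences for your write-up. First, this is exactly where the hypothesis \eqref{e:TND-upper} and the specific choice $\eta_1\sim\delta_0/C_2$ enter; your current use of $C_2$ (``rules out the degenerate possibility that $E\cap B(x_1,\ell(Q))$ is too sparse'') misreads its role. Second, your fixed choice $r_2=c_2\,\ell(Q)$ with $c_2$ small removes the freedom needed for the pigeonhole; the paper takes $x_2=x_1$ and $r_2\sim\ell(Q)$, and the slight enlargement to $\wt r=(1+\eta_0)r_2$ (rather than doubling) suffices to absorb the $O(\tau\ell(Q))$ displacement from $\pi$. With the pigeonhole idea in place, the linking/degree step you worry about is not needed at all.
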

\noindent
We will prove this lemma through a few lemmas below. 
Set 
\begin{align} \label{e:tau}
    \tau < \frac{1}{1000} \min\{\alpha_0, \eta_0\}.
\end{align}
\noindent
To ease the notation we let $\Top= \Top(k_0)$; recall that for a large constant $C_2$, we want to prove the existence of parameters $r_1>0$ and $0<\alpha_1,\delta_1<1$ (as in \eqref{e:TND-constants}) so that for all $x_1 \in E$, $R \in \Top$ and $Q \in \Tree(R)$ with $x_1 \in Q$, as in \eqref{e:TND-x1}-\eqref{e:TND-R}, for which \eqref{e:TND-upper} holds, we have the lower bound \eqref{e:TND4}. Let us immediately choose the parameters in \eqref{e:TND-constants} (our choice is somewhat similar to that of \cite{david2004hausdorff}, equation 3.10, except that we have one extra parameter to deal with, since our topological condition is weaker).
We set
\begin{align}
    & r_1 = r_0, \mbox{ where } r_0 \mbox{ is the one given by } \eqref{e:TC}; \label{e:r_0b} \\
    & \alpha_1 = C \min(\eta_0, \alpha_0); \label{e:alpha} \\
    & \eta_1 = C \frac{c(\eta_0)\delta_0\gamma_0}{C_2}, \mbox{ where } c(\eta_0):= \frac{\eta_0^2}{1+\eta_0}; \label{e:eta} \\
    & \delta_1 = C \delta_0. \label{e:delta}
\end{align}
We will fix the various absolute constants $C$ as we go along. They will only depend on $n$.
Let $x_1$, $R$ and $Q$ as in \eqref{e:TND-x1}, \eqref{e:TND-R} and \eqref{e:TND-Q}. We now want to find a ball $B(x_2, r_2)$ with the required properties: recall that the topological condition in Definition \ref{d:TC} gives us a ball $B=B(x_B,r_B)$ centered on $E$ (satisfying \eqref{e:r_B} and \eqref{e:Bin}) where the lower bounds \eqref{e:TC} holds. We choose
\begin{align}
    & x_2= x_B \mbox{ and } \label{e:TND-x}\\
    & r_2 \st \frac{r(B)}{1+ \eta_0} \leq r_2 \leq (1-\eta_0)r(B). \label{e:TND-r_2}
\end{align}
\begin{remark}\label{r:annulus}
We would also like the quantity $\hd\ps{E_R \cap A_{\eta_1 \ell(Q)}(x_2,r_2)}$ to be small. Indeed, if for some choice of $r_2$ it held that
\begin{align} \label{e:TP-TNDb}
    \hd\ps{E_R \cap A_{\eta_1 \ell(Q)} (x_2,r_2)} \lesssim \delta_1 \ell(Q)^d,
\end{align}
then in order to verify \eqref{e:TND4} (adjusting the constant in the definition of $\delta_1$), we would only have to check that 
\begin{align}\label{e:TP-TNDa}
\hd\ps{ \vp_1(E_R \cap B(x_2,r_2))} \geq \delta_1 \ell(Q)^d.     
\end{align}
\end{remark}
\begin{claim}\label{claim:annulus}
There exists a radius $r_2$ satisfying  \eqref{e:TND-r_2} and \eqref{e:TP-TNDb}.
\end{claim}
\begin{proof}
Choose $C$ in \eqref{e:eta} so that
\begin{align} \label{e:TND-C-eta}
    C\delta_0/C_2 = 10^{-1}.
\end{align} 
Then we consider ten radii $\{s_k\}_{k=0}^9$ given as 
\begin{align*}
    s_{k} = \frac{r(B)}{1+\eta_0} - \frac{k}{10} c(\eta_0) r(B), \quad 0\leq k\leq 9.
\end{align*}
Note that each $s_k$ satisfies \eqref{e:TND-r_2}. Since
\begin{align*}
    \eta_1 \ell(Q) \stackrel{\eqref{e:eta}}{=} C \frac{c(\eta_0)\gamma_0 \delta_0}{C_2} \ell(Q) \stackrel{\eqref{e:r_B}}{\leq}  \frac{C\delta_0}{C_2}c(\eta_0)r(B) \stackrel{\eqref{e:TND-C-eta}}{\leq} \frac{1}{10} c(\eta_0) r(B), 
\end{align*}
the annuli $\ck{A_{\eta_1 \ell(Q)}(x_2, s_k)}_{k=0}^9$ are pairwise disjoint.
Then we see that
\begin{align*}
    \sum_{k=0}^9 \hd(E_R \cap A_{\eta_1 \ell(Q)} (x_2,s_k)) \leq \hd\ps{E_R \cap B(x_2, r(B))} \leqt{\eqref{e:TND-upper}}  C_2 \ell(Q)^d. 
\end{align*}
Then by the pigeonhole principle and \eqref{e:eta} and \eqref{e:delta}, we must have that for some $0\leq k \leq 9$, 
\begin{align*}
    \hd(E_R \cap A_{\eta_1 \ell(Q)} (x_2, s_k)) \leq 10^{-1} C_2 \ell(Q)^d \stackrel{\eqref{e:TND-C-eta}}{\lesssim} C\delta_0 \ell(Q)^d \stackrel{\eqref{e:delta}}{\approx} 
    \delta_1 \ell(Q)^d. 
\end{align*}
Thus 
\eqref{e:TP-TNDb} holds putting $r_2=s_k$.
\end{proof}
\begin{lemma} \label{l:IandQ}
Let $I \in \cC_R(x_2, r_2)$, where $\cC_R(x,r)$ is defined in \eqref{e:C_R(x,t)}; also, here $x_2=x_1 \in Q$, for some $Q \in \Tree(R)$, and $r_2$ is as in \eqref{e:TND-r_2}. Then 
\begin{align}
    \ell(I) \leq C\, \tau\, \ell(Q). 
\end{align}
\end{lemma}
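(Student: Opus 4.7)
The plan is to unpack the definition of $\cC_R$ from \eqref{e:CR}, namely $\ell(I) < \tau \, d_R(I)$, and then show $d_R(I) \lesssim \ell(Q)$ by exhibiting a single element of $\Stop(R)$ sitting inside $Q$ that realizes a favorable value in the infimum defining $d_R$.

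First, since $Q \in \Tree(R)$, either $Q$ itself is a minimal cube of $\Tree(R)$, or by descending we find a minimal cube $P$ of $\Tree(R)$ with $P \subseteq Q$; in either case, such a $P$ exists, belongs to $\Stop(R)$, and satisfies $\ell(P) \leq \ell(Q)$. Both $x_2 = x_1$ and (any point of) $P$ lie in $Q \subseteq B(\zeta_Q, \ell(Q))$ by Theorem \ref{t:Christ}, so $\dist(x_2, P) \leq 2\ell(Q)$.

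Next, pick any $y \in I \cap B(x_2, r_2)$, which exists because $I \in \cC_R(x_2, r_2)$. By the triangle inequality and the restriction \eqref{e:TND-r_2} on $r_2$,
\begin{align*}
\dist(y, P) \leq |y - x_2| + \dist(x_2, P) < r_2 + 2\ell(Q) \leq \tfrac{2}{3}\ell(Q) + 2\ell(Q) = \tfrac{8}{3}\ell(Q).
\end{align*}
Using this cube $P \in \Stop(R)$ in the definition of $d_R$, we obtain
\begin{align*}
d_R(I) = \inf_{x \in I} d_R(x) \leq d_R(y) \leq \ell(P) + \dist(y, P) \leq \ell(Q) + \tfrac{8}{3}\ell(Q) = \tfrac{11}{3}\ell(Q).
\end{align*}

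Finally, since $I \in \cC_R$ satisfies $\ell(I) < \tau\, d_R(I)$ by \eqref{e:CR}, we conclude $\ell(I) \leq \tfrac{11}{3}\tau\,\ell(Q)$, which is the desired inequality with $C = 4$ (say). The argument is essentially a two-line triangle inequality chase once the right descendant $P$ is identified, so I expect no real obstacle; the only point that requires a moment of care is making sure a minimal cube of $\Tree(R)$ inside $Q$ actually lies in $\Stop(R)$, which follows directly from the definitions of $\Forest(R)$ and $\Stop(R)$ in Section \ref{s:ER}.
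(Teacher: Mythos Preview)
Your proof is correct and follows essentially the same approach as the paper: both arguments pick $y \in I \cap B(x_2,r_2)$, bound $d_R(y)$ via a cube in $\Stop(R)$ sitting inside $Q$, and then invoke $\ell(I) < \tau\, d_R(I)$ from \eqref{e:CR}. The paper phrases this as a reduction to the case $Q \in \Stop(R)$ together with the $1$-Lipschitz property of $d_R$, while you unpack the same triangle inequality directly and track the constants explicitly.
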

\begin{proof}
It suffices to prove the lemma for $Q \in \Stop(R)$, since if $Q \in \Tree(R) \setminus \Stop(R)$, then there exists a cube $Q' \in \Stop(R)$ with $x_2=x_1 \in Q'$ and $\ell(Q') < \ell(Q)$. 
If $I \in \cC_R$, then we know that $\ell(I) \lesssim \tau d_R(I)$ (recall that $\tau$ depends on $\alpha_0$ and $\eta_0$ and was fixed in \eqref{e:tau}). Moreover, $d_R(\cdot)$ is 1-Lipschitz. Then if $y \in I \cap B(x_2, r_2)$, we have that
\begin{align*}
    d_R(I) \leq d_R(y) \leq |y-x_2| + d_R(x_2).
\end{align*}
Because $r_2 \approx \ell(Q)$, $|x_2-y| \leq \ell(Q)$. On the other hand, we see that $$d_R(x_2) = \inf_{P \in \Stop(R)} \ps{ \ell(P) + \dist(x_2, P)} \leq \ell(Q).$$
\end{proof}
\begin{remark}
Note that the same holds for any $I \in \cC_R^2(x_2,r_2)$, by definition of $\cC_R^2(x_2,r_2)$ (as defined in \eqref{e:C2a}, \eqref{e:C2b} and \eqref{e:C2c}). 
\end{remark}

\begin{lemma} \label{l:TND}
Let  $(x_2,r_2)$ to be as chosen in \eqref{e:TND-x} and \eqref{e:TND-r_2}. For any one parameter family of Lipschitz deformations $\ck{\vp_t}_{0\leq t \leq 1}$ satisfying \eqref{e:vp1}, \eqref{e:vp2}, \eqref{e:vp3} and \eqref{e:TND3} \textup{(}relative to $(x_2,r_2)$\textup{)}, the property \eqref{e:TP-TNDa} holds, that is, we have
\begin{align*}
    \hd(\vp_1(E_R \cap B(x_2, r_2))) \geq \delta_1 \ell(Q)^d. 
\end{align*}
\end{lemma}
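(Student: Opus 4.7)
The plan is to derive Lemma \ref{l:TND} from the topological condition \eqref{e:TC} on $E$ by lifting the given deformation $\vp_t$ of $E_R$ to an allowed Lipschitz deformation of $E$ itself, using the Federer--Fleming projection constructed in Section 6 as the bridge. The natural homotopy to consider is
\begin{align*}
\psi_t(y) := \begin{cases} (1-2t)y + 2t\, \pi(y), & 0 \leq t \leq 1/2, \\ \vp_{2t-1}(\pi(y)), & 1/2 \leq t \leq 1, \end{cases}
\end{align*}
where $\pi$ is the map of Lemma \ref{l:FF} applied to the ball $B(x_2,r_2)$. Since $\pi = \id$ off $D^2_R(x_2,r_2)$ and $\vp_t = \id$ off $B(x_2,r_2)$, the map $\psi_t$ is supported in a ball $B(x_1, \tilde r)$ with $\tilde r := r_2 + C\tau\ell(Q)$, which by Lemma \ref{l:IandQ} and our choice \eqref{e:tau} of $\tau$ satisfies $\tilde r \leq \ell(Q) < r_0 = r_1$.

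The core technical step is to verify that $\{\psi_t\}$ is an $\alpha_0$-ALD at the pair $(x_1,\tilde r)$. Properties \eqref{e:vp1}--\eqref{e:vp3} follow readily from the corresponding properties of $\vp_t$, the fact that cubes of $\cC_R^2(x_2,r_2)$ are mapped into themselves by $\pi$, and Lemma \ref{l:IandQ}. The key property to check is \eqref{e:vp4}, the distance condition. For $t \in [0,1/2]$ and $y \in E \cap D_R^2(x_2,r_2)$, both $y$ and $\pi(y)$ lie in a common cube $I \in \cC_R^2(x_2,r_2)$, so $\psi_t(y) \in I$ and hence $\dist(\psi_t(y),E) \leq \diam(I) \lesssim \tau\,\ell(Q)$. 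For $t \in [1/2,1]$ and $y \in E$ with $\pi(y) \in B(x_2,r_2)$, we use that $\pi(y) \in E_R$ by Lemma \ref{l:FF} and apply the hypothesis \eqref{e:TND3} to conclude $\dist(\vp_{2t-1}(\pi(y)),E) \leq \alpha_1 \ell(Q)$; if $\pi(y) \notin B(x_2,r_2)$, then $\vp_{2t-1}(\pi(y)) = \pi(y)$ and the previous bound applies. Combining these with the choices \eqref{e:tau} and \eqref{e:alpha}, we obtain $\dist(\psi_t(y),E) \leq \alpha_0\tilde r$ as required.

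Having $\{\psi_t\}$ as an $\alpha_0$-ALD at $(x_1,\tilde r)$ with $\tilde r < r_0$, we apply \eqref{e:TC} to get
\begin{align*}
\hd\bigl(B(x_1,(1-\eta_0)\tilde r) \cap \psi_1(E)\bigr) \geq \delta_0 \tilde r^d \gtrsim \delta_0\,\ell(Q)^d.
\end{align*}
Now $\psi_1(E) = \vp_1(\pi(E))$, and by Lemma \ref{l:FF}, for any $y \in E$ lying in a cube $I \in \cC_R(x_2,r_2)$ with $I \subset B(x_2,r_2)$ we have $\pi(y) \in E_R \cap I \subset E_R \cap B(x_2,r_2)$. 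For $y$ whose image $\pi(y)$ falls in a cube meeting the boundary of $B(x_2,r_2)$, either $\pi(y) \in E_R$ is already near $\partial B(x_2,r_2)$ (within $C\tau\ell(Q)$) or $\psi_1(y) = \pi(y)$ sits outside $B(x_2,r_2)$; in both cases these contributions lie in an annular shell of width $O(\tau\ell(Q))$ and hence, since $\eta_0 \gg \tau$ by \eqref{e:tau}, miss the smaller ball $B(x_1,(1-\eta_0)\tilde r)$.

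Thus every point of $\psi_1(E) \cap B(x_1,(1-\eta_0)\tilde r)$ lies in $\vp_1(E_R \cap B(x_2,r_2))$, and we conclude
\begin{align*}
\hd\bigl(\vp_1(E_R \cap B(x_2,r_2))\bigr) \geq \hd\bigl(B(x_1,(1-\eta_0)\tilde r) \cap \psi_1(E)\bigr) \gtrsim \delta_0\,\ell(Q)^d,
\end{align*}
which with the choice \eqref{e:delta} of $\delta_1 = C\delta_0$ gives \eqref{e:TP-TNDa}. The main obstacle in carrying this out rigorously will be the annular bookkeeping in the final step---ensuring that the ``boundary junk'' coming from the FF projection at the periphery of $B(x_2,r_2)$ does indeed stay outside $B(x_1,(1-\eta_0)\tilde r)$---which is ultimately handled by the quantitative comparison between the small parameter $\tau$ (governing cube sizes near $Q$) and the fixed parameter $\eta_0$ coming from the TC hypothesis.
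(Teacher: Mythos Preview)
Your proof is correct and follows essentially the same approach as the paper: define the concatenated homotopy $\psi_t$ (the paper calls it $f_t$), verify that it is an $\alpha_0$-ALD on a slightly enlarged ball, apply \eqref{e:TC}, and then argue that the contribution inside the shrunken ball comes only from $\vp_1(E_R\cap B(x_2,r_2))$. The paper uses $\tilde r = (1+\eta_0)r_2$ rather than your $r_2 + C\tau\ell(Q)$ and handles the final ``boundary junk'' step via the containment $B(x_2,(1-\eta_0)\tilde r)\subset B(x_2,r_2)$ together with the observation that $f_1(y)\notin B(x_2,r_2)$ whenever $y\in E\setminus D_R(x_2,r_2)$, but the substance is the same.
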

\begin{proof}
We have two ingredients we want to put together to achieve \eqref{e:TP-TNDa}: on one hand, we know that something similar holds for $E$ (i.e. TC); on the other hand, we know that $E$ is locally well approximated by $E_R$, and we have a continuous (actually Lipschitz) way to move from $E$ to $E_R$ (i.e. the Federer-Fleming projection we constructed in the previous section). The idea is therefore the following: pick the one parameter family $\vp_t$ for which we want to show \eqref{e:TP-TNDa}, and pick $\pi$ as in Lemma \ref{l:FF}. We will construct from these a deformation $f$ which satisfies conditions \eqref{e:vp1}-\eqref{e:vp4}; hence from \eqref{e:TC}, we will deduce \eqref{e:TP-TNDa}.
\\
\\
\noindent
Set 
\begin{align*}
    \pi_t (y) := t\, \pi(y) + (1-t)\, y.
\end{align*}
If $y \in D_R^2(x_2,r_2)$, let $I \in \cC_R^2(x_2, r_2)$ be the dyadic cube containing $y$ (recall the definition of $\cC_R^2(x_2, r_2)$ in \eqref{e:C2a}-\eqref{e:C2c}); then \eqref{e:pi2} tells us that
\begin{align*}
    |\pi_t(y) - y |\leq t|\pi(y)-y| \leq n^{1/2} \ell(I)
\end{align*}
if $\pi_t$ actually moves $y$.
Otherwise this quantity is equal to zero (this happens for example if $I \in \cC_R^2(x_2, r_2) \setminus \cC_R(x_2, r_2)$). By Lemma \ref{l:IandQ} we have that
\begin{align*}
    \ell(I) \lesssim \tau d_R(y) \leq \tau \ps{|y-x_2| + d_R(x_2)} \lesssim \tau \ell(Q).
\end{align*}
Hence we obtain
\begin{align} \label{e:TP-TNDg}
    |\pi(y) - y| \lesssim \tau \ell(Q) \mbox{ for all } y \in \R^n.
\end{align} 
Let us now define $\{f_t\}_{0\leq t \leq 1}$. We set
\begin{align}
    f_t (y) := \begin{cases}
    \pi_{2t}(y) & \mbox{ if } 0 \leq t \leq \frac{1}{2};\\
    \vp_{2t-1} (\pi(y)) & \mbox{ if } \frac{1}{2}\leq  t \leq 1.
\end{cases}
\end{align}
We claim that $\{f_t\}$ satisfies the conditions \eqref{e:vp1}-\eqref{e:vp4} applied to the larger ball \begin{align}\label{e:TND-wt-r}
    B(x_2, \wt r) \mbox{ where } \wt r:= (1+ \eta_0)r_2. 
\end{align}
See Figure \ref{fig:stable-top}.
We verify these conditions one by one. It is immediate from the definition that each $f_t$ is Lipschitz. 
\begin{figure}
    \centering
    \includegraphics[scale=0.5]{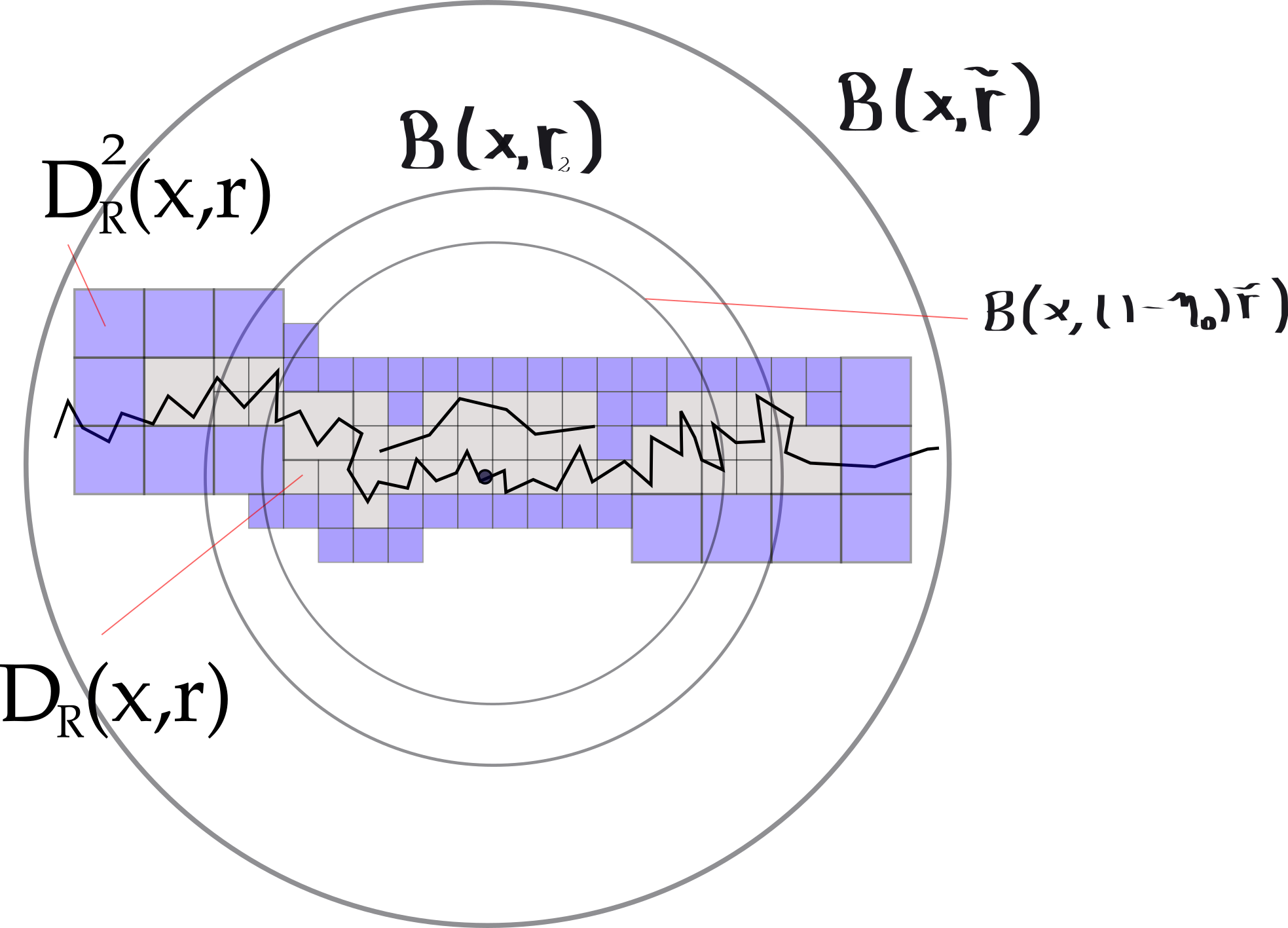}
    \caption{The set-up for the proof of Lemma \ref{l:TND}}.
    \label{fig:stable-top}
\end{figure}

\begin{claim} \label{claim:a}
We have that $f_t(\overline{B}(x,\wt r)) \subset \overline{B}(x, \wt r)$, i.e. \eqref{e:vp1} holds for $f_t$. 
\end{claim}
\begin{proof}[Proof of claim \ref{claim:a}]
Note that
\begin{align} \label{e:TP-TNDd}
    B(x_2,r_2) \cap E \subset D_R(x_2,r_2) \subset D_R^2(x_2,r_2),
\end{align}
where $D_R(x_2,r_2)$ and $D^2_R(x_2,r_2)$ were defined in \eqref{e:D-R} and \eqref{e:D-R-2};  \eqref{e:TP-TNDd} follows immediately from the definitions. Moreover, using Lemma \ref{l:IandQ}, we see that
any cube which was added to $\cC_R^2(x_2,r_2) \setminus \cC_R(x_2,r_2)$, must have side length at most $C \tau \ell(Q)$ (recall that $Q$ satisfies \eqref{e:TND-Q} and $x_2$ and $r_2$ are as in \eqref{e:TND-x} and \eqref{e:TND-r_2}, respectively). Thus $D_R^2(x_2,r_2) \subset B(x_2,r_2+ C \tau \ell(Q))$ and also, since
\begin{align*}
    \tau \ell(Q) \leqt{\eqref{e:tau}} \frac{1}{100} \eta_0 \ell(Q) \leqt{\eqref{e:TND-r_2}} \eta_0 r_2,
\end{align*}
we have that
\begin{align}
    D^2_R(x_2, r_2) \subset B(x_2, r_2+ \tau \ell(Q)) \subset   B(x_2, (1 + \eta_0) r_2) \subset B(x_2, \wt r).
\end{align}
\noindent
Let us consider a few cases separately.
\begin{itemize}[leftmargin=0.5cm]
    \item 
If $y \in D^2_R(x_2,r_2)$, then $\pi_t(y) \in D^2_R(x_2,r_2)$ for all $0\leq t \leq 1$ (by Lemma \ref{l:FF}), and so 
\begin{align*} 
    f_s(y) \in B(x_2, \wt r) \mbox{ for } s \leq \frac{1}{2} \mbox{ whenever } y \in D_R^2(x_2,r_2).
\end{align*} 
Now, recall that \eqref{e:vp1} holds for $\vp_t$ (relative to $B(x_2,r_2)$); hence 
if, together with $y \in D_R^2(x_2,r_2)$, we also have that $\pi(y) \in  \overline{B}(x_2,r_2)$, we then conclude that $f_s(y) \in \overline{B}(x_2,r_2)$ for $\frac{1}{2} \leq s \leq 1$. Also \eqref{e:vp3} holds for $\vp_t$: if $\pi(y) \notin \overline{B}(x_2,r_2)$, then, for $\frac{1}{2} \leq s \leq 1$ and recalling \eqref{e:TP-TNDd}, $f_s(y)= \pi(y) \in D_R^2(x_2,r_2)$. We obtain then that
\begin{align} \label{e:TP-TNDc}
    f_s(y) \in B(x_2, \wt r) \mbox{ whenever } y \in D_R^2(x_2,r_2).
\end{align}
\item Suppose now that $y \in B(x_2,\wt r) \setminus D_R^2(x_2,r_2)$; by construction $\pi(y)=y$ whenever $y \in \R^n \setminus D_R^2(x_2,r_2)$, hence $\pi_t(y) = y$ for $t \in [0,1]$, and thus
\begin{align} \label{e:TP-TNDe}
    f_s(y)=y \mbox{ for } s \leq \frac{1}{2} \mbox{ whenever } y \in B(x_2, \wt r) \setminus D_R^2(x_2,r_2).
\end{align} 
But because $\vp_t$ satisfies \eqref{e:vp3} relative to $B(x_2,r_2)$ (that is, $\vp_t$ is the identity outside $B(x_2,r_2)$, and becuase $B(x_2,r_2) \subset D^2_R(x_2,r_2)$ (i.e. \eqref{e:TP-TNDd}), then $\vp_t(\pi(y))=\pi(y)=y$. Thus we obtain that
\begin{align}\label{e:TP-TNDf}
    f_s(y) \in B(x_2, \wt r) \mbox{ for } \frac{1}{2} \leq s \leq 1 \mbox{ whenever  } y \in B(x_2, \wt r) \setminus D_R^2(x_2,r_2).
\end{align}
Now \eqref{e:TP-TNDc}, \eqref{e:TP-TNDe} and \eqref{e:TP-TNDf} give us the property \eqref{e:vp1} for $\{f_s\}$ relative to $B(x_2,\wt r)$. 
\end{itemize}
\end{proof}
\begin{claim}\label{claim:b}
 The path $s \mapsto f_s(y)$ is continuous, that is, \eqref{e:vp2} holds for $f_t$. Condition \eqref{e:vp3} also holds. 
\end{claim}
\begin{proof}[Proof of claim \ref{claim:b}]
The first conclusion is clear, since $t \mapsto \pi_t(y)$ is continuous; moreover, $\pi_1(y)=\pi(y) = \vp_0(\pi(y))= f_{\frac{1}{2}}(y)$, and $t \mapsto \vp_{2t-1}(\pi(y))$ is also continuous.
As for the second conclusion of the claim,
we see that $f_0(y)= \pi_0(y)=y$; if $y \in \R^n \setminus B(x_2,\wt r)$, we have seen above that $f_t(y)=y$. Thus \eqref{e:vp3} holds for $\{f_s\}$ in $B(x_2, \wt r)$. 
\end{proof}
\begin{claim}\label{claim:c}
Condition \eqref{e:vp4} holds, i.e. we have that $\dist(f_s(y), E) \leq \alpha_0 \wt r$ for $s \in [0,1]$ and $y \in E \cap B(x_2, \wt r)$.
\end{claim}
\begin{proof}[Proof of claim \ref{claim:c}]
First, consider $0 \leq s \leq \frac{1}{2}$; let $y\in E \cap B(x_2,\wt r)$; then we have
\begin{align*}
    & \dist(f_s(y), E) = \dist(\pi_{2s}(y), E) \leq |\pi_{2s}(y)- y|\\
    & \leqt{\eqref{e:TP-TNDg}} n^{\frac{1}{2}} \tau \ell(Q) \leqt{\eqref{e:tau}} \frac{1}{3} \alpha_0 \ell(Q) \leqt{\eqref{e:TND-r_2}, \eqref{e:TND-wt-r}} \alpha_0 \wt r.
\end{align*}
Now suppose that $s> \frac{1}{2}$. If $y \in B(x_2, \wt r) \cap E$, then, 
\begin{align}
    & \mbox{either } y \in D_R^2(x_2,r_2) \label{e:TP-TNDh} \\
    & \mbox{or } y \notin D^2_R(x_2,r_2). \label{e:TP-TNDi}
\end{align}
If $y$ is so that \eqref{e:TP-TNDi} holds, then $\pi(y) =y$, and moreover, from \eqref{e:vp3} for $\vp_t$ relative to $B(x_2,r_2)$, we see that $\vp_{2s-1}(\pi(y))= \vp_{2s-1}(y) = y$. Hence \eqref{e:vp4} holds in this case. 
On the other hand, suppose that \eqref{e:TP-TNDh} holds. Then from the proof of Lemma \ref{l:FF}, we have that
\begin{align}
    & \mbox{either } \pi(y) \in E_R \label{e:TP-TNDl}, \\
    & \mbox{or } \pi(y) \in T, \mbox{ where } T \mbox{ is an } (n-1)\mbox{-face with } T \nsubseteq D_R(x_2,r_2). \label{e:TP-TNDm}
\end{align}
If \eqref{e:TP-TNDl} holds, then $\dist(\vp_{2s-1}(\pi(y),E) \leq \alpha_1 \ell(Q)$ by \eqref{e:TND3} applied to $\vp_t$; this immediately implies $\dist(f_s(y), E) \leq \alpha_0 \wt r$ for $s> \frac{1}{2}$ by the choice of $\alpha_1$ in \eqref{e:alpha}. 
On the other hand, if \eqref{e:TP-TNDm} holds, we must have $\pi(y) \notin B(x_2,r_2)$ (by construction of $\pi$), and therefore $\vp_{2s-1}(\pi(y))= \pi(y)$. Now $\pi(y)$ belongs to a cube in $\cC_R^2(x_2,r_2)$ with side length at most $\tau \ell(Q)$ and touching $E$, hence we retrieve $\dist(f_s(y), E)\leq \alpha_0 \ell(Q) \leq \alpha_0 \wt r$. Together with the previous estimates, we obtain that $\{f_s\}$ satisfies \eqref{e:vp4} for all $s \in [0,1]$. This concludes the proof of claim \ref{claim:c}.
\end{proof}
\noindent
Claims \ref{claim:a}-\ref{claim:c} show that $\{f_t\}$ is an allowed Lipschtiz deformation relative to the ball $B(x_2, \wt r)$, with $\wt r = (1+\eta_0)r_2$. Since $B(x_2, \wt r_2) \subset B$, this in particular implies that $\{f_t\}$ is an allowed Lipschitz deformation for $B$ (the `stable' ball satisfying \eqref{e:Bcent}, \eqref{e:Bin}, \eqref{e:r_B}). Hence since we assumed that $E$ is a stable $d$-surface, we apply \eqref{e:TC}, and obtain
\begin{align}\label{e:TP-TNDo}
    \hd\ps{ B(x_B,(1-\eta_0)r(B)) \cap f_1(E) } \geq \delta_0 \ell(Q)^d.
\end{align}
But recall that $r_2 \geq (1-\eta_0)r(B)$ and thus $B(x_2, r_2) \supset B(x_2, (1-\eta_0) \wt r)$. Then,
\begin{align}\label{e:TP-TNDp}
    \hd\ps{B(x_2,r_2) \cap f_1(E)} \stackrel{\eqref{e:TP-TNDo}}{\geq}  \hd\ps{ B(x_B,(1-\eta_0)r(B)) \cap f_1(E) } \geq \delta_0 \ell(Q)^d.
\end{align}
Recall from above that if $y \in E$ and it is such that $y \notin D_R(x_2,r_2)$, then $f_1(y) \notin B(x_2,r_2)$. Thus, 
\begin{align*}
    B(x_2,r_2) \cap f_1(E) \subset f_1(D_R(x_2,r_2)\cap E).
\end{align*}
Note also that $\pi(E \cap D_R(x_2,r_2)) \subset E_R$. Thus we obtain, using \eqref{e:TP-TNDp}, \eqref{e:TP-TNDb} and \eqref{e:delta},
\begin{align*}
    \hd\ps{\vp_1(E_R) \cap B(x_2,r_2)} \geq 2 \delta_1 \ell(Q)^d. 
\end{align*}
This concludes the proof of Lemma \ref{l:TND}.
\end{proof}

\begin{proof}[Proof of Lemma \ref{l:TC-STC}]
Recall from 
Remark \ref{r:annulus}, that proving the estimates \eqref{e:TP-TNDb} and \eqref{e:TP-TNDa} is enough to prove \eqref{e:TND4}, and thus Lemma \ref{l:TC-STC}. Claim \ref{claim:annulus} gives the first estimate, while Lemma \ref{l:TND} gives the second one. 
\end{proof}

\section{A further approximating set}\label{s:E-rho-STC}
We now construct a dyadic approximation of $E_R$ (this is for technical reasons which will become transparent later on). We will also then show that this approximation satisfies the STC.
\\
\\
\noindent
Let $\rho$ be a small parameter (which we will fix later, and can be assumed to be of the form $2^{-k}, k \in \N$). We write
\begin{align*}
    \Delta_\rho := \Delta_{j(\rho)},
\end{align*}
where $j(\rho)$ is an integer so that $2^{-j(\rho)} = \rho.$
We set
\begin{align}
    & \cC_{R, \rho} := \ck{ I \in \Delta_{\rho} \, |\, I \cap E_R \neq \emptyset}; \label{e:cC-R-rho}\\
    & E_{\rho}= E_{R,\rho} := \bigcup_{I \in \cC_{R, \rho}} \partial_d I. \label{e:E-rho}
\end{align}

\begin{figure}[hbt!]
 \centering
 \includegraphics[scale=1.2]{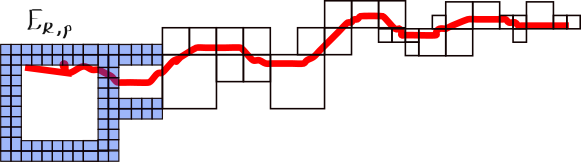}
 \caption{The red set represents $E$; the black squares represent $E_R$, while the blue smaller ones represent  $E_{R, \rho}$.}
\end{figure}

\begin{lemma} \label{l:Erho-ADR}
Let $I_*$ be the smallest cube in $\cC_R$ (which exists since $\cC_R$ is finite). Then for all $\rho< \ell(I_*)$, $ E_{R,\rho}$ is Ahlfors regular, with  regularity constant comparable to that of $E_R$. 
\end{lemma}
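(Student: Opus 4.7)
The strategy will be to exploit the fact that, because $\rho < \ell(I_*)$, every cube $I \in \cC_{R,\rho}$ has the same side length $\rho$ and meets $E_R$, so that $\cC_{R,\rho}$ is a uniform Whitney-type cover of $E_R$ at scale $\rho$. I will then transfer the Ahlfors $d$-regularity of $E_R$ (with constant $C_3$) to its $d$-skeletal thickening $E_\rho$, obtaining Ahlfors bounds depending only on $n$, $d$ and $C_3$.

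I plan to split the argument into two scale regimes: $r \lesssim \rho$ and $r \gtrsim \rho$. In the small-scale regime, for $x \in E_\rho$ lying in some $d$-face $F$ of a cube $I_0 \in \cC_{R,\rho}$, the lower bound will come immediately from the fact that $F$ is a flat $d$-dimensional cube of side length $\rho \geq r$, so that $\hd(F \cap B(x,r))$ is already of order $r^d$ with dimensional constant. The upper bound will follow from bounding the number of $d$-faces of $\Delta_\rho$-cubes that can pass through $B(x,r)$ by a purely combinatorial constant, and noting that each such face contributes at most a dimensional multiple of $r^d$ to $E_\rho \cap B(x,r)$.

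In the large-scale regime, the key mechanism will be counting. Let $\mathcal N(x,r) = \#\{I \in \cC_{R,\rho} \,:\, I \cap B(x,r) \neq \emptyset\}$. For the upper bound, each such $I$ contains some $y_I \in E_R$, and the balls $\{B(y_I,\rho)\}$ have bounded overlap since their centers lie in distinct $\Delta_\rho$-cubes; lower Ahlfors regularity of $E_R$ then forces $\mathcal N(x,r) \lesssim (r/\rho)^d$. Since $\hd(\partial_d I) \sim_{n,d} \rho^d$, summing over the cubes meeting $B(x,r)$ will give $\hd(E_\rho \cap B(x,r)) \lesssim r^d$. For the lower bound, I pick $y \in E_R$ with $|x-y|\leq \sqrt{n}\rho$, and observe that every $I \in \cC_{R,\rho}$ meeting $B(y,r/4)$ is contained in $B(x,r)$; the lower Ahlfors bound $\hd(E_R \cap B(y,r/4)) \gtrsim r^d$ combined with the upper bound $\hd(E_R \cap I) \leq C_3(\sqrt{n}\rho)^d$ on each such cube will force a lower bound of $(r/\rho)^d$ on the count, and summing $\hd(\partial_d I)$ over those cubes will yield the claim after accounting for overlaps.

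The main obstacle I anticipate is that $\partial_d I$ and $\partial_d I'$ share an entire $d$-face whenever $I,I'$ are adjacent cubes of $\Delta_\rho$, so that the naive sum $\sum_I \hd(\partial_d I)$ genuinely overcounts $\hd(E_\rho \cap B(x,r))$ and cannot be used directly in the lower bound. I will resolve this with the purely combinatorial observation that each $d$-face of $\Delta_\rho$ is a face of at most $2^{n-d}$ ambient cubes, so dividing by this dimensional constant turns the sum into a valid lower bound for $\hd(E_\rho \cap B(x,r))$ while preserving the correct order $r^d$. No other step should require more than routine manipulation.
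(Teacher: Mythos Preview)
Your argument is correct, but it differs from the paper's in a way worth noting. The paper exploits the explicit skeletal structure of $E_R$: since $\rho<\ell(I_*)$, each $d$-face $T$ of a cube $J\in\cC_R$ can be tiled exactly by $d$-faces from $\cF^d_\rho$, and every $I\in\cC_{R,\rho}$ meets some such $T$. This lets the paper bound $\sum_{I\in\cC_{R,\rho},\,I\cap B(x,r)\neq\emptyset}\hd(\partial_d I)$ by a dimensional constant times $\sum_{J\in\cC_R(x,r)}\sum_{T\text{ face of }J}\hd(T)$, which is controlled by the upper Ahlfors regularity of $E_R$ directly, with no scale dichotomy. For the lower bound the paper simply observes that $E_R\subset E_\rho$ (again because $\rho<\ell(I_*)$, every $d$-face of $E_R$ is tiled by faces in $\cF^d_\rho$), so lower regularity is inherited at once; your counting argument for the lower bound in the large-scale regime is therefore more work than necessary.

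What your route buys is robustness: your counting argument uses only that $E_R$ is Ahlfors $d$-regular and that $E_\rho$ is the $d$-skeletal $\rho$-thickening of it, never that $E_R$ itself is a union of dyadic $d$-faces. The paper's argument is shorter but leans on that specific structure. Both proofs lose dimensional constants relative to $C_3$, so the phrase ``with the same regularity constant'' in the statement should in either case be read as ``with a regularity constant depending only on $n$, $d$ and $C_3$''.
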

\begin{proof}
Let $T$ be a $d$-dimensional face of some cube $J \in \cC_R$. Denote by $\cF^{d}_\rho$ the collection of $d$-dimensional faces from cubes in $\cC_{R, \rho}$. Then we can cover $T$ with a subcollection of $\cF^{d}_\rho$ with pairwise disjoint interiors. If we denote such a collection by $\cF^{d}_\rho(T)$, then it is obvious that
\begin{align*}
    \hd(T) =\sum_{F \in \cF^d_{\rho}} \hd(F). 
\end{align*}
To each such a face $F \in \cF^d_{\rho}$, there corresponds a bounded number of cubes so that $F \subset I \in \cC_{R,\rho}$. This bounded number depends only on $n$ and $d$. Moreover, each of these cubes has a bounded number of other $d$-dimensional faces, and, again, this number depends only on $n$ and $d$. Thus, if we denote by $\Delta_\rho(T)$ the family of cubes in $\Delta_\rho$ which also meet $T$, we see that
\begin{align*}
   \sum_{I \in \Delta_\rho(T)} \hd(I) \leq C(n, d) \sum_{F \in \cF^{d}_\rho} \hd(F) = C(n,d)\, \hd(T).
\end{align*}
Then, we see that
\begin{align*}
    \hd\ps{ E_{R,\rho} \cap B(x,r)} & \leq \sum_{\substack{I \in \cC_{R,\rho}\\ I \cap B(x,r) \neq \emptyset}} \hd(\partial_d I) \\
    & \leq \sum_{J \in \cC_{R}(x,r)} \sum_{T \,\text{face of } J} \sum_{I \in \cF^d_\rho(T)} \hd(\partial_d I) \\
    & \leq C(n,d) \sum_{J \in \cC_{R}(x,r)} \hd(\partial_d J) \\
    & \lesssim \Cc_3 r^d.
\end{align*}
Recall that $\Cc_3$ is the upper regularity constant of $E_R$ (see Notation \ref{n:C3}). By enlarging it if necessary, we will assume that $\Cc_3$ is also the regularity constant of $E_{R, \rho}$.
\\
\\
\noindent It is immediate to conclude lower regularity for those points in $E_{R, \rho} \cap E_R$. Hence let $x \in E_{R, \rho} \setminus E_R$. If $r$ is comparable to $\rho$ or smaller, than clearly $\hd(B(x,r)\cap E_{R, \rho}) \gtrsim r^d$ since $E_{R, \rho}$ is just a $d$-dimensional affine plane at this scale. If $r \gtrsim \rho$, then, by construction, $ B(x,r)$ will contain a ball $B'$ with $r(B') \approx r$ and such that $B'$ is centered on $E_R$. We then use the lower regularity of $E_R$ to conclude that $E_{R, \rho}$ is lower $d$-regular, too.   
\end{proof}
\noindent 
\begin{remark}\label{rem:C3}
By making it even larger if needed, we assume that $\Cc_3$ is the Ahlfors $d$-regularity constant of $E_{R, \rho}$; it will not be useful to distinguish the Ahlfors regularity constants of $E_R$ and $E_{R,\rho}$; we will call them both $\Cc_3$. 
\end{remark}
\begin{lemma}\label{l:TC-STC-Erho}
Lemma \ref{l:TC-STC} holds when we substitute $E_R$ with $E_{R, \rho}$.
\end{lemma}
\noindent In other words, given a stable $d$-surface $E \subset \R^n$ with constants ($r_0, \alpha_0, \delta_0, \eta_0$) and with $0< \hd(E)<+\infty$, let $Q_0 \in \dD(E)$ be so that $r_0< \ell(Q_0)$. Then pick $k_0 \in \N$ be sufficiently large so that we can construct the partition (with parameter $\tau>0$ as in \eqref{e:tau}) $\Top(k_0)$ of $\dD(Q_0, k_0)$ as in Lemma \ref{l:corona}. For each $R \in E_{R, \rho}$ and for $\rho>0$ to be fixed (see \eqref{e:rho} below), we construct $E_{R, \rho}$ as described above. Then for $C_2$ sufficiently large, we can choose the parameters in the definition of STC (Definition \ref{d:TND}) as in\footnote{To be sure, $\eta_1$ will have to be chosen in the same way up to a multiplicative constant $c$, see the proof below. } \eqref{e:r_0b}-\eqref{e:delta} and hence find a ball with center $x_2$ as in \eqref{e:TND-x} and $r_2$ as in \eqref{e:TND-r_2} so that $\hd(E_{R,\rho} \cap B(x_2, r_2)) \geq \delta_1 \ell(Q)^d + \hd(E_{R, \rho} \cap A_{c\eta_1 \ell(Q)}(x_2,r_2))$, where $\vp_t$ is a deformation as in Definition \ref{d:TND}.
\begin{proof}
Let $C_2$ be the same constant as in Lemma \ref{l:TC-STC}. We know from Section \ref{s:TC-STC}, that $E_R$ satisfies the STC with a choice of constants as in \eqref{e:r_0b} - \eqref{e:delta}. Let $(x_2,r_2)$ as in \eqref{e:TND-x} and \eqref{e:TND-r_2}. 
We now add to the constraint on $\rho$ given in the statement of Lemma \ref{l:Erho-ADR}, the following one: we ask that
\begin{align} \label{e:rho}
    \rho < \frac{1}{1000\sqrt{n}} \min \ck{\eta_1, \alpha_1} \ell(Q) \mbox{ for all $Q \in \Tree(R)$}.
\end{align}
Note that because $E_{R,\rho}$ is Ahlfors regular independently of $\rho$, this does not cause any trouble. Moreover, we can always choose $\rho >0$ since $\Tree(Q)$ is a finite family (recall Remark \ref{d_R(x)bounded}). 
\\
\\
\noindent
Let $Q \in \Tree(R)$ and $(x_2, r_2)$ as in the proof\footnote{Recall that the skeletal topological condition (Definition \ref{d:TND}) requires us to consider points in $E$ (not in $E_R$ - this may lead to some confusion). If we then choose $x_2=x_1$ as in \eqref{e:TND-x}, we then do not need to distinguish between points in $E_R$ and $E_{R, \rho} \setminus E_R$, as it was done to show the lower Ahlfors regularity of $E_{R, \rho}$, for example.   } of Lemma \ref{l:TC-STC}, in particular see \eqref{e:TND-x} and \eqref{e:TND-r_2}.
Note that $E_{R,\rho} \cap B(x_2,r_2) \supset E_R\cap B(x_2,r_2)$, simply because $E_R \subset E_{R,\rho}$. Thus, also $\vp_1(E_{R,\rho} \cap B(x_2,r_2)) \supset \vp_1(E_R\cap B(x_2,r_2))$, and therefore
\begin{align*}
    \hd(\vp_1(E_{R,\rho} \cap B(x_2, r_2))) & \geq \hd\ps{\vp_1(E_R \cap B(x_2,r_2))} \\
    &\geq \delta_1 \ell(Q)^d + \hd(E_R \cap A_{\eta_1 \ell(Q)}(x_2,r_2)).
\end{align*}
But now note that if we choose a parameter $c>0$ sufficiently small, and we put $\wt \eta_1 = c \eta_1$, then we see that
\begin{align*}
    \hd(E_R \cap A_{\eta_1 \ell(Q)}(x_2, r_2)) \geq \hd(E_{R,\rho} \cap A_{\wt \eta_1 \ell(Q)}(x_2, r_2)). 
\end{align*}
Note that $c$ only depends on $n$ and $d$. Hence we obtain that 
\begin{align} \label{e:STC-E-rho}
    \hd(\vp_1(E_{R,\rho} \cap B(x_2, r_2))) \geq \delta_1 \ell(Q)^d + \hd(E_{R,\rho} \cap A_{\wt \eta_1 \ell(Q)}(x_2, r_2)),
\end{align}
and the lemma is proven. 
\end{proof}
\begin{remark}\label{rem:eta1-tilde}
To ease the notation, we ignore the fact that we changed $\eta_1$ by a constant $c$, and we will continue denoting $\wt \eta_1$ by $\eta_1$. 
\end{remark}

\section{A functional on skeleta}  \label{s:functional}
In this section, we show that the topological condition STC imposed on $E_R$ (and thus on $E_{R,\rho}$) tells us that $E_{R,\rho}$ has a large intersection (with repsect to the scale of each cube $Q \in \Tree(R)$) with a uniformly $d$-rectifiable set. The idea is to define a functional whose minimizer $F$ has large intersection with $E_{R,\rho}$. In turn $F$, by virtue of being a minimiser of such a functional, will turn out to be a quasiminimiser (in the sense of \cite{david2000uniform}), and thus uniformly rectifiable.

\subsubsection{The story so far.}
Let $E$ be the stable $d$-surface that we started with. From Lemma \ref{l:TP-LCR} we know that it is lower content ($d, \cc_1$)-regular. Hence we apply the coronisation as described in Subsection \ref{s:ER} (Lemma \ref{l:corona}), which is a collection of `top cubes' $\{R\}_{R \in \Top(k_0)}$. For each such an $R$ we construct an approximating set which we called $\wt E_R$ (see its definition in \eqref{e:wtER}, and see recall the definition of $\cC_R$ in \eqref{e:CR}). We subsequently modify it as in \eqref{e:newER}. Next we defined coherent Federer-Fleming projections (Lemma \ref{l:FF}), through which one can map $E$ into $E_R$ in a Lipschitz continuous way (without control on the Lipschitz constant). Via Federer-Fleming projections, we show that the fact that $E$ is a stable $d$-surface (with finite $d$-measure), implies that $E_R$, for each $R \in \Top(k_0)$, is also topologically stable, in the sense of the skeletal topological condition in Definition \ref{d:TND}; see Lemma \ref{l:TC-STC}. In Section \ref{s:E-rho-STC} we construct, for technical reasons, a further approximating set $E_{R, \rho}$ (for each $R \in \Top(k_0)$). 
Once again, the strategy below comes from \cite{david2004hausdorff}. We adapt it to our current situation.

\subsection{Definition of a functional $J$.}

\subsubsection{Preliminaries}

Let $C_2$ be a large constant (depending on $\Cc_3$, the Ahlfors regularity constant of $E_{R,\rho}$ and $E_R$) and $k_0$ a sufficiently large integer (as per Lemma \ref{l:corona}). Then STC gives us constants $r_1<r_0$, and $0<\alpha_1, \eta_1, \delta_1< 1$  (as in \eqref{e:TND-constants}) such that for every choice of $x_1 \in E$, $R \in \Top(k_0)$ and $Q \in \Tree(R)$ as in \eqref{e:TND-x1}, \eqref{e:TND-R} and \eqref{e:TND-Q} respectively, with also $\hd(E_R \cap B(x_1, \ell(Q))) \leq C_2 \ell(Q)^d$, we can find a ball $B(x_2,r_2) \subset B(x_1, \ell(Q))$ centered on $E$ for which, given an appropriate one-parameter family of Lipschitz deformations $\{\vp_t\}$, we have the lower bound \eqref{e:TND4}.
From the previous sections, we see that this holds for both $E_R$ and $E_{R, \rho}$.
\\
\\
\noindent
Fix a cube $Q \in \Tree(R)$ and let $(x_2,r_2)$ be as in \eqref{e:TND-x} and \eqref{e:TND-r_2}. To simplify the notation, we put
\begin{align}\label{e:x-r}
    & r:= r_2;\\
    & x := x_2.
\end{align}
For later use, let us set
\begin{align}\label{e:Bj}
    B_j:= B\ps{x, r - \eta_1 \ell(Q) + \frac{j \eta_1 \ell(Q)}{10}} \mbox{ for } 0 \leq j \leq 10.
\end{align}
Note that $B_{10}= B(x,r)$ and $B_{10}\setminus B_0 =A_{\eta_1 \ell(Q)} (x,r)$. 
\\
\\
\noindent
Recall the constraint on $\rho$, \eqref{e:rho}, and consider a constant $\sigma$ (let it be a power of $2$), so that
\begin{align} \label{e:sigma}
    \frac{1}{200\sqrt{n}} \min\ck{\eta_1, \alpha_1} \ell(Q) \leq \sigma \leq \frac{1}{100\sqrt{n}}  \min\ck{\eta_1,\alpha_1} \ell(Q).
\end{align}
Then we put
\begin{align}
    & \Delta_{\sigma}(B_6):= \ck{ I \in \Delta_{j(\sigma)} \, |\, I \cap B_6 \neq \emptyset};\label{e:delta6}\\
  & \cC_{\sigma}^1(B_6) := \ck{ I  \in \Delta_\sigma(B_6) \, |\, I \cap J \neq \emptyset \mbox{ for some } J \in \cC_R}; \label{e:cCB6}\\
  & \cC_{\sigma}^2(B_6) := \ck{ I \in \Delta_\sigma(B_6) \, |\, \mbox{ there exists a } J \in \cC_\sigma^1(B_6) \mbox{ with } I \cap J \neq \emptyset}.
\end{align}
Finally we put
    \begin{align}
        & V^1_Q := \bigcup_{I \in \cC^1_{\sigma}(B_6)} I  \enskip \enskip \mbox{ and}\label{e:V1}\\
        & V^2_Q := \bigcup_{I \in \cC^2_{\sigma}(B_6)} I. \label{e:V2}
    \end{align}
Note that for any cube $I \in \cC_R(B_6)$, there exists a cube $J \in \cC_{\sigma}(B_6)$ so that $J \supset I$. 
\begin{lemma}
With the notation above, we have that
\begin{align}
    & B_6\cap E \subset V_Q^2 \subset B_7 \mbox{ and } \label{e:Ta}\\
    & \dist(y, E) \leq \alpha_1 \ell(Q) \mbox{ for all } y \in V^1_Q.  \label{e:Tb}
\end{align}
\end{lemma}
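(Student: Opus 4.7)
The plan is to verify the three assertions separately, relying on the geometric structure of the $\sigma$-cube decomposition together with the Whitney-type cover $\cC_R$ of $E$ from Lemma \ref{l:corona}.

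First I would show $V^2 \subseteq B_7$ by a direct geometric estimate: any $I \in \cC_\sigma^2(B_6)$ touches some $J \in \cC_\sigma^1(B_6) \subseteq \Delta_\sigma(B_6)$, so $I$ sits in the $2\sqrt{n}\sigma$-neighbourhood of $B_6$, and by \eqref{e:sigma} we have $2\sqrt{n}\sigma \leq \eta_1 \ell(R)/50$, comfortably less than the $\eta_1\ell(R)/10$ of extra room provided in going from $B_6$ to $B_7$.

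Next I would prove $B_6 \cap E \subseteq V^2$. Given $y \in B_6 \cap E$, let $I_y \in \Delta_\sigma$ be the unique cube containing $y$; then $I_y \in \Delta_\sigma(B_6)$. By the cover property \eqref{e:contains} of Lemma \ref{l:corona}, $y$ lies in some $I_0 \in \cC_R$, and Lemma \ref{l:IandQ} applied with $Q = R$ bounds $\ell(I_0) \leq C \tau \ell(R)$. Choosing $\tau$ small enough relative to $\min\{\eta_1,\alpha_1\}/\sqrt{n}$ (compatible with \eqref{e:tau}) we have $\ell(I_0) \leq \sigma$, so nestedness of standard dyadic cubes forces $I_0 \subseteq I_y$. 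Then $\partial_d I_0 \subseteq I_y$ is a nonempty subset of $E_R$, so $I_y$ lies in $\cC_\sigma^1(B_6)$ and hence in $\cC_\sigma^2(B_6)$, placing $y$ in $V^2$.

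Finally, for the distance bound \eqref{e:Tb}, I would argue as follows. If $y \in V^1$, the first step of the proof already gives $y \in V^2 \subseteq B_7 \subseteq B(x_2, r_2)$, and since $x_2 = x_1 \in E$ and $r_2 \leq 2\ell(R)/3$ by \eqref{e:TND-r_2}, the triangle inequality delivers $\dist(y,E) \leq |y - x_1| \leq r_2 \leq 2\ell(R)/3$. This yields \eqref{e:Tb} provided the absolute constant in \eqref{e:alpha} is taken large enough that $\alpha_1 \geq 2/3$. If finer control is wanted, one instead picks $z \in I \cap E_R$ for the cube $I \in \cC_\sigma^1(B_6)$ containing $y$, estimates $|y - z| \leq \sqrt n \sigma \leq \alpha_1\ell(R)/100$ from \eqref{e:sigma}, and uses Lemma \ref{l:I-QI} on the cube $J_0 \in \cC_R$ carrying $z$ to bound $\dist(z, E)$ in terms of $\ell(J_0) \leq C\tau\ell(R)$, then adds the two contributions.

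The main obstacle is the third item: the tolerance $\alpha_1 \ell(R)$ is small and comparable to the ``geometric radius'' $r_2$, so one must carefully tune the constants---taking $\tau$ very small relative to $\alpha_1$ and the dimensional constant in \eqref{e:alpha} correspondingly large---so that all three estimates close. Once these choices are in place, the first two claims follow from straightforward dyadic bookkeeping, and the third follows from a triangle-inequality argument anchored either at $x_1 \in E$ or at the Whitney structure of $\cC_R$ via Lemma \ref{l:I-QI}.
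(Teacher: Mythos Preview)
Your treatments of $V^2 \subset B_7$ and of $B_6 \cap E \subset V^2$ are fine; the first matches the paper exactly, and for the second you give a careful argument where the paper simply writes ``immediate'' (in fact the paper only ever uses this inclusion with $E$ replaced by $E_R$ or $E_\rho$, for which it really is trivial: any $\sigma$-cube containing a point of $E_R$ lies in $\cC_\sigma^1(B_6)$ by definition).

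The genuine problem is your primary route for \eqref{e:Tb}. You propose to take the constant in \eqref{e:alpha} large enough that $\alpha_1 \geq 2/3$, but this is not permitted: $\alpha_1$ must be \emph{small}. In the proof of Lemma \ref{l:TND} one needs $\alpha_1 \ell(Q) \leq \alpha_0 \wt r$ with $\wt r = (1+\eta_0)r_2 \leq \ell(Q)$, which forces the constant in \eqref{e:alpha} to be at most of order $1/3$; taking it large would break the implication TC $\Rightarrow$ STC. So the coarse bound $\dist(y,E) \leq |y - x_1| \leq r_2$ is far too weak here, and your closing remark about choosing that constant ``correspondingly large'' should be dropped.

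Your alternative (b) is the correct argument and is precisely what the paper does: for $y \in V^1$ one first has $\dist(y, E_R) \leq \sqrt n\,\sigma \leq \alpha_1 \ell(R)/100$ by \eqref{e:sigma}, and for the nearby point $z \in E_R$, the Whitney cube $J_0 \in \cC_R$ through $z$ satisfies $\ell(J_0) \leq C\tau \ell(R)$ by Lemma \ref{l:IandQ}; since cubes in $\cC_R$ meet $E$ one gets $\dist(z,E) \leq \sqrt n\,\ell(J_0) \lesssim \tau\ell(R)$, and \eqref{e:tau} with \eqref{e:alpha} then gives $\dist(y,E) \leq \alpha_1\ell(R)$. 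Present (b) as the proof, not as an optional refinement.
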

\begin{proof}
The first inclusion in \eqref{e:Ta} is immediate\footnote{Indeed, if $y \in E \cap B_6$, then, first there exists a cube $J \in \cC_R$ which contains $y$. But then $J \cap B_6 \neq \varnothing$. Then, because $\Delta_\sigma(B_6)$ covers $B_6$, we find a cube $I \in \Delta_\sigma(B_6)$ which intersects $J$ and $y \in J \cap I$. Clearly $J \in \cC_\sigma^1(B_6)$, and thus $y \in V_Q^2$.}. To see the second one, note that for any point $y \in V_Q^2$, we have that $\dist(y, V^1_Q) \leq \sqrt{n} \sigma$. In turn, any point in $V^1_Q \setminus B_6$ can be at most $\sqrt{n}\sigma$ away from $\partial B_6$. Hence, by the choice of $\sigma$ in \eqref{e:sigma}, we see that if $y \in V_Q^2\setminus B_6$, $\dist(y, \partial B_6) < \frac{1}{10} \eta_1 \ell(Q)$, and so \eqref{e:Ta} will be satisfied. 
\\
\\
\noindent
To prove \eqref{e:Tb}, suppose first that $y \in V^1_Q$ is such that $y \in I$ for some $I \in \cC_R$. Now, by construction, if $y \in V^1_Q$, then $y\in B(x_2, r_2)$, with $x_2 \in Q$ and $r_2 \approx \ell(Q)$. Thus $\dist(I,Q)\lesssim \ell(Q)$. This implies, using Lemma \ref{l:whitey-cC-R} (1), that
\begin{align*}
    \ell(I) \leq \tau d_R(I) \leq \tau (\ell(Q) + \dist(I,Q)) \lesssim \tau \ell(Q).
\end{align*}
But recall also that $\tau \ll \alpha_0, \eta_0$ (as in \eqref{e:tau}), and thus also $\tau \ll \alpha_1$, by the choice of $\alpha_1$ in \eqref{e:alpha}. Since $I \cap E \neq \emptyset$, for otherwise $I$ would not be in $\cC_R$, we obtain that $\dist(y, E) \leq \alpha_1 \ell(Q)$ for these $y$. Note that if $y \in V^1_Q$ but $y \notin I$ for all $I \in \cC_R$, then by the way that $V^1_Q$ was defined, $\dist(y, I) < \sigma$ for some $I \in \cC_R$. Keeping in mind the definition of $\sigma$ in \eqref{e:sigma}, we can repeat the argument above to obtain \eqref{e:Tb} also in this case.
\end{proof}

\subsubsection{The domain of the functional}
\noindent
We are now ready to fix the class of subsets upon which the said functional will be allowed to act.
We set
\begin{align}
    \dF_0 \label{e:F_0}
\end{align}
to be the class of subsets $F$ of $\R^n$ such that
\begin{align}
    & F \mbox{ is closed (in the topology of $\R^n$).} \label{e:F_01}\\
    & F \subset V_Q^2 \label{e:F_02}\\
    & F = F^* \cup L. \label{e:F_03}
\end{align}
Here $L$ denotes any subset of Hausdorff dimension smaller or equal than $d-1$; by $F^*$ we mean a finite union of $d$-dimensional faces of cubes coming from $\Delta_{\rho}$. We will call $F^*$ the \textit{coral} part of $F$. 
In other words the class $\dF_0$ is composed by subsets that are built out of a finite number of $d$-faces coming from cubes in $\Delta_\rho$. 
Let us consider a subclass of $\dF_0$: we set
\begin{align} \label{e:dF}
    \dF := \ck{ F \in \dF_0 \, |\,  F= \vp_1(E_{R,\rho} \cap V_Q^2)},
\end{align}
where $\{\vp_t\}_{0 \leq t \leq 1}$ is a family of Lipschitz mappings on $\R^n$ such that
\begin{align}
    & \vp_t(V_Q^2) \subset V_Q^2 \mbox{ for all } t \in [0,1];  \label{e:vpV2a} \\
    & t \mapsto \vp_t(y) \mbox{ is continuous for all } y \in \R^n; \label{e:vpV2b} \\
    & \vp_t(y) = y \mbox{ for } t=0 \mbox{ and for } y \in \R^n \setminus V_Q^2; \label{e:vpV2c} \\
    & \dist(\vp_t(y), E_R) < \alpha_1 \ell(Q) \mbox{ for } y \in E_{R,\rho} \cap V_Q^2 \mbox{ and all } t \in [0,1]; \label{e:vpV2d}\\
    & \vp_1(y) \in V^1_Q \mbox{ for } y \in E_{R,\rho} \cap V_Q^2.\label{e:vpV2e}
\end{align}
\begin{lemma}\label{l:dF}
We have that $E_{R,\rho} \cap V_Q^2 \in \dF$. In particular, the class $\dF$ is nonempty. 
\end{lemma}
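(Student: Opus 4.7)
The plan is to exhibit the identity family $\vp_t = \mathrm{id}_{\R^n}$ for $t \in [0,1]$ as a witness: it trivially satisfies $\vp_1(E_\rho \cap V^2) = E_\rho \cap V^2$, so the lemma reduces to verifying that this family meets conditions \eqref{e:vpV2a}--\eqref{e:vpV2e} relative to the set $E_\rho \cap V^2$. Once that is done, the second assertion of the lemma is automatic.

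First I would dispose of \eqref{e:vpV2a}, \eqref{e:vpV2b}, \eqref{e:vpV2c}, which are immediate for the identity: $\vp_t(V^2) = V^2$, each orbit $t \mapsto y$ is constant and hence continuous, and $\vp_0$ agrees with $\mathrm{id}$ both at $t=0$ and off $V^2$. Second, I would verify \eqref{e:vpV2d}: any $y \in E_\rho$ lies on the $d$-skeleton of some $\rho$-cube $I \in \cC_{R,\rho}$ with $I \cap E_R \neq \emptyset$, so
\begin{equation*}
\dist(y,E_R) \leq \diam I \leq \sqrt{n}\,\rho,
\end{equation*}
and by the constraint \eqref{e:rho} on $\rho$ this is strictly less than $\alpha_1\ell(R)$.

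The main content is \eqref{e:vpV2e}, namely $E_\rho \cap V^2 \subseteq V^1$. Given $y \in E_\rho \cap V^2$, pick $I \in \cC_{R,\rho}$ with $y \in \partial_d I$ and $I \cap E_R \neq \emptyset$; since $\rho<\sigma$ are both dyadic, $I$ sits in a unique $\sigma$-cube $J_I$, which therefore also meets $E_R$. Using $y \in V^2 \subseteq B_7$ from \eqref{e:Ta} together with the smallness of $\sigma$ relative to $\eta_1\ell(R)$ given in \eqref{e:sigma}, one checks geometrically that $J_I$ must reach into $B_6$, so $J_I \in \Delta_\sigma(B_6)$; combined with $J_I \cap E_R \neq \emptyset$ this puts $J_I \in \cC^1_\sigma(B_6)$, and hence $y \in J_I \subseteq V^1$.

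I expect \eqref{e:vpV2e} to be the only delicate step; the obstacle is essentially bookkeeping between the scales $\rho \ll \sigma \ll \eta_1\ell(R)$ and the nested balls $B_6 \subset B_7$, making sure that the $\sigma$-cube anchoring $y$ is simultaneously close enough to $E_R$ (automatic from the $\rho$-cube $I$) and close enough to $x$ (forced by $y \in V^2$) to belong to $\cC^1_\sigma(B_6)$. All other conditions are routine.
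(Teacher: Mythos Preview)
Your approach---take $\vp_t\equiv\mathrm{id}$ and verify \eqref{e:vpV2a}--\eqref{e:vpV2e}---is exactly what the paper does, and your treatment of \eqref{e:vpV2a}--\eqref{e:vpV2d} is correct (indeed more explicit than the paper's).

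There is, however, a concrete slip in your argument for \eqref{e:vpV2e}. You try to force $J_I\cap B_6\neq\emptyset$ from $y\in V^2\subseteq B_7$ together with the smallness of $\sigma$. But by \eqref{e:Bj} the annulus $B_7\setminus B_6$ has width $\eta_1\ell(R)/10$, whereas \eqref{e:sigma} gives $\diam J_I=\sqrt{n}\,\sigma\leq\eta_1\ell(R)/100$; the gap between $B_6$ and $B_7$ is thus \emph{larger} than the cube $J_I$, so $y\in B_7$ alone cannot drag $J_I$ into $B_6$. The way out is not to pass to $B_7$ at all: by definition $\cC^2_\sigma(B_6)\subseteq\Delta_\sigma(B_6)$, so every $\sigma$-cube making up $V^2$ already meets $B_6$. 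Hence $y\in V^2$ lies in some $\sigma$-cube $K$ with $K\cap B_6\neq\emptyset$, and (up to the usual harmless boundary ambiguity when $y$ sits on a face shared by several dyadic cubes) the $\rho$-cube from $\cC_{R,\rho}$ carrying $y$ can be taken inside $K$, giving $K\cap E_R\neq\emptyset$, so $K\in\cC^1_\sigma(B_6)$ and $y\in V^1$. This is essentially the paper's one-line justification.
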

\begin{proof}
We just take the trivial deformation $\vp_t(y)=y$ for all $y$ and $t$, so that \eqref{e:vpV2a}, \eqref{e:vpV2b} and \eqref{e:vpV2c} hold immediately. Moreover, by construction we have that all points in $E_{R,\rho}$ are contained in a cube from $\cC_{R, \rho}$. The side length of these cubes is (much) less than\footnote{Since one such a cube will have side length $\rho$ and $\rho \ll \min\{\eta_1, \alpha_1\}\ell(Q)$ from \eqref{e:sigma}.} $\alpha_1 \ell(Q)$ and they must touch $E_R$. Hence $\dist(y, E_R) < \alpha_1 \ell(Q)$ and so \eqref{e:vpV2d} is satisfied. As far as condition \eqref{e:vpV2e} is concerned, we see that if $y \in E_{R,\rho} \cap V_Q^2$, then by definition of $E_{R,\rho}$ and $\cC_{R, \rho}$ in \eqref{e:E-rho} and \eqref{e:cC-R-rho}, we see that $y$ must lies in a cube which belongs to $\cC_\sigma^1(B_6)$ (from the definition of $\sigma$ in \eqref{e:sigma}), and thus it must be in $V^1_Q$.
\end{proof}
\noindent
We now define the aforementioned functional. For some $c_2<1$ to be chosen later, we put
\begin{align}\label{e:M}
M=\frac{\Cc_3}{c_2\,\delta_1},    
\end{align}
where recall that $\Cc_3$ is the Ahlfors regularity constant of $E_R$ (as fixed in Notation \ref{n:C3}) and of $E_{R, \rho}$.
Then we set
\begin{align}\label{e:J}
    J(F) := \hd(F \cap E_{R,\rho}) + M\, \hd(F \setminus E_{R,\rho}) \mbox{ for } F \in \dF. 
\end{align}
Note that $J(F)= J(F^*)$ (with notation as in \eqref{e:F_03}), and there is only a finite number of sets like $F^*$. Thus there exists a set $\wt F \in \dF$ such that 
\begin{align*}
    J(\wt F) = \min_{F \in \dF} J(F). 
\end{align*}

\subsubsection{A minimiser of $J$ has large intersection with $E_{R, \rho}$}
Note that, for a set $F \in \dF$ trying to keep $J(F)$ small, it will be very expensive to have a large portion which does not intersect $E_{R,\rho}$, as $M$ can be quite large. This is the reason why we expect the minimiser $\wt F$ to have a large intersection with $E_{R,\rho}$. This also implies that a minimiser of $J$ also will lie close to $E_R$.
\begin{lemma} \label{l:large-intersection}
Let $\wt F$ be a minimiser of $J$ \textup{(}as in \eqref{e:J}\textup{)} in $\dF$. Then 
\begin{align} \label{e:L-Int}
    \hd(E_{R,\rho} \cap \wt F) \geq C \,\delta_1\ell(Q)^d.
\end{align}
\end{lemma}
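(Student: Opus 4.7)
The plan is to combine the skeletal topological condition (STC) for $E_\rho$, which forces the image $\wt F$ to have large total measure, with the minimality of $\wt F$ for $J$, which forces the $M$-expensive part $\wt F\setminus E_\rho$ to be small. The difference must then lie in $\wt F\cap E_\rho$.

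First I would apply STC for $E_\rho$ (as established in Section \ref{s:E-rho-STC}) to the one-parameter family $\{\vp_t\}$ that produces $\wt F=\vp_1(E_\rho\cap V^2)$. The hypotheses \eqref{e:vp1}--\eqref{e:vp3} and \eqref{e:TND3} need to be verified: \eqref{e:vpV2a}--\eqref{e:vpV2c} immediately give the first three since $V^2\subset B_7\subset B(x,r)$, while \eqref{e:vpV2d} combined with the fact that $E_R$ lies within $\tau\ell(R)\ll\alpha_1\ell(R)$ of $E$ (Lemma \ref{l:IandQ}) yields \eqref{e:TND3} (after possibly slightly shrinking $\alpha_1$). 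Thus STC gives
\begin{align*}
\hd\bigl(\vp_1(E_\rho\cap B(x_2,r_2))\bigr)\ \geq\ \delta_1\ell(R)^d+\hd\bigl(E_\rho\cap A_{\wt\eta_1\ell(R)}(x_2,r_2)\bigr).
\end{align*}

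Next I would account for the discrepancy between $E_\rho\cap V^2$ and $E_\rho\cap B(x_2,r_2)$. Since each point of $E_\rho\cap B_6$ lies in some cube $I\in\cC_{R,\rho}$ meeting $E_R$, it lies in a $\sigma$-cube of $\cC^1_\sigma(B_6)$, hence in $V^1\subset V^2$. Therefore $E_\rho\cap B(x_2,r_2)\setminus V^2$ is contained in $B(x,r)\setminus B_6$, which (by \eqref{e:Bj} and the choice \eqref{e:sigma}) is an annulus of width comparable to $\eta_1\ell(R)$. Because $\vp_t$ is the identity off $V^2$,
\begin{align*}
\vp_1(E_\rho\cap B(x_2,r_2))\ =\ \wt F\ \cup\ \bigl(E_\rho\cap B(x_2,r_2)\setminus V^2\bigr),
\end{align*}
and the second piece is contained in an annulus already subtracted by STC. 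Choosing $\wt\eta_1$ appropriately, I obtain
\begin{align*}
\hd(\wt F)\ \geq\ c\,\delta_1\ell(R)^d.
\end{align*}

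Then I would exploit minimality. Lemma \ref{l:dF} shows $E_\rho\cap V^2\in\dF$, so
\begin{align*}
J(\wt F)\ \leq\ J(E_\rho\cap V^2)\ =\ \hd(E_\rho\cap V^2)\ \leq\ C_3\,\ell(R)^d
\end{align*}
by the Ahlfors regularity of $E_\rho$ (Lemma \ref{l:Erho-ADR}) and the fact that $V^2\subset B_7$. Since $J(\wt F)\geq M\,\hd(\wt F\setminus E_\rho)$ and $M=C_3/(c_2\delta_1)$, this gives
\begin{align*}
\hd(\wt F\setminus E_\rho)\ \leq\ c_2\,\delta_1\,\ell(R)^d.
\end{align*}

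Finally I would combine:
\begin{align*}
\hd(\wt F\cap E_\rho)\ =\ \hd(\wt F)-\hd(\wt F\setminus E_\rho)\ \geq\ (c-c_2)\,\delta_1\,\ell(R)^d,
\end{align*}
and choosing $c_2$ sufficiently small (depending only on the constants from STC and on $C_3$) yields \eqref{e:L-Int}. The main obstacle is the bookkeeping in the second step: making sure that what STC ``loses'' in the annulus term exactly absorbs the points of $E_\rho\cap B(x_2,r_2)\setminus V^2$ that $\vp_1$ leaves untouched. This is why $\sigma$ was chosen much smaller than $\eta_1\ell(R)$ in \eqref{e:sigma} and why the nested balls $B_j$ in \eqref{e:Bj} were introduced; together they guarantee the annulus correction is strictly dominated by the gain $\delta_1\ell(R)^d$.
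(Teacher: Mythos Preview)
Your proposal is correct and follows essentially the same route as the paper: verify that the deformation $\{\vp_t\}$ defining $\wt F$ is admissible for the STC on $E_\rho$, use STC to get $\hd(\wt F)\geq c\,\delta_1\ell(R)^d$ after cancelling the annular piece $E_\rho\cap(B(x,r)\setminus V^2)$ against the term $\hd(E_\rho\cap A_{\wt\eta_1\ell(R)}(x,r))$, then use minimality against the competitor $E_\rho\cap V^2$ and the choice $M=C_3/(c_2\delta_1)$ to bound $\hd(\wt F\setminus E_\rho)\leq c_2\delta_1\ell(R)^d$, and subtract. Your verification of \eqref{e:TND3} via \eqref{e:vpV2d} plus the $\tau\ell(R)$-closeness of $E_R$ to $E$ is in fact slightly cleaner than the paper's, which invokes \eqref{e:vpV2e} (a condition stated only for $t=1$) at that step.
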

\begin{figure}
    \centering
    \includegraphics[scale=0.7]{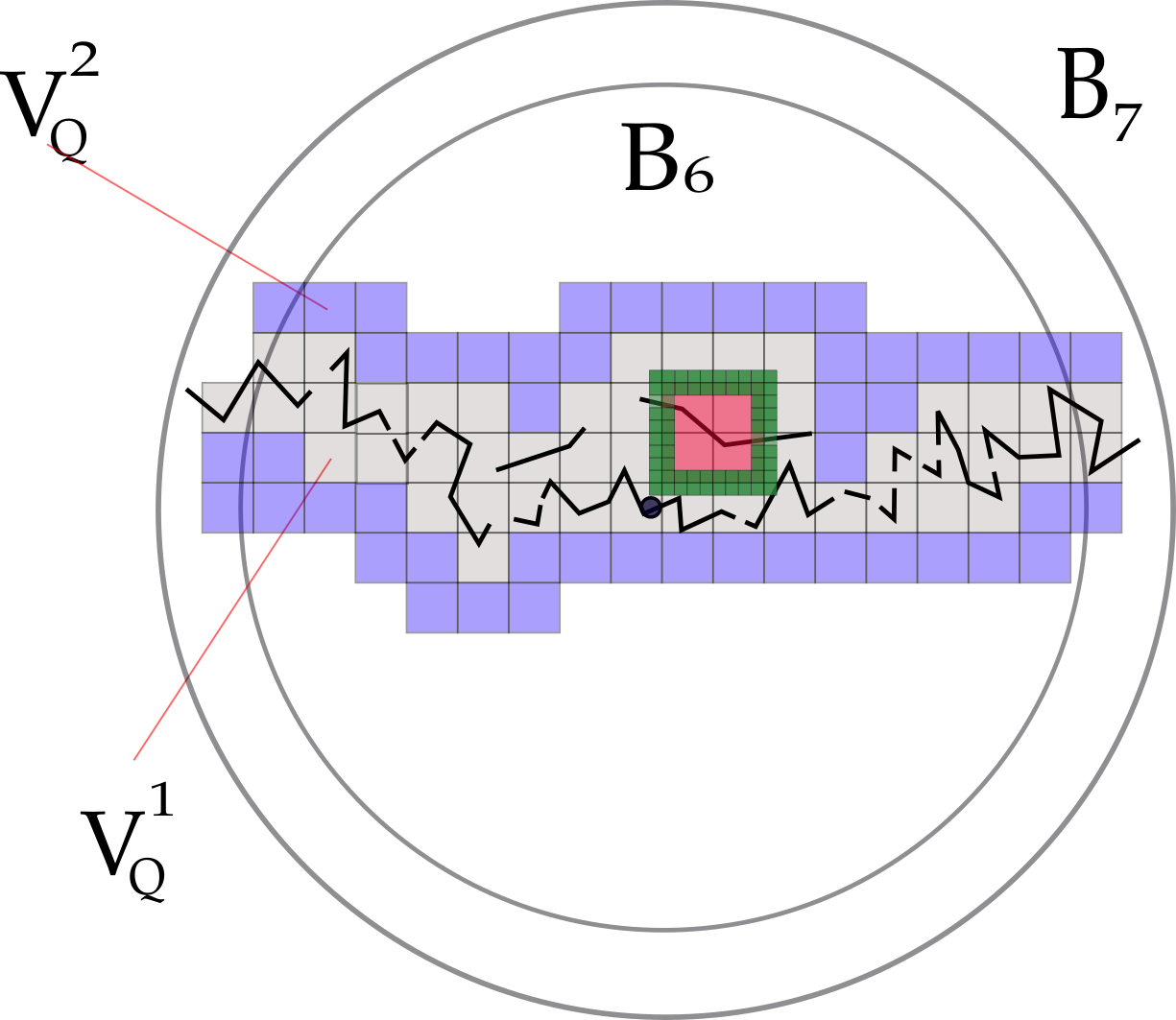}
    \caption{The set up for the proof of Lemma \ref{l:large-intersection}: the grey and blue dyadic square are the ones with side length $\sigma$. The red dyadic square is from $\cC_R$ (i.e. its skeleton forms part of $E_R$ - one should imagine red squares covering the black set; they were not drawn for simplicity). The small green squares are those whose skeleta form $E_{R, \rho}$.}
    \label{f:large-intersection}
\end{figure}
\begin{proof}
Because $\wt F \in \dF$, then $\wt F = \vp_1(E_{R,\rho} \cap V_Q^2)$, where $\{\vp_t\}$ satisfies \eqref{e:vpV2a}-\eqref{e:vpV2e}. We want to check that this specific one parameter family $\vp_t$ satisfies also the conditions for the deformations used for STC (see Definition \ref{d:TND}) relative to $B(x,r)=B(x_2,r_2)$. Note that  $\{\vp_t\}$ satisfies \eqref{e:vp1}, \eqref{e:vp2} and \eqref{e:vp3}, since from \eqref{e:Ta}, we have that $V_Q^2 \subset B(x,r)$. We want to check that  \eqref{e:TND3} hold, that is
\begin{align} \label{e:TND3b}
    \dist(\vp_t(y), E_R) \leq \frac{\alpha_1 \ell(Q)}{2} \mbox{ for } t \in [0,1] \mbox{ and } y \in E_{R,\rho} \cap B(x,r).
\end{align}
So, let $y \in E_{R,\rho} \cap B(x,r)$. If $y \notin V_Q^2$, then $\vp_t(y)=y $ by \eqref{e:vpV2c}; since $y \in E_{R, \rho}$, then by construction $\dist(y, E_R) < \sqrt{n} \rho \ll \alpha_1 \ell(Q)$ (by the constraint on $\rho$ in \eqref{e:rho}). Moreover, if $x \in E_R$ is the point closest to $y$, then $x \in \partial_d I$ for some $I \in \cC_R(x,r)$ and, by Lemma \ref{l:IandQ}, we have that $\ell(I)\lesssim \tau \ell(Q)$. All in all, this implies that $\dist(y, E)< \alpha_1 \ell(Q)$ whenever $y \in B(x,r)\cap E_{R,\rho}$ does not belong to   $V_Q^2$. 
\\
\\
\noindent
 If $y \in V_Q^2 \cap E_{R,\rho}$, then by \eqref{e:vpV2e}, $\vp_t(y)$ must lie in $V^1_Q$, and hence be at most $\sigma$ far away from $E_R$ (by construction); but $\sigma < \alpha_1 \ell(Q)$, and repeating the argument of the previous case we again see that  \eqref{e:TND3b} holds.
\\
\\
\noindent
Thus $\{\vp_t\}$ is a one parameter family of Lipschitz deformations which satisfies the requirements stated in the definition of STC (see Definition \ref{d:TND}). Recall that $E_{R,\rho}$  satisfies the STC (at scale $\ell(Q)$); we therefore have the lower bound \eqref{e:STC-E-rho}, i.e.
\begin{align*}
    \hd(\vp_1(E_{R,\rho} \cap B(x,r))) \geq \delta_1 \ell(Q)^d + \hd\ps{E_{R,\rho} \cap A_{\wt \eta_1 \ell(Q)}(x,r)}.
\end{align*}
Now, the family $\{\vp_t\}$ which we are considering, not only satisfies \eqref{e:vp3}, but also \eqref{e:vpV2c}, and so, in particular, 
\begin{align*}
    \vp_1\ps{E_{R,\rho} \cap B(x,r)} & = \vp_1\ps{(V_Q^2 \cap E_{R,\rho}) \cup (E_{R,\rho} \cap (B(x,r) \setminus V_Q^2))}\\
    & = \vp_1(E_{R,\rho} \cap V_Q^2) \cup \ps{E_{R,\rho} \cap (B(x,r) \setminus V_Q^2)},
\end{align*}
recalling that by definition (see \eqref{e:Ta}) $V_Q^2 \subset B(x,r)$. Also, note that 
\begin{align} \label{e:Ja}
    E_{R,\rho} \cap (B(x,r) \setminus V_Q^2) \subset A_{\wt\eta_1 \ell(Q)}(x,r) \cap E_{R,\rho};
\end{align}
indeed, using  $B_6 \cap E_{R,\rho} \subset V_Q^2$, we see that $E_{R,\rho} \cap (B(x,r) \setminus V_Q^2) \subset E_{R,\rho} \cap (B(x,r) \setminus B_6)$, and (recalling the definition of the balls $B_j$'s \eqref{e:Bj}), $B(x,r) \setminus B_6 \subset A_{\wt \eta_1\ell(Q)}(x,r)$. Thus we have
\begin{align*}
     \hd\ps{\vp_1(E_{R,\rho} \cap V_Q^2)} + \hd\ps{A_{\wt \eta_1 \ell(Q)}(x,r)\cap E_{R,\rho}} \geq \delta_1 \ell(Q)^d + \hd\ps{E_{R,\rho} \cap A_{\wt \eta_1 \ell(Q)}(x,r)}
\end{align*}
and so
\begin{align}\label{e:Jb}
    \hd\ps{\vp_1(E_{R,\rho} \cap V_Q^2)} \geq \delta_1\ell(Q)^d. 
\end{align}
In particular, from the definition of $\dF$, this inequality holds for any $F \in \dF$. 
Recall now that we decided that $\wt F$ was a minimiser of $J$ (as defined in \eqref{e:J}). Thus we have that
\begin{align}\label{e:Jc}
    & J(\wt F) \leqt{\text{Lemma } \ref{l:dF} } J(E_{R,\rho} \cap V_Q^2) = \hd\ps{E_{R,\rho} \cap V_Q^2} \notag\\
    & \leqt{\eqref{e:Ta}} \hd\ps{E_{R,\rho} \cap B(x,r)} \leqt{\text{Lemma } \ref{l:Erho-ADR}} \Cc_3 \ell(Q)^d. 
\end{align}
Moreover, by definition of $J$, 
\begin{align} \label{e:Jd}
    \hd\ps{\wt F \setminus E_{R,\rho}} \leq \frac{J(\wt F)}{M} \leqt{\eqref{e:Jc}} \frac{C_3}{M}\ell(Q)^d \eqt{\eqref{e:M}} c_2\,\delta_1 \ell(Q)^d.
\end{align}
But then we have that, with $c_2$ sifficiently small, 
\begin{align}
    \hd\ps{\wt F \cap E_{R,\rho}} = \hd(\wt F) - \hd(\wt F \setminus E_{R,\rho}) \geqt{\eqref{e:Jb}, \eqref{e:Jd}} \frac{\delta_1 \ell(Q)^d}{2}.
\end{align}
This proves the Lemma.
\end{proof}


\section{Almgren quasiminimality of $\wt F$.} \label{s:quasiminimality} Roughly speaking, a set $S$ in $\R^n$   is a (Almgren\footnote{The definition below was introduced by Almgren in \cite{almgren1976existence}.}) quasiminimiser of the $d$-dimensional Hausdorff measure $\hd$ if, whenever we deform $S$ in a suitable way, the $d$-measure of such deformations does not shrink too much. Quasiminimality is a form of stability: the set maintain its Hausdorff dimension under a suitable class of perturbations. Heuristically, this is the reason why we need to transfer the topological condition from $E$ to an Ahlfors regular set: in this case, modulo technicalities, quasimininality roughly coincides with our topological condition.

\subsubsection{The story so far.}
Let $E$ be the stable $d$-surface that we started with. From Lemma \ref{l:TP-LCR} we know that it is lower content ($d, \cc_1$)-regular. Hence we apply the coronisation as described in Subsection \ref{s:ER} (Lemma \ref{l:corona}), which is a collection of `top cubes' $\{R\}_{R \in \Top(k_0)}$. For each such an $R$ we construct an approximating set which we called $\wt E_R$ (see its definition in \eqref{e:wtER}, and see recall the definition of $\cC_R$ in \eqref{e:CR}). We subsequently modify it as in \eqref{e:newER}. Next we defined coherent Federer-Fleming projections (Lemma \ref{l:FF}), through which one can map $E$ into $E_R$ in a Lipschitz continuous way (without control on the Lipschitz constant). Via Federer-Fleming projections, we show that the fact that $E$ is a stable $d$-surface (with finite $d$-measure), implies that $E_R$, for each $R \in \Top(k_0)$, is also topologically stable, in the sense of the skeletal topological condition in Definition \ref{d:TND}; see Lemma \ref{l:TC-STC}. In Section \ref{s:E-rho-STC} we construct, for technical reasons, a further approximating set $E_{R, \rho}$ (for each $R \in \Top(k_0)$) which also satisfies the STC (Lemma \ref{l:TC-STC-Erho}). Then, it turns out that the minimiser $\wt F$ of the functional $J$, see \eqref{e:J}, has large intersection with $E_{R, \rho}$ (Lemma \ref{l:large-intersection}).

\subsection{Quasiminimizer sets for Hausdorff measure}
\noindent
We now recall from \cite{david2000uniform} the precise definitions to make this notion precise. Let $U$ be an open set in $\R^n$ and fix two constants 
\begin{align}\label{e:QMconst}
    1 \leq k < \infty \mbox{ and } 0 < \delta \leq + \infty. 
\end{align}
Let $S \subset U$ be so that
\begin{align} \label{e:QMa}
    S \neq \emptyset \mbox{ and } \overline{S} \setminus S \subset \R^n \setminus U.
\end{align}
Assume also that 
\begin{align} \label{e:QM-finite}
    \hd(S \cap B) < +\infty \mbox{ for all balls } B \subset U.     
\end{align}
Now, let us make precise what we mean by `deformations' or `perturbations'. Given a set $S$, deformations of $S$ will be sets of the form $\phi(S)$, where 
\begin{align}\label{e:QM-Lip}
    \phi: \R^n \to \R^n \mbox{ is Lipschitz }
\end{align}
and satisfies the following properties. 
\begin{align}
    & \diam\ps{ W \cup \phi(W)} \leq \delta \mbox{ where } W := \ck{ x \in \R^n \, |\, \phi(x) \neq x}; \label{e:QM-W} \\
    & \dist\ps{W \cup \phi(W) , \R^n \setminus U} > 0; \label{e:QM-dist}\\
    & \phi \mbox{ is Lipschitz-homotopic to the identity}. \label{e:QM-homo}
\end{align}
The last requirement means that there exists a continuous map 
\begin{align*}
    h : \R^n \times [0,1] \to \R^n
\end{align*}
such that $h(x, 0) = x$ and $h(x, 1) = \phi(x)$ for all $x \in \R^n$, such that $h(\cdot, t): \R^n \to \R^n$ is Lipschitz for all $t \in [0,1]$, and such that
\begin{align*}
    \diam(\widehat W) < \delta \mbox{ and } \dist( \widehat W, \R^n \setminus U ) >0,
\end{align*}
where 
\begin{align*}
    \widehat W := \bigcup_{t \in [0,1]} W_t \cup \phi_t(W_t), \enskip \phi_t(x) = h(x,t) \mbox{ and } W_t:= \ck{x \in \R^n \, |\, \phi_t(x) \neq x }.
\end{align*}

\begin{definition}
Let $0<d< n$; let $U \subset \R^n$ be an open set and fix two constant $k, \delta$ as in \eqref{e:QMconst}. We say that $S \subset U$ is a \textit{$(U, k, \delta)$-quasiminimizer} for $\hd$ if $S$ satisfies \eqref{e:QMa}, \eqref{e:QM-finite} and 
\begin{align}
    & \hd(S \cap W) \leq k \hd\ps{\phi(S \cap W)}\label{e:QMmain} 
\end{align}
for all Lipschitz mappings  $\phi$  which satisfy  \eqref{e:QM-W}, \eqref{e:QM-dist}  and  \eqref{e:QM-homo}.
\end{definition}

\subsection{The set $\wt F$ is a quasiminimizer}
\noindent This subsection is devoted to prove Lemma \ref{l:quasiS} below. Recall that $\wt F$ is a minimiser of the functional $J$ (as in \eqref{e:J}). Recall also the notation 
\begin{align*}
    B_j:= B\ps{x, r - \eta_1 \ell(Q) + \frac{j \eta_1 \ell(Q)}{10}} \mbox{ for } 0 \leq j \leq 10,
\end{align*}
where $x,r$ are as in \eqref{e:x-r}, $\eta_1$ as in \eqref{e:eta} (bearing in mind Remark \ref{rem:eta1-tilde}) and $Q \in \Tree(R)$ as in \eqref{e:TND-Q}.

\begin{lemma}\label{l:quasiS}
The set 
\begin{align}
    S:= \wt F \cap B_2
\end{align}
is a $(B_2, k, \delta)$-quasiminimizer for $\hd$, where
\begin{align}
    k = C\,4^{nd}M, 
\end{align}
(here $C$ is a geometric constant), and
\begin{align}
    \delta = c_3 \min\ck{\alpha_1, \eta_1} \ell(Q).
\end{align}
Here $0< c_3<1$ is a parameter bounded above by a universal constant.
\end{lemma}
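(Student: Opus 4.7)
Fix a Lipschitz $\phi$ with homotopy $h$ satisfying \eqref{e:QM-W}--\eqref{e:QM-homo} with $U = B_2$, and let $W_\phi = W \cup \phi(W) \subset B_2$, so $\diam W_\phi \leq \delta$. The strategy is to manufacture a competitor $F' \in \dF$ that coincides with $\wt F$ outside $W_\phi$ and whose mass in $W_\phi$ is controlled by $\hd(\phi(\wt F \cap W))$ up to a Federer--Fleming factor, and then to extract the quasiminimality estimate from the minimality inequality $J(\wt F) \leq J(F')$.

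To construct $F'$, write $\wt F = \vp_1(E_\rho \cap V^2)$ with $\vp_t$ satisfying \eqref{e:vpV2a}--\eqref{e:vpV2e}, and concatenate with $h$ by setting $\psi_t := \vp_{2t}$ for $t \in [0,1/2]$ and $\psi_t := h(\vp_1(\cdot), 2t-1)$ for $t \in [1/2,1]$, so that $\psi_1(E_\rho \cap V^2) = \phi(\wt F)$. Because $W_\phi \subset B_2 \subset V^2$ and the extra displacement introduced by $h$ is at most $\delta = c_3\min(\alpha_1,\eta_1)\ell(R)$, the conditions \eqref{e:vpV2a}--\eqref{e:vpV2d} follow easily once $c_3$ is chosen small (for instance $\dist(\psi_t(y), E_R) \leq \sigma + \delta \leq \alpha_1\ell(R)$ by \eqref{e:sigma}). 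The only obstruction is \eqref{e:vpV2e}, since $\phi$ may push the image of $\vp_1$ out of $V^1$ by as much as $\delta$. To repair this, I would compose with a Federer--Fleming-style projection $\pi$ onto the $d$-skeleton $E_\rho$, built on the uniform $\Delta_\rho$-grid over a $C\rho$-neighbourhood of $V^1$; choosing $c_3$ small guarantees that this neighbourhood contains $\phi(\wt F)$ while sitting inside $V^2$, that $\pi$ is Lipschitz and fixes the complement, that $\pi$ is Lipschitz-homotopic to the identity, that $\pi(\phi(\wt F)) \subset V^1$, and that $\hd(\pi(A)) \leq C\cdot 4^{nd}\hd(A)$ for $\hd$-finite $A$. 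Splicing the homotopy of $\pi$ onto the end of $\psi$ and reparametrizing yields a single one-parameter family realizing $F' := \pi(\phi(\wt F)) \in \dF$.

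With $F' \in \dF$, the inequality $J(\wt F) \leq J(F')$ and the fact that $\wt F$ and $F'$ agree outside $W_\phi$ give
\begin{align*}
\hd(\wt F \cap W \cap E_\rho) + M\hd(\wt F \cap W \setminus E_\rho) \leq (1+M)\,\hd(F' \cap W_\phi) \leq (1+M)\,C\cdot 4^{nd}\,\hd(\phi(\wt F \cap W)),
\end{align*}
and since the left-hand side dominates $\hd(\wt F \cap W) = \hd(S\cap W)$, we obtain quasiminimality with $k = C\cdot 4^{nd} M$. The main obstacle is the construction of $\pi$: it has to be defined on a neighbourhood of $V^1$ large enough to swallow $\phi(\wt F)$ and small enough to stay inside $V^2$ (these competing demands pin down the exact form of $\delta$ and hence of $c_3$), and it has to admit a Lipschitz homotopy to the identity that, once inserted into $\psi_t$, preserves the intermediate distance constraint \eqref{e:vpV2d}; the remaining steps are routine bookkeeping on the deformations together with the classical Federer--Fleming expansion bound.
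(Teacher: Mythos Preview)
Your overall strategy---compose $\phi$ with a projection to produce a competitor $F'\in\dF$, then cancel the common parts in $J(\wt F)\le J(F')$---is the same as the paper's. But there is a genuine scale mismatch in your repair step. You propose a single Federer--Fleming projection $\pi$ at the grid scale $\rho$, built on a $C\rho$-neighbourhood of $V^1$, and claim that choosing $c_3$ small makes this neighbourhood swallow $\phi(\wt F)$. This cannot work: by definition $\delta=c_3\min\{\alpha_1,\eta_1\}\ell(R)$ is comparable to $c_3\sigma$ (see \eqref{e:sigma}), while $\rho$ is only required to satisfy $\rho<\ell(I_*)$ and \eqref{e:rho}, so $\rho$ may be arbitrarily small relative to $\sigma$. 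A $C\rho$-neighbourhood of $V^1$ therefore need not contain $\phi(\wt F)$, and a $\rho$-scale Federer--Fleming map moves points by only $O(\rho)$, which is not enough to push the image back into $V^1$. Conversely, if you switch to a $\sigma$-scale projection you land in $\dS_{j(\sigma),d}$, not in the $\rho$-skeleton, so the result is not in $\dF_0$.

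The paper resolves this with a two-step procedure: first a $4^n$-Lipschitz retraction $h$ of the $\sigma/4$-neighbourhood $V^1_+$ onto $V^1$ (Lemma \ref{l:retraction}), which handles the $\delta$-displacement; then the Federer--Fleming projection $f$ at scale $\rho$ (Lemma \ref{l:FF2}) to land in $\dS_{j(\rho),d}$; and finally a further cleanup map sending partially covered faces to their $(d-1)$-boundary so that the result is a finite union of \emph{full} $d$-faces, as required by the definition of $\dF_0$. Your proposal also omits this last cleanup; without it the competitor is a priori not in $\dF$ and the minimality inequality cannot be invoked. Once you insert the $\sigma$-scale retraction and the full-face cleanup, the rest of your bookkeeping (cancellation outside $W$, the $4^{nd}$ Lipschitz expansion of $h$, the $C$ from Federer--Fleming) goes through exactly as in the paper.
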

\noindent
We will need the following lemma from \cite{david2004hausdorff}. We tailor it to our current notation.  
\begin{lemma}[{\cite{david2004hausdorff}, Lemma 5.8}]\label{l:retraction}
Let $V^1$ be a finite union of dyadic cubes belonging to $\Delta_\sigma$, where $\sigma$ should be thought of as in \eqref{e:sigma}. There exists a $4^n$-Lipschitz function $h$, defined on 
\begin{align} \label{e:h1}
    V^1_+ := \ck{ y \in \R^n \,|\, \dist(y, V^1) \leq \frac{\sigma}{4}}
\end{align}
and such that
\begin{align}
    & h(V^1_+) \subset V^1 \label{e:h2}\\
    & h(y) = y \mbox{ for } y \in V^1, \label{e:h3}
\end{align}
and 
\begin{align}
    |h(y) - y| \leq n^{1/2} \sigma.
\end{align}
\end{lemma}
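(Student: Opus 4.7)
The plan is to construct $h$ by induction on the ambient dimension $n$, reducing to a one-dimensional nearest-point projection by using the projection shadow of $V^1$ to resolve one coordinate at a time. After rescaling so that $\sigma = 1$, $V^1$ is a finite union of axis-aligned unit cubes with corners in $\bZ^n$, and $V^1_+$ is its $1/4$-neighborhood.

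For the base case $n = 1$, $V^1$ is a finite disjoint union of closed unit intervals and the nearest-point projection of $V^1_+$ onto $V^1$ is $1$-Lipschitz with displacement at most $1/4$. For the inductive step, consider the shadow $\pi_1(V^1) \subset \R^{n-1}$, which is a finite union of $(n-1)$-dimensional unit cubes belonging to the analogue of $\Delta_\sigma$ in dimension $n-1$. By the inductive hypothesis there is a $4^{n-1}$-Lipschitz retraction $h'$ from the $1/4$-neighborhood of $\pi_1(V^1)$ onto $\pi_1(V^1)$, with displacement at most $(n-1)^{1/2}\sigma$. For $y = (y_1, y_{-1}) \in V^1_+$, one has $y_{-1}$ in the $1/4$-neighborhood of $\pi_1(V^1)$ since any $z \in V^1$ with $|y-z|\leq 1/4$ has $|y_{-1}-z_{-1}|\leq 1/4$ and $z_{-1}\in \pi_1(V^1)$, so I set $y' := (y_1, h'(y_{-1}))$. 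Since $h'(y_{-1}) \in \pi_1(V^1)$, the line $L_1 := \{(t, h'(y_{-1})) : t \in \R\}$ meets $V^1$ in a non-empty union of closed unit intervals, and I define $h(y)$ to be $y'$ with $y_1$ replaced by its nearest point in $V^1 \cap L_1$.

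For the Lipschitz estimate of the final one-dimensional step, on each fixed line parallel to $e_1$ the nearest-point projection onto a union of unit intervals is $1$-Lipschitz; across nearby parallel lines the interval endpoints of $V^1 \cap L_1$ may shift, but since the cubes all have side $1$ and the neighborhood thickness is $1/4$, a direct case analysis at shared cube faces bounds this shift by $4$ times the perpendicular displacement. Combined with the $4^{n-1}$ Lipschitz contribution from $h'$, this yields the total $4^n$ bound. The retraction property $h|_{V^1}=\mathrm{id}$ follows because $y\in V^1$ gives $y_{-1}\in \pi_1(V^1)$, so $h'(y_{-1})=y_{-1}$ by induction, and then $y_1\in V^1\cap L_1$, so the projection in the last step fixes $y_1$. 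The displacement bound follows from adding in quadrature the contribution $(n-1)^{1/2}\sigma$ from $h'$ and a bounded multiple of $\sigma$ from the one-dimensional step, yielding $n^{1/2}\sigma$.

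The main obstacle I expect is twofold. First, verifying the $4$-Lipschitz bound for the one-dimensional retraction across lines whose interval structures differ at shared cube faces, requiring a careful case analysis that exploits the uniform cube size $\sigma$ and the ratio $1:1/4$ of cube width to neighborhood thickness. Second, controlling the displacement in the final one-dimensional step: $h'(y_{-1})$ might lie in the shadow of a different cube than a nearest $z \in V^1$, so a priori the distance from $y_1$ to $V^1\cap L_1$ need not be small. One fix is to impose additional consistency on $h'$ (for example preferring shadows of cubes near $y$), threaded through the induction, so that $h'(y_{-1})$ lies in the shadow of a cube whose $e_1$-extent is close to $y_1$.
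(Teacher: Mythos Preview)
Your inductive scheme has a genuine gap, and you have essentially identified it yourself in the second obstacle. The problem is not a technicality: the shadow $\pi_1(V^1)$ forgets which cubes sit at which $y_1$-heights, so after applying $h'$ to $y_{-1}$ you may land in the shadow of a cube that is arbitrarily far from $y$ in the $e_1$-direction. Concretely (with $\sigma=1$), take $V^1=[0,1]^2\cup\bigl([100,101]\times[1,2]\bigr)$ and $y=(0.5,1.1)\in V^1_+$. Then $\pi_1(V^1)=[0,2]$, $h'(1.1)=1.1$, and $L_1\cap V^1=\{(t,1.1):t\in[100,101]\}$, so your $h(y)=(100,1.1)$ and $|h(y)-y|=99.5$. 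The displacement bound fails outright.

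The Lipschitz bound fails for the same reason: as $h'(y_{-1})$ crosses the boundary of the shadow of one cube into the shadow of another, whole unit intervals in $V^1\cap L_1$ appear or disappear, so the nearest-point projection onto $V^1\cap L_1$ is not even continuous in $y$. Your proposed fix (``prefer shadows of cubes near $y$'') would require $h'$ to depend on all of $y$, not just $y_{-1}$, which destroys the inductive structure; and it is not clear how to make such a preference Lipschitz.

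The paper does not give its own proof here; it cites David's Lemma~5.8 in \cite{david04}. The standard argument does not pass through shadows. Instead one works cube by cube: for each $Q\in\Delta_\sigma$ with $Q\not\subset V^1$ but $Q\cap V^1_+\neq\emptyset$, one checks that $F_Q:=Q\cap V^1$ is a nonempty union of closed faces of $Q$ (any $z\in V^1$ with $|y-z|\le\sigma/4$ forces the cube containing $z$ to be adjacent to $Q$), and builds a Lipschitz retraction of $Q\cap V^1_+$ onto $F_Q$. These retractions are made to agree on shared faces by defining $h$ first on the $0$-skeleton, then the $1$-skeleton, and so on --- an induction on skeleton dimension rather than on the ambient dimension. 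At each step the extension into the interior of a $k$-face can be done by coning from a point of the face far from $V^1_+$, which yields a controlled Lipschitz constant (this is where the factor $4$ per dimension, hence $4^n$, arises) and displacement at most the diameter $\sqrt{n}\,\sigma$ of a cube. This keeps everything local to a single cube of $\Delta_\sigma$, which is exactly what your global shadow projection fails to do.
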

\noindent
Recall that we want to show that $\wt F$ is a quasiminimal set for $\hd$.
Here is the idea to do so. We want to look at $\hd(\phi(\wt F \cap W))$; what we know about $\wt F$ which makes us hope that it may well be a quasiminimal set is that $\wt F$ is a minimiser of the functional $J$ as defined in \eqref{e:J}. We want to use this information. In other words, we would like to say that $\phi(\wt F)$ is a competitor of $\wt F$ belonging to the class $\dF$. Unfortunately, this is not true, in the sense that $\phi(\wt F)$ may lie outside $V_Q^1$, and this is not permitted (see \eqref{e:vpV2e}). What we can do however, is first to retract $\phi(\wt F)$ (which we will call $F_1$) back into $V_Q^1$ (using the map $h$ from Lemma \ref{l:retraction}); let us set $F_2:=h(F_1)$. Next, we want to project $F_2$ onto some $d$-dimensional skeleton so that it belongs to $\dF_0$ (as defined in \eqref{e:F_0}). This projection will happen in two steps, with two corresponding maps; we will denote the images so obtained by $F_3$ and then $F_4$; this latter one will be the needed competitor. The last step will be to show that these distortions of $\phi(\wt F)$ don't increase the size of $\phi(\wt F)$ too much. In this way, first by the minimising property of $\wt F$ we will obtain a bound like $\hd(\wt F) \leq M \hd(F_4) $ and then, by this last step, a bound similar to $\hd(F_4) \leq C \hd(\phi(\wt F))$ and thus establishing quasiminimality.

\subsubsection{Construction of $F_1$ and $F_2$}
Let us get started: we want to deform $\wt F$ with Lipshitz maps $\phi$ as in \eqref{e:QM-Lip}. Pick one such Lipshitz deformation $\phi$. We are interested in those points $y \in W \cap \wt F$, i.e. those points which are actually being moved by $\phi$. But by \eqref{e:QM-W}, we must have that $|\phi(y)-y| \leq \delta$. We put
\begin{align} \label{e:delta-choice}
    \delta= c_3 \min\ck{\alpha_1, \eta_1} \ell(Q), 
\end{align}
where $c_3$ is a small parameter to be chosen soon. The rationale to choose $c_3$ is that we want $ \phi(\wt F)$ to lie in $V^1_+$, so that we may apply Lemma \ref{l:retraction} and send $\phi(\wt F)$ back into $V_Q^1$. Recall that $V^1_+$ is the set of points lying at most $\sigma/4$ far away from $V_Q^1$; recall also that $\wt F= \vp_1(E_\rho \cap V^2) \subset V_Q^1$ by \eqref{e:dF} (the way $\dF$ was defined) and the property of $\{\vp_t\}$, \eqref{e:vpV2e}. Hence for an appropriate choice of $c_3$, say 
\begin{align}\label{e:c_3}
    c_3 = \frac{1}{300 \sqrt{n}},
\end{align}
(see \eqref{e:sigma}) we have that if $y \in W \cap \wt F$, then
\begin{align}
    \phi(y) \in V^1_+.
\end{align}
We set 
\begin{align}
    & F_1 := \phi(\wt F) \mbox{ and }\label{e:F1} \\
    & F_2 := h(F_1) = h(\phi(\wt F)). \label{e:F2} 
\end{align}
In particular, $F_2 \subset  V_Q^1$ by Lemma \ref{l:retraction}.

\subsubsection{Construction of $F_3$}
We want to project $F_2$ back into a $d$-dimensional skeleton, since this is a requirement to belong to $\dF_0$ (and so eventually to $\dF$). By definition of $\dF_0$, we will be projecting onto the $d$-skeleton of cubes coming from $\Delta_{\rho}$.   
We will use the following Lemma, which is taken from \cite{david2000uniform}.

\begin{lemma}[{Lemma 11.14, \cite{david2000uniform}}] \label{l:FF2}
Let $j \in \Z$ and let $A$ be a compact subset of $\R^n$ such that $\hd(A)<\infty$. Denote by $N(A)$ the union of all the cubes $I \in \Delta_j$ that touch a cube in $\Delta_j$ which intersects $A$. Then there is a Lipschitz mapping $f: \R^n \to \R^n$ with the following properties.
\begin{align}
    & f(x) = x \mbox{ for } x \in \R^n \setminus N(A); \label{e:FF2a}\\
    & f(x) = x \mbox{ for all } x \in \mathcal{S}_{j,d}; \label{e:FF2b} \\
    & f(A) \subset \mathcal{S}_{j,d}; \label{e:FF2c}\\
    & f(I) \subset I \mbox{ for all } I \in \Delta_j; \label{e:FF2d} \\
    & \hd\ps{f((A \cap I) \setminus \dS_{j,d})} \leq C \,\hd\ps{(A \cap I) \setminus \dS_{j,d}} \mbox{ for all } I \in \Delta_j. \label{e:FF2e}
\end{align}
\end{lemma}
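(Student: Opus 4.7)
The plan is the classical Federer--Fleming radial projection, iterated $n-d$ times to reduce dimension from $n$ down to $d$.

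For a single step, fix $I \in \Delta_j$ and a compact subset $B \subset I$ with $\hd(B) < \infty$. For a candidate center $c$ in the interior of $\tfrac{1}{2}I$, define $\pi_c(x)$ to be the point where the ray from $c$ through $x$ meets $\partial I$. The tangential $d$-Jacobian of $\pi_c$ at $x$ is bounded by $C(\ell(I)/|x-c|)^d$, so
\[
\hd(\pi_c(B)) \,\leq\, C \int_B \Bigl(\frac{\ell(I)}{|x-c|}\Bigr)^d d\hd(x).
\]
Integrating in $c$ over $\tfrac{1}{2}I$ and using Fubini with
\[
\int_{\tfrac{1}{2}I} |x-c|^{-d}\, dc \,\lesssim\, \ell(I)^{n-d}
\]
(finite because $d < n$), the mean value of $c \mapsto \hd(\pi_c(B))$ is at most $C\ell(I)^n \hd(B)/|\tfrac{1}{2}I| \sim C\hd(B)$. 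Since $\hd(B) < \infty$ and $d < n$, the set $B$ has Lebesgue measure zero, so there exists a good $c_I \in \tfrac{1}{2}I \setminus B$. Extend $\pi_{c_I}$ Lipschitzly to a map $I \to I$ fixing $\partial I$ pointwise, via conical interpolation from $c_I$.

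Apply this in every $I \in \Delta_j$ meeting $A$, and take the identity on the remaining cubes and on $\R^n \setminus N(A)$. Since each $\pi_{c_I}$ fixes $\partial I$, the pieces glue into a global Lipschitz map $f_1$ satisfying $f_1 = \id$ on $\dS_{j,n-1}$ and off $N(A)$, $f_1(I) \subset I$, $f_1(A) \subset \dS_{j,n-1}$, and the cube-wise measure bound (analogue of \eqref{e:FF2e}) with target $\dS_{j,n-1}$. The image $f_1(A)$ now lies in a union of $(n-1)$-dimensional faces. View each such face as a cube in one lower ambient dimension and repeat the construction inside it; the integrability $\int |y-c|^{-d}\, dc$ over a $k$-face is finite precisely when $d < k$, which is why the process can (and must) run from $k = n$ down to $k = d+1$. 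After $n-d$ iterations, the composition $f := f_{n-d} \circ \cdots \circ f_1$ lands in $\dS_{j,d}$ and inherits properties \eqref{e:FF2a}--\eqref{e:FF2d} from the stage-by-stage construction; property \eqref{e:FF2e} follows by telescoping the per-stage estimate, with a final constant depending only on $n$ and $d$.

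The main obstacle is the averaging step that isolates a valid center $c_I$: it is the only nonconstructive ingredient, and one must verify the integral $\int_{\tfrac{1}{2}I}|x-c|^{-d}dc$ really is uniformly controlled across all $n-d$ stages (the exponent $d$ is fixed, while the ambient dimension of each integration decreases, hence the constraint $d<k$ at every level). A secondary care point is coherence of the gluing at shared faces: two neighbouring cubes $I, I'$ of $\Delta_j$ agree on the common face $T = I \cap I'$, and the two radial projections $\pi_{c_I}$, $\pi_{c_{I'}}$ both equal the identity on $T$, so they match; but the cubewise Lipschitz extension inside $I$ must be chosen (as the straight-line interpolation from $c_I$) so as to preserve this boundary identity, which is what makes the whole assembly a globally defined Lipschitz map.
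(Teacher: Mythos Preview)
The paper does not prove this lemma; it is quoted verbatim from \cite{david-semmes00}, Lemma 11.14, and used as a black box. Your argument is the classical Federer--Fleming construction with the averaging-in-the-center step to secure the measure bound \eqref{e:FF2e}, and this is exactly the approach in the cited reference, so the proposal is correct and matches the intended proof.

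One small remark: the paper does carry out a closely related construction earlier (Lemma~\ref{l:FF}), projecting $E$ onto a $d$-skeleton via iterated radial projections, but there the centers $c_I$ are chosen arbitrarily in the complement of $E$ and no measure control is claimed. Your averaging argument over $c \in \tfrac{1}{2}I$ is precisely the extra ingredient that gives \eqref{e:FF2e}, which is why the paper invokes the David--Semmes lemma separately rather than reusing its own Lemma~\ref{l:FF}. Your handling of the iteration (noting that $\int_{k\text{-face}} |y-c|^{-d}\,dc < \infty$ requires $d<k$, hence the process terminates exactly at the $d$-skeleton) and of the gluing across shared faces is correct; the need for the enlarged set $N(A)$ rather than just the cubes meeting $A$ arises, as you implicitly allow, because at later stages the extension from a face back up to the ambient cubes may reach into neighbouring cubes that did not originally meet $A$.
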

\noindent
Recall the definition of $\dS_{j,d}$ in \eqref{e:Sjd}. \\
\\
\noindent
We now apply Lemma \ref{l:FF2} with \begin{align*}
    j=-j(\rho) \mbox{ and } A = F_2= h(\phi(\wt F)), 
\end{align*}
and thus we set
\begin{align} \label{e:F3}
    F_3= f(F_2)= (f \circ h \circ \phi)(\wt F). 
\end{align}

\begin{remark}\label{r:rem-A}
Let us note a couple of facts. First, we see that if $y \in \wt F \setminus W$, then $\phi(y)=y$ (by definition of $W$, as in \eqref{e:QM-W}); but, still with the same $y$, also $h(\phi(y))=h(y)=y$, since $y \in V_Q^1$ already, and $h$ does not move such points (as in \eqref{e:h3}), and further, $f(h(\phi(y)))= f(y) =y$ by \eqref{e:FF2b}, since $y \in \wt F$, and therefore it belongs to the $d$-face of some cubes from $\Delta_\rho$. 
\end{remark}
\begin{lemma}
With the notation as above,  we have
\begin{align} \label{e:dim}
    \dim \ps{F_3 \setminus \dS_{-j(\rho), d}} \leq d-1. 
\end{align}
\end{lemma}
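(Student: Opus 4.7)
\textbf{Proof proposal for \eqref{e:dim}.} The claim is essentially an immediate consequence of property \eqref{e:FF2c} of Lemma \ref{l:FF2}, modulo what appears to be a sign slip in the scale index: the Federer--Fleming map $f$ was produced by invoking Lemma \ref{l:FF2} at the dyadic scale corresponding to side length $\rho = 2^{-j(\rho)}$, so the target skeleton is $\dS_{j(\rho),d}$ (I read the $-j(\rho)$ in the statement accordingly). With this in mind, the plan is simply to chase the definitions and write
\begin{align*}
   F_3 \;=\; f(F_2) \;=\; f(A) \;\subset\; \dS_{j(\rho),d},
\end{align*}
where the last containment is exactly property \eqref{e:FF2c}. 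Consequently $F_3 \setminus \dS_{j(\rho),d} = \emptyset$, and the Hausdorff dimension bound \eqref{e:dim} holds trivially.

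I do not expect any genuine obstacle here; the lemma is a one-line bookkeeping step. What is being recorded is that, after the Federer--Fleming projection, whatever $d$-dimensional mass $F_3$ may carry must live on the $d$-faces of the $\rho$-grid, and anything it carries off that skeleton is of strictly lower dimension (in fact, there is nothing at all off the skeleton). The role of \eqref{e:dim} in what follows is presumably to prepare $F_3$ (or its eventual modification $F_4$) for membership in the admissible class $\dF_0$, whose elements must decompose as $F^* \cup L$ with $F^*$ a \emph{finite union of full $d$-faces} and $\dim L \leq d-1$. The inclusion $F_3 \subset \dS_{j(\rho),d}$ produced here supplies the ambient skeleton; the next step will then be to ``saturate'' $F_3$ by completing any partially-hit $d$-face into a full $d$-face and absorbing the discrepancy into a $(d-1)$-dimensional residue, thereby turning the result into a legitimate $\dF$-competitor for $\wt F$ in the quasi-minimality argument.
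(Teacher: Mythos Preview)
Your proof is correct and essentially the same as the paper's: both conclude $F_3 = f(A) \subset \dS_{j,d}$ via property \eqref{e:FF2c}, making the set in question empty. The paper's argument is slightly more roundabout---it splits $\wt F$ into $\wt F \cap W$ and $\wt F \setminus W$ and handles the latter via Remark \ref{r:rem-A} (facts needed later anyway)---but your direct application of \eqref{e:FF2c} to $A = F_2$ is enough, and your reading of the sign slip in the scale index is also correct.
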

\begin{proof}
By Remark \ref{r:rem-A}, we already know that $f(h(\phi(\wt F \setminus W)))= \wt F \setminus W \subset \dS_{-j(\rho), d}$. On the other hand, we must have that $f(h(\phi(\wt F \cap W))) \subset \dS_{-j(\rho), d}$ by \eqref{e:FF2c}. Thus the Lemma follows. 
\end{proof}

\subsubsection{Construction of $F_4$ and $F_4 \in \dF$}
Note that $F_3$ is not necessarily a union of full $d$-dimensional faces: the projection $f$ is into and not necessarily onto. 
\begin{lemma}
There exists a Lipschitz map $\pi: \R^n \to \R^n$ so that $\pi(F_3)$ is precisely the union of those $d$-dimensional faces which were contained in $F_3$. 
\end{lemma}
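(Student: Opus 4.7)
The plan is to construct $\pi$ piece by piece on the $d$-skeleton $\dS_{-j(\rho),d}$ and then extend it to $\R^n$. Since $F_3$ is a compact subset of $\dS_{-j(\rho),d}$, only finitely many $d$-dimensional faces $T$ of cubes from $\Delta_{-j(\rho)}$ meet $F_3$. I would partition these into the \emph{full} faces, with $T\subseteq F_3$, and the \emph{partial} faces, with $T\cap F_3\neq\emptyset$ but $T\not\subseteq F_3$. On every full $d$-face and on every $d$-face disjoint from $F_3$, set $\pi$ to be the identity.

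For each partial face $T$, closedness of $F_3$ together with $T\not\subseteq F_3$ forces $\Int(T)\setminus F_3$ to be nonempty and open in $T$, so I can select a center $c_T\in\Int(T)\setminus F_3$ at positive distance both from $F_3\cap T$ and from $\partial T$. Define $\pi$ on $T$ as a Lipschitz regularization of the radial projection from $c_T$ onto $\partial T$: project radially on $T\setminus B(c_T,r_T)$ and crush $B(c_T,r_T)$ to a single base point of $\partial T$, where $r_T>0$ is smaller than the gap separating $c_T$ from $F_3\cap T\cup\partial T$. This map fixes $\partial T$ pointwise and sends $F_3\cap T$ into $\partial T\subset\dS_{-j(\rho),d-1}$. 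Since all face-level definitions agree with the identity on shared $(d-1)$-subfaces, they glue into a Lipschitz map on $\dS_{-j(\rho),d}$, whose Lipschitz constant is controlled by the finitely many parameters $(c_T,r_T)$.

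Next I would extend $\pi$ from the $d$-skeleton to all of $\R^n$ by iterating the Federer--Fleming style construction used in Lemma~\ref{l:FF}: from $\dS_{-j(\rho),d}$, extend to the $(d+1)$-skeleton by choosing, for each $(d+1)$-face $G$, an interior center $c_G$ and performing a radial projection onto $\partial G$, then continue inductively through higher skeleta up to the ambient $n$-cubes, setting $\pi=\id$ outside a fixed bounded neighborhood of $F_3$. Each step preserves the Lipschitz property with a bounded factor depending only on $n$ and on the parameters fixed at the previous step.

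Finally, I would read off the image. Full faces contribute themselves to $\pi(F_3)$, while partial faces contribute their $F_3$-portions, which land in $\dS_{-j(\rho),d-1}$. Hence the $d$-dimensional (coral) part of $\pi(F_3)$ is exactly the union of those $d$-faces entirely contained in $F_3$, and any residual $(d-1)$-dimensional remnant is absorbed into the lower-dimensional set $L$ in the decomposition $F_4=F_4^*\cup L$ from the definition of $\dF_0$. The main technical obstacle is securing a uniform positive gap between the chosen centers $c_T$ and the corresponding sets $F_3\cap T$, which is what keeps the overall Lipschitz constant of $\pi$ finite; this comes from the closedness of $F_3$ in each partial $T$ together with the finiteness of the collection of partial faces. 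The resulting Lipschitz constant may be large, but, as with the similar extensions earlier in the paper, this is of no concern here.
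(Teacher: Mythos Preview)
Your approach is correct and matches the paper's (which defers the details to \cite{david04}): radially project $F_3\cap T$ onto $\partial T$ for each partial $d$-face $T$ from a point of $\Int(T)\setminus F_3$, leave full faces fixed, and extend to $\R^n$ via the convex-interpolation scheme from the proof of Lemma~\ref{l:FF}. Your acknowledgment that a $(d-1)$-dimensional remnant survives and is absorbed into the $L$-part of $F_4$ is exactly how the paper reads the statement as well.
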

\begin{proof}
See \cite{david2004hausdorff}, pages 211-212. The idea of the proof is to consider those faces $T$ which intersect $F_3$ on a set of positive $d$-dimensional measure but that are not contained in $F_3$. On these faces, exactly because they are not contained in $F_3$, we can define a Lipschitz map $\pi$ which sends whatever lies of $F_3$ in one such face to its $(d-1)$-dimensional boundary. The result, $F_4$, will be a set of full $d$-faces plus a set of dimension smaller or equal to $d-1$.
\end{proof}
\noindent
We now set 
\begin{align}
    F_4 := \pi(F_3).
\end{align}
\begin{lemma}
With notation as above,
\begin{align}
    F_4 \in \dF.
\end{align}
\end{lemma}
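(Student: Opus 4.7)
The plan is to exhibit an explicit one-parameter family $\{\psi_t\}_{0\le t\le 1}$ of Lipschitz self-maps of $\R^n$ satisfying \eqref{e:vpV2a}--\eqref{e:vpV2e} for which $\psi_1(E_\rho\cap V^2)=F_4$. Since by construction
\begin{align*}
F_4 \;=\; (\pi\circ f\circ h\circ \phi)(\wt F) \;=\; (\pi\circ f\circ h\circ \phi\circ \vp_1)(E_\rho\cap V^2),
\end{align*}
where $\{\vp_t\}$ is the family witnessing $\wt F\in\dF$, it is enough to prepend the homotopy $\vp_t$ with four homotopies running the identity to $\phi$, $h$, $f$ and $\pi$ respectively. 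The coral-plus-lower-dimensional structure of $F_4$ demanded by \eqref{e:F_03} is built into the construction of $F_4$ itself: $F_3$ sits inside $\dS_{-j(\rho),d}$, and $\pi$ either keeps a $d$-face whole or pushes its interior onto the $(d-1)$-skeleton, so $F_4=F_4^*\cup L$ with $F_4^*$ a finite union of $d$-faces of cubes in $\Delta_\rho$ and $\dim L\le d-1$.

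The concrete homotopy is built by concatenation on five subintervals. On $[0,\tfrac15]$, set $\psi_t=\vp_{5t}$; this inherits \eqref{e:vpV2a}--\eqref{e:vpV2e} from $\vp$ and lands with $\psi_{1/5}=\vp_1$. On $[\tfrac15,\tfrac25]$, let $H(\cdot,s)$ be the Lipschitz homotopy from the identity to $\phi$ guaranteed by \eqref{e:QM-homo}, and put $\psi_t=H(\vp_1(\cdot),5t-1)$. On $[\tfrac25,\tfrac35]$, use the straight-line homotopy $h_s(y)=(1-s)y+s\,h(y)$ (extended by identity off $V^1_+$) and set $\psi_t=h_{5t-2}\circ\phi\circ\vp_1$; this is well defined because $\phi(\wt F)\subset V^1_+$ by the choice of $c_3$. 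On $[\tfrac35,\tfrac45]$, use the straight-line homotopy $f_s(y)=(1-s)y+s\,f(y)$; it is coherent (and keeps points inside their cubes) by \eqref{e:FF2d}, and we set $\psi_t=f_{5t-3}\circ h\circ\phi\circ\vp_1$. Finally on $[\tfrac45,1]$, use the analogous straight-line homotopy $\pi_s$ for $\pi$, which is coherent because $\pi$ moves each point inside its containing $d$-face; set $\psi_t=\pi_{5t-4}\circ f\circ h\circ\phi\circ\vp_1$. Continuity in $t$ is immediate from the pointwise continuity of each interpolation and the agreement at the gluing values, giving \eqref{e:vpV2b}; $\psi_0=\text{id}$ is \eqref{e:vpV2c} at $t=0$.

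The chain-of-containments verification of the remaining axioms is the main bookkeeping step. For \eqref{e:vpV2c} at a point $y\in\R^n\setminus V^2$: $\vp_{5t}(y)=y$ by \eqref{e:vpV2c} for $\vp$; the support of $H$ is contained in a $\delta$-neighbourhood of $\wt F\subset V^1$, so for the choice $\delta=c_3\min\{\alpha_1,\eta_1\}\ell(R)$ we have $y\notin\widehat W$ and $H(y,s)=y$; the maps $h,f,\pi$ are identity outside $V^1_+$, $N(F_2)$, and a small neighbourhood of $V^1$ respectively, all of which lie in $V^2$ because $\rho$ and $\sigma$ are much smaller than $\eta_1\ell(R)$. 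The same smallness estimates together with \eqref{e:Ta} give $\psi_t(V^2)\subset V^2$, i.e.\ \eqref{e:vpV2a}. For \eqref{e:vpV2d} one tracks $y\in E_\rho\cap V^2$ through the five stages: $\vp_s(y)$ stays within $\alpha_1\ell(R)$ of $E_R$ by \eqref{e:vpV2d} for $\vp$; the $\phi$-stage moves by at most $\delta\ll\alpha_1\ell(R)$; the $h$-stage by at most $n^{1/2}\sigma\ll\alpha_1\ell(R)$; the $f$- and $\pi$-stages keep each point within its $\Delta_\rho$ cube, of diameter $\sqrt n\,\rho\ll\alpha_1\ell(R)$ by \eqref{e:rho}. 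Finally, \eqref{e:vpV2e} follows from the chain $\vp_1(E_\rho\cap V^2)\subset V^1$, $\phi(V^1)\subset V^1_+$, $h(V^1_+)\subset V^1$ (by \eqref{e:h2}), $f(V^1)\subset V^1$ (since $V^1$ is a union of cubes from $\Delta_\sigma$ and each such cube is mapped into itself by \eqref{e:FF2d}), and $\pi(V^1)\subset V^1$ (since $\pi$ moves points only within $d$-faces of cubes in $V^1$). This produces $\psi_1(E_\rho\cap V^2)=F_4$ and shows $F_4\in\dF$.

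The one delicate point is the verification of \eqref{e:vpV2d} and \eqref{e:vpV2a} during the $\phi$-stage, where we do not have pointwise control of $H(\cdot,s)$ outside the estimate $\diam(\widehat W)<\delta$; this is handled by noting that $H(y,s)=y$ off $\widehat W$, while on $\widehat W$ both $H(y,s)$ and $y$ lie in a set of diameter at most $\delta$ meeting $\wt F\subset V^1$, so $\dist(H(y,s),E_R)\le\delta+\dist(\wt F,E_R)\le\delta+\alpha_1\ell(R)$, which after absorbing $\delta$ into $\alpha_1$ (adjusting the constant $C$ in \eqref{e:alpha}) gives the required bound.
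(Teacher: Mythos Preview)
Your approach is the natural one and is essentially what the paper (via its reference to \cite{david04}, pages 212--215) does: concatenate the witnessing homotopy $\{\vp_t\}$ for $\wt F$ with straight-line (or given) homotopies running the identity to $\phi$, $h$, $f$, and $\pi$ in turn, and then verify \eqref{e:vpV2a}--\eqref{e:vpV2e} stage by stage using the smallness of $\delta$, $\sigma$, and $\rho$ relative to $\alpha_1\ell(R)$ and $\eta_1\ell(R)$.

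One technical point deserves more care than you give it. The retraction $h$ of Lemma~\ref{l:retraction} is only defined on $V^1_+$ and is \emph{not} the identity on $\partial V^1_+$, so ``extended by identity off $V^1_+$'' does not yield a continuous (let alone Lipschitz) map on $\R^n$. What is needed instead is a Lipschitz extension of $h$ (and of each $h_s$) to all of $\R^n$ that equals the identity outside $V^2$; this exists because $|h(y)-y|\le n^{1/2}\sigma$ on $V^1_+$ and the gap between $V^1_+$ and $\R^n\setminus V^2$ has width comparable to $\sigma$, so one can interpolate. A similar remark applies to $\pi$. With these extensions in hand, the rest of your verification goes through; in particular your handling of the $\phi$-stage via $\diam(\widehat W)<\delta$ and the absorption of $\delta$ into the $\alpha_1$-bound is the right idea.
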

\begin{proof}
Once again, see \cite{david2004hausdorff}, pages 212 to 215, from equation (5.32) to mid-page 215. 
\end{proof}
\noindent
Hence $F_4$ is a valid competitor in $\dF$ of $\wt F$. But $\wt F$ is a minimiser of the functional $J$ in this class, and therefore we have the inequality
\begin{align} \label{e:F4-F}
    J(\wt F) \leq J (F_4). 
\end{align}
We will use this inequalty in the following subsection to finally prove that $\wt F$ is also a quasiminimiser of $\hd$. 

\subsubsection{$\wt F$ is a quasiminimiser}
First, note that $F_4 \subset F_3$, except perhaps from a set of dimension smaller than, or equal to, $d-1$. Thus, using also \eqref{e:F4-F}, we have that
\begin{align*}
    J(\wt F) \leq J(F_3). 
\end{align*}
Recall the definition of $W$ in \eqref{e:QM-W} and that of $J$ in \eqref{e:J}. Writing $\wt F = \ps{\wt F \cap W} \cup \ps{\wt F \setminus W}$, and using the additivity of $\hd$, we have
\begin{align}
    J(\wt F) = J(\wt F \cap W) + J(\wt F \setminus W). 
\end{align}
Let us set 
\begin{align*}
    \Phi(y) = \ps{f \circ h \circ \phi} (y).
\end{align*}
With this notation we see that $F_3= \Phi(\wt F)$ (this is just \eqref{e:F3}). Moreover, recall from Remark \ref{r:rem-A}, that $\Phi(\wt F) = \Phi(\wt F \cap W) \cup \Phi(\wt F \setminus W)$, and in turn, that $\Phi(\wt F \setminus W) = \wt F \setminus W$, and thus $J(\Phi(\wt F \setminus W)) = J(\wt F \setminus W)$, which is immediate from the definition of $J$ as in \eqref{e:J}. In particular we get that
\begin{align*}
    J(F_3) & = J(\Phi(\wt F ))  \\
    & \leq J\ps{\Phi(\wt F \cap W)} + J\ps{\Phi(\wt F \setminus W)} \\
    & = J\ps{\Phi(\wt F \cap W)} + J\ps{\wt F \setminus W}. 
\end{align*}
We thus have
\begin{align*}
   J(\wt F)=  J(\wt F \cap W) + J(\wt F \setminus W) \leq J(\Phi(\wt F \cap W)) + J(\wt F \setminus W), 
\end{align*}
which, subtracting $J(\wt F \setminus W)$ from both sides, gives,
\begin{align*}
    J(\wt F \cap W) \leq J(\Phi(\wt F \cap W)).
\end{align*}
But note that, by the definition of the functional $J$ in \eqref{e:J}, \begin{align*}
    & \hd(\wt F \cap W) \leq J(\wt F \cap W) \leq J(\Phi(\wt F \cap W)) \\
    & = \hd(\Phi(\wt F \cap W)\cap E_\rho) + M \hd(\Phi(\wt F \cap W) \setminus E_\rho) \leq M \hd(\Phi(\wt F \cap W)).
\end{align*}
That is, 
\begin{align} \label{e:QMd}
    \hd(\wt F \cap W) \leq M \, \hd(\Phi(\wt F \cap W)).
\end{align}
Note that \eqref{e:QMd} resembles the comparison estimate \eqref{e:QMmain}: we need to swap $\Phi$ with $\phi$. To do so, we need to show that up constants, the maps $f$ and $h$ did not increase the mass of $\phi(\wt F \cap W)$. 
Let us worry about $f$ first. We write
\begin{align}
    & A_1 := h(\phi(\wt F \cap W)) \cap \dS_{-j(\rho),d}\,; \\
    & A_2 := h(\phi(\wt F \cap W)) \setminus \dS_{-j(\rho),d}\, .
\end{align}
Now, because $f(y) = y $ whenever $y \in \dS_{-j(\rho), d}$, we immediately have that
\begin{align} \label{e:QMi}
    \hd(f(A_1)) = \hd(A_1).
\end{align}
Let us look at $A_2$. Because of \eqref{e:FF2d} and the fact that dyadic cubes have bounded overlap, we can write
\begin{align*}
    \hd\ps{f(A_2)} \leq \sum_{I \in \Delta_{-j(\rho)}} \hd(f(A_2 \cap I)) \leq C \sum_{I \in \Delta_{-j(\rho)}} \hd(f(\overline{A_2} \cap I \setminus \dS_{-j(\rho),d} )).
\end{align*}
To estimate this last sum, we use \eqref{e:FF2e}:
\begin{align*}
   &  C \sum_{I \in \Delta_{-j(\rho)}} \hd(f( \overline{A_2} \cap I \setminus \dS_{-j(\rho),d} )) \nonumber \\
   &\leq C' \sum_{I \in \Delta_{-j(\rho)}} \hd(\overline{A_2} \cap I \setminus \dS_{-j(\rho),d})\nonumber\\
   & \leq C'\hd\ps{\overline{ h(\phi(\wt F \cap W))} \setminus \dS_{-j(\rho), d}}.
\end{align*}
Putting together these estimates, we see that
\begin{align} \label{e:QMf}
\hd(\Phi(\wt F \cap W)) \leq \hd(A_1) + C'\hd(\overline{ h(\phi(\wt F \cap W))} \setminus \dS_{-j(\rho), d}).
\end{align}
\begin{lemma}
With the notation as above, we have
\begin{align} \label{e:QMg}
    \dim\ps{\overline{h(\phi(\wt F \cap W))} \setminus (h(\phi(\wt F \cap W)) \cup \dS_{-j(\rho), d})} \leq d-1.
\end{align}
\end{lemma}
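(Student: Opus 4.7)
The plan is to show that the only points in $\overline{h(\phi(\wt F \cap W))} \setminus h(\phi(\wt F \cap W))$ come from $\wt F \cap \partial W$, on which both $\phi$ and $h$ act as the identity; this will place these extra points back inside $\wt F$, from which the dimension bound follows from the structural decomposition of $\wt F \in \dF_0$.

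First I would note that $\wt F$ is closed and bounded (since $\wt F \subset V^2$), hence compact, so $\overline{\wt F \cap W}$ is a compact subset of $\wt F$. After checking that $\phi(\overline{\wt F \cap W}) \subset V^1_+$ (which follows from $|\phi(y)-y|\leq \delta$ together with the choice of $c_3$ made just before \eqref{e:F1}), the map $h\circ \phi$ is continuous on $\overline{\wt F \cap W}$. The elementary identity $\overline{g(A)} = g(\overline A)$ for continuous $g$ on a precompact set then gives
$$\overline{h(\phi(\wt F \cap W))} \setminus h(\phi(\wt F \cap W)) \;\subset\; (h\circ \phi)\bigl(\overline{\wt F \cap W}\setminus(\wt F \cap W)\bigr).$$
Since $\wt F$ is closed and $W = \{x:\phi(x)\neq x\}$ is open (as the preimage of $\R^n \setminus \{0\}$ under the continuous map $x\mapsto \phi(x)-x$), any point in $\overline{\wt F \cap W}\setminus (\wt F \cap W)$ lies in $\wt F \cap \partial W$.

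Next I would observe that $h\circ\phi$ fixes $\wt F \cap \partial W$. Indeed, $W$ open implies $\partial W \subset \R^n \setminus W$, so $\phi|_{\partial W} = \mathrm{id}$ by the definition of $W$; and by condition \eqref{e:vpV2e} applied to $\wt F = \vp_1(E_\rho \cap V^2)$ we have $\wt F \subset V^1$, on which $h$ is the identity by \eqref{e:h3}. Therefore $(h\circ \phi)(\wt F \cap \partial W) = \wt F \cap \partial W$.

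To finish, since $\wt F \in \dF \subset \dF_0$, we write $\wt F = \wt F^* \cup L$, where $\wt F^*$ is a finite union of $d$-dimensional faces of cubes from $\Delta_\rho$ — hence $\wt F^* \subset \dS_{-j(\rho),d}$ — and $\dim L \leq d-1$. Combining the inclusions,
$$\overline{h(\phi(\wt F \cap W))} \setminus \bigl(h(\phi(\wt F \cap W)) \cup \dS_{-j(\rho),d}\bigr) \;\subset\; (\wt F \cap \partial W)\setminus \dS_{-j(\rho),d} \;\subset\; L,$$
which has Hausdorff dimension at most $d-1$. The argument is essentially careful bookkeeping of which maps fix which boundary points; the only delicate step is locating the structural containment $\wt F \subset V^1$ that guarantees $h$ acts trivially there, which is precisely the constraint \eqref{e:vpV2e} baked into the definition of the class $\dF$.
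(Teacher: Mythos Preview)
Your proof is correct. The paper does not give its own argument for this lemma but simply refers the reader to the proof of equation~5.60 in \cite{david04}, so your self-contained treatment actually supplies more detail than the paper. The approach you take---identifying the residual closure points with $\wt F\cap\partial W$ via the compactness identity $\overline{g(A)}=g(\overline A)$, then using that $\phi$ fixes $\partial W$ and $h$ fixes $V^1\supset\wt F$ to reduce to the low-dimensional part $L$ of $\wt F$---is the natural one and is essentially what David does in the cited reference.
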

\begin{proof}
This is equation 5.60 in \cite{david2004hausdorff}. As the proof is brief, we add it for completeness. 
Set $X:= h(\phi(\wt F \cap W))$. Let $x \in \overline{X}\setminus X$. Since $W$ is compactly contained in $B_2$, and $V^1_+$ is closed, then there exists a 
 $y \in \wt F \cap \overline{W}$ so that $x=h(\phi(y))$. Also, $y \notin W$, since $x \notin X$. But then, by definition of $W$ in \eqref{e:QM-W}, we have that $\phi(y)=y$. Also, $y \in \wt F$, and $\wt F \subset V^1_Q \subset V^1_+$; hence $h(y)=y$. This implies that $x \in \wt F$. Now, if $x \in F^*$ (the coral part), then it belongs to $\dS_{-j(\rho), d}$. But recall also from \eqref{e:F_03} that $\wt F$ is constituted by a subset of $\dS_{-j(\rho), d}$ and a subset $L=\wt F \setminus \wt F^*$ of dimension lower than $d-1$. Thus we must have $\overline{X}\setminus X\cup (\dS_{-j(\rho), d}) \subset L$. This proves the lemma.
\end{proof}
\noindent
Using \eqref{e:QMg}, we then can write
\begin{align}
    & \hd(A_1) + C' \hd(\overline{h(\phi(\wt F \cap W))} \setminus \dS_{-j(\rho),d}) \nonumber\\
    &
    \leq \hd(A_1) + C' \hd(h(\phi(\wt F \cap W)) \setminus \dS_{-j(\rho),d}\nonumber \\
    & = \hd(A_1) + C' \hd(A_2) \nonumber\\
    & \leq C' \hd(h(\phi(\wt F \cap W))).  \label{e:QMh}
\end{align}
Hence, \eqref{e:QMf} and \eqref{e:QMh} tell us that
\begin{align} \label{e:QMm}
     \hd(\Phi(\wt F \cap W)) \leq C'\hd(h(\phi(\wt F \cap W))).
\end{align}
Now note that because $\pi$ is Lipschtz with constant $4^n$ as for Lemma \ref{l:retraction}, we immediately see that
\begin{align} \label{e:QMl}
    \hd(h \circ \phi(\wt F \cap W)) \leq 4^{nd}\hd(\phi(\wt F \cap W)).
\end{align}
The two estimates \eqref{e:QMm} and \eqref{e:QMl} together show that $\wt F \cap B_2$ is a $(B_2, k, \delta)$-quasiminimal set (with $B_2$ as defined in \eqref{e:Bj}, 
\begin{align} \label{e:QM-kappa}
    k=4^{nd} C' M
\end{align}
and 
\begin{align}\label{e:QM-delta-final}
    \delta= c_3 \min\ck{\alpha_1, \eta_1} \ell(Q).
\end{align}
This finishes the proof of Lemma \ref{l:quasiS}.

\section{A uniformly rectifiable set covering the minimising set $\wt F$.}
In this short section, we will use the main result of \cite{david2000uniform}, to show that $\wt F$ can be locally covered by a uniformly rectifiable set.

\subsubsection{The story so far.}
Let $E$ be the stable $d$-surface that we started with. From Lemma \ref{l:TP-LCR} we know that it is lower content ($d, \cc_1$)-regular. Hence we apply the coronisation as described in Subsection \ref{s:ER} (Lemma \ref{l:corona}), which is a collection of `top cubes' $\{R\}_{R \in \Top(k_0)}$. For each such an $R$ we construct an approximating set which we called $\wt E_R$ (see its definition in \eqref{e:wtER}, and see recall the definition of $\cC_R$ in \eqref{e:CR}). We subsequently modify it as in \eqref{e:newER}. Next we defined coherent Federer-Fleming projections (Lemma \ref{l:FF}), through which one can map $E$ into $E_R$ in a Lipschitz continuous way (without control on the Lipschitz constant). Via Federer-Fleming projections, we show that the fact that $E$ is a stable $d$-surface (with finite $d$-measure), implies that $E_R$, for each $R \in \Top(k_0)$, is also topologically stable, in the sense of the skeletal topological condition in Definition \ref{d:TND}; see Lemma \ref{l:TC-STC}. In Section \ref{s:E-rho-STC} we construct, for technical reasons, a further approximating set $E_{R, \rho}$ (for each $R \in \Top(k_0)$) which also satisfies the STC (Lemma \ref{l:TC-STC-Erho}). Then, it turns out that the minimiser $\wt F$ of the functional $J$, see \eqref{e:J}, has large intersection with $E_{R, \rho}$ (Lemma \ref{l:large-intersection}). Moreover, by virtue of being a minimiser of $J$, $\wt F$ is a quasiminimizer of $\hd$ (Lemma \ref{l:quasiS}). 
\\
\\
\noindent
Recall that we are dealing with a fixed Christ-David cube $Q \in \Tree(R)$ ($R \in \Top(k_0)$) and with $(x_2, r_2) \in E \times (\ell(Q)/3, 2\ell(Q)/3)$ (as they were chosen in \eqref{e:TND-x} and \eqref{e:TND-r_2}). For simplicity of notation we set $x_2=x$ and $r_2=r$ (see \eqref{e:x-r}) at the beginning of Section \ref{s:functional}; we retain this notation. 
\begin{lemma}\label{l:UR-cover}
With notation as above \textup{(}in particular recall the definition of $B_1$ in \eqref{e:Bj}\textup{)}, we have that
\begin{align*}
    \wt F \cap B_1 \subset Z, 
\end{align*}
where $Z$ is an Ahlfors $d$-regular set which is also uniformly $d$-rectifiable set and has {\rm BPLG}. The Ahlfors regularity constants, and those of uniform rectifiability and {\rm BPLG} only depend on $n, d, M$ \textup{(}where $M$ is as in \eqref{e:M}\textup{)}, $\alpha_1$ and $\eta_1$. In particular, they do not depend on $x, r, Q$ or $R$.  
\end{lemma}
\noindent
Let us recall the main result in \cite{david2000uniform}. 
\begin{theorem}[{\cite{david2000uniform}, Theorem 2.11}] \label{t:UR}
    Let $U$ be an open set in $\R^n$, and suppose that $S$ is a $(U, k, \delta)$-quasiminimizer for $\hd$. Let $S^*$ be the support in $U$ of the restriction of $\hd$ to $S$. Then for each $x \in S^*$ and radius $R_0$ which satisfy
    \begin{align}
        0<R_0< \delta \mbox{ and } B(x, 3R_0) \subset U,
    \end{align}
    there is a compact, Ahlfors $d$-regular set $Z$ such that
    \begin{align}
        S^* \cap B(x,R_0) \subset Z \subset S^* \cap B(x,2R_0)
    \end{align}
    and
    \begin{align}
        Z \mbox{ is uniformly rectifiable and contains big pieces of Lipschitz graphs.}
    \end{align}
    The Ahlfors regularity constants of $Z$, as well as those pertaining uniform rectifiability and {\rm BPLG} can be taken to depend only on $n$ and $k$.
\end{theorem}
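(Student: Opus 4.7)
The plan is to prove Theorem \ref{t:UR} in three stages, deducing uniform rectifiability of $Z$ from the combinatorial control on competitors guaranteed by quasiminimality. First I would show that $S^*$ is Ahlfors $d$-regular at scales $r < \delta$, with constants depending only on $n$ and $k$. The upper bound $\hd\ps{S^* \cap B(y, r)} \lec_{n,k} r^d$ follows from testing \eqref{e:QMmain} against an explicit competitor: apply a Federer-Fleming projection in the spirit of Lemma \ref{l:FF} to send $S \cap B(y, r)$ onto the $d$-skeleton of a dyadic grid of mesh $\sim r$ covering $B(y, 2r)$, then retract the output onto a fixed $d$-disk of radius $\sim r$. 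This yields a Lipschitz $\phi$ homotopic to the identity with $W \cup \phi(W) \subset B(y, 3r) \subset U$ and $\diam\ps{W \cup \phi(W)} < \delta$. The lower bound $\hd\ps{S^* \cap B(y, r)} \gec_{n,k} r^d$ is proved by contradiction in the style of Lemma \ref{l:TP-LCR}: if $S^*$ had small content in $B(y, r)$, one could use Proposition \ref{p:content} to collapse it onto a $(d-1)$-skeleton via a Lipschitz homotopy, producing a strict decrease of $\hd$ incompatible with \eqref{e:QMmain}.

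Second I would establish a scale-invariant big-projections estimate: there exist $c = c(n,k) > 0$ and $A = A(n,k) \geq 1$ such that for every $y \in S^*$ and every $r$ with $0 < Ar < \delta$ and $B(y, Ar) \subset U$, some $d$-plane $P = P_{y,r}$ satisfies
\[
    \hd\ps{\pi_P\ps{S^* \cap B(y, r)}} \geq c\, r^d,
\]
where $\pi_P$ denotes orthogonal projection. The argument is once more by contradiction: if every projection were small, a quantitative coarea-type argument produces a Lipschitz map, built out of Federer-Fleming projections onto lower-dimensional skeleta near $S^*$ followed by a retraction of the sheath (as in Section 6), that compresses $S \cap B(y, r)$ into a set of negligible $d$-measure while satisfying \eqref{e:QM-W}--\eqref{e:QM-homo}, contradicting quasiminimality.

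Third I would invoke the David--Semmes theory of uniformly rectifiable sets: an Ahlfors $d$-regular set admitting uniform big projections at all scales $r < \delta$ automatically contains big pieces of Lipschitz graphs, and hence is uniformly rectifiable; see \cite{david-semmes93}. The covering set $Z$ is produced by a standard truncation, extending $S^* \cap B(x, 3R_0/2)$ to a compact set inside $B(x, 2R_0)$ (for instance by completing across the boundary sphere $\partial B(x, 3R_0/2)$ via a Whitney-type extension), preserving Ahlfors regularity and BPLG on the relevant scales; the containment $S^* \cap B(x, R_0) \subset Z \subset S^* \cap B(x, 2R_0)$ is then immediate.

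The hard step is establishing the big projections of stage two. A single-scale estimate does not suffice: the constant $c$ must be independent of $y$ and $r$, which forces the contradiction argument to be entirely quantitative and to use only the quasiminimality constants $k$ and $\delta$. In particular, the Lipschitz constant of the competitor $\phi$ must be controlled purely in terms of $n$ and $k$, and the homotopy \eqref{e:QM-homo} must be exhibited explicitly --- this is precisely where the iterated Federer-Fleming machinery developed in Section 6, applied across dimensions $n-1, n-2, \dots, d$, carries most of the weight.
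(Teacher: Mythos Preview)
The paper does not prove Theorem~\ref{t:UR}; it is quoted verbatim from \cite{david-semmes00} and used as a black box in the proof of Lemma~\ref{l:UR-cover}. So there is no ``paper's proof'' to compare against, and your sketch should be measured against the original David--Semmes argument.

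Your stages one and three (Ahlfors regularity via Federer--Fleming competitors, and the final truncation producing $Z$) are broadly in line with how \cite{david-semmes00} proceeds. The genuine gap is in the bridge between stages two and three. You assert that an Ahlfors $d$-regular set with big projections at all scales ``automatically contains big pieces of Lipschitz graphs, and hence is uniformly rectifiable; see \cite{david-semmes93}''. This implication is \emph{not} in \cite{david-semmes93}; on the contrary, David and Semmes explicitly observe there that the condition BP (big projections) is strictly weaker than their other characterisations of uniform rectifiability, and whether BP alone implies UR was a well-known open problem for decades after 1993. So stage two, even if carried out successfully, does not feed into stage three in the way you claim.

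What \cite{david-semmes00} actually do is considerably more delicate: beyond Ahlfors regularity they establish a \emph{concentration} property and a quantitative rectifiability statement (bilipschitz pieces of $\bR^d$ cover a definite proportion of $S^*$ in every ball), using competitors that do more than collapse onto skeleta --- in particular, they exploit Almgren-type monotonicity and a careful analysis of how quasiminimality forces $S^*$ to be close to a plane on a large set of scales and locations. The Federer--Fleming machinery you describe is indeed an ingredient, but it does not by itself yield the uniform control needed for BPLG; the missing idea is precisely the step that promotes ``some projection is big'' to ``a big piece of $S^*$ is a Lipschitz graph over that projection'', and that step requires the full strength of quasiminimality, not just the measure bounds you have extracted.
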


\begin{proof}[Proof of Lemma \ref{l:UR-cover}]
Recall that $\wt F$ is a minimiser of the functional $J$ over $\dF$, see definitions \eqref{e:J} and \eqref{e:dF}.
By Lemma \ref{l:quasiS}, we know that $S= \wt F \cap B_2$ is a $(B_2, k, \delta)$-quasiminimizer for $\hd$. Also recall that
\begin{align*}
    B_2=B\ps{x, r- \frac{8}{10} \eta_1 \ell(Q) } \mbox{ (see \eqref{e:Bj})},
\end{align*}
that $k = C4^{nd}M$, and $\delta= c_3 \min\{\alpha_1, \eta_1\} \ell(Q)$. Then, making $c_3$ smaller if needed, we see that for all points $y \in \wt F \cap B_1$, we have 
\begin{align}
    B(y, 3\delta) \cap \wt F^* \subset B_2,
\end{align}
where recall that $\wt F^*$ is the coral part of $\wt F$ (see \eqref{e:F_03}).
By Theorem \ref{t:UR}, we see that there exists a uniformly rectifiable set $Z_y$ so that
\begin{align} \label{e:Zy}
    \wt F^* \cap B(y, \delta/2) \subset Z_y \subset \wt F^* \cap B(y, \delta),
\end{align}
since we can chose $R_0 \geq \delta/2$.
Now, clearly
\begin{align*}
    \bigcup_{y \in B_1\cap \wt F} B(y, \delta/2) \supset \wt F \cap B_1. 
\end{align*}
Moreover, we can find a finite subfamily of balls $\{B(y_j, \delta/2)\}_{j=0}^N$ such that
\begin{align}\label{e:fin-balls}
    \wt F \cap B_1 \subset \bigcup_{i=0}^N B(y_i, \delta/2) 
\end{align}
and 
\begin{align}\label{e:C-N}
    N \leq C = C(n, \eta_1, \alpha_1). 
\end{align}
To see this, recall that $r(B_1)= r-9/10\eta_1 \ell(Q)$. This, by the choice of $r=r_2$ in \eqref{e:TND-r_2}, and choosing the constant $C$ in \eqref{e:eta} appropriately, gives that $r(B_1) \leq \ell(Q)/2$. Hence, since $\delta=c_3\min\{\alpha_1, \eta_1\}\ell(Q)$, we need at most $C$ balls in \eqref{e:fin-balls}, where $C$ depends only on $n, \alpha_1$ and $ \eta_1$. Now, for this each one of these balls, we take the corresponding uniformly rectifiable set $Z_{y_i}$ as in \eqref{e:Zy}, and we set
\begin{align}\label{e:Zx}
    Z_x := \bigcup_{i=0}^N Z_{y_i}.
\end{align}
Then $Z_x$ is an Ahlfors $d$-regular, uniformly rectifiable set which has  BPLG and so that
\begin{align*}
    \wt F^* \cap B_1 \subset \bigcup_{i=0}^N B(y_i, \delta/2) \cap \wt F^* \subset \bigcup_{i=0}^N Z_{y_i} = Z_x. 
\end{align*}
A word on the constants involved: the Ahlfors regularity, uniform rectifiability and BPLG constants of $Z_{y_i}$ depend only on\footnote{In the current case $k = 4^{nd} C' M$ (see \eqref{e:QM-kappa}) where $C'$ is an absolute constant.} $n$ and $k$. In particular, they \textit{do not depend on } $y_i$, $x$, $r$, $Q$ or $R$.
When we take the (uniformly) finite union \eqref{e:Zx}, Ahlfors regularity, uniform rectifiability and BPLG constants come to depend on $C$ as well, which depend (see \eqref{e:C-N}) on $n, \alpha_1, \eta_1$.  
This proves the lemma. 
\end{proof}

\begin{remark}
A short summary of what has been done so far will be useful to the reader in the subsequent section. 
\\
\\
\noindent
We started off with a surface $E$ satisfying the topological condition \eqref{e:TC} with some prescribed parameters $r_0,\, \alpha_0, \, \eta_0$ and $\delta_0$. We took the corona construction from Lemma \ref{l:corona}, and we showed that the topological condition on $E$ implies a skeletal topological condition on all the approximating $E_R$ in the coronisation (Section \ref{s:TC-STC}, Lemma \ref{l:TC-STC}). Next, we constructed a further approximating Ahlfors regular set $E_{R, \rho}$ (see \eqref{e:E-rho}), to then show that \textit{for any point} $x \in R$ (see the choice of $x_2$ in \eqref{e:TND-x}), there is a $(B_2, k, \delta)$-quasiminimiser set $\wt F= \wt F(R, x)$ such that, first, 
\begin{align} \label{e:wtF-Erho}
    \hd(\wt F \setminus E_{R, \rho}) \leq c_2 \delta_1 \ell(Q)^d,
\end{align}
| this is equation \eqref{e:Jd}); and second, that there exists a uniformly $d$-rectifiable set $Z_x$ so that
\begin{align} \label{e:B1F-Zx}
    B_1 \cap \wt F^* \subset Z_x;
\end{align}
| this is Lemma \ref{l:UR-cover}.
\end{remark}
\begin{lemma} \label{l:UR-large-int}
Let $E \subset \R^n$ be a stable $d$-surface \textup{(}with parameters \textup{(}$r_0, \alpha_0, \delta_0, \eta_0$\textup{)}\textup{)} so that $\hd(E)<+\infty$. Let $\Top(k_0)$ be the decomposition from Lemma \ref{l:corona}. 
Let $R \in \Top(k_0)$ and consider the approximating Ahlfors $d$-regular sets $E_R$ and $E_{R, \rho}$ \textup{(}with $\rho$ as in \eqref{e:rho}\textup{)}. For each Christ-David cube $Q\in \Tree(R)$ \textup{(}$\Tree(R)$ as defined in the paragraph below \eqref{e:ST-cond}\textup{)}  there exists a uniformly $d$-rectifiable set $Z_Q$ \textup{(}which also has {\rm BPLG}\textup{)}\footnote{The constants involved in Ahlfors regularity, uniform recitfiability and BPLG have the same dependence as in Lemma \ref{l:UR-cover}.} and a set $\wt F_Q$ which is a the union of a uniformly finite family of quasiminimal sets so that, 
\begin{align} \label{e:UR-largint-1}
    \hd(\wt F_Q \cap E_{R, \rho}) \geq \delta_0 \ell(Q)^d,
\end{align}
and,
\begin{align} \label{e:UR-largint-2}
    \hd(\wt F_Q \setminus E_{R, \rho}) \leq c_2 \delta_1 \ell(Q)^d;
\end{align}
and,
\begin{align} \label{e:UR-largint-3}
     \wt F_Q \subset Z_Q.
\end{align}
\end{lemma}
\begin{proof}
Recall from \eqref{e:Bj}, that 
\begin{align*}
    B_1 = B\ps{x_2, r_2 - \frac{9}{10} \eta_1 \ell(Q)},
\end{align*}
and also recall from \eqref{e:r-R}, that $r_2 > \eta_1 \ell(Q)$. In particular we have that
\begin{align*}
    B(x, \eta_1\ell(Q)/10) \subset B_1.
\end{align*}
Note that to cover $Q$, we need at most $N'\approx_n \eta_1^{-n}$ balls centered on $R$ and with radius $\eta_1 \ell(Q)/10$. Pick one such collection that is also minimal, and thus of bounded overlap. Let it be
\begin{align*}
    B^j := B(x_j, \eta_1 \ell(Q)/10), \, x_j \in Q, \, 1 \leq j \leq N'. 
\end{align*}
For each $1\leq j \leq N'$, there correspond a quasiminimal set $\wt F=\wt F(Q, x_j)$ (and its coral part $\wt F^*$) satisfying \eqref{e:wtF-Erho} (Lemma \ref{l:large-intersection}), and a corresponding uniformly $d$-rectifiable set $Z_{x_j}$ satisfying \eqref{e:B1F-Zx} (from Lemma \ref{l:UR-cover}).
We now set 
\begin{align}\label{e:FQ}
    \wt F_Q := \bigcup_{j=1}^{N'} \wt F(Q, x_j),
\end{align}
and
\begin{align}\label{e:ZQ}
    Z_Q := \bigcup_{j=1}^{N'} Z_{x_j}
\end{align}
It is then easy to see that \eqref{e:UR-largint-1}, \eqref{e:UR-largint-2} and \eqref{e:UR-largint-3} hold.

\end{proof}
\begin{lemma}\label{l:BP2LG-1}
For each pair $(x,r) \in E_{R,\rho} \times (0, \ell(R))$, there exists a uniformly $d$-rectifiable set $Z_{x,r}$ such that 
\begin{align*}
    \hd(E_{R,\rho} \cap Z_{x,r} \cap B(x,r)) \geq \delta_0 r^d.
\end{align*}
The Ahlfors regularity, uniform rectifiability and {\rm BPLG} constants of the sets $Z_{x,r}$ depend on $n, d, M, \alpha_1, \eta_1$ \textup{(}as in Lemma \ref{l:UR-cover}\textup{)}; in particular they do not depend on $(x,r)$. 
\end{lemma}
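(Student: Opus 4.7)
The plan is to bootstrap the preceding corollary (which produces, for every Christ--David cube $Q\in\Tree(R)$, a quasiminimal set $\wt F_Q$ inside a uniformly $d$-rectifiable $Z_Q$ with $\hd(\wt F_Q\cap E_{R,\rho})\gtrsim\ell(Q)^d$) from cubes to arbitrary pairs $(x,r)$. The idea is: for most pairs we will just locate a tree cube $Q\in\Tree(R)$ of scale $\ell(Q)\sim r$ sitting inside $B(x,r)$ and take $Z_{x,r}:=Z_Q$; for the remaining, finest-scale, pairs we will take $Z_{x,r}:=E_{R,\rho}$ and appeal directly to the Ahlfors regularity established in Lemma \ref{l:Erho-ADR}.

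First I would transfer the base point from $E_{R,\rho}$ to $E$. Since $x\in E_{R,\rho}$ lies in some dyadic cube $J\in\cC_{R,\rho}$ of side $\rho$ which meets $E_R$, and each point of $E_R$ lies on a $d$-face of some cube $I\in\cC_R$ that touches $E$ with $\ell(I)\lesssim\tau\ell(R)$ (Lemma \ref{l:IandQ}), one obtains $y\in E$ with $|x-y|\leq c_*(\rho+\tau\ell(R))$. Provided $r$ exceeds a fixed multiple of $\rho+\tau\ell(R)$, we invoke Theorem \ref{t:Christ} to pick $Q\in\Tree(R)$ with $y\in Q$ and $\ell(Q)\in[\kappa r,\lambda^{-1}\kappa r]$ for a small constant $\kappa=\kappa(\eta_1,\alpha_1,n)$ chosen so that
\begin{equation*}
B(\zeta_Q,\ell(Q))\subset B(x,r).
\end{equation*}
Applying the corollary to this $Q$ yields $\wt F_Q\subset Z_Q$ with $\hd(\wt F_Q\cap E_{R,\rho})\geq\delta_0\ell(Q)^d\gtrsim r^d$; since $\wt F_Q$ is produced as $\vp_1(E_\rho\cap V^2)$ with $V^2\subset B_7\subset B(\zeta_Q,\ell(Q))$, the whole intersection lies inside $B(x,r)$, and setting $Z_{x,r}:=Z_Q$ gives the conclusion with uniform constants (the UR, AR and BPLG constants of $Z_Q$ depending only on $n,d$ and the parameters $k,\delta$ of the quasiminimality, which in turn depend only on the parameters of the topological condition).

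For the residual regime $r\lesssim \rho+\tau\ell(R)$ I would simply take $Z_{x,r}:=E_{R,\rho}$ itself. Indeed, $E_{R,\rho}$ is a finite union of $d$-dimensional dyadic faces, hence locally contained in a finite union of affine $d$-planes, and Lemma \ref{l:Erho-ADR} gives Ahlfors $d$-regularity with the universal constant $C_3$; this renders $E_{R,\rho}$ uniformly $d$-rectifiable with BPLG, with constants depending only on $n$ and $d$. The required bound $\hd(E_{R,\rho}\cap Z_{x,r}\cap B(x,r))\geq\delta_0 r^d$ then reduces to lower Ahlfors regularity, which at scales below $\rho$ follows because $E_{R,\rho}\cap B(x,r)$ locally coincides with a piece of a $d$-plane through $x$.

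The main technical obstacle is the first regime: making sure that the quasiminimiser associated to the chosen cube $Q$ is localised inside $B(x,r)$ and that the lower bound $\delta_0\ell(Q)^d$ absorbs into the desired $\delta_0 r^d$ after rescaling. This is handled by tracking, back through the construction in Sections \ref{s:E-rho-STC} and the preceding one, exactly where the Lipschitz deformations defining $\wt F_Q$ are supported (namely within the set $V^2\subset B_7$), and by choosing the constant $\kappa$ above small enough that the ball $B_7$ sits in $B(x,r)$. All other ingredients (Ahlfors regularity of $Z_Q$, containment of $\wt F_Q$ in $Z_Q$, and the lower bound on $\hd(\wt F_Q\cap E_{R,\rho})$) are already packaged in the corollary, so the argument is essentially a bookkeeping reduction to it.
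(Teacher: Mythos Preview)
There is a genuine gap in the residual regime. Your split is based on the \emph{global} worst-case bound $|x-y|\lesssim \rho+\tau\ell(R)$ on the distance from $E_{R,\rho}$ to $E$, so your ``small $r$'' case covers every $r$ up to a fixed fraction $\sim\tau\ell(R)$ of $\ell(R)$. For those $r$ you then invoke that $E_{R,\rho}$ is uniformly rectifiable with constants depending only on $n,d$. But that is precisely Corollary~\ref{c:UR-Erho}, which the paper \emph{deduces from} the present lemma; assuming it here is circular. Your justification (``finite union of $d$-faces plus Ahlfors regularity $\Rightarrow$ UR'') is not valid in general: an Ahlfors $d$-regular union of $\rho$-sized $d$-faces can fail to be UR with constants independent of $\rho$ (think of approximating a Garnett-type $1$-AR, purely unrectifiable set by horizontal $\rho$-segments; the $\beta$-Carleson sums blow up like $\log(R/\rho)$).

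The paper avoids this by replacing your global threshold with the \emph{local} one. Given $x\in E_\rho$, one first locates the Whitney-type cube $I\in\cC_R$ with $\dist(x,\partial_d I)\lesssim\rho$, and then the associated tree cube $Q_I\in\Tree(R)$ from Lemma~\ref{l:I-QI} with $\ell(Q_I)\sim\tau^{-1}\ell(I)$. The trichotomy is on $r$ versus $\ell(Q_I)$, not versus $\tau\ell(R)$. For $r\lesssim\ell(Q_I)$ one takes $Z_{x,r}$ to be a single $d$-dimensional tile $T(r)\subset\partial_d I$ (a piece of a $d$-plane), which is trivially UR with constants depending only on $n,d$; for $r\gtrsim\ell(Q_I)$ one uses $Z_{Q}$ with $Q=Q_I$ or an ancestor of $Q_I$ in $\Tree(R)$. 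This also sidesteps a second, smaller gap in your first regime: you need a cube $Q\in\Tree(R)$ of size $\sim\kappa r$ containing your point $y$, but the stopping cube through $y$ may be larger than $\kappa r$; the paper's route through $Q_I$ and its ancestors guarantees membership in $\Tree(R)$ by construction.
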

\begin{proof}
If $x \in E_{R, \rho}$, then by the construction of $E_{R, \rho}$ (as in \eqref{e:cC-R-rho} and  \eqref{e:E-rho}), there exists a dyadic cube $I \in \cC_R$ such that $\dist(x, \partial_d I) < \ell(I)$. Recall also that for each $I \in \cC_R$, there exists a Christ-David cube $Q_I \in \Tree(R)$ such that $\dist(Q_I, I) \leq c \tau^{-1} \ell(I)$ and $\ell(I) \leq \ell(Q_I) \leq c' \tau^{-1} \ell(I)$. This cube is given by Lemma \ref{l:I-QI}.
\\
\\
\noindent
Now, take two constants $C$, $C'$ to be fixed below, depending on the constants $c, c'$.
\begin{enumerate}[leftmargin=0.8cm]
    \item Suppose first that \begin{align*}
        C\tau^{-1}\ell(I) \leq r \leq C' \ell(Q_I).
    \end{align*}
    Choosing $C$ appropriately, we can insure that
    \begin{align*}
        B(x,r) \supset B_{Q_I}.
    \end{align*}
    But from Lemma \ref{l:UR-large-int}, we know that $$\hd(E_{R, \rho} \cap Z_{Q_I}) = \hd(E_{R, \rho} \cap Z_{Q_I} \cap B(x,r)) \geq \delta_1 \ell(Q)^d.$$ 
    Since $r \approx \ell(Q_I)$, where the constants behind $\approx$ depend on $C,C', c, c'$, then we conclude that there is an absolute constant $C''$ so that
    \begin{align*}
        \hd(E_{R, \rho} \cap Z_{Q_I} \cap B(x,r) ) \geq C'' \delta_0 r^d.
    \end{align*}
    This give the Lemma for this case.
     \item Suppose now that
     \begin{align*}
         0< r < C \tau^{-1} \ell(I). 
     \end{align*}
     Let $T$ be a $d$-face of $\partial_d I$, and let $T(r)$ be tile of $T$ containing $x$ and with
     
     \begin{align*}
         \ell(T(r)) \approx \min\ck{\frac{1}{10}r, \ell(I)}.
     \end{align*}
     Then clearly, 
     \begin{align*}
         T(r) \subset E_{R, \rho} \cap B(x,r) \mbox{ for all } r>0,
     \end{align*}
     and $T(r)$ is a uniform rectifiable set with constants independent of $r$. Now, note that if $C \tau^{-1} > r>\ell(I)$, 
     then 
     \begin{align*}
         \hd(E_{R, \rho} \cap T(r) \cap B(x,r)) \geq \hd(T(r)) = c \ell(I)^d \approx_{\tau, C, C'} r^d.
     \end{align*}
     On the other hand, if $0<r< \ell(I)$, we have
     \begin{align*}
         \hd(E_{R, \rho} \cap B(x,r) \cap T(r)) \geq \hd(T(r)) = c r^d.
     \end{align*}
     In any case, we found a unifromly rectifiable set which intersects $E_{R, \rho}$ with measure bounded below uniformly. This gives the Lemma in this case.
     \item Now, if $C' \ell(Q_I) < r < \ell(R)$, we can repeat the arguments of point (1) for some parent of $Q_I$ appropriately chosen. By construction, this parent will be belong to $\Tree(R)$, and thus the same estimates apply.
\end{enumerate}
Note that in Cases (1) and (3) of the above list, the uniform rectifiable set $Z_{x,r}$ is in fact the set $Z_{Q_I}$ ($Q_I \in \Tree(R)$), which is described in \eqref{e:ZQ} and has Ahlfors regularity, uniform rectifiability and BPLG constants which depend only on $n, d, M, \alpha_1, \eta_1$; in particular they are independent of $x,r, Q, R$. As for Case (2), $Z_{x,r}$ is in fact a (piece of) affine $d$-plane. The constants of this set are trivially independent of $x,r, Q, R$. 
\end{proof}
\noindent
Now recall the following definitions from \cite{david-semmes93}.
\begin{definition}[\cite{david-semmes93}, Definition 1.26]\label{d:BP2LG}
We say that $E$ has BPLG (big pieces of Lipschitz graph) if $E$ is Ahlfors regular and if there exist constants $C$ and $b_1>0$ so that, for $x \in E$ and $R>0$, there is a $d$-dimensional Lipschitz graph $\Gamma$ with constant $\leq C$ such that
\begin{align*}
    \hd(E \cap \Gamma \cap B(x,R)) \geq b_1 R^d. 
\end{align*}
\end{definition}
\begin{definition}[\cite{david-semmes93}, Definition 1.32]
Let $E \subset \R^n$ has BP(BPLG) (big pieces of BPLG) if $E$ is Ahlfors $d$-regular and if there exist positive constants $C$, $b_1$ and $b_2$ so that, if $B$ is any ball centered on $E$, then there is an Ahlfors regular set $F \subset \R^n$ so that
\begin{align*}
    \hd(E \cap F \cap B) \geq b_2 \, \hd(E \cap B), 
\end{align*}
F is Ahlfors regular with constant $\leq C$, and $F$ satisfies the BPLG condition with constants $b_1$ and $C$. 
\end{definition}
\noindent
 Then we have the following (see also the paragraph below Definition 1.32 in \cite{david-semmes93}).
 \begin{proposition}[\cite{david-semmes93}, Proposition 1.28]\label{p:BP2LG-UR}
     If $E \subset \R^n$ has {\rm BP(BPLG)}, then $E$ is uniformly $d$-rectifiable. 
 \end{proposition}
 \noindent
We therefore obtain the following corollary. 
\begin{corollary}\label{c:UR-Erho}
Let $E_{R, \rho}$ be as in \eqref{e:E-rho} \textup{(}so $R \in \Top(k_0)$ with $\Top(k_0)$ as in Lemma \ref{l:corona}\textup{)}. 
The set  $E_{R, \rho}$ is an Ahlfors regular, uniformly $d$-rectifiable set.
\end{corollary}
\begin{proof}
We want to show that $E_{R,\rho}$ has BP(BPLG). We choose $C$ in Definition \ref{d:BP2LG} to be the maximum between the Ahlfors regularity and BPLG constants of the sets $Z_{x,r}$ from Lemma \ref{l:BP2LG-1}; note that neither of them depend on $x,r,R$. From Lemma \ref{l:BP2LG-1}, we know that for any ball $B=B(x,r)$ centered on $E_{R, \rho}$, there exists a set $Z_{x,r}=Z_B$ which has BPLG so that 
\begin{align*}
    \hd(E_{R,\rho} \cap Z_B \cap B) \geq \delta_0 r^d. 
\end{align*}
Thus $E_{R, \rho}$ satisfies  Definition \ref{d:BP2LG} with $E=E_{R, \rho}$, $F=Z_B$ (from Lemma \ref{l:BP2LG-1}), $C$ as above, $b_1=\delta_0$ and $b_2$ equal to the BPLG constant of $Z_B$, which is independent of $B$. From Proposition \ref{p:BP2LG-UR}, we see that $E_{R,\rho}$ is uniformly $d$-rectifiable. 
\end{proof}
\noindent
For future use, let us pin down an easy fact about the distance between $R \subset E$ and $ E_{R, \rho}$. 
\begin{lemma}\label{l:Erho-dist}
For each $x \in S$, with $S \in \Stop(R)$, we have
\begin{align*}
    \dist(x, E_{R, \rho}) \leq C \ell(S). 
\end{align*}
\begin{proof}
Using again Lemma \ref{l:meta1}, we see that if $S \in \Stop(R)$, then there exists a cube $I_S \in \cC_R$ such that $\zeta_S \in I_S$ and $\ell(S) \approx \tau^{-1} \ell(I_S)$. Thus in particular, if $x \in S$, then $\dist(x, E_R) \lesssim \ell(S) \approx \tau^{-1} \ell(I_S)$. Further, by construction of $E_{R, \rho}$, we have that $E_R \subset E_{R, \rho}$. This proves the Lemma. 
\end{proof}
\end{lemma}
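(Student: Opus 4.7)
My plan is to bound the distance in two pieces: first show $\dist(x, E_R) \lesssim \ell(S)$, then use the containment $E_R \subseteq E_\rho$ to transfer the bound to $E_\rho$.

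First, I would invoke Lemma \ref{l:meta1} (more precisely, the natural analog of it for minimal cubes in $\Stop(R)$, since the stopping-time construction at $R$ proceeds just as in the construction for $Q \in \Next(R)$). This produces a dyadic cube $I_S \in \cC_R$ with $I_S \subset \tfrac{1}{2} B_S$ and $\ell(I_S) \sim \tau \ell(S)$. Picking any point $y \in \partial_d I_S$, one has $y \in \wt E_R \subset E_R$ and, since both $x \in S \subset B_S$ and $y \in \tfrac{1}{2} B_S$,
\begin{align*}
    |x-y| \leq \diam(B_S) \lesssim \ell(S).
\end{align*}
Hence $\dist(x, E_R) \lesssim \ell(S)$.

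Next, I would justify $E_R \subseteq E_\rho$. By \eqref{e:newER}, $E_R$ is a union of $d$-faces coming from cubes in $\cC_R$ together with the tiled $d$-faces in $\cF^d$; since $\rho$ is a dyadic scale strictly smaller than every $I_* \in \cC_R$ (this was the standing assumption on $\rho$ in Lemma \ref{l:Erho-ADR}), each such $d$-face is tessellated exactly by $d$-faces of subcubes at scale $\rho$. Every such subcube $I \in \Delta_\rho$ meets $E_R$ and therefore belongs to $\cC_{R,\rho}$ (see \eqref{e:cC-R-rho}), so its $d$-skeleton lies in $E_\rho$. Assembling these inclusions gives $E_R \subseteq E_\rho$, and combining with the first step produces $\dist(x,E_\rho)\leq \dist(x,E_R) \lesssim \ell(S)$, which is the claim.

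The argument is essentially two lines once the two ingredients are on the table; the only mildly delicate point is the containment $E_R \subseteq E_\rho$, which requires one to unwind how the tiles in $\cF^d$ were glued into $E_R$ in Section \ref{s:ER} and observe that tessellating any such $d$-face at scale $\rho$ still produces faces attached to cubes that touch $E_R$, hence that belong to $\cC_{R,\rho}$. Everything else is a direct application of the corona construction.
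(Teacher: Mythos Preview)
Your proof is correct and follows essentially the same approach as the paper's: invoke Lemma~\ref{l:meta1} to bound $\dist(x,E_R)\lesssim\ell(S)$, then use $E_R\subset E_\rho$ to conclude. Your justification of the containment $E_R\subseteq E_\rho$ is in fact more explicit than the paper's, which simply asserts it ``by construction.''
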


\section{Estimates on the $\beta$ coefficients}\label{s:beta}
In this section we give the estimates on the $\beta$ coefficient $\beta_E^{p,d}$ (as defined in \eqref{e:beta-p-cont}) which will then give Theorem \ref{t:topo-main}. The main proposition of this section will be the following one\footnote{Recall that it suffices to prove Proposition \ref{p:main-one-dir} with $\aA=3$ and $p=2$. This was established in Lemma \ref{l:redux-Ap}. Hence we will prove Proposition \ref{p:main-one-dir-k0} for this choice of $\aA$ and $p$.}. 

\begin{proposition} \label{p:main-one-dir-k0}
Assume that $1 \leq d < n$. Let $E \subset \R^n$ be a stable $d$-surface with parameters $r_0, \alpha_0, \delta_0$ and $\eta_0$ and let $\dD$ be its Christ-David cubes. Let $Q_0 \in \dD$ be an arbitrary Christ-David cube\footnote{This is effectively the same Christ -David cube $Q_0$ as in the statement of Theorem \ref{t:topo-main}.} satisfying $\ell(Q_0)< r_0$.
 Let $k_0 \in \N$ be as in Lemma \ref{l:corona} and denote by $\dD(Q_0, k_0)$ the family of Christ-David cube $\cup_{k=0}^{k_0} \{ Q \in \dD_k \, |\, Q \subset Q_0\}$. 
 Then it holds that
\begin{align}\label{e:main-k_0}
    {\rm diameter}(Q_0)^d + {\rm BETA }(Q_0, k_0) \lesssim \hd(Q_0),
\end{align}
where 
\begin{align}\label{e:BETA-k_0}
    {\rm BETA}(Q_0,k_0) = {\rm BETA}_{E, 3, 2, d} (Q_0,k_0): = \sum_{Q \in \dD(Q_0,k_0)} \beta_E^{d,2} (3 B_Q)^2 \ell(Q)^d,
\end{align}
and where the constant behind the symbol $\lesssim$ depends on the parameters of \eqref{e:TC} \textup{(}$\alpha_0$,$\delta_0$, $r_0$, $\eta_0$, $\gamma_0$\textup{)}, and on the dimensional parameters \textup{(}$d$, $n$\textup{)}, but not on $k_0$.
\end{proposition}

\noindent
We will use the following lemma\footnote{The lemma below actually holds for a wider range of $p$ but we will just need it for $p=2$.}.
\begin{lemma}[{\cite{azzam2018analyst}, Lemma 2.21}] \label{lemma:azzamschul}
Let $1 \leq p < \infty$ and $E_1, E_2 \subset \R^n$. Let $x \in E_1$ and fix $r>0$. Take some $y \in E_2$ so that $B(x,r) \subset B(y, 2r)$. Assume that $E_1, E_2$ are both lower content \textup{(}$d, \cc_1$\textup{)}-regular. Then
\begin{align*}
    \beta_{E_1}^{p,2} (x,r) \lesssim_{\cc_1} \beta_{E_2}^{p,2} (y, 2r) + \left( \frac{1}{r^d} \int_{E_1 \cap B(x, 2r)} \left(\frac{\dist (y, E_2)}{r} \right)^2 \, d \hdc(y)\right)^{\frac{1}{2}}.
\end{align*}
\end{lemma}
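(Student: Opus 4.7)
The plan is a triangle-inequality-type comparison, interpreting the $\beta$-numbers as $L^p(\hdc)$-distances to affine planes. Fix $\epsilon>0$ and pick an affine $d$-plane $L$ that achieves (up to $\epsilon$) the infimum defining $\beta_{E_2}^{p,d}(y,2t)$. Use $L$ as a competitor in the infimum defining $\beta_{E_1}^{p,d}(x,t)$: for $s\in(0,1]$ set
\[
A_s := \ck{z\in E_1\cap B(x,t)\,:\,\dist(z,L)>ts},
\]
and split $A_s=A_s^{\mathrm{far}}\sqcup A_s^{\mathrm{near}}$ according to whether $\dist(z,E_2)>ts/2$ or $\leq ts/2$.

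The contribution of $A_s^{\mathrm{far}}$ directly reproduces the error term on the right-hand side: since $A_s^{\mathrm{far}}\subset\ck{z\in E_1\cap B(x,2t)\,:\,\dist(z,E_2)>ts/2}$, a Fubini/layer-cake computation for the Choquet integral against $\hdc$, followed by the substitution $\lambda=ts/2$, yields
\[
\frac{1}{t^d}\int_0^1 \hdc(A_s^{\mathrm{far}})\,s^{p-1}\,ds \,\lesssim\, \frac{1}{t^{d+p}}\int_{E_1\cap B(x,2t)}\dist(z,E_2)^p\,d\hdc(z),
\]
which is the stated error term once one takes the $p$-th root.

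The contribution of $A_s^{\mathrm{near}}$ is controlled by $\beta_{E_2}^{p,d}(y,2t)$ via a Vitali-style covering. For each $z\in A_s^{\mathrm{near}}$ pick $z_0(z)\in E_2$ with $|z-z_0(z)|<ts/2$; the triangle inequality then forces $\dist(z_0(z),L)>ts/2$ and, since $z\in B(x,t)\subset B(y,2t)$ and $s\leq 1$, also $z_0(z)\in B(y,Ct)$ for an absolute constant $C$. Apply the $5B$-lemma to the cover $\ck{B(z,ts/2)}_{z\in A_s^{\mathrm{near}}}$ to extract a disjoint subfamily $\ck{B(z_i,ts/2)}_i$ whose fivefold enlargements still cover $A_s^{\mathrm{near}}$; then use the lower content $d$-regularity of $E_2$ on smaller concentric balls $B(z_{0,i},c\,ts)$, with $c$ chosen so that this family remains disjoint even after the perturbation $z_i\rightsquigarrow z_{0,i}$, to obtain
\[
\hdc(A_s^{\mathrm{near}}) \,\lesssim\, \hdc\ps{\ck{w\in E_2\cap B(y,Ct)\,:\,\dist(w,L)>ts/2}}.
\]
Integrating against $s^{p-1}\,ds$ and rescaling $s\mapsto s/\mathrm{const}$ bounds the result by a constant multiple of $(2t)^d\,\beta_{E_2}^{p,d}(y,2t)^p$ (the mild radius inflation $2t\rightsquigarrow Ct$ being absorbed into the implicit $\lesssim_c$ constant). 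Summing the two contributions, taking the $p$-th root, applying $(a+b)^{1/p}\leq a^{1/p}+b^{1/p}$, and letting $\epsilon\to 0$ yields the lemma.

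The main obstacle is the covering step: one must choose $c$ small enough that the $E_2$-balls $B(z_{0,i},c\,ts)$ remain disjoint despite being defined via perturbations of the disjoint $E_1$-family $\ck{B(z_i,ts/2)}$, so that the lower-content-regularity estimates of $E_2$ can be summed without overlap loss. This is the only place where the hypothesis of lower content $d$-regularity of $E_2$ is essentially used. The minor mismatch between the intermediate ball $B(y,Ct)$ and the ball $B(y,2t)$ in the stated right-hand side is a purely bookkeeping inconvenience that is absorbed in the implicit constant, exactly as recorded by the subscript on $\lesssim_c$.
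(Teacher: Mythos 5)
Your overall strategy (pick a near-optimal plane $L$ for $\beta_{E_2}^{p,d}(y,2t)$, use it as a competitor for $E_1$, split $A_s$ at the threshold $ts/2$, layer-cake the far part) is the right one, and the far-part estimate is correct. The first genuine gap is in the covering step for $A_s^{\mathrm{near}}$. Minor point first: no choice of $c$ makes the balls $B(z_{0,i},c\,ts)$ disjoint, since the $z_i$ are only $ts$-separated and each is displaced by up to $ts/2$, so distinct $z_i$ can share the same witness $z_{0,i}$ (this is repairable by running the $5B$-selection on the $E_2$-centred balls $B(z_0(z),ts)$, or by noting each $E_2$-ball absorbs boundedly many $z_i$). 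The serious point is the summation itself: passing from $\sum_i \hdc\bigl(E_2\cap B(z_{0,i},c\,ts)\bigr)$ to the content of the union uses superadditivity of Hausdorff content on disjoint sets, and $\hdc$ is not a measure: $N$ disjoint unit segments inside a unit square have $\sum\hdc=N$ while their union has content $\lesssim 1$. Worse, the packing inequality your scheme needs, namely $N\,(ts)^d\lesssim\hdc\bigl(\{w\in E_2\cap B(y,Ct):\dist(w,L)>ts/4\}\bigr)$ for a $ts$-separated family of points of that set, is simply false for lower content regular sets of dimension larger than $d$ (let $E_2$ contain a $(d+1)$-dimensional disk far from $L$ and take the $w_i$ to be a $ts$-net of it). The inequality $\hdc(A_s^{\mathrm{near}})\lesssim\hdc\bigl(\{w\in E_2\cap B(y,Ct):\dist(w,L)>ts/4\}\bigr)$ is true, but proving it requires trading contents for measures: put a Frostman measure on $A_s^{\mathrm{near}}$, smear the mass of each point onto $E_2\cap B(z_0(z),ts/8)$ using Frostman measures supplied by the lower regularity of $E_2$, verify that the resulting measure has $d$-dimensional growth at all scales, and finish with the mass distribution principle. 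This measure-theoretic step (which is how Azzam--Schul handle such comparisons) is exactly what is missing, and since it is the only place where lower content regularity enters, the gap sits at the heart of the lemma; note the paper itself gives no proof but cites [AS1].

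The second gap is the one you explicitly wave away: the inflation $2t\rightsquigarrow Ct$ is not bookkeeping. Beta numbers are monotone in the wrong direction for this: $\beta_{E_2}^{p,d}(y,2t)\lesssim\beta_{E_2}^{p,d}(y,Ct)$ but not conversely, because $E_2\cap\bigl(B(y,Ct)\setminus B(y,2t)\bigr)$ is invisible to the smaller ball, so your near-part bound is in terms of a quantity not controlled by $\beta_{E_2}^{p,d}(y,2t)$. Under the bare hypothesis $B(x,t)\subseteq B(y,2t)$, which permits $|x-y|=t$, the witnesses $z_0(z)$ really can exit $B(y,2t)$, and this matters: taking $d=1$, $E_1$ the $x_1$-axis plus a vertical spike of length $\sim\sqrt{s_0}$ placed just inside the tangency point of $\partial B(y,2t)$ with $\overline{B(x,t)}$, and $E_2$ the axis plus the translated twin spike just outside $B(y,2t)$, one gets $\beta_{E_2}^{p,d}(y,2t)=0$ and an error term of order $s_0^{1+1/(2p)}$, while $\beta_{E_1}^{p,d}(x,t)\gtrsim s_0^{(p+1)/(2p)}$, so the inequality with $B(y,2t)$ on the right fails as $s_0\to0$. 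Your argument, correctly completed, proves the estimate with a slightly larger ball, say $\beta_{E_2}^{p,d}(y,3t)$, or under a containment hypothesis with definite slack; that is the form actually used in this paper (where $|x_Q-x_Q'|\lesssim\tau\ell(Q)\ll\ell(Q)$), but it cannot be converted into the literally quoted statement by adjusting the implicit constant.
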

\noindent
Let us now get started with the proof of Proposition \ref{p:main-one-dir-k0}. Recall the decomposition of $\dD(Q_0, k_0)$ from Lemma \ref{l:corona} and the collection of `top cubes' $R \in \Top(k_0)$. Recall also the definition of $\Forest(R)$ in \eqref{e:forest}.
\begin{lemma} \label{lemma:betatree}
Fix an integer $k_0>0$ sufficiently large as in Lemma \ref{l:corona} and let $R \in \Top(k_0)$.
We have
\begin{align} 
    \sum_{\substack{Q \in \Tree(R)\\ Q \in \dD(Q_0, k_0)}} \beta_{E}^{p,2}(3 B_Q)^2 \ell(Q)^d & \lesssim  \ell(R)^d,
\end{align}
where the constant behind the symbol $\lesssim$ depends on $n, d$, and the Ahlfors regularity constant of $E_R$ and $E_{R, \rho}$ \textup{(}which was called $\Cc_3$ and fixed in Section \ref{s:E-rho-STC}, Remark \ref{rem:C3}\textup{)}, but not on $k_0$. 
\end{lemma}

\begin{proof}
We want to apply Lemma \ref{lemma:azzamschul} with $E_1=E$ and $E_2 = E_{R, \rho}$. 
For $Q \in \dD$, recall that $z_Q$ denotes the center of $Q$. By the definition of $ \Tree(R)$, we see that if $Q \in \Tree(R)$, then there must exists a dyadic cube $I \in \cC_R$ which meets $Q$. The $d$-skeleton $\partial_d I$ of $I$ is part of $E_{R, \rho}$. We see that $\ell(I) \lesssim \tau \ell(Q)$. Hence 
\begin{align}\label{e:XQ}
    \mbox{there exists a point } \quad  x_Q \in E_{R, \rho} \quad \mbox{ such that } \quad |z_Q - x_Q| \leq 4 \tau \ell(Q),
\end{align}
and we obtain that
\begin{align*}
    B_Q := B(z_Q, \ell(Q)) \subset B(x_Q, 2 \ell(Q))=: B_Q'.
\end{align*}
 This implies that for each cube $Q \in  \Tree(R)$ the hypotheses of Lemma \ref{lemma:azzamschul} are satisfied (with $E_1 = E$ and $E_2 = E_{R, \rho}$); therefore we apply Lemma \ref{lemma:azzamschul} and write\footnote{Recall the notational convention for Choquet integrals \eqref{e:choquet}.}
\begin{align*}
    \sum_{\substack{P \in \Tree(R)\\ P \in \dD(Q_0, k_0)}} \beta_{E}^{p, 2} (3 B_P)^2 \ell(P)^d  \lesssim \sum_{\substack{P \in \Tree(R)\\ P \in \dD(Q_0,k_0)}} \beta_{E_{R, \rho}}^{d,2} (6 B'_P)^2 \, \ell(P)^d \\
    + \sum_{\substack{P \in  \Tree(R)\\ P \in \dD(Q_0,k_0)}} \ps{\frac{1}{\ell(P)^d} \int_{6 B_P\cap E} \ps{\frac{\dist(y, E_{R, \rho})}{\ell(P)}}^2 \, d \hdc(y) }\, \ell(P)^d \\
    & := I_1 + I_2.
\end{align*}
First, let us look at $I_1$. We apply Theorem \ref{t:Christ} to $E_{R,\rho}$; let us denote the cubes so obtained by $\dD(E_{R,\rho})$. Note that for each $P \in \Tree(R)$ with $P \in \dD(Q_0, k_0)$, $x_P$ (this is the point in $E_{R, \rho}$ found above in \eqref{e:XQ}) belongs to some cube $ P' \in \dD(E_{R, \rho})$ so that $\ell(P') \approx \ell(P)$; hence there exists a constant $C_4 \geq 1$ so that 
\begin{align*}
    6 B'_P \subset C_4 B_{P'}.
\end{align*}
This in turn implies that $\beta_{E_{R,\rho}}^{p,2}(6 B'_P) \lesssim_{n,d,C_4} \beta_{E_{R,\rho}}^{d,2}(C_4 B_{P'})$. Hence, 
\begin{align}
    \sum_{\substack{ P \in \Tree(R) \\ P \in \dD(Q_0,k_0)}} \beta_{E_{R,\rho}}^{d,2} (6 B'_P)^2 \ell(P)^d \lesssim_{n,d, C_4} \sum_{\substack{P' \in \dD(E_{R,\rho}) \\ \ell(P') \lesssim \ell(R) }} \beta_{E_{R,\rho}}^{d,2} (C_4 B_{P'})^2 \ell(P')^d. 
\end{align}
Since $E_{R,\rho}$ is uniformly rectifiable (by Corollary \ref{c:UR-Erho}), we immediately have that $I_1 \lesssim \ell(R)^d$ by Theorem \ref{t:DS-beta}. Let us remark that the content $\beta$ number we are using are comparable to the one introduced by David and Semmes when computed on Ahlfors regular sets.
\\
\\
\noindent
We now now estimate $I_2$. 
Let $y \in 6B_R$; by Lemma \ref{l:Erho-dist}, there exists a cube $S \in \Stop(R)$ such that 
\begin{align} \label{eq:SS_distyER}
    \dist(y, E_{R,\rho}) \lesssim \ell(S);
\end{align}
We can estimate the integral in $I_2$ with \eqref{eq:SS_distyER} as follows.
\begin{align*}
    \int_{6 B_P\cap E} \ps{\frac{\dist(y, E_{R,\rho})}{\ell(P)}}^2 \, d \hdc(y) & \leq \sum_{P' \in \dN(P)} \int_{P'} \ps{\frac{\dist(y, E_{R,\rho})}{\ell(P)}}^2 \, d \hdc(y)\\
   &  \lesssim \sum_{P' \in \dN(P)} \sum_{\substack{S \in \Stop(R) \\ S \subset P' }} \int_{S} \frac{\ell(S)^2}{\ell(P)^2}  \\
    & \lesssim \sum_{P' \in \dN(P)} \sum_{\substack{S \in \Stop(R) \\ S \subset P' }} \frac{ \ell(S)^{d+2}}{\ell(P)^{2}},
\end{align*}
and so 
\begin{align*}
    I_2 \lesssim \sum_{\substack{P \in \Tree(R)\\ P \in \dD(k_0)}}
    \sum_{P' \in \dN(P)} \sum_{\substack{S \in \Stop(R) \\ S \subset P' }} \frac{\ell(S)^{d + 2}}{\ell(P)^{ 2}}.
\end{align*}
We now swap the sums (which are all finite), to obtain that
\begin{align}
    I_2 & \lesssim \sum_{\substack{S \in \Stop(R)}}  \ell(S)^{d + 2} \sum_{\substack{P \in  \Tree(R) \\ \exists P' \in \dN(P):\, P' \supset S}}  \frac{1}{\ell(P)^{ 2}}\nonumber  \\
    & \lesssim_{d,n} \sum_{\substack{S \in \Stop(R)}} \ell(S)^{d + 2} \sum_{\substack{P \in  \Tree(R) \\ \exists P' \in \dN(P):\, P' \supset Q}} \frac{1}{\ell(P)^{2}}. \label{e:estBeta1}
\end{align}
We see that the number of cubes $P \in  \Tree(R)$ of a given generation so that there exists a sibling $P' \in \dN(P)$ for which $P' \supset S$ is bounded above by a universal constant depending on $n$. Thus 
 we can sum geometrically the interior series:
\begin{align*}
    \sum_{\substack{P \in  \Tree(R) \\ \exists P' \in \dN(P):\, P' \supset Q}} \frac{1}{\ell(P)^{ 2}} \lesssim_{n} \frac{1}{\ell(S)^{2}}. 
\end{align*}
Therefore we obtain 
\begin{align*}
    \eqref{e:estBeta1} \lesssim \sum_{\substack{S \in \Stop(R) }}  \frac{\ell(S)^{d +2 }}{\ell(S)^{2}} = \sum_{\substack{S \in \Stop(R)}} \ell(S)^d.
\end{align*}
This latter sum is bounded above by $C \ell(R)^d$. This concludes the proof of the lemma.
\end{proof}

\begin{proof}[Proof of Proposition \ref{p:main-one-dir-k0}]
Let $Q_0$ as in the statement of the proposition. Then
\begin{align}
    \sum_{Q \in \dD(Q_0, k_0)} \beta_E^{d,2}(3 B_Q)^2 \ell(Q)^d & \lesssim \sum_{\substack{R \in \Top(k_0) \\ R \subset Q_0}} \sum_{\substack{Q \in \Tree(R)\\ Q \in \dD(Q_0,k_0)}} \beta_{E}^{p,d}(3 B_Q)^2\ell(Q)^d. \label{e:betaest-10}
\end{align}
By Lemma \ref{lemma:betatree}, we see that 
\begin{align}
     \eqref{e:betaest-10} \lesssim \sum_{\substack{R \in \Top(k_0) \\ R \subset Q_0}} \ell(R)^d. 
\end{align}
Note that each $R \in \Top(k_0)$ is the child of some stopped cube $R'$. Recall: we stopped at a Christ-David cube $R' \in \mathcal{D}$ whenever it happened that $R' \cap I$ and $\ell(I) \approx \ell(Q)$ for some dyadic cube $I \in \Bad$. We can therefore associate to each $R \in \Top(k_0)$ a bad dyadic cube $I$, and thus, by \eqref{e:Bad-est}, we have that
\begin{align}
    \sum_{\substack{R \in \Top(k_0)  \\ R \subset Q_0}} \ell(R)^d \lesssim \sum_{\substack{I \in \Bad\\ I \subset B_{Q_0}}} \ell(I)^d \lesssim \hd(E\cap B_{Q_0}).
\end{align}
The estimate in \eqref{e:Bad-est} is independent of $k_0$, so is the one we obtained here.
All in all, we see that,
\begin{align*}
   \eqref{e:betaest-10}  \lesssim  \hd(E \cap B_{Q_0}).
\end{align*}
\noindent
This concludes the proof of Proposition \ref{p:main-one-dir-k0}
\end{proof}

\section{End of proof of Theorem \ref{t:topo-main}
}\label{s:end}
In this brief section we wrap up the proof of our main result. Recall that, starting with Section \ref{s:prel}, our main objective was to prove Proposition \ref{p:main-one-dir}, which we rewrite here for convenience. 
\begin{proposition} 
Assume that $1 \leq d < n$. Let $E \subset \R^n$ be a stable $d$-surface with parameters $r_0, \alpha_0, \delta_0$ and $\eta_0$ and let $\dD$ be its Christ-David cubes. Let $Q_0 \in \dD$ be an arbitrary cube satisfying $\ell(Q_0)< r_0$ and let $\aA>1$. Take $1 \leq p < p(d)$ with 
\begin{align*}
p(d):= \left\{ \begin{array}{cl} \frac{2d}{d-2} & \mbox{if } d>2 \\
 \infty & \mbox{if } d\leq 2\end{array}\right. 
 \end{align*}
 Then it holds that
\begin{align}
    {\rm diameter}(Q_0)^d + {\rm BETA }(Q_0) \lesssim \hd(Q_0),
\end{align}
where 
\begin{align}
    {\rm BETA}(Q_0) = {\rm BETA}_{E, \aA, p, d} (Q_0): = \sum_{Q \in \dD(Q_0)} \beta_E^{d,p} (\aA B_Q)^2 \ell(Q)^d,
\end{align}
and where the constant behind the symbol $\lesssim$ depends on the parameters of $\eqref{e:TC}$ \textup{(}$\alpha_0, \delta_0, r_0, \eta_0$\textup{)}, on $\aA$, on $p$, and on the dimensional parameters \textup{(}$d$, $n$\textup{)}.
\end{proposition}

\begin{proof}[Proof of Proposition \ref{p:main-one-dir}]
Using Proposition \ref{p:main-one-dir-k0}, the proposition follows immediately for $\aA = 3 $ and $p=2$ by taking $k_0 \to \infty$ and recalling that the estimate \eqref{e:main-k_0} is independent of $k_0$. For the remaining $p$'s and values of $\aA$, it suffices to use Lemma \ref{l:redux-Ap}.
\end{proof}

\begin{proof}[Proof of Theorem \ref{t:topo-main}]
It follows from \eqref{e:tst} in Theorem \ref{t:AS-real} (i.e. the main result of Azzam and Schul in \cite{azzam2018analyst}),  that 
\begin{align}
    \hd(Q_0) \lesssim {\rm diameter}(Q_0)^d+ {\rm BETA}_{E, \aA, p, d} (Q_0) 
\end{align}
whenever $\aA$ is sufficiently large.
This completes the proof of Theorem \ref{t:topo-main}.
\end{proof}
\section{Construction of a stable surface covering a lower content regular set}\label{s:sigma-construction}
In this section we show that any lower content $d$-regular set can be covered by a topologically stable $d$-surface. The $d$-LCR set $E$, the `top cube' $Q_0$ and the constants $\ve$ and $\Cc_0$ will be fixed throughout the section. We choose $\ve$ to be sufficiently small so that Theorem \ref{t:david-toro} below may be applied, and its value will be adjusted (i.e. made smaller so to absorb an absolute constant a finite number of times) without explicit mention. The construction below is inspired partly by one in \cite{azzam2018analyst} and partly by a well-known construction of David and Leger (see e.g. \cite{david-semmes91} or \cite{leger1999menger} - nice expositions of this might be found in Tolsa's book \cite{tolsa-book} or in \cite{jaye2019proof}).

\begin{remark}
In proving Theorem \ref{t:SIGMA-cor-in-sec} one may always assume that ${\rm BETA}_{E, \aA, p, d}(Q_0)< + \infty$, since otherwise one can simply cover $E$ by a union of $n$-dimensional dyadic cubes (say); this union will satisfy the topological condition trivially, but such a statement will be completely useless.  
\end{remark}

\begin{theorem}\label{t:SIGMA-cor-in-sec}
Let $\ve>0$ be sufficiently small, and $\Cc_0>1$. Let $E\subset \R^n$ be a lower content $(\cc_1, d)$-regular set and $Q_0 \in \dD(E)$. Then there is a choice of constants $(r_0$, $\alpha_0$, $\eta_0$, $\delta_0$, $\gamma_0)$ such that the following holds. We can construct a topologically stable $d$-surface (with parameters as above) $\Sigma=\Sigma(Q_0, \ve, \Cc_0)$ so that 
\begin{gather}
     Q_0 \subset \Sigma;\\
     {\rm BETA}_{E, \aA, p, d}(Q_0) + \ell(Q_0)^d \approx \hd(\Sigma).\label{e:SIGMA-const1}
\end{gather}
The constant behind the $\approx$ symbol depend on $d, n, \cc_1, \aA, p, \ve$ and the topological condition constants $(r_0, \alpha_0, \eta_0, \delta_0, \gamma_0)$ which can be chosen as follows: $r_0= \ell(Q_0)$, $\eta_0, \alpha_0, \gamma_0$ are chosen sufficiently small with respect to $\ve$ and $Q_0$,  and $\delta_0$ sufficiently small depending on $\eta_0, \gamma_0, d$. 
\end{theorem}
\noindent
Note in particular that the constant behind \eqref{e:SIGMA-const1} does not depend on $E$ (aside from its lower regularity constant). 

\subsection{A parameterisation theorem by David and Toro}
We will need the following Theorem by David and Toro \cite{david2012reifenberg}. In \cite{david2012reifenberg} it is not stated as it is below; however we provide references within \cite{david2012reifenberg} where the mentioned facts are proven. In fact, we copied this summary from \cite{azzam2018analyst}.

\begin{theorem}[\cite{david2012reifenberg}, Sections 1-9] \label{t:david-toro}
For each $m \in \N \cup \{0\}$, set $r_m := 10^{-m}$ and let $\{x^k_j\}_{J_k}$ be an $r_m$-separated net of points.
Suppose that 
\begin{enumerate}[leftmargin=0.5cm, label=\alph*)]
    \item we can find a $d$-plane $P_0 \subset \R^n$
\begin{align}\label{P_0}
    \{x_j^0\}_{j \in J_0} \subset P_0;
\end{align}
\item to each point $x_j^m$, $m \in \N$ and $j \in J_m$, we have associated a $d$-plane $P_j^m \subset \R^n$ so that
\begin{align*}
    P_j^m \ni x_j^m;
\end{align*}
\item for each $m \geq 1$, 
\begin{align}
    x_j^m \in V^2_{m-1},
\end{align}
where, for $c>0$, $V^c_m := \bigcup_{j \in J_m} B(x_j^m, c\, r_m)$. 
\end{enumerate}
Set
\begin{align*}
    B(x_j^m, r_m) =: B_j^m.
\end{align*}
For each $m \in \N$, define a function $\ve_m$ on $\R^n$ by setting
\begin{align*}
    \ve_m(x):= \sup \left\{ d_{x, 10^4 r_l}(P_j^m, P_i^l) \quad | \quad j \in J_m, \quad |l-m|<2, \quad i \in J_l, \quad x \in 100B_j^m \cap 100B_i^l\right\}.
\end{align*}

There exists an $\ve_0>0$ so that if we can find an $\ve \in(0, \ve_0)$ such that 
\begin{align}\label{e:DT4}
    \ve_m (x_j^m) < \ve \quad \mbox{ for all } \quad m >0 \quad \mbox{ and } \quad j \in J_m, 
\end{align}
then there is a bijection $g: \R^n \to \R^n$ such that the following holds. 
\begin{enumerate}[leftmargin=0.5cm, label=\textup{(}\arabic*\textup{)}]
    \item We have 
    \begin{align}
        E_\infty := \bigcap_{M=1}^\infty \overline{\bigcup_{m \geq M} \{x_j^m\}_{j \in J_m} } \subset \Sigma := g(P_0).
    \end{align}
    \item $g(z) = z $ when $\dist(z, P_0) >2$.
    \item For $x, y \in \R^n$, 
    \begin{align}
        \tfrac{1}{4}|x-y|^{1+\tau} \leq |g(x)-g(y)| \leq 10 |x-y|^{1-\tau}. 
    \end{align}
    \item $|g(z)-z| \lesssim \ve$ for $z \in \R^n$. 
    \item For $x \in P_0$, 
    \begin{align*}
        g(x) = \lim_{m \to \infty} \sigma_m \circ \sigma_{m-1} \circ \dots \circ \sigma_1(x),
    \end{align*}
    where 
    \begin{align*}
        \sigma_m (y) = y + \sum_{j \in J_m}\theta_j^m (y) \pi_{j}^n(y);
    \end{align*}
    Here $\{\theta_j^m\}_{j \in L_m \cup J_m}$ is a partition of unity so that $\1_{B_j^m} \leq \theta_j^m \leq \1_{10B_j^m}$ for all $m \in \N$ and $j \in L_m \cup J_m$, and for each $m$ $L_m$ is the index set of a $\tfrac{r_m}{2}$-separated net in $\R^n \setminus V^9_m$.
    \item \textup{(}\cite{david2012reifenberg}, Equation 4.5\textup{)} For $m \geq 0$, 
    \begin{align}
        \sigma_k(y) = y \mbox{ and } D\sigma_m(y) = I \mbox{ whenever } y \in \R^n \setminus V_m^{10}.
    \end{align}
    \item \textup{(}\cite{david2012reifenberg}, Proposition 5.1\textup{)}. Let $\Sigma_0=P_0$ and $\Sigma_m = \sigma_m(\Sigma_{m-1})$. There is a $C^2$ function $A_j^m: P_j^m \cap 49 B_j^m \to (P_j^m)^\perp$ so that $|A_j^m (x_j^m)| \lesssim \ve r_m$, $|DA_j^m| \lesssim \ve$ on $P_j^m \cap 49B_j^m$ and, if $\Gamma_j^m$ is its graph over $P_j^m$, then
    \begin{align*}
        \Sigma_m \cap D(x_j^m, P_j^m, 49r_m) = \Gamma_{j}^m \cap D(x_j^m, P_j^m,  49r_j^m),
    \end{align*}
    where 
    \begin{align*}
        D(x, P, r) := \{ x + w \, |\, z \in P \cap B(x,r), \, \, w \in P^\perp \cap B(0, r) \}.
    \end{align*}
    \textup{(}Here $P^\perp$ is the plane perpendicular to $P$ containing $0$\textup{)}. In particular, 
    \begin{align*}
        d_{x_j^m, 49r_m}(\Sigma_m, P_j^m) \lesssim \ve. 
    \end{align*}
    \item \textup{(}\cite{david2012reifenberg}, Proposition 6.3\textup{)} $\Sigma=g(P_0)$ is $C\ve$-Reifenberg flat, in the sense that for all $z \in \Sigma$ and $r \in (0,1)$ there is a $d$-plane $P=P(z, r)$ so that $d_{z, r}(\Sigma, P) \lesssim \ve$. 
    \item For all\footnote{This follows from eq. (7.13) in \cite{david2012reifenberg}.} $y \in \Sigma_m$, 
    \begin{align*}
        |\sigma_m(y)-y| \lesssim \ve_{m}(y) r_m.
    \end{align*}
    In particular, 
    \begin{align*}
        \dist(y, \Sigma) \lesssim \ve r_m \quad \mbox{ for } \quad y \in \Sigma_m. 
    \end{align*}
\end{enumerate}
\end{theorem}

\subsection{Construction of a good surface for a good cube}
The following proposition summarises what we do in the current subsection. 
\begin{proposition}
Let $E \subset \R^n$ be a lower content $d$-regular set, where $d \in \N$ and $d<n$. Let $R \in \dD$ will be a Christ-David cube with $\inf_{P: d\rm{-plane}} d_{\Cc_0B_R}(E, P)< \ve$. Fix two constant $\ve>0$ and $\Cc_0>1$. If $\ve>0$ is chosen sufficiently small, we can construct a Reifenberg $\ve$-flat surface $\Sigma^R$ satisfying
\begin{gather}
     \dist(x, \Sigma^R) \leq C \ve d(x) \mbox{ for all } x \in \Cc_0B_R \cap E \mbox{ and}\\
     \dist(x, E) \leq C \ve d(x) \mbox{ for all } x \in \Cc_0B_R \cap \Sigma^R. 
\end{gather}
\end{proposition}

\bigskip  
\subsubsection{Construction of separated nets}\label{sss:nets}
For a ball $B=B(x,r)$, $P_B$ or $P_{x,r}$ denotes a best approximating plane in terms of $d_B(\cdot,\cdot)$. Now let $x \in E$ and $0< r < 50 \ell(R)$. We call $B=B(x,r)$ \textit{good} and set $B \in \Gg$ if 
\begin{align*}
    d_{\Cc_0 B} (P_B, E) < \ve.
\end{align*}
We say that $B=B(x,r)$ is \textit{very good} and set $B \in \Vv\Gg$ if $B(x, s) \in \Gg$ for all $r \leq s \leq 50 \ell(R)$. For $x \in E$, we set
\begin{align}
    h(x) := \inf\{ r \leq 50 \ell(R) \, |\, B(x, r) \in \Vv \Gg\}. 
\end{align}
We will eventually need to control angles between planes to apply Theorem \ref{t:david-toro}. However, the fact that $h$ has no regularity would cause trouble. We follow a well-beaten path and introduce the following regularised version of $h$. For a point $x \in \R^n$, define
\begin{align}\label{e:dt-d}
    d(x) := \inf_{B \in \Vv\Gg} \left( r_B + \dist(x, B)\right).
\end{align}
Note that $d(\cdot)$ is an infimum over $1$-Lipschitz functions, and thus $1$-Lipschitz itself. For a ball $B=B(x,r)$, set 
\begin{align*}
    d(x,r) = d(B):= \inf_{z \in B} d(x). 
\end{align*}
The function $d$ is quite ubiquitous in stopping time arguments, one of its first appearance being Chapter 8 of \cite{david-semmes91}. Its properties are well known, but, for the sake of completeness, we will summarise them in a few lemmas below. 
We now inductively construct a sequence of maximal nets $\dN_m^R$, for $m=0, 1, 2,...$. At each step, we associate to all points in $\dN_m^R$ a $d$-plane. Put first
\begin{align*}
    \dN_0^R:= \{z_R\}, 
\end{align*}
where $z_R$ is the center of the ball $B_R$. Suppose we constructed $\dN_{m-1}^R$;
then we put
\begin{align}\label{e:dt-20}
    \wt{ \dN_m^R}:= \dN_{m-1}^Q \cup \{ x \in E \, |\, d(x, r_m) \leq 20 r_m \}.
\end{align}
For some index set $J_m^R$, we then set
\begin{align}\label{e:net-m}
    \dN_m^R= \{x_j^m\}_{j \in J_m^R} := \mbox{ a choice of maximal } r_m-\mbox{net for } \wt{\dN_m^R}. 
\end{align}

\begin{lemma}\label{l:dt-001}
With notation as above, let $x \in \Cc_0 B_R$ and suppose $d(x)>0$. Let $m=m_x$ be the minimal integer for which $d(x,r_m) > 20 r_m$. Then $d(x, r_m) \approx d(x) \approx r_m$. 
\end{lemma}
\begin{proof}
Let $y \in B(x, 10r_m)$; since $d$ is $1$-Lipschitz, we have that
\begin{align*}
    d(y) \geq d(x, r_m) - 10 r_m \geq 10 r_m. 
\end{align*}
On the other hand, since $m$ was chosen to be minimal with the property that $d(x, r_m) > 20 r_m$, we can find a $z \in B(x,r_{m-1})$ so that $d(y) \leq 20 r_m$, and thus, again because $d$ is $1$-Lipschitz, 
\begin{align*}
    d(y) \leq d(z) + 10r_m \leq 30 r_m. 
\end{align*}
We thus have $d(y) \approx r_m$ (up to a universal multiplicative constant) for any $y \in B(x, 10r_m)$, and the lemma follows. 
\end{proof}

\begin{lemma}\label{l:dt-80}
For $m \geq 0$, let $\dN_m^R$ be the sequence of nets defined in \eqref{e:net-m} satisfies conditions a) and c) of Theorem \ref{t:david-toro}. 
\end{lemma}

\begin{proof}
Condition a) holds by construction. Let $x \in \dN_m^R$. If $x \in \dN_m^R \cap \dN_{m-1}^R$, we are done. If $x \in\dN_m^R \setminus \dN_{m-1}^R$, we argue by contradiction. Suppose that for any $z \in \dN_{m-1}^R$, we have $|x-z| > 2r_{m-1}$. Note that 
\begin{align*}
    d(x, r_{m-1})= \inf_{y \in B(x, r_{m-1})} d(y) \leq \inf_{y \in B(x, r_m)} d(y) = d(x, r_m) \stackrel{\eqref{e:dt-20}}{\leq} r_m < r_{m-1}.
\end{align*}
This contradicts the fact that $\dN_{m-1}^R$ is a maximal $r_{m-1}$ separated net for $\dN_{m-2}^R$ $\cup$ $\{ d(z,r_{m-1}) \leq r_{m-1}\}$. Thus also condition b) holds. 
\end{proof}
\subsubsection{Choice of planes associated to nets}\label{sss:plane-choice}
 
We need an auxiliary lemma. 
\begin{lemma}\label{l:dt-30}
There exists a constant $c \geq 1$ such that the following holds. Let $m \geq 1$ and $x \in E$. Suppose that  $20 r_m \geq d(B)$, with $B=B(x, r_m)$. Then there exists a ball $B' \in \Vv\Gg$ such that
\begin{align}
     r_{B'} \approx_c r_m \quad \mbox{ and } \quad
     \dist(B, B') \lesssim_c r_m. 
\end{align}
\end{lemma} 
\begin{proof}
By definition of $d(\cdot)$, there is a ball $B' \in \Vv\Gg$ so that 
\begin{align*}
    r_{B'} + \dist(B, B') \leq 1.1 d(x, r_m) \stackrel{(\ref{e:dt-20})}{\leq} r_m.
\end{align*}
This $B'\in \Vv\Gg$ satisfies in particular
$\dist(B',B) \lesssim 1.1 r_m$ and $r_{B'} \leq 1.1 r_m$. If also $r_{B'} \geq \tfrac{1}{1.1}r_m$, then setting $c=1.1$ we are done. If $r_{B'} < \tfrac{1}{1.1}$, we choose the ball $B(x, \tfrac{1}{1.1}r_m) \in \Vv\Gg$. 
\end{proof}

\noindent
Let $m \geq 0$ and $x \in \dN_m^Q$. To each point $x_j^m$ we associate a plane $P=P_j^m$. We distinguish two cases: 
\begin{center}
\begin{gather}
    \mathrm{a)} \quad  x \in \dN_{m}^R \cap \dN_{m-1}^R,\label{e:dt-casea} \\
\mathrm{or} \nonumber \\
\mathrm{b)} \quad  20r_m \geq d(x, r_m). \label{e:dt-caseb} 
\end{gather}
\end{center}
Case a): let $i \in J_{m-1}^R$ be so that $x_j^m = x_i^{m-1}$. We set $P_j^m = P_i^{m-1}$. Case b): we let $P_j^m$ be the translate to $x_j^m$ of a best approximating plane of $E$ with respect to $d_{\Cc_0 B'} (\cdot, \cdot)$, where $B' \in \Vv\Gg$ is a ball that we find via Lemma \ref{l:dt-30}.

\bigskip

\noindent
\begin{corollary}\label{c:dt-10}
This choice immediately implies that condition b) of Theorem \ref{t:david-toro} holds. 
\end{corollary}

\subsubsection{Verification of \eqref{e:DT4}} To verify \eqref{e:DT4} we need to control the angles between the best approximating planes.

\begin{lemma}[\cite{villa2020tangent}, Lemma 4.12]\label{l:dt-lemma30}
Let $E \subset \R^n$ be lower content $(d, \cc_1)$-regular. Let $X_m=\{x_j^m\}$ be a sequence of maximal $r_m$-separated nets satisfying conditions a), b) and c) of Theorem \ref{t:david-toro}. Let $m \geq 0$ and $j \in J_m^R$, take either $\ell=m$ or $\ell=m-1$ and let $i \in J_\ell^R$ be so that $|x_j^m - x_i^\ell| < 100r_m$. Then, for $r_m/2 \leq s \leq 5000r_m$, we have
\begin{align}\label{e:dt-50}
    d_{x_j^m, s}(P_j^m, P_i^\ell) \lesssim \beta_{E}^{1, p} (x_j^m, 120r_m) + \beta_{E}^{1, p} (x_i^\ell, 120r_\ell),
\end{align}
where the constant behind $\lesssim$ depends on $d, \cc_1$.
\end{lemma}

\begin{lemma}\label{l:dt-lemma40}
Let $\ve>0$ the small constant in Subsection \ref{sss:nets} and let $\Cc_0 \geq 1$ (also appearing in \ref{sss:nets}) sufficiently large depending on $c \geq 1$ in Lemma \ref{l:dt-30}. There exists a universal constant $C \geq 1$ so that the following holds. For $m \geq 0$, let $x=x_j^m \in \dN_m^Q$. If $r_m \geq d(x, r_m)$, then 
\begin{align}\label{e:dt-60}
    d_{x, s} (P_j^m, E) < C \ve,
\end{align}
where $r_m \leq s \leq 0.99\Cc_0r_m$ and $P_j^m$ is the $d$-plane chosen in Subsection \ref{sss:plane-choice}.
\end{lemma}
\begin{proof}
Let $B'$ be the ball found in Lemma \ref{l:dt-30}, satisfying $B' \in \Vv\Gg$, $r_m \approx_c r_{B'}$ and $\dist(B', B(x,r_m)) \lesssim_c r_m$, for some numerical constant $c\geq 1$. Then, if we choose $\Cc_0 \geq 1$ sufficiently large with respect to $c \geq 1$, we have that
\begin{align*}
    B(x, s) \subset \Cc_0 B'.
\end{align*}
It is immediate to see that if two balls $B_1$ and $B_2$, both centered on a subset $E \subset \R^n$ and such that $B_2 \subset B_1$ and $r_{B_1} \approx_{C} r_{B_2}$ for some constant $C$, then 
\begin{align*}
    d_{B_2}(E, P) \lesssim_C d_{B_1}(E, P).
\end{align*}
(Here $P$ is a plane, say). We conclude therefore that 
\begin{align*}
    d_{x, s} (E, P_j^m) \lesssim d_{\Cc_0 B'} (E, P_j^m).
\end{align*}
Recall that $P_j^m$ is the translate to $x_j^m$ of a minimising plane $P'$ of $d_{\Cc_0 B'}(\cdot, E)$. It is easy to check that $$
d_{\Cc_0 B'}(E, P_j^m) \lesssim d_{\Cc_0 B'} (E, P') + d_{2\Cc_0 B'}(P', P_j^m).
$$
Because $B' \in \Vv\Gg$ the first term on the right hand side is $\leq C \ve$. For the very same reason, the second term, too, is $\leq C \ve$. This conclude the proof.
\end{proof}

\begin{lemma}\label{l:dt-lemma70}
Let $\ve>0$ and $\Cc_0 \geq 1$ be as above. Let $\dN_m^R$ be the sequence of nets constructed in \eqref{e:net-m}. For $m \geq 0$ and $i \in J_m^R$, and $\ell \in \N$ and $i \in J_\ell^R$ satisfying $\ell=m$ or $\ell = m-1$ and $x_j^m \in 100B_i^\ell$, we have
\begin{align*}
    d_{x_j^m, s} (P_j^m, P_i^\ell) \leq C \ve \quad \mbox{ for } \quad r_m \leq s \leq 0.99C_0r_m. 
\end{align*}
\end{lemma}
\begin{proof}\

\noindent
\textbf{Case 1.} Suppose that $r_m \geq d(x_j^m, r_m)$. 
\begin{itemize}[leftmargin=0.5cm]
    \item \textit{Case 1.1}: $r_\ell \geq d(x_i^\ell, r_\ell)$. From Lemma \ref{l:dt-lemma40}, there exists a universal constant $C$ such that $d_{x, r}(P, E) <  C\ve$ for $x=x_j^m, x_i^\ell$, $r=r_m, r_\ell$ and $P=P_j^m, P_i^\ell$. On the other hand, it is easy to check that $\beta_E^{d,1}(B) \lesssim_d \beta_{\infty, E}^d (B)$ for a lower content regular set $E$ and a ball $B$ centered on $E$. We refer the unconvinced reader to \cite{azzam2018analyst}, Lemma 2.11 (eq. 2.29); also, $\beta_{E, \infty}^d(B) \leq \inf_P d_B(E, P)$. Putting all this together (and choosing $\Cc_0$ larger than some absolute constant) we arrive at 
    \begin{align*}
        d_{x_j^m, s}(P_j^m, P_i^\ell) & \stackrel{\eqref{e:dt-50}}{\lesssim_{\cc_1, d}} \beta_{E}^{d, 1}(x_j^m, 120r_m) + \beta_E^{d, 1}(x_i^\ell, 120r_\ell)\\
        & \lesssim_{\cc_1, d} d_{x_j^m, 120r_m} (E, P_j^m) + d_{x_i^\ell, 120r_\ell} (E, P_i^\ell) \stackrel{\eqref{e:dt-60}}{\lesssim} \ve. 
    \end{align*}
    \item \textit{Case 1.2}. $r_\ell < d(x^\ell_i, r_\ell)$. Let $\ell'$ be the maximal integer for which $r_{\ell'} \geq d(x_i^\ell, r_{\ell'})$; in particular 
    \begin{align}\label{e:dt-70}
        d(x_i^\ell, r_{\ell'}) \approx r_{\ell'}. 
    \end{align}
    Since $x_j^m \in 100r_\ell$, it must hold that $x_j^m \in 10r_{\ell'}$. Observing that $d(x_i^\ell, r_{\ell'}) \approx r_{\ell'} \approx d(x_i^\ell, 10r_{\ell'})$, we conclude
    \begin{align*}
        r_\ell' \approx d(x_i^\ell, 10r_{\ell'}) = C \inf_{y \in B(x_i^\ell, 10r_{\ell'}) \cap E} d(y) \leq C d(x_j^m) \leq C r_m. 
    \end{align*}
    That is, there is a constant $C \geq 1$ so that $r_m \approx r_{\ell'}$. Moreover, 
    the choice of planes made in \ref{sss:plane-choice} implies that $P_i^\ell = P_{i'}^{\ell'}$ for some $i' \in J_{\ell'}^R$. We can therefore conclude as in Case 1.1.
\end{itemize}
\textbf{Case 2.} Suppose that $r_m < d(x_j^m, r_m)$. Cases 2.1 and 2.2 are similar to the ones above and we leave them to the reader. 
\end{proof}

\noindent
Lemma \ref{l:dt-80}, Corollary \ref{c:dt-10} and Lemma \ref{l:dt-lemma70} say that conditions a), b), c) and \eqref{e:DT4} of Theorem \ref{t:david-toro} are satisfied. We denote by $\left\{\Sigma_m^R\right\}_{m \geq 0}$ sequence of surfaces guaranteed by Theorem \ref{t:david-toro} (7), and by $\Sigma^R$ the surface from (8).  

\begin{lemma}\label{l:d(x)=0}
Notation as above. If $x \in \Cc_0 B_R $ is so that $d(x) =0$, then $x \in \Sigma^R\cap E$. 
\end{lemma}
\begin{proof}
That $d(x) =0$ implies the existence of a sequence $\{B_k\} \subset \Vv\Gg $ so that $r_{B_k} + \dist(x, B_k) \to 0$ as $k \to \infty$. By construction, there exists a sequence of points $x_j^{m_k}$ ($m_k \to \infty$ as $k \to \infty$), with $x_j^{m_k} \in \dN_{m_k}^R$ such that $|x_j^{m_k}- x| \to 0$ as $k \to \infty$. Thus for all $M \geq 1$, $x \in \overline{\bigcup_{m \geq M} \{x_j^m\}}$, and therefore $x \in E_\infty \subset \Sigma^R$ 
\end{proof}

\begin{lemma}
Let $R, \Sigma^R$ as above. Then there exists a constant $C \geq 1$ so that, for all $x \in \Cc_0B_R \cap E$, 
\begin{align}\label{e:dist-Sigma-R}
\dist(x, \Sigma^R) \leq C \ve d(x). 
\end{align}
\end{lemma}

\begin{proof}
If $d(x) = 0$, then \eqref{e:dist-Sigma-R} holds by Lemma \ref{l:d(x)=0}. Suppose $d(x) >0$; let $m \in\N$ be minimal with $d(x) > 20r_m$. Let $j \in J_m^R$ be so that $|x_j^m - x| < r_m$. For some $z_m \in \Sigma_m^R$, 
\begin{align*}
    \dist(x, \Sigma^R) \leq |x- z_m| + \dist(z_m, \Sigma^R) \lesssim |x-z_m| + \ve r_m, 
\end{align*}
by Theorem \ref{t:david-toro}, (9).  Let now $B \in \Vv\Gg$ be the ball from Lemma \ref{l:dt-30}. Precisely because $B \in \Vv\Gg$,  
\begin{align}\label{e:dt-101}
   d_B(E, P_B) < \ve.
\end{align}
From our choice of plane $P_j^m$, we have that \begin{align}\label{e:dt-102}
    d_{x_j^m, \Cc_0 r_m}(P_B, P_j^m) \lesssim \ve.
\end{align} 
Finally, it follows from Theorem \ref{t:david-toro}, (7), that 
\begin{align}\label{e:dt-103}
    d_{x_j^m, 49r_m} (\Sigma_m, P_j^m) \lesssim \ve. 
\end{align}
Putting together \eqref{e:dt-101}, \eqref{e:dt-102} and \eqref{e:dt-103}, we arrive at $|x-z_m| \lesssim r_m \ve$. Since $m$ was chosen minimal satisfying $20r_m \geq d(x)$, by Lemma \ref{l:dt-001}, we obtain $\dist(x, \Sigma^R) \lesssim \ve d(x)$. 
\end{proof}

\noindent
Set 
\begin{align*}
    E_0^R := \{x \in E \, |\, d(x)= 0 \}.
\end{align*}
We then consider the family of balls 
\begin{align}\label{e:dt-F_0}
   \dF_0 = \large\{ B \, |\, z_B \in \Cc_0 B_R \, \mbox{ and }\, r_B = r_m \mbox{ for } m \in \N \, \mbox{ minimal s.t. } \, d(z_B, r_m)  > 20 r_m \large\},
\end{align}
Note that
\begin{align*}
    \bigcup_{B \in \dF_0} B \cap E = E \setminus E_0.
\end{align*}
We choose a Vitali covering of $\dF_0$, denoting it $\dF$.  

\begin{lemma}\label{l:dt-300}
Let $B=B(x, r_m) \in \dF$ for some $m \in \N$. Then there exists a point $x_j^{m-1} \in \dN_{m-1}^R$ such that $|x-x_j^{m-1}| < r_{m-1}$ and moreover 
\begin{align}\label{e:dt-201}
    \left\{ y \in B(x_j^{m-1}, 2r_{m-1}) \supset B(x, r_m) \, |\, d(y,r_{m+1}) \geq 20 r_{m+1} \right\} = \emptyset. 
\end{align}
\end{lemma}
\begin{proof}
Since $m$ is minimal with $d(B) > 20r_m$, then $d(x, r_{m-1}) \leq 20r_{m-1}$. Recalling the definition of $\dN_{m-1}^R$, this implies that there exists a point $x_j^{m-1} \in \dN_{m-1}^R$ such that $|x-x_j^{m-1}| < r_{m-1}$. Moreover, using the fact that $d$ is $1$-Lipschitz and that $d(y) > 20 r_{m}$ for all $y \in B(x, r_m)$, one sees that $d(y) > 8r_m$ for all $y \in B(x_j^{m-1}, 3r_{m-1}) \supset B(x, r_m)$, and thus \eqref{e:dt-201}. 
\end{proof}

\begin{lemma}\label{l:dt-500}
Let $B=B(x, r_m) \in \dF$ as above. Then there exists a constant $C\geq 1$ so that for all integers $k> \floor{Cm}=: m'$, $\Sigma_{k}^R \cap B= \Sigma_{m'}^R \cap B$. In particular, $\Sigma^R \cap B= \Sigma^R_{m'}\cap B$ and thus, by Theorem \ref{t:david-toro} (7), $\Sigma^R \cap B$ coincides with a Lipschitz graph with small Lipschitz constant (over some plane).  
\end{lemma}
\begin{proof}
From Theorem \ref{t:david-toro} (9), we know that for all $y \in \Sigma_k$, $|\sigma_k(y)-y| \lesssim \ve_k(y)r_k$. Now, by Lemma \ref{l:dt-300}, there exists a point $z \in \dN_{m-1}^R$ so that $B(x, r_{m}) \subset B(z, 2 r_{m-1})$ and if $k > m$, then $\{d(x, r_k) \leq 20r_k\} \cap B(z, 2r_{m-1}) = \emptyset$. Thus for any point $x_i^k \in \dN_{k}^R \cap B(z, 2r_{m-1})$, there exists a $i' \in J_m$ so that $x_i^k = x_{i'}^m$, and therefore, by the choice of planes in Case a) \eqref{e:dt-casea}, $P_i^k = P_{i'}^m$. Then choosing $C$ larger than some absolute constant, we have that $\ve_{k}(y) = 0$ whenever $y \in B$ and $k \geq \floor{Cm}$. This proves the lemma.  
\end{proof}

\begin{lemma}\label{l:dt-400}
With $R$, $\Sigma^R$ as above, 
\begin{align}
    \dist(x, E) \lesssim \ve d(x)
\end{align}
for all $x \in \Cc_0 B_R \cap \Sigma^R$.
\end{lemma}
\begin{proof}
If $x \in \Cc_0 B_R$ is so that $d(x) = 0$, then $x \in \Sigma^R \cap E$. 
Let $m$ be minimal with $d(x) > r_m$. There exists a ball $B \in \Vv\Gg$ so that $r_B + \dist(x, B) \approx r_m$. In particular $x_B \in E$ is so that $| x-x_B | \lesssim r_m$. This implies that $d(x_B, r_m) \approx r_m$, and thus, by construction, there exists a point $x_j^{m-1} \in \dN_{m-1}^R$ so that $x \in B(x_j^{m-1}, r_{m-1})$. Now, since $x \in \Sigma^R$, there is a point $x' \in \Sigma_m^R$ so that $x = \lim_{K \to \infty} \sigma_{m+K} \circ \cdots \sigma_{m}(x')$. Then, by Theorem \ref{t:david-toro} (9), $|x-x'| \lesssim \ve r_m$. Moreover, by Theorem \ref{t:david-toro} (7), there is a point $p$ in $P_j^m$ so that $|x' - p| \lesssim \ve r_m$. Further, the plane was chosen so that $\dist(p, E) \lesssim \ve r_m$. Finally, because $m$ was chosen minimal satisfying $d(x)> r_m$, we also have $d(x) \approx r_m$. This proves the lemma.
\end{proof}

\noindent
\begin{lemma}
Let $R, \Sigma^R$ as above. Then
\begin{align}\label{e:dt-600}
    \hd(\Sigma^R) \lesssim \sum_{B \in \dF} r_B^d + \hd(E_0). 
\end{align}
\end{lemma}
\begin{proof}
Let $x \in \Sigma^R \cap \Cc_0B_R$. If $d(x)=0$, then $x \in E_0 \subset E$. If $d(x)>0$, let $m$ be minimal so that $d(x) >  20r_m$. Then by Lemma \ref{l:dt-400}, there exists a point $y \in E$ so that $|x-y|\lesssim \ve r_m$. Hence $m$ is minimal so that $d(y, r_m) > 20 r_m$, and thus $B(y, r_m) \in \dF_0$. This implies that $x \in B$ for some $B \in \dF$. All in all we see that if $ x \in \Sigma^R \cap \Cc_0 B_R$ then either $x \in E_0$ or $x \in B$ for some $B \in \dF$. 
Thus we write
\begin{align*}
    \hd(\Sigma^R \cap \Cc_0B_R) \leq \hd \left( \bigcup_{B \in \dF} \Sigma^R \cap B \right) + \hd(E_0).
\end{align*}
From Lemma \ref{l:dt-500}, $\Gamma^R \cap B$ is a Lipschitz graph with small constant for each $B$ and thus its $d$-dimensional measure is $\approx r_B^d$.
\end{proof}

\subsection{Construction of the covering surface $\Sigma$}

\noindent
Keep $Q_0, \ve, \Cc_0$ fixed. 
Set 
\begin{align*}
    \cG= \cG(Q_0, \ve, \Cc_0) := \{Q \in \dD(Q_0) \, |\, \inf_P d_{\Cc_0B_Q} (E, P) < \ve\}.
\end{align*}
For $R \in \cG$, define $\Tree(R)$ to be the stopping-time region constructed by inductively adding Christ-David cubes $Q$ to $\Tree(R)$ if either $Q=R$, or 
\begin{itemize}
    \item The parent $Q'$ of $Q$ satisfies $Q' \in \Tree(R)$.
    \item All siblings $Q'$ of $Q$ satisfy $\inf_{P} d_{\Cc_0 B_{Q'}}(E, P) < \ve$, where the infimum is taken over all $d$-dimensional planes. 
\end{itemize}
We denote by $\Stop(R)$ the family of maximal Christ-David cubes in $\Tree(R)$ satisfying $\ell(Q)< \tfrac{1}{20}d(Q)$, where $d(Q)=d_R(Q) = \inf_{x \in Q} d(x)$, with $d$ as in \eqref{e:dt-d}. Using a `cube' version of Lemma \ref{l:dt-30}, it is easy to see that, if $Q \in \Tree(R)$, then $\inf_P d_Q(E, P) \lesssim \ve$. Moreover, by the fact that $d$ is $1$-Lipschitz, the minimality of $m$ is in the construction of $\dF$ \eqref{e:dt-F_0}, and the maximality of the Christ-David cubes in $\Stop(R)$, for each $B \in \dF$, we can find a cube $Q_B$ in $\Stop(R)$ with  
\begin{align}\label{e:QB}
    \ell(Q_B) \approx r_B.
\end{align}
Now, let $\cF=\cF(Q_0, \ve, \Cc_0)$ be the family of maximal Christ-David cubes in $\cG=\cG(Q_0, \ve, \Cc_0)$. Set 
\begin{align*}
    \Top_0:=\cF.
\end{align*}
Suppose that $\Top_k$ has been defined. Then set
\begin{align*}
    \Top_{k+1} := \bigcup_{R \in \Top_k} \Stop(R). 
\end{align*}
Set $\Top := \cup_{k \geq 0} \Top_k$.
Note that 
\begin{align*}
    \sum_{R \in \BWGL(Q_0)} \ell(R)^d \approx \sum_{R \in \Top} \ell(R)^d. 
\end{align*}
Each Christ-David cube $R \in \Top$ satisfies $\inf_{P} d_{\Cc_0 B_R}(E, P) < \ve$. Then we can apply the construction above, and obtain a surface $\Sigma^R$. Set
\begin{align}
    \Sigma' := Q_0 \cup \left( \bigcup_{R \in \Top} \Sigma^R \cap 2 B_R \right). 
\end{align}
Note that if $R, S \in \Top$ then $E_0^R \cap E_0^S = \emptyset$. Thus
\begin{align}\label{e:measSIGMA1}
    \hd(\Sigma') \lesssim \hd(Q_0) + \sum_{R \in \Top} \hd(\Sigma^R) \stackrel{\eqref{e:dt-600}, \eqref{e:QB}}{\lesssim} \hd(Q_0) + \sum_{R \in \BWGL(Q_0)} \ell(Q)^d.
\end{align}
Finally, let $\cF(Q_0)$ to be the family of maximal Christ-David cubes $Q \in \dD(Q_0)$ so that $\inf_P d_{\Cc_0B_Q}(E, P) < \ve$. Then since $\sum_{Q \in \BWGL(Q_0, \ve)} \ell(Q)^d< + \infty$, we must have that 
\begin{align}\label{cheap-cover}
    \bigcup_{Q \in \cF} \Cc_0 B_Q \supset Q_0.
\end{align}
For each $Q \in \dD(Q_0)$ not contained in any cube belonging to $\cF(Q_0)$, we put
\begin{align*}
    \cC_Q^0 := \{I \in \Delta_k \, |\, I \cap \cC_0B_Q \neq \emptyset \mbox{ and } k=k(Q)\}.
\end{align*}
Then we set 
\begin{align}\label{e:SIGMA-0}
    \Sigma^0 := \bigcup_{\substack{Q: Q' \subsetneq Q \subset Q_0 \\ Q' \in \cF(Q_0)}} \bigcup_{I \in \cC_Q^0} \partial_d I. 
\end{align}
Finally, set
\begin{align}\label{e:SIGMA}
\Sigma= \Sigma(Q_0, \ve, \Cc_0) := \Sigma' \cup \Sigma^0. 
\end{align}
Note that if $Q$ is as in the first union, then $Q \in \BWGL(Q_0, \ve, \Cc_0)$. Thus, using \eqref{e:measSIGMA1}, 
\begin{align}\label{e:measSIGMA-2}
    \hd(\Sigma) \lesssim \hd(Q_0) + \sum_{Q \in \BWGL(Q_0, \ve, \Cc_0)} \ell(Q^d).
\end{align}
\begin{remark}\label{r:measSIGMA}
Note that the constant behind the symbol $\lesssim$ in \eqref{e:measSIGMA1} and \eqref{e:measSIGMA-2} depend only on $n, d, \ve$ and $\Cc_0$. 
\end{remark}

\subsection{The covering surface $\Sigma$ is topologically stable}
\begin{proposition}\label{p:SIGMA}
Let $Q_0 \in \dD$ and fix $\ve>0$. Let $\Sigma= \Sigma(Q_0, \ve)$ be defined as in \eqref{e:SIGMA}. Then we can find
\begin{itemize}
    \item $\alpha_0, \eta_0$ sufficiently small with respect to $\ve>0$;
    \item $\gamma_0$ smaller than some universal constant;
    \item $\delta_0$ sufficiently small depending on $\eta_0, \gamma_0, d$;
    \item $r_0= \ell(Q_0)$,
\end{itemize}
such that $\Sigma$ satisfies the topological condition (Definition \ref{d:TC}) with parameters $(r_0, \alpha_0, \eta_0, \delta_0, \gamma_0)$. 
\end{proposition}

\begin{lemma}\label{l:SIGMA-l1}
Let $\Sigma$ be the set defined in \eqref{e:SIGMA}. Let $\Sigma^R$ be a Reifenberg $\ve$-flat set corresponding to some Christ-David cube $R \in \Top$. There exist constants $\eta_0$ and $\alpha_0$ depending on $\ve$ so that the following holds true. Let $x \in \Sigma^R$ and $B(x,r) \subset 2 B_R$. Then 
\begin{align}\label{e:SIGMA-l1}
    \hd(B(x, (1-\eta_0)r) \cap \vp_1(\Sigma)) \gtrsim_{d, \eta_0} r^d, 
\end{align}
whenever $\{\vp_t\}$ is an $\alpha_0$-ALD for $B(x,r)$.  
\end{lemma} 

Recall that $\Sigma^R$ is a $d$-dimensional Reifenberg $\ve$-flat surface.
\begin{claim}\label{claim:reif-a}
Let $\{\psi_t\}$ be an $\alpha_0$-ALD with $\alpha_0$ to be fixed below depending on $\ve$. Then for all $x \in B(x,r) \cap \Sigma^R$, there exists $y \in \vp_1(\Sigma^R)$ such that
\begin{align*}
    |x-y| < 2\ve r.
\end{align*}
\end{claim}
\begin{proof}
Let $P$ be a $d$-plane for which $d_{x,r}(\Sigma^R, P) < 1.1\ve$ holds. Since $\vp_1$ is the continuous image of a $d$-plane, that is, $\vp_1(\Sigma^R) = \vp_1 \circ g (P_0)$ (where $P_0$ is the plane in \eqref{P_0}, which exists since $R \in \Top$), then there must exists a point $y \in \vp_1(\Sigma^R)$ which also satisfies $y \in \Pi_P^{-1}(B^d(\Pi_P(x),r))$, where $\Pi_P$ is the orthogonal projection $\R^n \to P$ and $B^d(\Pi_P(x),r)$ is $d$-dimensional ball in $P$. Now, suppose that the claim does not hold, and thus $B(x, 2\ve r) \cap \vp_1(\Sigma^R) = \emptyset$. But by the previous considerations, there exists a point $y \in \Pi_P^{-1}(B^d(\Pi_P(x),r)) \cap \vp_1(\Sigma^R)$. Since $\Sigma^R$ is Reifenberg $\ve$-flat, this points will have $\dist(y, \Sigma^R) > 2\ve r$. Choosing $\alpha_0 \leq \ve$, this contradicts the fact that $\{\vp_t\}$ is an $\alpha_0$-ALD for $B(x,r)$ (specifically \eqref{e:vp4}). 
\end{proof}
\noindent
\begin{claim}
Let $\eta_0>0$ be sufficiently small depending  on $\ve$. 
With the notation above, we have
\begin{align}\label{e:reif-b}
    \Pi_P(B(x, (1-\eta_0)r)\cap \vp_1(E)) \supset B^d(\Pi_P(x), (1-2\eta_0)r) \cap P.
\end{align}
\end{claim}
\begin{proof}
Let $\wt r= (1-\eta_0)r$.  Using Claim \ref{claim:reif-a}, we see that for any point $y \in \partial B(x,\wt r) \cap \vp_1(\Sigma^R)$, $|y-\Pi_P(y)|\leq 4 \ve$. Then by trigonometry, $|\Pi_P(x)- \Pi_P(y)| = |x- \Pi_P(y)| \geq (1-16\ve^2)\wt r$. From the continuity of $\vp_1$, it follows that for any point $p \in B(x, (1-16\ve^2)\wt r) \cap P$ there is a $y \in \vp_1(\Sigma^R)$ so that $\dist(y, P) < 4\ve r$. This proves the claim.
\end{proof}
\begin{proof}[Proof of Lemma \ref{l:SIGMA-l1}]
Since $\Pi_P$ is $1$-Lipschitz, we have 
\begin{align*}
    \hd(B(x, (1-\eta_0)r) \cap \vp_1(\Sigma)) & \stackrel{\eqref{e:SIGMA}}{\geq}  \hd(B(x, (1-\eta_0)r) \cap \vp_1(\Sigma^R))\\
    & \stackrel{\eqref{e:reif-b}}{\geq} \hd(B^d(\Pi_P(x), (1-2\eta_0)r)) = c(d,\eta_0)r^d. 
\end{align*}
\end{proof}

\begin{lemma}\label{l:SIGMA-l2}
Let $\Sigma$ be the set defined in \eqref{e:SIGMA}. Let $\Sigma^R$ be a Reifenberg $\ve$-flat set corresponding to some Christ-David cube $R \in \Top$. There exist constants $\eta_0$ and $\alpha_0$ depending on $\ve$, a constant $\gamma_0$ smaller than some absolute value and a constant $\delta_0< 1$ depending on $\eta_0, \gamma_0$ and $d$, so that the following holds true. Let $x \in \Sigma^R \cap 2 B_R$ and $r \in (0, \ell(R))$. Then we can find a ball $B$ satisfying \eqref{e:Bcent}, \eqref{e:r_B} and \eqref{e:Bin}, so that  
\begin{align}\label{e:SIGMA-l2}
    \hd(B(x_B, (1-\eta_0)r(B)) \cap \vp_1(\Sigma)) \geq\, \delta_0 r^d. 
\end{align}
\end{lemma} 
\begin{proof}
Using Reifenberg $\ve$-flatness of $\Sigma^R$ and Claim \ref{claim:reif-a}, we find a ball $B$ centered on $\Sigma^R$ with $r(B) \geq \frac{1}{10} r$ and such that $B \subset 2 B_R$ and $B \subset B(x, r)$. Then $B$ satisfies the hypotheses of Lemma \ref{l:SIGMA-l1}. Thus
\begin{align*}
    \hd(B(x_B, (1-\eta_0)r(B)) \cap \vp_1 \stackrel{\eqref{e:SIGMA-l1}}{\geq} c(d,\eta_0) r(B)^d \geq c(d, \eta_0, \gamma_0) r^d.
\end{align*}
Choosing $c(d, \eta_0, \gamma_0)= \delta_0$, proves the lemma. 
\end{proof}

\begin{lemma}\label{l:SIGMA-l3}
Let $\Sigma$ be the set defined in \eqref{e:SIGMA}. Let $\Sigma^R$ be a Reifenberg $\ve$-flat set corresponding to some Christ-David cube $R \in \Top$. There exist constants $\eta_0$ and $\alpha_0$ depending on $\ve$, a constant $\gamma_0$ smaller than some absolute value and a constant $\delta_0< 1$ depending on $\eta_0, \gamma_0$ and $d$, so that the following holds true. Let $x \in \Sigma^R \cap 2 B_R$ and $\Cc_0 \ell(R) < r \leq \ell(Q_0)$. Then we can find a ball $B$ satisfying \eqref{e:Bcent}, \eqref{e:r_B} and \eqref{e:Bin}, so that  
\begin{align}\label{e:SIGMA-l3}
    \hd(B(x_B, (1-\eta_0)r(B)) \cap \vp_1(\Sigma)) \geq\, \delta_0 r^d. 
\end{align}
\end{lemma} 
\begin{proof}
Recall from Lemma \ref{l:dt-400}, that there exists a point $y \in E$ such that $|x-y| \lesssim \ve d(x) \leq \ve \ell(R)$. There exists a universal constant $c>0$ and a Christ-David cubes $Q \in \dD(Q_0)$ satisfying $x \in 2B_Q \cap E$ and $cB_Q \subset B(x,r) \subset 2B_Q$. We consider two distinct cases. 

\noindent
\textbf{Case 1.} Suppose $Q \in \Tree(R')$ for some $R' \in \Top$. Then by Lemma \ref{l:dt-400}, we have $\dist(y, \Sigma^{R'}) \leq \ve \ell(Q)$. Let $z \in \Sigma^{R'}$ be such that $|z-y| \lesssim \ve \ell(Q)$. Then $|z-x| \lesssim \ve \ell(Q) + \ve\ell(R) \lesssim \ve \ell(Q)$. Then consider the ball $B$ with $x_B = y$ and $r_B= c' r$, where  $c'$ can be chosen smaller than some universal value and depending on $c$ so that $B\subset B(x,r)$. Now, if $B \subset \Cc_0B_{R'}$, then we argue as in Lemma \ref{l:SIGMA-l1}. Thus \eqref{e:SIGMA-l1} gives us 
\begin{align}
    \hd(B(x_B, (1-\eta_0)r_B) \cap \vp_1(\Sigma)) \gtrsim_{d, \eta_0} r_B \gtrsim_{d, \eta_0, c'} r^d. 
\end{align}
On the other hand, if $B$ is not contained in $\Cc_0 B_{R'}$, we use Lemma \ref{l:SIGMA-l2} to find a further ball $B' \subset B$ satisfying \eqref{e:SIGMA-l2}.

\noindent
\textbf{Case 2.}

Suppose now that $Q$ is so that $cB_Q \subset B(x,r) \subset 2B_Q$ but that  $Q \notin \Tree(R')$ for any $R' \in \Top$. 
Then there exists a Christ-David cube $Q' \in \cF$ such that $Q' \subsetneq Q$ and $x \in 1.5B_{Q'}$. But recall from the definition of $\Sigma^0$ \eqref{e:SIGMA-0}, that the dyadic cubes in $\cC_Q^0$ cover $B(x,r)$. 
It can be easily then shown that 
\begin{align*}
    \hd(B(x_B, (1-\eta_0)r_B) \cap \vp_1(\Sigma^{Q'})) \gtrsim_{d, \eta_0} r_B \gtrsim_{d, \eta_0} r^d,
\end{align*}
whenever $\vp_t$ is an $\alpha_0$-ALD for $B$ (with $\alpha_0$ chosen sufficiently small with respect to $\ve$). The ball $B=B(x_B, r_B)$ can be chosen so that $r_B \approx r$ up to a universal multiplicative constant. This implies at once \eqref{e:SIGMA-l3} and thus concludes the proof of Lemma \ref{l:SIGMA-l3}.
\end{proof}

\begin{lemma}
Keep the notation as above. Suppose that $x \in Q_0$ and $0< r < \ell(Q_0)$. Then if $\alpha_0$ and $\eta_0$ are chosen sufficiently small w.r.t $\ve$, if $\gamma_0$ is chosen smaller than a universal constant, and if $\delta_0$ is chosen appropriately (depending on $d, \gamma_0, \eta_0$), then we can find a ball $B$ centered on $\Sigma$, with $B \subset B(x,r)$ and $\gamma_0 r \leq r_B \leq r$ such that
\begin{align}
    \hd(B(x_B, (1-\eta_0)r_B) \cap \vp_1(\Sigma)) \geq \delta_0 r^d,
\end{align}
whenever $\vp_t$ is an $\alpha_0$-ALD for $B$.
\end{lemma}
\noindent
The proof of this Lemma follows the same arguments as Lemma \ref{l:SIGMA-l1}, \ref{l:SIGMA-l2} and \ref{l:SIGMA-l3} and is left to the reader. This concludes the proof of Proposition \ref{p:SIGMA}.

\subsection{Conclusion}
Let us finish this section by proving Theorem \ref{t:SIGMA-cor-in-sec}, and thus Corollary \ref{c:TST-lcr-stable}. 
Given $E, Q_0, \ve, \Cc_0$ as in the statement of Theorem \ref{t:SIGMA-cor-in-sec}, we constructed a stable $d$-surface $\Sigma$ with constants $(r_0, \alpha_0, \delta_0, \eta_0, \gamma_0)$ as in Proposition \ref{p:SIGMA}. It is immediate from the construction \eqref{e:SIGMA} that $Q_0 \subset \Sigma$. Moreover, we have
\begin{align}
    \hd(\Sigma) & \stackrel{\eqref{e:measSIGMA-2}, \,{\rm Remark } \ref{r:measSIGMA}}{\lesssim_{n,d,\ve,\Cc_0}} \hd(Q_0) + \sum_{R \in \BWGL(Q_0, \ve, \Cc_0)} \ell(Q_0)^d \\
    & \stackrel{{\rm Theorem} \,  \ref{t:AS-real}}{\lesssim_{n,d,p,\aA,p, \Cc_0,\ve}} {\rm BETA}_{E, d, p, \aA} (Q_0) \ell(Q)^d \\
    & \stackrel{{\rm Theorem} \, \ref{t:topo-main}}{\leq} C\, \hd(\Sigma),
\end{align}
where $C=C(n,d,p,\aA, \Cc_0, \alpha_0, \eta_0, \gamma_0, \delta_0)= C(n, d, p, \aA, \Cc_0)$, since $\alpha_0, \eta_0, \gamma_0, \delta_0$ depend on either a universal constant or $\ve$. In particular, $C$ does not depend on $E$. 
\section{An application to uniformly non-flat sets}\label{s:non-flat}
In \cite{david2004hausdorff}, David proved that if $E$ satisfies a stronger\footnote{David required that the ball $B$ equal to $B(x,r)$ in our Definition \ref{d:TC}.} version of the ($d$-dimensional) topological condition and it is uniformly non-flat (with respect to the $L^\infty$ coefficients $\beta_\infty$), then it must have dimension strictly larger that $d$. As mentioned in the introduction, David's result was in the spirit of a previous result by 
 Bishop and Jones about uniformly wiggly, or uniformly non-flat, sets. 
 \begin{definition}\label{d:nonflat}
 A set $E \subset \R^n$ is called \textit{uniformly wiggly} or \textit{uniformly non-flat} (with parameter $\beta_0$) if for all cubes $Q \in \dD_E$, we have that
 \begin{align*}
     \beta_\infty(Q) > \beta_0 >0.
 \end{align*}
 \end{definition}
 \begin{remark}
 Clearly, this definition can be recast in terms of different types of $\beta$ numbers, such as the content beta numbers which we have been using so far in this paper. 
 \end{remark}
 \noindent
 Let us now recall the result of Bishop and Jones. 
 \begin{theorem}[\cite{bishop1997wiggly}, Theorem 1.1] \label{t:nonflatBJ}
 Let $E \subset \R^2$ be a compact, connected subset which is uniformly wiggly with parameter $\beta_0$. Then $\dim(E) > 1+ C \beta_0^2$, where $C$ is an absolute constant.
 \end{theorem}
 \noindent
 Let us go back to David's result. His is
 a generalisation of Bishop and Jones's Theorem. However, it is of qualitative nature, and the dependence of the lower bound on the parameter $\beta_0$ is not explicit. It is also proven with respect to a slightly stronger topological condition and with respect to $\beta_\infty$. In this section we give a generalisation of Bishop and Jones Theorem where such a dependence is made explicit. This result is a fairly immediate application of Corollary \ref{t:topo-main} and of the scheme of proof from \cite{bishop1997wiggly}. 
 
\begin{theorem} \label{t:high-dim}
Let $E \subset \R^n$ be a topologically stable $d$-surface. Let $R \in \dD$ be such that, for any $Q \in \dD(R)$, we have that
\begin{align}
    \beta_E^{p,d}(C_0 Q)^2 > \beta_0 >0.
\end{align}
Then
\begin{align}
    \dim(R) > d + c \beta_0^2. 
\end{align}
\end{theorem}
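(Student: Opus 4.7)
The plan is to adapt the scheme of Bishop and Jones \cite{bishop-jones97} to the higher-dimensional setting, substituting Corollary \ref{cor:main} for their planar Travelling Salesman estimate, and using the lower content regularity from Lemma \ref{l:TP-LCR} to handle the combinatorics of the Christ--David tree.

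First, I would localize the TST estimate: apply Corollary \ref{cor:main} not only to $R$ but to every Christ--David subcube $Q \subseteq R$. Using the hypothesis $\beta_E^{p,d}(C_0 P)^2 > \beta_0$ uniformly on $\dD(R)$, this yields the scale-invariant lower bound
\begin{align*}
    \hd(Q) \;\gtrsim\; \ell(Q)^d + \beta_0 \sum_{P \in \dD,\, P \subseteq Q} \ell(P)^d,
\end{align*}
which is the higher-dimensional analogue of the Bishop--Jones excess-length inequality. Combined with the lower content $d$-regularity supplied by Lemma \ref{l:TP-LCR}, this should be upgraded into a branching estimate of the form
\begin{align*}
    \sum_{P \in \Child(Q)} \ell(P)^d \;\geq\; (1 + c\beta_0^2)\, \ell(Q)^d,
\end{align*}
expressing quantitatively that a uniformly non-flat set must distribute its mass across strictly more children than a flat $d$-plane would.

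Next, I would perform the Bishop--Jones martingale construction on the tree $\dD(R)$. The branching inequality permits the recursive construction of a Frostman measure $\mu$ supported on $R$ satisfying $\mu(B(x,r)) \lesssim r^{d + c\beta_0^2}$: at each node $Q$, mass is redistributed to $\Child(Q)$ with weights proportional to $\ell(P)^{d + c\beta_0^2}$, and the branching estimate ensures that the normalization is consistent across scales. Passing from the cube estimate to one on arbitrary balls is standard, via a bounded-overlap covering together with the Christ--David cube properties from Theorem \ref{t:Christ}. Frostman's lemma then delivers $\dim(R) \geq d + c\beta_0^2$, and strict inequality is immediate since $\beta_0 > 0$.

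The main obstacle I foresee is deriving the branching inequality with the sharp $\beta_0^2$ exponent. A direct application of the TST only produces the linear factor $(1 + c\beta_0)$, since the hypothesis only gives $\beta^2 > \beta_0$ and not $\beta \gtrsim \sqrt{\beta_0}$ in a usable quadratic form in the sum. To obtain the correct rate one must exploit the \emph{squared} exponent appearing in the TST sum, which reflects a Pythagorean defect between the measure of $E \cap Q$ and that of its projection onto the best approximating $d$-plane; this is precisely the geometric observation underpinning the original planar result of Bishop and Jones, that a curve with $\beta \geq \beta_0$ has length in a ball of radius $r$ bounded below by $r(1 + c\beta_0^2)$ rather than merely $r(1 + c\beta_0)$. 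Transferring this Pythagorean argument to general $d$ should go through using the Christ--David structure and the uniform non-flatness hypothesis, but it is the delicate step of the proof.
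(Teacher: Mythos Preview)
Your overall plan (Bishop--Jones scheme plus Corollary \ref{cor:main} plus a Frostman measure) is the right one, but the implementation has two genuine gaps.

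\textbf{Applying the TST directly to $E$ is vacuous.} If $E$ is uniformly non-flat and topologically stable, one expects $\hd(Q)=\infty$ for every $Q\in\dD(R)$, so your displayed inequality $\hd(Q)\gtrsim \ell(Q)^d+\beta_0\sum_{P\subset Q}\ell(P)^d$ reads $\infty\gtrsim\infty$ and gives no branching information. The paper avoids this by not applying Corollary \ref{cor:main} to $E$ at all. Instead, for each scale $k$ it replaces $R$ by the finite Ahlfors-regular set
\[
E_{R,k}=\bigcup\{\partial_dI:\ I\in\Delta,\ \ell(I)\sim 2^{-k},\ I\cap R\neq\emptyset\},
\]
observes that $E_{R,k}$ is itself a topologically stable $d$-surface, and applies Corollary \ref{cor:main} \emph{to $E_{R,k}$}. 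The $\beta$-numbers of $E$ at scales above $2^{-k}$ are then transferred to $E_{R,k}$ via Lemma \ref{lemma:azzamschul} (the error term sums to $\lesssim\ell(R)^d$). This yields the finite, usable inequality
\[
\hd(E_{R,k}\cap I_N)\gtrsim (k-N)\,\beta_0^2\,2^{-dN}
\]
for every dyadic $I_N$ of side $2^{-N}$ meeting $R$.

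\textbf{The $\beta_0^2$ exponent comes from a multi-scale jump, not a Pythagorean defect.} You are right that one generation of the TST does not give a gain of $(1+c\beta_0^2)$ per child; but the fix is not the curve-length Pythagoras of \cite{bishop-jones97}, which has no obvious analogue for general lower-content-regular $d$-sets. The paper instead counts net points: the inequality above forces at least $C^{-1}(k-N)\beta_0^2\,2^{d(k-N)}$ dyadic cubes of side $2^{-k}$ meeting $R$ inside $I_N$. Choosing the jump $\kappa:=k-N\sim\beta_0^{-2}$ turns this into $\geq 2^{(d+c\beta_0^2)\kappa}$ cubes. One then iterates over successive blocks of $\kappa$ generations, building nested families $\dS_0,\dS_1,\dots$ with branching number $\geq 2^{(d+c\beta_0^2)\kappa}$, and the Frostman measure is defined on this coarser tree. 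So the quadratic exponent drops out of the accumulated $\sum\beta^2$ in the TST over $\sim\beta_0^{-2}$ scales, with no single-scale geometric estimate needed.
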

\noindent
Here $\dim$ stands for Hausdorff dimension. Lemma 2.12 in \cite{azzam2018analyst} says that $\beta_\infty^d (0.5 B) \leq c(d) \beta^{1,d}_E(B)^{\frac{1}{d+1}}$ whenever $B$ is a ball centered on a lower content $d$-regular set $E$. (Here $\beta^d_\infty$ is the coefficient as in the Introduction, except that we infimize over $d$-planes). We immediately obtain the following corollary. 
\begin{corollary}
Let $E \subset \R^n$ be a stable $d$-surface. If $R \in \dD(E)$ is uniformly non-flat \textup{(}with respect to $\beta_\infty^d$ and parameter $\beta_0$\textup{)}, then $dim(R) > d + C \beta_0^{\frac{2}{d+1}}$. 
\end{corollary}
\noindent
To arrive at Corollary \ref{c:non-flat} from this is immediate. 
The scheme of the proof of Theorem \ref{t:high-dim} is the same as that of Bishop and Jones. We also used a clear summary of such proof to be found in Garnett and Marshall's book, \cite{garnett2005harmonic}, page 429. For this reason, we only sketch the proof. 
\begin{proof}
Given a stable $d$-surface, a cube $R \in \dD(E)$ and an integer $m \geq 0$, we put
\begin{align*}
    \beta_m(R) =  \sum_{Q \in \dD_m(R)} \beta_E^{p,d}(Q) \ell(Q)^d. 
\end{align*}
Next, we consider 
\begin{align} \label{e:Delta-ck}
    \Delta_{k,c}(R) :=  \ck{ I \in \Delta \, |\, I \cap R \neq \emptyset \mbox{ and } \ell(I) = c\,2^{-k}},
\end{align}
where $c<1$ is a constant which is a power of $2$ and will be fixed later (it will depend on the parameter $\lambda>0$ coming from Theorem \ref{t:Christ}). 
We then put
\begin{align*}
    E_{R, k} := \bigcup_{I \in \Delta_{k,c}(R)} \partial_d I. 
\end{align*}

\noindent
\textbf{Claim 1.} There exists a constant $C_5$ so that, if 
\begin{align}\label{e:N-0-scale-R}
    R \in \dD_{N_0}(E)
\end{align}
with $N_0 \leq k$, then
\begin{align} \label{e:BJ-a}
    C_5\ps{\ell(R)^d + \sum_{m=N_0}^k \beta_m (R)} \leq \hd(E_{R, k}).
\end{align}
To see this, note first that because $E$ satisfies the topological condition \eqref{e:TC} with parameters $r_0, \alpha_0, \eta_0, \delta_0, \gamma_0$, then $E_{R, k}$ must also be a stable $d$-surface with comparable parameters (up to constants). This can be seen with a proof akin to that of Lemma \ref{l:TC-STC}. Hence, we can apply Corollary \ref{t:topo-main} to see that
\begin{align*}
    \hd(E_{R,k}) \approx \beta_{E_{R,k}, C_0, p}(R),
\end{align*}
where the constants behind $\approx$ are as in the statement of Corollary \ref{t:topo-main}. 
We can now check \eqref{e:BJ-a}: we have that
\begin{align*}
    \hd(E_{R,k}) \approx \diam(E_{R,k})^d + \sum_{P \in \dD_{E_{R,k}}} \beta^{d,p}_{E_{R,k}}(C_0 P)^2 \,\ell(P)^d.
\end{align*}
By construction, we immediately see that $\diam(E_{R, k})^d \approx \ell(R)^d$. On the other hand, consider a cube $Q \in \dD_{E}$, such that $\ell(Q) > c 2^{-k}$, for $c<1$ as in \eqref{e:Delta-ck}. If we choose $c$ sufficiently small, we can apply Lemma \ref{lemma:azzamschul} with $E_1=E$ and $E_2 = E_{R,k}$, to obtain
\begin{align*}
    \beta_{E}^{d,p}(C_0P) \lesssim \beta^{d,p}_{E_{R,k}}(2C_0 P) + \ps{ \frac{1}{\ell(P)^d} \int_{2C_0 B_P} \ps{\frac{\dist(y, E_{R,k})}{\ell(P)}}^p \, d \mathcal{H}_\infty^d}^{\frac{1}{p}}.
\end{align*}
Thus we see that
\begin{align*}
    & \sum_{\substack{P \in \dD_{E}\\ \ell(P) > c 2^{-k}}} \beta_{E}^{d,p} (C_0 P) \\
    & \lesssim \sum_{\substack{P' \in \dD_{E_{R, k}} \\ \ell(P') \gtrsim c 2^{-k}}} \beta^{d,p}_{E_{R,k}}(2C_0 P') + \sum_{\substack{P \in \dD_{E} \\ \ell(P) > c 2^{-k}}} \ps{ \frac{1}{\ell(P)^d} \int_{2C_0 B_P} \ps{\frac{ \dist(y, E_{R,k})}{\ell(P)}}^{p} \, \hdc(y) }^{\frac{1}{p}}.
\end{align*}
With a calculation similar to that from \eqref{eq:SS_distyER} to \eqref{e:estBeta1}, we obtain that the second sum above is $ \lesssim \ell(R)^d$. This then gives 
\begin{align*}
    \hd(E_{R, k}) & \approx \ell(R)^d + \sum_{P \in \dD_{E_{R,k}}} \beta_{E_{R, k}}(C_0 P)^2 \ell(P)^d \\
    &   \gtrsim \ell(R)^d + \sum_{\substack{P' \in \dD_{E_{R,k}} \\ \ell(P') \geq c 2^{-k}}} \beta_{E_{R,k}} + \sum_{\substack{P \in \dD_{E} \\ \ell(P) > c 2^{-k}}} \ps{ \frac{1}{\ell(P)^d} \int_{2C_0 B_P} \ps{\frac{ \dist(y, E_{R,k})}{\ell(P)}}^{p} \, \hdc(y)}^{\frac{1}{p}}\\
    & \geq C_5\ps{ \ell(R)^d + \sum_{\substack{P' \in \dD_{E_{R,k}} \\ \ell(P') \geq c 2^{-k}}} \beta_{E_{R,k}}}. 
\end{align*}
This proves \eqref{e:BJ-a}.

\noindent
\textbf{Claim 2.}
Let $N$ an integer so that $N > N_0$ (recall that $N_0$ is roughly the scale of $R$, see \eqref{e:N-0-scale-R}). Consider a dyadic cube $I_N \in \Delta_N(\R^n)$ for which $\ell(I_N) < \ell(R)/10$ and such that $\frac{1}{3} I_N \cap E \neq \emptyset$. For $k > N$, we have
\begin{align*}
    \sum_{m=N}^k \beta_{m}(R \cap I_N)^2 \geq (k-N) \beta_0^2 2^{-dN}.
\end{align*}
By $\beta_{m}(R \cap I_N)$ here we mean that we sum over those cubes $Q \in \dD_m(R)$ such that $Q \cap I_N \neq \emptyset$.
To see this, note first that by lower regularity of $E$, there are at least $2^{d(m-N)}$ (up to a a constant depending on the lower regularity parameter) dyadic cubes $J$ of generation $m$ (with $m>N$) such that $J \subset I_N$ and $J \cap E \neq \emptyset$. Hence since $E$ is uniformly non-flat, we see that if $N \leq m \leq k$,  
\begin{align*}
    \beta_m(R \cap I_N)^2 & =  \sum_{\substack{Q \in \dD_m(R) \\ Q \cap I_N \neq \emptyset}} \beta_{E}(C_0 Q)^2 \ell(Q)^d \\
    & \geq  \beta_0^2 \sum_{\substack{Q \in \dD_m(R) \\ Q \cap I_N}} \ell(Q)^d \\
    & \approx \beta_0^2 \sum_{\substack{J \in \Delta_{m,c}(R)\\ J \subset I_N}} \ell( J)^d \\
    & \gtrsim_{c} \beta_0^2 2^{d(m-N)} 2^{-dm} \approx_c \beta_0^2 2^{-dN}.
\end{align*}
Hence, we have that
\begin{align*}
    \sum_{m=N}^k \beta_m(R \cap I_N)^2 \gtrsim_{c} (k-N) \beta_0^2 \, 2^{-dN},
\end{align*}
and so, using \eqref{e:BJ-a},
\begin{align*}
    \hd(E_{R,k}\cap I_N) \gtrsim_{C_5, c} (k-N) \, \beta_0^2 \, 2^{-dN}.
\end{align*}

Let now $\ck{z_j}$, $j$ in some index set $A$, be a maximal $2^{-k}$-separated net of $E_{R,k}\cap I_N$ such that 
\begin{align} \label{e:max-net-zj}
    \bigcup_{j \in A} B(z_j, 2^{-k+2}) \supset E_{R,k} \cap I_N.
\end{align}
Then there exists a constant $C_7$ (depending only on $n$) so that
\begin{align*}
    \hd(E_{R,k} \cap I_N) \leq C_7\,c^d 2^{-dk}\, \text{Card}(A).
\end{align*}
Thus we obtain 
\begin{align*}
    C_7 \, c^d 2^{-dk} \, \text{Card}(A) \gtrsim_{C_5, c} (k-N) \, \beta_0^2 2^{-dN}, 
\end{align*}
and therefore
\begin{align} \label{e:CardA}
    \text{Card}(A) \gtrsim_{C_5,C_7,c} (k-N) \, \beta_0^2\, 2^{d(k-N)}.
\end{align}
Since $k$ was an arbitrary integer with $k \geq N$, we can choose it so that
\begin{align*}
 \kappa:= k-N \approx \frac{1}{\beta_0^2}.
\end{align*}
Hence we see from \eqref{e:CardA} that
\begin{align} \label{e:CardA-b}
    \text{Card}(A) \geq 2^{(d+ c' \beta_0^2)\kappa},
\end{align}
where $c'=c'(C_7, C_5, c)$.
\noindent
We now apply this construction recursively for each $N > N_0$, as follows. For $N_0$, we put 
\begin{align*}
    \dS_0 := \ck{ I \in \Delta_{N_0+\kappa}(R) \, |\, \exists j \in A \mbox{ s.t. } z_j \in I}
\end{align*}
Then for each  $I \in \dS_0$, we find a maximal net $\{z_j\}_{j \in A}$ as in \eqref{e:max-net-zj}; the cardinality of this net will be again as in \eqref{e:CardA-b}. We put the relative cubes in the subfamily
\begin{align*}
    \dS(I):= \ck{ J \in \Delta_{N_0 + 2\kappa} \, |\, \exists j \in A \mbox{ s.t. }z_j \in J}.
\end{align*}
We then put
\begin{align*}
    \dS_1:= \bigcup_{I \in \dS_0} \dS(I).
\end{align*}
Having defined $\dS_{j-1}$, we set
\begin{align*}
    \dS_{j} := \bigcup_{I \in \dS_{j-1}} \dS(I), 
\end{align*}
where $\dS(I)=\{ j \in \Delta_{N_0 + j\kappa} \, |\, \exists j \in A \mbox{ s.t. } z_j \in J\}$.
Let us record that for each $j \in \N$, we have 
\begin{enumerate}
    \item Each $J \in \dS_j$, is a subset of some $I \in \dS_{j-1}$.
    \item Each $I \in \dS_{j-1}$ contains at least $2^{(d+ c' \beta_0^2) \kappa}$ cubes $I \in \dS_{j}$ (as in \eqref{e:CardA}).
    \item For each $j \in \N$, if $I \in \dS_j$, we have $I\cap R \neq \emptyset$.
\end{enumerate}

\noindent
\textbf{Claim 3.} If $R$ satisfies (1)-(3), then 
\begin{align*}
    \dim(R) > d+c' \beta_0^2.
\end{align*}
To prove this claim, we define the $\mu$ on the elements $I$ of $\dS_j$, for $j \geq 0$, by 
\begin{align*}
    \mu(I) = \text{Card}(A)^{-j} \leq 2^{-j\kappa(d+c'\beta_0^2)}.
\end{align*}
One can then check that $\spt(\mu)= E$ and that $\mu(R) = 1$. Then, by Frostman's Lemma (Theorem 8.8 in \cite{mattila}), we have that
\begin{align*}
    \mathcal{H}^{d+ c'\beta_0^2}(R) >0.
\end{align*}
This completes the proof of Theorem \ref{t:high-dim}.
\end{proof}
\section{Appendix}
Recall the statement of Lemma \ref{l:meta1}. 
\begin{lemman}
Let $S$ be a cube in $\wt \Stop(Q)$ for some $Q \in \Next(R)$, $R \in \Top(k_0)$. Then there exists a dyadic cube $I_S := I \in \cC_{Q}$ so that $I_S \subset \frac{1}{2}B_S$ and $\ell(I_S) \sim \tau \ell(S)$.
\end{lemman}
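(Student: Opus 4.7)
The plan is to locate $I_S$ via the David--Christ center $\zeta_S\in S$, which lies in $E$, and then read off its side length from the Whitney-like relation \eqref{e:whitney-like}. Since $S\subset Q\subset C_1B_Q$, the covering property \eqref{e:contains} yields some $I\in \cC_Q$ with $\zeta_S\in I$; this will be our candidate $I_S$. Both conclusions $\ell(I_S)\sim\tau\ell(S)$ and $I_S\subset \tfrac12 B_S$ will then be consequences of the two-sided estimate $d_{\cF(Q)}(\zeta_S)\sim \ell(S)$.

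To obtain that comparison, I would use two elementary facts: $S$ itself is a minimal cube of the tree rooted at $Q$, so $S\in \cF(Q)=\wt\Stop(Q)$; and distinct elements of $\cF(Q)$ are pairwise disjoint. Taking $P=S$ in the infimum defining $d_{\cF(Q)}$ immediately gives the upper bound
$d_{\cF(Q)}(\zeta_S)\le \ell(S)+\dist(\zeta_S,S)=\ell(S)$.
For the matching lower bound, any other competitor $P\in \cF(Q)\setminus\{S\}$ is disjoint from $S$, hence disjoint from the inner ball $B(\zeta_S,c_5\ell(S))\subset S$ guaranteed by Theorem \ref{t:Christ}; this forces $\dist(\zeta_S,P)\ge c_5\ell(S)$, and so $\ell(P)+\dist(\zeta_S,P)\ge c_5\ell(S)$. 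Taking the infimum over $P$ yields $d_{\cF(Q)}(\zeta_S)\ge c_5\ell(S)$, and therefore $d_{\cF(Q)}(\zeta_S)\sim \ell(S)$.

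From here the rest is routine. The map $x\mapsto d_{\cF(Q)}(x)$ is $1$-Lipschitz, being an infimum of $1$-Lipschitz functions $x\mapsto \ell(P)+\dist(x,P)$, so $|d_{\cF(Q)}(x)-d_{\cF(Q)}(\zeta_S)|\le \sqrt{n}\,\ell(I_S)$ for every $x\in I_S$. Plugging into \eqref{e:whitney-like} gives $\ell(I_S)\sim \tau\,d_{\cF(Q)}(I_S)$, and since $\tau$ is small enough by \eqref{e:tau} to absorb the error term $\sqrt{n}\,\ell(I_S)$, this upgrades to $\ell(I_S)\sim \tau\ell(S)$. The inclusion $I_S\subset \tfrac12 B_S$ is then a pure diameter estimate: $I_S$ contains $\zeta_S$ and has diameter $\sqrt{n}\,\ell(I_S)\lesssim \sqrt{n}\,\tau\,\ell(S)$, which is less than $\ell(S)/2$ for $\tau$ small as in \eqref{e:tau}.

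The only mildly delicate step is the lower bound $d_{\cF(Q)}(\zeta_S)\gtrsim \ell(S)$, which hinges on the disjointness of the members of $\wt\Stop(Q)$ inherited from the stopping-time construction together with the inner inclusion in Theorem \ref{t:Christ}; all remaining steps are mechanical applications of the Whitney-like estimate and the Christ--David cube properties.
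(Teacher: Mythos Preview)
Your proof is correct and follows essentially the same route as the paper's: locate $I_S$ as the cube of $\cC_Q$ containing $\zeta_S$, bound $d_{\cF(Q)}$ from above by testing against $S$ itself, and from below by using that every other stopped cube is disjoint from the inner ball $B(\zeta_S,c_5\ell(S))\subset S$; then read off $\ell(I_S)\sim\tau\ell(S)$ from \eqref{e:whitney-like} and conclude the containment from the smallness of $\tau$. The only cosmetic difference is that you evaluate $d_{\cF(Q)}$ at the point $\zeta_S$ and then transfer to $I_S$ via the $1$-Lipschitz property, whereas the paper works directly with $d_Q(I)=\inf_{x\in I}d_{\cF(Q)}(x)$; the two are equivalent once one knows $\ell(I_S)\lesssim\tau\ell(S)$ from the upper bound. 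One small imprecision: your justification ``$S\subset Q\subset C_1B_Q$'' is not literally correct when $S$ sits in a neighbouring tree of the forest $\Forest(Q)$, but the needed fact $\zeta_S\in E\cap C_1B_Q$ (hence covered by \eqref{e:contains}) still holds by the definition of $\dN(Q)$ and the choice of $C_1$.
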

\begin{proof}[Proof of Lemma \ref{l:meta1}]
   Let $\zeta_S$ be the center of $S$. Then there exists a dyadic cube $I \in \cC_{Q}$ such that $\zeta_S \in I$; thus for $I$ we have $\dist(I, S)=0$, and therefore $d_{Q}(I) \leq \dist(I, S) + \ell(S) = \ell(S)$. In other words, when computing $d_{Q}(I)$, it suffices to minimise over all cubes $T$ such that 
   \begin{align*}
       \dist(I, T) + \ell(T) \leq \ell(Q).
   \end{align*}
   But note that since $S$ is a minimal cube in $\wt \Tree(Q)$, we must have that $T \subset S^c$. Recall also that, by Theorem \ref{t:Christ},  $E \cap B(\zeta_S, c_0 \ell(S)) \subset S$. If we let $\tau$ be small enough, we can insure that $I \subset B(\zeta_S, \frac{c_0 \ell(S)}{2})$; hence we see that \begin{align*}
        \dist(I, T) \gtrsim \ell(S),
   \end{align*}
  and therefore $\tau^{-1} \ell(I) \sim  d_{Q}(I) \gtrsim \ell(S) \geq \ell(I)$ 
\end{proof}

\begin{lemman}
Let $I \in \cC_Q$ for $Q \in \Next(R)$, $R \in \Top(k_0)$. Then there exists a cube $Q_I \in \wt \Tree(Q)$ so that 
\begin{align}
    & \ell(I) \leq \ell(Q_I) \leq c \tau^{-1} \ell(I);\label{e:IQ-I-a} \\
    & \dist(I, Q_I) \leq c \tau^{-1} \ell(I) \label{e:IQ-I-b}. 
\end{align}
\end{lemman}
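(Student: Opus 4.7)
The plan is to locate $Q_I$ as (an ancestor of) the almost-minimising stop cube appearing in the definition of $d_Q(I)$, and then to check that it does not escape the tree of a sibling of $Q$.

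First I would extract from the construction of $\cC_Q$ the two-sided Whitney-like estimate $\ell(I)\sim \tau\,d_Q(I)$, which is exactly \eqref{e:whitney-like}; the upper half is built into \eqref{e:CR}, while the lower half follows from the maximality of $I$ applied to its dyadic parent and the $1$-Lipschitz property of $d_Q$. This turns the problem into one of finding, among the cubes that (approximately) realise $d_Q(I)=\inf_{P\in\Stop(Q)}(\ell(P)+\dist(I,P))$, a Christ–David cube whose side length has been bumped up so as to be comparable to $\ell(I)$. Concretely, choose $P\in\Stop(Q)$ with $\ell(P)+\dist(I,P)\le 2\,d_Q(I)\lesssim \tau^{-1}\ell(I)$; if $\ell(P)\ge \ell(I)$ set $Q_I:=P$, otherwise let $Q_I$ be the smallest ancestor of $P$ (in the Christ–David tree containing $P$) with $\ell(Q_I)\ge \ell(I)$, in which case $\ell(Q_I)\le \lambda^{-1}\ell(I)$ by Theorem~\ref{t:Christ}. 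Either way $\ell(I)\le\ell(Q_I)\lesssim \tau^{-1}\ell(I)$ and $\dist(I,Q_I)\le \dist(I,P)\lesssim \tau^{-1}\ell(I)$, which are exactly \eqref{e:IQ-I-a}–\eqref{e:IQ-I-b}.

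The one subtle point, and what I expect to be the main obstacle, is checking that the enlarged cube $Q_I$ is still in $\wt\Tree(Q)$; i.e.\ that we have not accidentally ascended past the top cube of the tree in which $P$ sits. For this I would first rule out the possibility $\ell(I)>\ell(Q)$: if it held, then from $I\cap 2C_0 Q\ne\emptyset$ one gets $\dist(I,Q)\lesssim \ell(I)$ and $d_Q(I)\le \ell(Q_*)+\dist(I,Q_*)\lesssim \ell(I)$ for any stop $Q_*\subset Q$, whence $\ell(I)<\tau\,d_Q(I)\lesssim \tau\,\ell(I)$, contradicting $\tau<1$. So $\ell(I)\le \ell(Q)$, and then the same estimate gives $d_Q(I)\lesssim \ell(Q)$, hence $\ell(I)\lesssim \tau\,\ell(Q)$ and consequently $\ell(Q_I)\lesssim \tau^{-1}\ell(I)\lesssim \ell(Q)$. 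Since $P\in \Stop(Q)$ lies inside some $\Tree(Q')$ with $Q'\in\dN(Q)$, and every Christ–David ancestor of $P$ of side length at most $\ell(Q')\sim \ell(Q)$ is contained in $\Tree(Q')$ by Definition~\ref{d:stopping}, this forces $Q_I\in \Tree(Q')\subseteq \wt\Tree(Q)$, completing the proof.
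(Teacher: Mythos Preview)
Your argument is correct and follows the same route as the paper's proof: pick a near-minimiser $P\in\Stop(Q)$ for $d_Q(I)$, use $\ell(I)\sim\tau\,d_Q(I)$ to get the size and distance bounds, and if $P$ is too small replace it by an ancestor of side length at least $\ell(I)$. The paper's version is terser and simply asserts that one may ``take some ancestor of $Q'$ in $\wt\Tree(Q)$''; your third paragraph makes explicit why such an ancestor does not overshoot the top of the forest (using $\ell(I)\lesssim \tau\,\ell(Q)$, which follows directly from $I\cap 2C_0B_Q\neq\emptyset$ and the existence of a stop cube in $Q$), a point the paper leaves to the reader.
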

\begin{proof}[Proof of Lemma \ref{l:I-QI}]
Recall that $d_Q(I) \tau \sim  \ell(I)$. Now, by definition of $d_Q(I)$, there exists a cube $Q' \in \wt \Stop(Q)$ such that $\dist(I, Q') + \ell(Q) \leq 1.5 d_Q(I) \sim \tau^{-1} \ell(I)$. This immediately implies \eqref{e:IQ-I-b} and the second inequality in \eqref{e:IQ-I-a}. As for the first one, if it doesn't hold, it suffices to take some ancestor of $Q'$ in $\wt \Tree(Q)$. We then let this ancestor to be $Q_I$.
\end{proof}

\bibliography{bibliography}
\bibliographystyle{halpha-abbrv}
\end{document}